\newcommand{\ubar}[1]{\underaccent{\bar}{#1\,}}
\newcommand{\dbar}{\,\mathchar'26\mkern-12mu d \hspace{0.06em}}
\theoremstyle{plain}
\newtheorem{thm}{Theorem}[section]
\newtheorem{lem}[thm]{Lemma}
\newtheorem{prop}[thm]{Proposition}
\theoremstyle{definition}
\theoremstyle{remark}
\setlist[enumerate,1]{leftmargin=2em}
\def\C{\mathbb C}
\def\F{\mathbb F}
\def\H{\mathfrak H}
\def\I{\mathbf I}
\def\N{\mathbb N}
\def\T{\mathbb T}
\def\Z{\mathbb Z}
\def\sl_2{\mathfrak{sl}_2}
\def\U{U_q(\mathfrak{sl}_2)}
\def\V{U_q'(\mathfrak{so}_3)}
\title[Center of the universal Askey--Wilson algebra
at roots of $1$]{Center of
the universal Askey--Wilson algebra\\
at roots of unity}
\author{Hau-Wen Huang}
\address{
Department of Mathematics\\
National Central University\\
Chung-Li 32001 Taiwan
}
\email{hauwenh@math.ncu.edu.tw}
\thanks{The research was partially supported by National Center for Theoretical Sciences of Taiwan and the Council for Higher Education of Israel.
}
\begin{document}

\begin{abstract}
Inspired by a profound observation on the Racah--Wigner coefficients of $U_q(\mathfrak{sl}_2)$, the Askey--Wilson algebras were introduced in the early 1990s. A universal analog $\triangle_q$ of the Askey--Wilson algebras was recently studied.
For $q$ not a root of unity, it is known that $Z(\triangle_q)$ is isomorphic to the polynomial ring of four variables. A presentation for $Z(\triangle_q)$ at $q$ a root of unity is displayed in this paper. 
As an application, a presentation for the center of the double affine Hecke algebra of type $(C_1^\vee,C_1)$ at roots of unity is obtained.
\end{abstract}

\maketitle

{\footnotesize{\bf Keywords:} Askey--Wilson algebras,
Chebyshev polynomials,
double affine Hecke algebras, quantum groups.}

{\footnotesize {\bf 2010 MSC Primary:} 17B37; {\bf Secondary:} 20C08, 33C45, 33D80.}

\section{Introduction}\label{s:intro}

Throughout this paper an algebra $\mathcal A$ is meant to be an associative algebra with unit and let $Z(\mathcal A)$ denote the center of an algebra $\mathcal A$.

Fix a complex scalar $q\not=0$. In \cite{hidden_sym} Zhedanov proposed the Askey--Wilson algebras which involve five extra parameters $\varrho$, $\varrho^*$, $\eta$, $\eta^*$, $\omega$. Given these scalars the {\it Askey--Wilson algebra} is an algebra over the complex number field $\C$ generated by $K_0$, $K_1$, $K_2$ subject to the relations
\begin{eqnarray*}
qK_1K_2-q^{-1}K_2K_1&=&\omega K_1+\varrho K_0+\eta^*,\\
qK_2K_0-q^{-1}K_0K_2&=&\omega K_0+\varrho^* K_1+\eta,\\
qK_0K_1-q^{-1}K_1K_0&=&K_2.
\end{eqnarray*}
These algebras are named after R. Askey and J. Wilson since the algebras can also describe a hidden relation between the three-term recurrence relation and the $q$-difference equation of Askey--Wilson polynomials \cite{ask85}.
Under the mild assumptions $q^4\not=1$, $\varrho\not=0$ and $\varrho\not=0$ substitute
\begin{gather*}
K_0=-\frac{\sqrt{\varrho^*}A}{q^2-q^{-2}},\qquad \quad
K_1=-\frac{\sqrt{\varrho}B}{q^2-q^{-2}},\qquad \quad
K_2=\frac{\omega}{q-q^{-1}}
-\frac{\sqrt{\varrho\varrho^*}C}{q^2-q^{-2}}
\end{gather*}
into the defining relations of the Askey--Wilson algebra. The resulting relations become that each of
\begin{gather}\label{e:relDelta}
 A+ \frac{qBC-q^{-1}CB}{q^2-q^{-2}},
\qquad \quad
B+
\frac{qCA-q^{-1}AC}{q^2-q^{-2}},
\qquad \quad
C+
\frac{qAB-q^{-1}BA}{q^2-q^{-2}}
\end{gather}
is equal to a scalar. By interpreting the elements in (\ref{e:relDelta}) as central elements, it turns into the so-called {\it universal Askey--Wilson algebra} $\triangle_q$ \cite{uaw2011}. Let us denote $\triangle=\triangle_q$ for brevity.

Let $\alpha$, $\beta$, $\gamma$ denote the central elements of $\triangle$ obtained from multiplying the elements (\ref{e:relDelta}) by $q+q^{-1}$, respectively. Motivated by Zhedanov \cite[\S1]{hidden_sym}, the distinguished central element
\begin{gather}\label{e:Casimir}
q ABC+ q^2 A^2+q^{-2} B^2+q^2 C^2-q A\alpha-q^{-1} B\beta-q C\gamma
\end{gather}
is called the {\it Casimir element} of $\triangle$. For $q$ not a root of unity, the center of $\triangle$ has been shown  in \cite[Theorem~8.2]{uaw2011} to be the four-variable polynomial ring over $\C$ generated by $\alpha$, $\beta$, $\gamma$ and the Casimir element (\ref{e:Casimir}). The inspiration of our study on  $Z(\triangle)$ at roots of unity comes from the quantum group $U_q'(\mathfrak{so}_3)$. The quantum group $U_q'(\mathfrak{so}_n)$ \cite{GK1991} is not Drinfeld--Jimbo type but plays the important roles in the study of $q$-Laplace operators and $q$-harmonic polynomials \cite{ik2001,nuw1996}, $q$-ultraspherical polynomials \cite{ultrapoly:96}, quantum homogeneous spaces \cite{n1996}, nuclear spectroscopy \cite{hkp1999}, $(2+1)$-dimensional quantum gravity \cite{nrz1990,nr1993} and so on. For $n=3$ the quantum group is exactly the Askey--Wilson algebra with $q^4\not=1$, $\varrho=1$, $\varrho^*=1$, $\eta=0$, $\eta^*=0$, $\omega=0$. According to \cite[\S4]{nuw1996} the Casimir element of $U_q'(\mathfrak{so}_3)$ is defined to be
\begin{gather}\label{e:CasimirV}
q(q^2-q^{-2}) K_0 K_1 K_2
-q^2K_0^2-q^{-2}K_1^2-q^2 K_2^2.
\end{gather}

As far as we know, Odesskii \cite[Theorem~4]{odes1986} first found three additional central elements of $\V$ at roots of unity defined as follows.
Assume that $q$ is a primitive $d^{\rm\, th}$ root of unity and set
\begin{gather*}
\dbar=\left\{
\begin{array}{ll}
d \qquad \quad &\hbox{if $d$ is odd},\\
d/2 \qquad \quad &\hbox{if $d$ is even}.
\end{array}
\right.
\end{gather*}
Denote by $\Z$ the ring of integers and by $\N$ the set of the nonnegative integers.
For each $n\in \N$ define
\begin{gather}
T_n(X)
=\sum_{i=0}^{\lfloor n/2\rfloor}(-1)^i \left(
{n-i\choose i}+ {n-i-1\choose i-1}
\right)X^{n-2i}.
\label{e:Tn}
\end{gather}
Here ${n\choose -1}$ for $n\in \N$ and ${-1\choose -1}$ are interpreted as $0$ and $1$, respectively.
Note that $
\frac{1}{2}\,T_n(2X)
$ is the Chebyshev polynomial of the first kind. Then
\begin{gather*}
\Gamma_i=T_{\dbar}(-(q^2-q^{-2})K_i)
\qquad \quad
\hbox{for all $i\in \Z/3\Z$}
\end{gather*}
are central in $\V$. A proof can be found in \cite[Lemma~2]{hp2001}.

On the other hand, while studying the quantum Teichm\"{u}ller space, Checkhov and Fock \cite[Example~2]{fock2000} were aware of a homomorphism $\flat$ of $\V$ into the algebra generated by $x_0^{\pm 1}$, $x_1^{\pm 1}$, $x_2^{\pm 1}$ subject to the relations
\begin{gather*}
x_i x_i^{-1}=x_i^{-1} x_i=1, \qquad \quad
x_i x_{i+1}=q^2 x_{i+1} x_i \qquad \quad \hbox{for all $i\in \Z/3\Z$}.
\end{gather*}
The homomorphism $\flat$ sends
$K_i$ to
\begin{gather*}
\frac{q^{-1} x_i^{-1} x_{i+1}^{-1}+
q x_i^{-1} x_{i+1}+
q^{-1} x_ix_{i+1}}
{q^2-q^{-2}}
\qquad \quad \hbox{for all $i\in \Z/3\Z$}.
\end{gather*}
Presently, the map $\flat$ was shown to be injective by Iorgov \cite[Proposition~1]{iog02}. The images of the Casimir element and $\Gamma_0$, $\Gamma_1$, $\Gamma_2$ were calculated out.
As a consequence the relation
\begin{equation}\label{e:presentationV}
\begin{split}
&2
\left(
\left\lceil
\dbar/2
\right\rceil
-
\left\lfloor
\dbar/2
\right\rfloor
-1
\right)
\left(
\Gamma_0+\Gamma_1+\Gamma_2
+4
\right)
(
(-1)^{\lfloor \dbar/2\rfloor}
T_{\lfloor \dbar/2\rfloor}(\Pi)+2
)
-T_{\dbar}(\Pi)\\
&\quad =\;\;
q^{\dbar}
\Gamma_0\Gamma_1\Gamma_2
+
\Gamma_0^2+\Gamma_1^2+\Gamma_2^2
+
4
\left(
\left\lceil
\dbar/2
\right\rceil
-\left\lfloor
\dbar/2
\right\rfloor
\right)-6
\end{split}
\end{equation}
was first discovered in \cite[Proposition~2]{iog02}, where $\Pi$ is a normalization of (\ref{e:CasimirV}) obtained by multiplying $(q^2-q^{-2})^2$ followed by adding $q^2+q^{-2}$. The center of $\V$ was conjectured by Iorgov in \cite[Conjecture~1]{iog02} to be the commutative algebra over $\C$ generated by $\Gamma_0$, $\Gamma_1$, $\Gamma_2$, $\Pi$ subject to the relation (\ref{e:presentationV}). Until recently, the $\N$-filtration structure of $\V$ was utilized to confirm that $\Gamma_i$ for all $i\in \Z/3\Z$ and $\Pi$ generate $Z(\V)$ by Havl\'i\v{c}ek and Po\v{s}ta \cite[Theorem~3.1]{hp2011}.

As we shall see in this paper, the whole ideas can be generalized to the case of $\triangle$ by a replacement of the role of $\flat$ with an embedding of $\triangle$ into the tensor product of $\U$ with the Laurent ring $\F[a^{\pm 1},b^{\pm 1},c^{\pm 1}]$ given by Terwilliger \cite{uaw&equit2011}. Furthermore, a presentation for $Z(\triangle)$ is given as follows. For each $n\in \N$ let
\begin{eqnarray*}
\phi_n(X_0,X_1,X_2;X) &=& T_n(X) T_n(X_0)+T_n(X_1) T_n(X_2),\\
\psi_n(X_0,X_1,X_2;X) &=& T_{2n}(X)+T_n(X_0)^2+T_n(X_1)^2+T_n(X_2)^2+
T_n(X)T_n(X_0)T_n(X_1)T_n(X_2).
\end{eqnarray*}
To each $i\in \Z/3\Z$ we associate a $\C[X]$-algebra automorphism $\ubar i$ of $\C[X_0,X_1,X_2,X]$ with
\begin{gather*}
X^{\ubar i}_j= X_{i+j}
\qquad \quad
\hbox{for all $j\in \Z/3\Z$}.
\end{gather*}
For each $n\in \N$ there exist unique polynomials $\Phi_n(X_0,X_1,X_2;X)$, $\Psi_n(X_0,X_1,X_2;X)$
over $\Z$ such that
\begin{gather*}
\Phi_n
(
\phi_m^{\ubar 0},
\phi_m^{\ubar 1},
\phi_m^{\ubar 2};
\psi_m)
=
\phi_{mn},
\qquad \quad
\Psi_n(
\phi_m^{\ubar 0},
\phi_m^{\ubar 1},
\phi_m^{\ubar 2};
\psi_m)
=
\psi_{mn}
\qquad \quad
\hbox{for all $m\in \N$}.
\end{gather*}
For instance
\begin{eqnarray*}
\Phi_0(X_0,X_1,X_2;X)&=&8,\\
\Phi_1(X_0,X_1,X_2;X)&=&X_0,\\
\Phi_2(X_0,X_1,X_2;X)&=&
X_0^2-2X+4,\\
\Phi_3(X_0,X_1,X_2;X)&=&
X_0^3-3(X-1)X_0+3X_1X_2
\end{eqnarray*}
and
\begin{eqnarray*}
\Psi_0(X_0,X_1,X_2;X)&=&30,\\
\Psi_1(X_0,X_1,X_2;X)&=&X,\\
\Psi_2(X_0,X_1,X_2;X)&=&
X^2-8X
+2
(
X_0^2+X_1^2+X_2^2
-X_0X_1X_2
)
+10,\\
\Psi_3(X_0,X_1,X_2;X)&=&
X^3-3(
X_0X_1X_2
+
X_0^2+X_1^2+X_2^2
+1
)X
\\
&&\quad +\;\;
3
(
X_0^2X_1^2+X_1^2X_2^2+X_2^2X_0^2
-X_0X_1X_2
)
+6
(
X_0^2+X_1^2+X_2^2
).
\end{eqnarray*}
Let $\Omega$ denote a normalization of (\ref{e:Casimir}) obtained by subtracting from $q^2+q^{-2}$.
Applying the techniques from algebraic number theory, $Z(\triangle)$ is shown to be the commutative algebra over $\C$ generated by $T_{\dbar}(A)$, $T_{\dbar}(B)$, $T_{\dbar}(C)$, $\alpha$, $\beta$, $\gamma$, $\Omega$ subject to the relation
\begin{align*}
&
q^{\dbar} \Phi_{\dbar}^{\ubar 0}(\alpha,\beta,\gamma;\Omega) T_{\dbar}(A)
+q^{\dbar} \Phi_{\dbar}^{\ubar 1}(\alpha,\beta,\gamma;\Omega) T_{\dbar}(B)
+q^{\dbar} \Phi_{\dbar}^{\ubar 2}(\alpha,\beta,\gamma;\Omega) T_{\dbar}(C)
\\
&\quad=\;\;
q^{\dbar} T_{\dbar}(A) T_{\dbar}(B) T_{\dbar}(C)
+
T_{\dbar}(A)^2
+ T_{\dbar}(B)^2
+ T_{\dbar}(C)^2
+\Psi_{\dbar}(\alpha,\beta,\gamma;\Omega)
-2.
\end{align*}

In one year after the Askey--Wilson algebra was formally proposed, the double affine Hecke algebra (DAHA) associated with a reduced affine root system was introduced by Cherednik as a tool to prove several conjectures made by Macdonald \cite{DAHA1992,DAHA1995,DAHA1995v2}. Later, the DAHA was extended by Sahi \cite[\S3]{DAHA1999} to any nonreduced affine root system. Consider the most general DAHA $\H$ of rank $1$, namely of type $(C_1^\vee,C_1)$. Simply speaking, the algebra $\H$ is isomorphic to an algebra over $\C$ generated by $t_0$, $t_1$, $t_0^\vee$, $t_1^\vee$ subject to the relations
\begin{gather*}
(t_0-k_0)(t_0-k_0^{-1})=0,
\qquad \quad
(t_1-k_1)(t_1-k_1^{-1})=0,
\\
(t_0^\vee-k_0^\vee)(t_0^\vee-k_0^{\vee-1})=0,
\qquad \quad
(t_1^\vee-k_1^\vee)(t_1^\vee-k_1^{\vee-1})=0,
\\
t_0^\vee t_0 t_1^\vee t_1=q^{-1},
\end{gather*}
where $k_0$, $k_1$, $k_0^\vee$, $k_1^\vee$ are arbitrary nonzero parameters. The relationships between the Askey--Wilson algebra and the DAHA $\H$ of type $(C_1^\vee,C_1)$ were investigated by Ito, Koornwinder and Terwilliger \cite{it10, koo07, koo08, aw&daha2013}. The results from \cite[\S6]{koo07} imply that there exists a homomorphism $\triangle\to \H$ that sends
\begin{gather*}
A \;\;\mapsto \;\; t_1 t_0^\vee +(t_1t_0^\vee)^{-1},
\qquad  \quad
B \;\;\mapsto \;\; t_1^\vee t_1+(t_1^\vee t_1)^{-1},
\qquad  \quad
C \;\;\mapsto \;\; t_0 t_1+(t_0 t_1)^{-1}.
\end{gather*}

For $q=1$, a presentation for $Z(\H)$ was given by Oblomkov \cite[Theorem~3.1]{ob04}. For $q$ a root of unity, a generalized but indefinite presentation for $Z(\H)$ was subsequently obtained in \cite[Theorem~6.12]{etingof07}.
In the final section, the study of $Z(\triangle)$ is used to explicitly extend the Oblomkov presentation for $Z(\H)$ at roots of unity as below. Let $\phi^{\ubar 0}$, $\phi^{\ubar 1}$, $\phi^{\ubar 2}$, $\psi\in \C$ given by
\begin{eqnarray*}
\phi^{\ubar 0}&=&
(k_1^{\dbar}+k_1^{-\dbar})(k_0^{\vee\dbar}+k_0^{\vee-\dbar})
+q^{\dbar}(k_1^{\vee\dbar}+k_1^{\vee-\dbar})(k_0^{\dbar}+k_0^{-\dbar}),\\
\phi^{\ubar 1}&=&
(k_1^{\dbar}+k_1^{-\dbar})(k_1^{\vee\dbar}+k_1^{\vee-\dbar})
+q^{\dbar}(k_0^{\dbar}+k_0^{-\dbar})(k_0^{\vee\dbar}+k_0^{\vee-\dbar}),\\
\phi^{\ubar 2} &=&
(k_1^{\dbar}+k_1^{-\dbar})(k_0^{\dbar}+k_0^{-\dbar})
+q^{\dbar}(k_0^{\vee\dbar}+k_0^{\vee-\dbar})(k_1^{\vee\dbar}+k_1^{\vee-\dbar}),\\
\psi &=&
k_0^{2\dbar}
+k_1^{2\dbar}
+k_0^{\vee2\dbar}
+k_1^{\vee2\dbar}
+k_0^{-2\dbar}
+k_1^{-2\dbar}
+k_0^{\vee-2\dbar}
+k_1^{\vee-2\dbar}
\\
&&\quad+\;\;
(k_0^{\dbar}+k_0^{-\dbar})
(k_1^{\dbar}+k_1^{-\dbar})
(k_0^{\vee\dbar}+k_0^{\vee-\dbar})
(k_1^{\vee\dbar}+k_1^{\vee-\dbar}).
\end{eqnarray*}
The polynomial ring
$\C[X_0,X_1,X_2]$ modulo the ideal generated by
\begin{gather*}
q^{\dbar} X_0X_1X_2
+X_0^2+X_1^2+X_2^2
-\phi^{\ubar 0} X_0
-\phi^{\ubar 1} X_1
-\phi^{\ubar 2} X_2
+\psi
+4
\end{gather*}
is isomorphic to $Z(\H)$ induced from the mapping
\begin{gather*}
X_0 \;\; \mapsto \;\; (t_1 t_0^\vee)^{\dbar}+(t_1 t_0^\vee)^{-\dbar},
\qquad \quad
X_1 \;\; \mapsto \;\; (t_1^\vee t_1)^{\dbar}+(t_1^\vee t_1)^{-\dbar},
\qquad  \quad
X_2 \;\; \mapsto \;\; (t_0 t_1)^{\dbar}+(t_0 t_1)^{-\dbar}.
\end{gather*}

In addition, this paper will be employed to study the finite-dimensional irreducible $\triangle$-modules at roots of unity in the future paper \cite{Huang:2017}, which has a potential application to the Racah--Wigner coefficients of $\U$ at roots of unity \cite{Huang:2016}. For $q$ not a root of unity, a classification of finite-dimensional irreducible $\triangle$-modules was finished in \cite[Theorem~4.7]{Huang:2015}.

\section{Preliminaries}\label{s:pre}

From now on, the ground field is set to be an arbitrary field $\F$ because the statements in this paper are valid for any ground field not only the complex number field $\C$. The parameter $q$ is always assumed to be a primitive $d^{\rm\, th}$ root of unity. Set $\dbar=d$ if $d$ is odd and $\dbar=d/2$ if $d$ is even.

The purpose of \S\ref{s:uaw} is to give a brief review of the background on $\triangle$ including the Poincar\'{e}--Birkhoff--Witt basis of $\triangle$ and the induced $\N$-filtration structure of $\triangle$. In \S\ref{s:uqsl2} we display the embedding $\triangle\to \F[a^{\pm 1},b^{\pm 1},c^{\pm 1}]\otimes_\F\U$. The Iorgov's identity is displayed in \S\ref{s:qbin} to evaluate $T_{\dbar}(A)^\natural$, $T_{\dbar}(B)^\natural$, $T_{\dbar}(C)^\natural$. Recall the Concini--Kac presentation for $Z(\U)$ in \S\ref{s:preZU} to derive the relation in $Z(\triangle)$ claimed in Introduction.

\subsection{The universal Askey--Wilson algebra $\triangle$}\label{s:uaw}

To drop the assumption $q^4\not=1$, the universal Askey--Wilson algebra $\triangle$ is slightly changed to be an $\F$-algebra with generators $A$, $B$, $C$, $\alpha$, $\beta$, $\gamma$ and the relations assert that each of $\alpha$, $\beta$, $\gamma$ is central in $\triangle$ and
\begin{eqnarray}
CB &=&
q^2 BC+q(q^2-q^{-2})A-q(q-q^{-1})\alpha,\label{e:uaw2}\\
CA &=&
q^{-2} AC-q^{-1}(q^2-q^{-2})B+q^{-1}(q-q^{-1})\beta,\label{e:uaw3}\\
BA &=&
q^2 AB+q(q^2-q^{-2})C-q(q-q^{-1})\gamma.\label{e:uaw1}
\end{eqnarray}
The Poincar\'{e}--Birkhoff--Witt theorem for $\triangle$ was given in \cite[Theorem~4.1]{uaw2011}:

\begin{lem}\label{lem:PBW}
The monomials
\begin{gather*}
A^{i_0} B^{i_1} C^{i_2} \alpha^{j_0} \beta^{j_1} \gamma^{j_2}
\qquad \quad
\hbox{for all $i_0,i_1,i_2,j_0,j_1,j_2\in \N$}
\end{gather*}
form an $\F$-basis of $\triangle$.
\end{lem}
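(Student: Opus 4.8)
The plan is to prove this by Bergman's Diamond Lemma. Read the defining relations as rewriting rules: the three $q$-commutation relations (\ref{e:uaw2})--(\ref{e:uaw1}) are oriented so that $A$ is moved to the left of $B$ and $C$, and $B$ to the left of $C$, while the centrality of $\alpha,\beta,\gamma$ is oriented so that each of $\alpha,\beta,\gamma$ is moved to the far right of every word and the three central generators are sorted among themselves (say $\alpha$ before $\beta$ before $\gamma$). Iterating these rules, every word in $A,B,C,\alpha,\beta,\gamma$ is rewritten to an $\F$-linear combination of monomials $A^{i_0}B^{i_1}C^{i_2}\alpha^{j_0}\beta^{j_1}\gamma^{j_2}$, so these monomials span $\triangle$; the remaining point is their linear independence.

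For the Diamond Lemma I would equip the free monoid on $A,B,C,\alpha,\beta,\gamma$ with the partial order in which one monomial dominates another if it is strictly longer, or of the same length with strictly more inversions, an inversion being an occurrence of $C$ before $B$, of $C$ or $B$ before $A$, of any of $A,B,C$ before any of $\alpha,\beta,\gamma$, or of $\gamma$ before $\beta$, $\gamma$ before $\alpha$, or $\beta$ before $\alpha$. This is a well-founded monoid partial order compatible with all the reduction rules above: each rule either shortens the word or preserves its length while strictly decreasing the inversion count, and the lower-order terms produced by (\ref{e:uaw2})--(\ref{e:uaw1}) are strictly shorter. The reduced words are precisely the claimed monomials, so it suffices to verify that every overlap ambiguity is resolvable.

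The overlaps involving a central generator (for instance $CB\alpha$ coming from $CB$ and $B\alpha$, or $\gamma CB$ coming from $\gamma C$ and $CB$, or $\gamma\beta\alpha$) resolve at once, because $\alpha,\beta,\gamma$ commute with everything and among themselves, so the central factors may be reordered at the end without interference. The single genuinely nontrivial ambiguity is the word $CBA$, which overlaps the left-hand sides of (\ref{e:uaw2}) and (\ref{e:uaw1}). Here I would reduce $(CB)A$ by applying first (\ref{e:uaw2}), then (\ref{e:uaw3}) to the resulting $CA$, then (\ref{e:uaw1}); and separately reduce $C(BA)$ by applying first (\ref{e:uaw1}), then (\ref{e:uaw2}) and (\ref{e:uaw3}); both must collapse to the same $\F$-linear combination of $ABC,\,A,\,B,\,C,\,\alpha,\,\beta,\,\gamma$. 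This is the main, though mechanical, computation. It necessarily works out, since the relations (\ref{e:uaw2})--(\ref{e:uaw1}) were extracted precisely from the requirement that the elements in (\ref{e:relDelta}) be central; but it must be carried through carefully, keeping track of the scalars $q(q^2-q^{-2})$ and $q(q-q^{-1})$. Once this ambiguity is resolved, the Diamond Lemma shows the reduced monomials form an $\F$-basis of $\triangle$, which is the assertion of the lemma. Alternatively, linear independence could be deduced from any faithful action of $\triangle$, such as the embedding $\triangle\hookrightarrow \F[a^{\pm1},b^{\pm1},c^{\pm1}]\otimes_\F\U$ recalled in \S\ref{s:uqsl2}, but the Diamond Lemma argument is self-contained.
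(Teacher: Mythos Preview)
The paper does not prove this lemma itself; it simply cites \cite[Theorem~4.1]{uaw2011}. Your Diamond Lemma approach is exactly the method used in that reference, so in substance you are reproducing the cited proof rather than offering an alternative. The overlap analysis is correct: the only nontrivial ambiguity is $CBA$, and the two reductions do agree (both yield $q^2ABC + q(q^2-q^{-2})(A^2-B^2+C^2) - q(q-q^{-1})(A\alpha - B\beta + C\gamma)$).

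One slip to fix: in your description of inversions you wrote ``any of $A,B,C$ before any of $\alpha,\beta,\gamma$'' is an inversion, but that is the \emph{correct} order for the target normal form $A^{i_0}B^{i_1}C^{i_2}\alpha^{j_0}\beta^{j_1}\gamma^{j_2}$. You mean the reverse: any of $\alpha,\beta,\gamma$ occurring before any of $A,B,C$ is an inversion. With that correction the order is compatible with all rules, as you claim.

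Also, your closing remark about deducing linear independence from the embedding $\natural$ of Lemma~\ref{lem:naturalABC} is not quite safe as a logical alternative: in \cite{uaw&equit2011} the injectivity of $\natural$ is itself proved using the PBW basis of $\triangle$, so invoking it here would be circular. The Diamond Lemma route is the right self-contained argument.
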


For any two submodules $V$, $W$ of an algebra $\mathcal A$, denote by $V\cdot W$ the submodule of $\mathcal A$ spanned by $vw$ for all $v\in V$ and $w\in W$.
For each $n\in \N$ let $\triangle_n$ denote the $\F$-subspace of $\triangle$ spanned by
\begin{gather*}
A^{i_0} B^{i_1} C^{i_2}  \alpha^{j_0} \beta^{j_1} \gamma^{j_2}
\qquad \quad
\hbox{for all
$i_0,i_1,i_2,j_0,j_1,j_2\in \N$ \; with \;  $i_0+i_1+i_2+j_0+j_1+j_2\leq n$}.
\end{gather*}
Recall from \cite[\S5]{uaw2011} that the $\F$-subspaces $\{\triangle_n\}_{n\in \N}$ of $\triangle$ satisfy
\setlist[enumerate,1]{leftmargin=2.4em}
\begin{enumerate}
\item[(F1)] $\triangle=\bigcup\limits_{n\in \N} \triangle_n$;

\item[(F2)] $\triangle_m\cdot \triangle_n\subseteq \triangle_{m+n}$ for all $m,n\in \N$.
\end{enumerate}
\setlist[enumerate,1]{leftmargin=2em}
In other words, the increasing sequence
\begin{gather*}
\triangle_0\subseteq \triangle_1\subseteq \cdots \subseteq \triangle_n\subseteq \cdots
\end{gather*}
gives an $\N$-filtration of $\triangle$.

Recall from Introduction that the notation $\Omega$ stands for the normalized Casimir element
\begin{gather}\label{e:Omega}
q^2+q^{-2}
-
q ABC-q^2 A^2-q^{-2} B^2-q^2 C^2
+q A\alpha+q^{-1} B\beta+q C\gamma.
\end{gather}
By \cite[Proposition~7.4]{uaw2011} we have

\begin{lem}\label{lem:PBW2}
For each $n\in \N$ the monomials
\begin{gather*}
A^{i_0}B^{i_1}C^{i_2}
\alpha^{j_0} \beta^{j_1} \gamma^{j_2}
\Omega^\ell
\qquad \quad
\begin{split}
&i_0,i_1,i_2,j_0,j_1,j_2,\ell\in \N, \qquad i_0i_1i_2=0,\\
&i_0+i_1+i_2+j_0+j_1+j_2+3\ell\leq n
\end{split}
\end{gather*}
form an $\F$-basis of $\triangle_n$.
\end{lem}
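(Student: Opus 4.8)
The plan is to pass to the associated graded algebra of the filtration $\{\triangle_n\}$ and thereby reduce the claim to an elementary bijection of index sets. Put $\triangle_{-1}=0$ and form $\mathrm{gr}\,\triangle=\bigoplus_{n\geq 0}\triangle_n/\triangle_{n-1}$; by (F1), (F2) this is an $\N$-graded $\F$-algebra, and by Lemma~\ref{lem:PBW} the images of the monomials $A^{i_0}B^{i_1}C^{i_2}\alpha^{j_0}\beta^{j_1}\gamma^{j_2}$ of total degree $n$ form an $\F$-basis of its degree-$n$ component. Writing $\bar x$ for the symbol of $x\in\triangle_n$ in $\triangle_n/\triangle_{n-1}$, the leading parts of relations (\ref{e:uaw2})--(\ref{e:uaw1}) read $\bar C\bar B=q^2\bar B\bar C$, $\bar C\bar A=q^{-2}\bar A\bar C$, $\bar B\bar A=q^2\bar A\bar B$, with $\bar\alpha,\bar\beta,\bar\gamma$ central; thus $\mathrm{gr}\,\triangle$ is a quantum affine space and reordering of monomials in it introduces only nonzero scalars. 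Inspecting (\ref{e:Omega}), the unique part of $\Omega$ of total degree $3$ is $-qABC$, which is already a PBW monomial, so $\Omega\in\triangle_3$ and $\bar\Omega=-q\,\bar A\bar B\bar C$ in $\mathrm{gr}_3\triangle$.

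Fix now a monomial $M=A^{i_0}B^{i_1}C^{i_2}\alpha^{j_0}\beta^{j_1}\gamma^{j_2}\Omega^{\ell}$ with $i_0i_1i_2=0$ and set $N=i_0+i_1+i_2+j_0+j_1+j_2+3\ell$, so $M\in\triangle_N$. Its symbol in $\mathrm{gr}_N\triangle$ equals $\bar A^{i_0}\bar B^{i_1}\bar C^{i_2}\bar\alpha^{j_0}\bar\beta^{j_1}\bar\gamma^{j_2}\,(-q\,\bar A\bar B\bar C)^{\ell}$, and after reordering the $\bar A,\bar B,\bar C$ factors (the $\bar\alpha,\bar\beta,\bar\gamma$ being central) this is $\lambda\,\bar A^{\,i_0+\ell}\bar B^{\,i_1+\ell}\bar C^{\,i_2+\ell}\bar\alpha^{j_0}\bar\beta^{j_1}\bar\gamma^{j_2}$ for some nonzero $\lambda\in\F$; the exact value of $\lambda$ is a power of $q$ and plays no role. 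Hence $M$ is carried by $\triangle_N\to\triangle_N/\triangle_{N-1}$ onto a nonzero scalar multiple of a basis monomial of $\mathrm{gr}_N\triangle$.

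The crux is the observation that, for each fixed triple $(j_0,j_1,j_2)$, the assignment $(i_0,i_1,i_2,\ell)\mapsto(i_0+\ell,\,i_1+\ell,\,i_2+\ell)$ is a bijection from $\{(i_0,i_1,i_2,\ell)\in\N^4:i_0i_1i_2=0\}$ onto $\N^3$: from $(a_0,a_1,a_2)$ one recovers $\ell=\min\{a_0,a_1,a_2\}$ (some $i_k$ must vanish, forcing $a_k=\ell$, while $a_j\geq\ell$ for every $j$) and then $i_k=a_k-\ell$. Moreover this bijection preserves total degree, since $i_0+i_1+i_2+j_0+j_1+j_2+3\ell=a_0+a_1+a_2+j_0+j_1+j_2$. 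Combined with the previous paragraph, the symbols of the monomials $M$ of total degree exactly $N$ run, each with a nonzero scalar coefficient, over \emph{all} of the distinct basis monomials of $\mathrm{gr}_N\triangle$; hence they form an $\F$-basis of $\mathrm{gr}_N\triangle=\triangle_N/\triangle_{N-1}$.

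It remains to lift this to $\triangle_n$ via the standard fact that if the symbols of a finite subset of $\triangle_N$ form an $\F$-basis of $\triangle_N/\triangle_{N-1}$, then that subset together with any $\F$-basis of $\triangle_{N-1}$ is an $\F$-basis of $\triangle_N$; starting from $\triangle_0=\F$ and inducting on $N$ gives the assertion. Note that this route produces spanning and linear independence simultaneously. The only nonroutine ingredient is the index-set bijection of the third paragraph (equivalently, the remark that multiplying by $\Omega$ shifts $(i_0,i_1,i_2)$ by $(1,1,1)$ at the level of leading terms, and that $\min$ is the inverse of this shift on the locus $i_0i_1i_2=0$); granting it, the leading-term computations are mechanical.
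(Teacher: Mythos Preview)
Your argument is correct. The paper does not give its own proof here; it simply cites \cite[Proposition~7.4]{uaw2011}, so you are supplying what the paper defers.

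Your associated-graded approach is the natural one and is implicitly present in the paper: the identity $\bar\Omega=-q\,\bar A\bar B\bar C$ together with the $q$-commutation of $\bar A,\bar B,\bar C$ is exactly the content of Lemma~\ref{lem:Omega-ABC}(ii), which the paper proves later (in the inverse direction, writing $A^{i_0}B^{i_1}C^{i_2}\equiv(-1)^\ell q^{\ell(\ell-2i_1)}A^{i_0-\ell}B^{i_1-\ell}C^{i_2-\ell}\Omega^\ell$ modulo $\triangle_{i_0+i_1+i_2-1}$). Your contribution beyond that computation is to package it cleanly via the bijection $(i_0,i_1,i_2,\ell)\leftrightarrow(i_0+\ell,i_1+\ell,i_2+\ell)$ with inverse $\ell=\min(a_0,a_1,a_2)$, which makes the basis statement immediate by induction on the filtration. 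One small stylistic point: calling $\mathrm{gr}\,\triangle$ a ``quantum affine space'' presupposes that the listed $q$-commutation relations are \emph{all} the relations, which is in fact equivalent to the PBW theorem you are invoking; your argument only uses that the PBW monomials form a basis of each graded piece (Lemma~\ref{lem:PBW}) and that reordering introduces a nonzero scalar, so no circularity arises, but the phrasing could be tightened.
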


\noindent By the symmetry of the defining relations of $\triangle$ there is an $\F$-algebra automorphism $\rho$ of $\triangle$ that sends
\begin{gather*}
(A,B,C,\alpha,\beta,\gamma)
\;\; \mapsto \;\;
(B,C,A,\beta,\gamma,\alpha).
\end{gather*}
Moreover \cite[Theorem~6.4]{uaw2011} showed that

\begin{lem}\label{lem:rho}
$\Omega^\rho=\Omega$.
\end{lem}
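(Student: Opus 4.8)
The plan is to verify directly that the Casimir element $\Omega$ defined in \eqref{e:Omega} is fixed by the rotation automorphism $\rho$, using nothing more than the defining relations \eqref{e:uaw2}--\eqref{e:uaw1} and the PBW basis of Lemma~\ref{lem:PBW}. Since $\rho$ cyclically permutes $(A,B,C)\to(B,C,A)$ and $(\alpha,\beta,\gamma)\to(\beta,\gamma,\alpha)$, applying $\rho$ to \eqref{e:Omega} produces
\begin{gather*}
\Omega^\rho=q^2+q^{-2}-qBCA-q^2B^2-q^{-2}C^2-q^2A^2+qB\beta+q^{-1}C\gamma+qA\alpha.
\end{gather*}
Comparing this with $\Omega$, the quadratic terms $q^2A^2$, $q^{-2}B^2$, $q^2C^2$ and the linear terms $qA\alpha$, $q^{-1}B\beta$, $qC\gamma$ already match (as unordered collections), so the entire content of the claim reduces to showing $qABC-qA\alpha-q^{-1}B\beta-qC\gamma$ equals $qBCA-qB\beta-q^{-1}C\gamma-qA\alpha$ up to the discrepancy in the quadratic/linear pieces; more precisely one must show
\begin{gather*}
qABC+q^{-2}B^2+q^{-1}B\beta=qBCA+q^{-2}C^2+q^{-1}C\gamma
\end{gather*}
after the matching $A^2$, $C^2$ (resp.\ $B^2$) terms and the $A\alpha$ terms are cancelled. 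So the crux is a single identity: $ABC$ and $BCA$ differ by a correction expressible through the quadratics and linears that restores the cyclic symmetry.

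The key computational step is therefore to commute $A$ past $BC$ (or equivalently rewrite $BCA$) using \eqref{e:uaw3} and \eqref{e:uaw1}. First I would use $BA=q^2AB+q(q^2-q^{-2})C-q(q-q^{-1})\gamma$ to move $A$ leftward through $B$, then use $CA=q^{-2}AC-q^{-1}(q^2-q^{-2})B+q^{-1}(q-q^{-1})\beta$ inside the resulting $BCA$-type terms to move $A$ through $C$; one reorganizes $BCA$ into a combination of $ABC$ plus lower-degree terms in the PBW basis. The scalars are arranged so that the cross terms involving $q^2-q^{-2}$ and $q-q^{-1}$ recombine exactly into $q^{-2}(B^2-C^2)$ together with the linear combination $q^{-1}(C\gamma - B\beta)$ needed above; I expect the coefficients to telescope because the defining relations were calibrated in \cite{uaw2011} precisely so that the three expressions in \eqref{e:relDelta} are simultaneously central, which is the source of this hidden symmetry.

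The main obstacle is purely bookkeeping: tracking the numerous $q$-power coefficients through two successive noncommutative substitutions without sign or exponent errors, and making sure one reduces everything to the same normal form (say, all monomials written as $A^{i_0}B^{i_1}C^{i_2}$ times central elements) before comparing. A cleaner route that I would actually prefer is to avoid brute force: recall that each of the three elements in \eqref{e:relDelta}, multiplied by $q+q^{-1}$, gives $\alpha,\beta,\gamma$, and that $\Omega$ can be re-expressed symmetrically in terms of these. Concretely, one can rewrite \eqref{e:Casimir} (hence $\Omega$) so that it manifestly depends only on the pair $(A, BC)$-type data in a way invariant under the cyclic relabelling — for instance by substituting $\alpha = (q+q^{-1})A + \tfrac{qBC-q^{-1}CB}{q-q^{-1}}$ and its cyclic images back into \eqref{e:Omega}, which should collapse the expression into one that is visibly $\rho$-stable. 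Either way, once the single identity relating $ABC$ and $BCA$ modulo quadratics is established, $\Omega^\rho=\Omega$ follows immediately, and uniqueness of PBW expansions (Lemma~\ref{lem:PBW}) guarantees there is no ambiguity in the comparison.
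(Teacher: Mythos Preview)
The paper does not give its own proof of this lemma; it merely cites \cite[Theorem~6.4]{uaw2011}. Your plan of a direct verification using the relations \eqref{e:uaw2}--\eqref{e:uaw1} is therefore a genuine alternative and, in outline, correct: one computes $BCA$ by first applying \eqref{e:uaw3} to $CA$ and then \eqref{e:uaw1} to $BA$, and checks that the discrepancy with $ABC$ exactly cancels the asymmetric quadratic and linear terms in $\Omega$.

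However, the displayed ``crux'' equation in your proposal is wrong. What actually needs to be shown is that $\Omega-\Omega^\rho=0$, i.e.
\[
q(BCA-ABC)+(q^2-q^{-2})(B^2-C^2)-(q-q^{-1})(B\beta-C\gamma)=0,
\]
and your displayed identity $qABC+q^{-2}B^2+q^{-1}B\beta=qBCA+q^{-2}C^2+q^{-1}C\gamma$ would instead give $q(BCA-ABC)=q^{-2}(B^2-C^2)-q^{-1}(B\beta-C\gamma)$, which has the wrong coefficients. Carrying out the two substitutions you describe yields
\[
BCA-ABC=q^{-1}(q^2-q^{-2})(C^2-B^2)+q^{-1}(q-q^{-1})(B\beta-C\gamma),
\]
which, upon multiplication by $q$, is exactly the required identity. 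So the approach is fine, but the bookkeeping you flagged as the ``main obstacle'' has indeed bitten you in the write-up; tighten the intermediate algebra before presenting it as a proof.
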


\subsection{An embedding of $\triangle$ into $\F[a^{\pm 1},b^{\pm1},c^{\pm 1}]\otimes_\F \U$}\label{s:uqsl2}

Assuming that $q^2\not=1$ the quantum group $\U$ is an $\F$-algebra generated by $e$, $k^{\pm 1}$, $f$ subject to the relations
\begin{gather*}
kk^{-1}=k^{-1}k=1,\\
ke=q^2ek, \qquad \quad
kf=q^{-2}fk, \\
ef-fe=\frac{k-k^{-1}}{q-q^{-1}}.
\end{gather*}
Let us abbreviate $U=\U$.
The generators $e$, $k^{\pm 1}$, $f$ are called the {\it Chevalley generators} of $U$. The Casimir element of $U$ is defined as
\begin{gather*}
ef+\frac{q^{-1} k+q k^{-1}}{(q-q^{-1})^2}.
\end{gather*}
Here we denote by $\Lambda$ a normalized Casimir element of $U$ obtained by multiplying the factor $(q-q^{-1})^2$.
The Poincar\'{e}--Birkhoff--Witt theorem for $U$ can be found in \cite[Theorem~1.5]{jantzen}:

\begin{lem}\label{lem:PBWU}
The monomials
\begin{gather*}
f^s k^{\pm i} e^r
\qquad \quad
\hbox{for all $i,r,s\in \N$}
\end{gather*}
form an $\F$-basis of $U$.
\end{lem}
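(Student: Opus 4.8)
The plan is to prove the final statement — the PBW theorem for $U = U_q(\mathfrak{sl}_2)$ — by the standard Bergman diamond-lemma / ordering argument, which is the same machinery already invoked for $\triangle$ in Lemma~\ref{lem:PBW}. First I would observe that the three defining relations, written with the "larger" side on the left, supply a rewriting system: $ke \to q^{-2}ek$, $fk \to q^{-2}kf$, and $fe \to ef - \frac{k-k^{-1}}{q-q^{-1}}$ (together with $kk^{-1} = k^{-1}k = 1$, which we handle by simply allowing a single two-sided power $k^{\pm i}$ rather than alternating $k$'s and $k^{-1}$'s). Declaring the monomial order by first comparing total $e$-degree, then by the position of $f$'s relative to $k$'s and $e$'s (a degree-then-lexicographic order with $f > k > e$), each rule strictly lowers a monomial. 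Hence every element of $U$ can be reduced to an $\F$-linear combination of the claimed monomials $f^s k^{\pm i} e^r$; this gives the spanning half.

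For linear independence I would verify that the overlap ambiguities resolve, i.e.\ that the reduction system is confluent. The genuine ambiguities come from overlaps of left-hand sides: $f \cdot (ke) $ vs.\ $(fk)\cdot e$, $f\cdot(k k^{-1})$ vs.\ $(fk)\cdot k^{-1}$, $(ke)$-type overlaps with $e$, and the triple overlap in $f k^{\pm 1} e$; in each case one reduces the word both ways and checks the two results agree, which is a short computation using only the commutation relations and the fact that $q - q^{-1}$ is invertible (here one uses $q^2 \ne 1$, which is in force). By Bergman's diamond lemma, confluence implies the irreducible monomials form an $\F$-basis, which is exactly the assertion.

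The one point requiring care — and the step I expect to be the main obstacle to a clean write-up — is the handling of $k^{\pm 1}$: because $k$ is a unit, naively treating $k$ and $k^{-1}$ as independent letters produces infinitely many reductions and a non-terminating system, so one must instead work in the localization, presenting $U$ as an iterated Ore extension $\F[k^{\pm 1}][e; \sigma][f; \tau, \delta]$ where $\sigma(k) = q^{-2}k$ on the Laurent ring, $\tau$ twists the first two generators appropriately, and $\delta$ encodes the $\frac{k - k^{-1}}{q - q^{-1}}$ term. The Ore-extension viewpoint makes the PBW basis automatic from the general theory of skew polynomial rings (each step is a free module over the previous ring with the indicated basis), and sidesteps the diamond-lemma bookkeeping entirely; I would likely present the argument this way, or else simply cite \cite[Theorem~1.5]{jantzen} as the excerpt already does and record only the ordering convention needed downstream. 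Either way, the substance is routine, so in the paper itself I would keep this to a one-line proof pointing to \cite{jantzen}.
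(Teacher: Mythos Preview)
The paper gives no proof of this lemma at all: it is stated immediately after the sentence ``The Poincar\'e--Birkhoff--Witt theorem for $U$ can be found in \cite[Theorem~1.5]{jantzen}'' and is simply quoted from that reference. Your final suggestion---to keep this to a one-line citation of Jantzen---is therefore exactly what the paper does.

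Your longer sketches (diamond lemma, iterated Ore extension) are both standard and valid routes to the result, and the Ore-extension description is clean. A small caution on the diamond-lemma version: the rewriting rules as you wrote them are not quite consistent with the target normal form $f^s k^{\pm i} e^r$. For that ordering you want reductions that push $f$ to the left and $e$ to the right, i.e.\ $ef \to fe + \frac{k-k^{-1}}{q-q^{-1}}$, $kf \to q^{-2} fk$, and $ek \to q^{-2} ke$, whereas you wrote $ke \to q^{-2} ek$ and $fe \to ef - \cdots$ (wrong direction for this basis) and $fk \to q^{-2} kf$ (should be $q^{2}$). These are cosmetic slips---the confluence checks go through once the rules are oriented correctly---but worth tidying if you ever write the argument out in full.
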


\noindent For each $n\in \Z$ let $U_n$ denote the $\F$-subspace of $U$ spanned by
\begin{gather*}
f^s k^{\pm i} e^r \qquad \quad \hbox{for all $i,r,s\in \N$ with $r-s=n$}.
\end{gather*}
Recall from \cite[\S1.9]{jantzen} that the $\F$-subspaces $\{U_n\}_{n\in \Z}$ of $U$ satisfy
\setlist[enumerate,1]{leftmargin=2.4em}
\begin{enumerate}
\item[(G1)] $U=\bigoplus\limits_{n\in \Z} U_n$;

\item[(G2)] $U_m\cdot U_n\subseteq U_{m+n}$ for all $m, n \in \Z$.
\end{enumerate}
\setlist[enumerate,1]{leftmargin=2em}
In other words, the $\F$-spaces $\{U_n\}_{n\in \Z}$ give a $\Z$-gradation of $U$.
By (G1) any element $u\in U$ can be uniquely written as
\begin{gather*}
\sum_{n\in \Z} u_n
\end{gather*}
where $u_n\in U_n$ and almost all zero. For each $n\in \Z$ the element $u_n$ is called the {\it homogeneous component of $u$ of degree $n$} or simply the {\it $n$-homogeneous component of $u$}.

Consider the elements
\begin{gather*}
x=k^{-1}-q^{-1}(q-q^{-1})ek^{-1}, \qquad \quad
y^{\pm 1}=k^{\pm 1},\qquad \quad
z=k^{-1}+(q-q^{-1})f.
\end{gather*}
Solving for $e$, $k^{\pm1}$, $f$ it yields that
\begin{gather*}
e=\frac{q(1-xy)}{q-q^{-1}}, \qquad \quad
k^{\pm 1}=y^{\pm 1}, \qquad \quad
f=\frac{z-y^{-1}}{q-q^{-1}}.
\end{gather*}
Thus $x$, $y^{\pm 1}$, $z$ generate $U$ and they are called the {\it equitable generators} of $U$ \cite[Definition~2.2]{equit2005}. Let $a$, $b$, $c$ denote three commuting indeterminates over $\F$. Extending the ground field $\F$ of $U$ to the Laurent polynomial ring $\F[a^{\pm 1},b^{\pm 1},c^{\pm 1}]$, a recent work \cite[Theorem~2.16--2.18]{uaw&equit2011} of Terwilliger showed that

\begin{lem}\label{lem:naturalABC}
There exists a unique $\F$-algebra injective homomorphism $\natural:\triangle\to \F[a^{\pm 1},b^{\pm 1},c^{\pm 1}]\otimes_\F U$ with
\begin{eqnarray*}
A^\natural &=&
ax+a^{-1}y+qbc^{-1}(1-xy),
\\
B^\natural &=&
by+b^{-1}z+qca^{-1}(1-yz),
\\
C^\natural &=&
cz+c^{-1}x+qab^{-1}(1-zx),
\\
\alpha^\natural &=&
(b+b^{-1})(c+c^{-1})+(a+a^{-1})\Lambda,
\\
\beta^\natural &=&
(c+c^{-1})(a+a^{-1})+(b+b^{-1})\Lambda,
\\
\gamma^\natural &=&
(a+a^{-1})(b+b^{-1})+(c+c^{-1})\Lambda,\\
\Omega^\natural
&=&
(a+a^{-1})^2+(b+b^{-1})^2+(c+c^{-1})^2+
(a+a^{-1})(b+b^{-1})(c+c^{-1})\Lambda
+\Lambda^2-2.
\end{eqnarray*}
\end{lem}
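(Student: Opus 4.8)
The plan is to construct $\natural$ explicitly on generators and verify that the displayed images satisfy the defining relations of $\triangle$, then upgrade the resulting homomorphism to an injection using the Poincar\'e--Birkhoff--Witt bases already at hand. First I would observe that it suffices to define $\natural$ on $A$, $B$, $C$, $\alpha$, $\beta$, $\gamma$ by the stated formulas and check that (i) the images of $\alpha$, $\beta$, $\gamma$ are central in $\F[a^{\pm1},b^{\pm1},c^{\pm1}]\otimes_\F U$, and (ii) the images of $A$, $B$, $C$ satisfy the three $q$-commutation relations \eqref{e:uaw2}--\eqref{e:uaw1} with those central images in the appropriate slots. Centrality of $\alpha^\natural$, $\beta^\natural$, $\gamma^\natural$ reduces, since $a$, $b$, $c$ are central indeterminates, to the statement that the normalized Casimir $\Lambda$ lies in $Z(U)$ — a standard fact for $U_q(\mathfrak{sl}_2)$ that holds over any field — together with the fact that $a+a^{-1}$, $b+b^{-1}$, $c+c^{-1}$ are scalars from the Laurent ring. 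For (ii) I would pass to the equitable presentation $x$, $y^{\pm1}$, $z$ of $U$ recalled just above, in which $A^\natural$, $B^\natural$, $C^\natural$ become cyclically symmetric polynomial expressions in $x$, $y^{\pm1}$, $z$ and the Laurent variables; the equitable commutation relations among $x$, $y$, $z$ (namely $q^{-1}xy-q yx = \ldots$ etc., derivable from the Chevalley relations) then make each of the three required identities a finite, mechanical computation.

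The cyclic $\Z/3\Z$ symmetry $(A,B,C,\alpha,\beta,\gamma)\mapsto(B,C,A,\beta,\gamma,\alpha)$ together with a compatible rotation of $(a,b,c)$ and of $(x,y,z)$ cuts the work for (ii) by a factor of three: it is enough to verify one of \eqref{e:uaw2}--\eqref{e:uaw1}, say $C^\natural B^\natural = q^2 B^\natural C^\natural + q(q^2-q^{-2})A^\natural - q(q-q^{-1})\alpha^\natural$, and the other two follow by applying the symmetry. At this stage one has an $\F$-algebra homomorphism $\natural:\triangle\to\F[a^{\pm1},b^{\pm1},c^{\pm1}]\otimes_\F U$, and it remains to prove injectivity. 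For this I would use the PBW basis $A^{i_0}B^{i_1}C^{i_2}\alpha^{j_0}\beta^{j_1}\gamma^{j_2}$ of $\triangle$ from Lemma~\ref{lem:PBW} and show its image is $\F$-linearly independent. A clean way: introduce a grading or filtration on the target — for instance, record the total degree in $a,b,c$ together with the PBW data $f^s k^{\pm i} e^r$ of $U$ from Lemma~\ref{lem:PBWU} — and identify, for each basis monomial of $\triangle$, a leading term under a suitable lexicographic order that is not cancelled by any other basis monomial's expansion. Concretely, $A^\natural$ contributes a leading $a x = a k^{-1}$ piece, $B^\natural$ a leading $b y = b k$ piece, $C^\natural$ a leading $c z$ piece, and the $\alpha^\natural,\beta^\natural,\gamma^\natural$ are ``lower'' in the $a,b,c$-degree filtration relative to products of $A,B,C$ of the same total degree; tracking the resulting leading monomials in the combined basis shows distinct PBW monomials of $\triangle$ have distinct leading images, hence linear independence.

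The main obstacle I anticipate is the bookkeeping in the injectivity step: the images $A^\natural$, $B^\natural$, $C^\natural$ are genuinely three-term expressions in the equitable generators, so a product $A^{i_0}B^{i_1}C^{i_2}$ expands into many monomials, and one must be careful to choose the monomial order so that the ``cross terms'' (e.g. the $qbc^{-1}(1-xy)$ part of $A^\natural$) never produce the leading term attributed to a different PBW monomial. I expect this is handled by giving $a$, $b^{-1}$, $c$ (or a similar asymmetric choice matching the leading pieces $ax$, $by$, $cz$) strictly dominant weights in the $a,b,c$-grading, so that the leading $a,b,c$-monomial of $A^{i_0}B^{i_1}C^{i_2}\alpha^{j_0}\beta^{j_1}\gamma^{j_2}$ is $a^{i_0}b^{i_1}c^{i_2}\cdot(\text{Laurent part of }\alpha^{j_0}\beta^{j_1}\gamma^{j_2})$ and the $U$-part's leading PBW monomial then separates monomials with the same $(i_0,i_1,i_2)$ but different $(j_0,j_1,j_2)$. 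The relation-checking in (ii), while conceptually routine once one is in the equitable picture, is the other place where sign and power-of-$q$ errors are easy; the $\Z/3\Z$ symmetry reduction and a careful statement of the equitable relations over an arbitrary field (no assumption $q^4\ne1$) are the safeguards there.
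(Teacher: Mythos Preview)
The paper does not prove this lemma: it is quoted directly from Terwilliger's paper \cite[Theorems~2.16--2.18]{uaw&equit2011}, so there is no in-paper argument to compare against. Your outline is a reasonable sketch of how one would reconstruct a proof from scratch, and the existence half is essentially correct: define $\natural$ on the generators $A,B,C,\alpha,\beta,\gamma$, use centrality of $\Lambda$ for the images of $\alpha,\beta,\gamma$, and verify one of \eqref{e:uaw2}--\eqref{e:uaw1} in the equitable presentation, transporting the other two by the $\Z/3\Z$ rotation $(a,b,c,x,y,z)\mapsto(b,c,a,y,z,x)$. Note that this rotation is only an automorphism of the subalgebra $U'$ generated by $x,y,z$ (not of all of $U$, since $y$ is invertible but $x,z$ are not), which is exactly the content of Lemma~\ref{lem:tilderho}; since all the images you need land in $\F[a^{\pm1},b^{\pm1},c^{\pm1}]\otimes_\F U'$, this suffices. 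You should also mention that the displayed formula for $\Omega^\natural$ is not part of the definition of $\natural$ but a consequence to be checked, since $\Omega$ is itself a polynomial in $A,B,C,\alpha,\beta,\gamma$.

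The injectivity half is where your proposal is genuinely incomplete rather than merely sketchy. The leading-term heuristic you describe (track $a^{i_0}b^{i_1}c^{i_2}$ times a PBW monomial in $U$) does not obviously work as stated: for instance $A^\natural$ contributes both $ak^{-1}$ and a term $qbc^{-1}(1-xy)$ whose $a,b,c$-exponents are incomparable to those coming from $B^\natural$ or $C^\natural$ under any single monomial order, and the ``leading $U$-part'' of $C^\natural=cz+c^{-1}x+qab^{-1}(1-zx)$ already mixes $e$- and $f$-degrees. A correct argument requires a more careful choice of filtration or a different mechanism altogether; in the cited source the injectivity is established by a more structural route (passing through an intermediate algebra and using explicit bases on both sides), not by a single lexicographic leading-term comparison. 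So the obstacle you flag is real, and your proposed fix (asymmetric weights on $a,b^{-1},c$) would need to be made precise and actually carried out before the argument is complete.
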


Let $U'$ denote the $\F$-subalgebra of $U$ generated by $x$, $y$, $z$. In terms of the equitable generators
\begin{gather*}
\Lambda=q x
+q^{-1} y
+q z
-q x y z.
\end{gather*}
Hence $\Lambda\in U'$. By Lemma~\ref{lem:naturalABC} the image of $\natural$ is contained in $\F[a^{\pm 1},b^{\pm 1},c^{\pm 1}]\otimes_\F U'$. More precisely,

\begin{lem}\label{lem:image}
$\F[\alpha,\beta,\gamma,\Omega]^\natural$ is contained in $\F[a^{\pm 1},b^{\pm1},c^{\pm 1}]\otimes_\F \F[\Lambda]$.
\end{lem}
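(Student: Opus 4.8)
The plan is almost immediate from Lemma~\ref{lem:naturalABC}. First I would record that each of the four generators $\alpha,\beta,\gamma,\Omega$ of the subalgebra $\F[\alpha,\beta,\gamma,\Omega]$ has image under $\natural$ lying in
\[
R:=\F[a^{\pm 1},b^{\pm 1},c^{\pm 1}]\otimes_\F \F[\Lambda].
\]
Indeed, the formulas in Lemma~\ref{lem:naturalABC} express $\alpha^\natural$, $\beta^\natural$, $\gamma^\natural$ and $\Omega^\natural$ verbatim as $\F$-polynomials in the invertible scalars $a^{\pm1},b^{\pm1},c^{\pm1}$ and in $\Lambda$; for instance $\alpha^\natural=(b+b^{-1})(c+c^{-1})\otimes 1+(a+a^{-1})\otimes\Lambda$, and similarly for $\beta^\natural$, $\gamma^\natural$, $\Omega^\natural$. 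Hence $\alpha^\natural,\beta^\natural,\gamma^\natural,\Omega^\natural\in R$.

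Next I would check that $R$ is genuinely an $\F$-subalgebra of $\F[a^{\pm 1},b^{\pm 1},c^{\pm 1}]\otimes_\F U$. As noted just before the statement of the lemma, $\Lambda=qx+q^{-1}y+qz-qxyz$ lies in $U'$, so $\F[\Lambda]$ is an $\F$-subalgebra of $U'$ (hence of $U$). Since $\F$ is a field, the inclusion $\F[\Lambda]\hookrightarrow U$ stays injective after tensoring with $\F[a^{\pm 1},b^{\pm 1},c^{\pm 1}]$, and its image is visibly closed under multiplication; thus $R$ sits inside $\F[a^{\pm 1},b^{\pm 1},c^{\pm 1}]\otimes_\F U$ as a subalgebra, in fact inside $\F[a^{\pm 1},b^{\pm 1},c^{\pm 1}]\otimes_\F U'$.

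Finally, because $\natural$ is an $\F$-algebra homomorphism, $\F[\alpha,\beta,\gamma,\Omega]^\natural$ is exactly the $\F$-subalgebra of $\F[a^{\pm 1},b^{\pm 1},c^{\pm 1}]\otimes_\F U$ generated by $\alpha^\natural,\beta^\natural,\gamma^\natural,\Omega^\natural$. Since these four elements lie in the subalgebra $R$, the subalgebra they generate is contained in $R$, which is the assertion. There is no real obstacle here: the whole content is in the explicit images computed in Lemma~\ref{lem:naturalABC}, and the only point deserving a word of care is that $\F[\Lambda]$ embeds into the relevant tensor product, which holds because we work over a field.
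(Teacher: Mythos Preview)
Your proposal is correct and matches the paper's approach: the paper states this lemma immediately after Lemma~\ref{lem:naturalABC} without a separate proof, treating it as an evident consequence of the explicit formulas for $\alpha^\natural,\beta^\natural,\gamma^\natural,\Omega^\natural$ displayed there. Your write-up simply makes explicit the (routine) observation that these four images lie in the subalgebra $\F[a^{\pm1},b^{\pm1},c^{\pm1}]\otimes_\F\F[\Lambda]$ and that an algebra homomorphism sends a generated subalgebra into the subalgebra generated by the images.
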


\noindent In \cite[Lemma~10.11 and Proposition~10.14]{uaw&equit2011} the $\F$-algebra automorphism $\rho:\triangle\to \triangle$ below Lemma~\ref{lem:PBW2} is extended as follows.

\begin{lem}\label{lem:tilderho}
There exists a unique $\F$-algebra automorphism $\widetilde \rho$ of $\F[a^{\pm 1},b^{\pm 1},c^{\pm 1}]\otimes_\F U'$ that sends
\begin{gather*}
(a,b,c,x,y,z)
\;\; \mapsto \;\;
(b,c,a,y,z,x).
\end{gather*}
Moreover the diagram
\begin{table}[H]
\centering
\begin{tikzpicture}
\matrix(m)[matrix of math nodes,
row sep=2.6em, column sep=2.8em,
text height=1.5ex, text depth=0.25ex]
{
\triangle
&\F[a^{\pm 1},b^{\pm 1},c^{\pm 1}]\otimes_\F U'\\
\triangle
&\F[a^{\pm 1},b^{\pm 1},c^{\pm 1}]\otimes_\F U'\\
};
\path[->,font=\scriptsize,>=angle 90]
(m-1-1) edge node[auto] {$\natural$} (m-1-2)
(m-1-1) edge node[left] {$\rho$} (m-2-1)
(m-2-1) edge node[auto] {$\natural$} (m-2-2)
(m-1-2) edge node[auto] {$\widetilde\rho$} (m-2-2);
\end{tikzpicture}
\end{table}
\noindent
commutes.
\end{lem}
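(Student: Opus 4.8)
The plan is to construct $\widetilde\rho$ as the tensor product of two manifest order-$3$ automorphisms and then reduce commutativity of the square to a short computation with the equitable generators.

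\emph{Existence and uniqueness.} The subalgebra $U'$ generated by the equitable generators $x,y,z$ has a presentation whose relations
\[
q\,xy-q^{-1}yx=q-q^{-1},\qquad q\,yz-q^{-1}zy=q-q^{-1},\qquad q\,zx-q^{-1}xz=q-q^{-1}
\]
are cyclically symmetric in $x,y,z$ (this is the equitable presentation; see \cite{equit2005} and \cite[\S10]{uaw&equit2011}). Hence there is an $\F$-algebra automorphism $\sigma$ of $U'$ with $(x,y,z)\mapsto(y,z,x)$, and $\sigma^3=\mathrm{id}$; similarly $(a,b,c)\mapsto(b,c,a)$ extends to an automorphism $\tau$ of $\F[a^{\pm1},b^{\pm1},c^{\pm1}]$ with $\tau^3=\mathrm{id}$. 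I would set $\widetilde\rho=\tau\otimes\sigma$; this is an automorphism of $\F[a^{\pm1},b^{\pm1},c^{\pm1}]\otimes_\F U'$ sending $(a,b,c,x,y,z)$ to $(b,c,a,y,z,x)$. Uniqueness is clear since $a^{\pm1},b^{\pm1},c^{\pm1},x,y,z$ generate the tensor product algebra.

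\emph{The identity $\widetilde\rho(\Lambda)=\Lambda$.} Writing $\Lambda=qx+q^{-1}y+qz-qxyz$ and applying $\widetilde\rho$ gives $qx+qy+q^{-1}z-q\,yzx$, so the claim amounts to $(q-q^{-1})(y-z)=q(yzx-xyz)$. Commuting $x$ to the left past $z$ and then past $y$ by means of the relations displayed above yields $yzx-xyz=(1-q^{-2})(y-z)$, which is exactly this identity. (Equivalently, the Casimir $\Lambda$ is fixed by the cyclic symmetry of the equitable presentation.)

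\emph{Commutativity of the square, and the main point.} Both $\natural\circ\rho$ and $\widetilde\rho\circ\natural$ are $\F$-algebra homomorphisms from $\triangle$, and by Lemma~\ref{lem:PBW} the algebra $\triangle$ is generated by $A,B,C,\alpha,\beta,\gamma$, so it suffices to compare the two on these six generators. For $A$ one has $\natural(\rho(A))=\natural(B)=B^\natural=by+b^{-1}z+qca^{-1}(1-yz)$, while applying $\widetilde\rho$ to $A^\natural=ax+a^{-1}y+qbc^{-1}(1-xy)$ via $(a,b,c,x,y,z)\mapsto(b,c,a,y,z,x)$ returns the same element; the cases of $B$ and $C$ follow by cycling. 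For $\alpha$ one has $\natural(\rho(\alpha))=\beta^\natural=(c+c^{-1})(a+a^{-1})+(b+b^{-1})\Lambda$, while $\widetilde\rho(\alpha^\natural)=(c+c^{-1})(a+a^{-1})+(b+b^{-1})\widetilde\rho(\Lambda)$, and these coincide by the previous paragraph; the cases of $\beta,\gamma$ are identical after cycling. The only nontrivial ingredients are the cyclic symmetry of the defining relations of $U'$ (which produces $\sigma$) and the identity $\widetilde\rho(\Lambda)=\Lambda$; everything else is bookkeeping, so I expect no genuine obstacle here beyond getting those two inputs straight.
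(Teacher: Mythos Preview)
Your argument is correct. The paper does not prove this lemma itself; it simply cites \cite[Lemma~10.11 and Proposition~10.14]{uaw&equit2011}, so there is no ``paper's proof'' to compare against beyond that reference. Your approach---building $\widetilde\rho$ as $\tau\otimes\sigma$ from the cyclic symmetry of the equitable relations, then checking the square on generators---is exactly the natural one and is what the cited reference does.

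Two small remarks. First, the one point that genuinely needs an external input is that $U'$ (the subalgebra of $U$ generated by $x,y,z$, \emph{without} $y^{-1}$) admits the cyclically symmetric presentation you wrote down; this is not the same statement as the equitable presentation of $U$ itself, where only $y$ is invertible and the cyclic map $y\mapsto z$ would fail on $U$. You correctly cite \cite[\S10]{uaw&equit2011} for this, but it is worth being explicit that this is where the work is. Second, your verification of $\widetilde\rho(\Lambda)=\Lambda$ is more than the lemma asks for: in the paper this is recorded separately as Lemma~\ref{lem:tilderhoLambda}. You need it to handle $\alpha,\beta,\gamma$ in the commutativity check, so it is natural to include, but strictly speaking you could also check those three generators directly from the formula $\Lambda=qx+q^{-1}y+qz-qxyz$ and the relations, or simply observe that commutativity on $A,B,C$ already forces commutativity on $\alpha,\beta,\gamma$ since the latter are expressible via $A,B,C$ through the defining relations of~$\triangle$.
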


\noindent By \cite[Lemma~10.13]{uaw&equit2011} we have

\begin{lem}\label{lem:tilderhoLambda}
$\Lambda^{\widetilde \rho}=\Lambda$.
\end{lem}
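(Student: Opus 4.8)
The statement to prove is Lemma~\ref{lem:tilderhoLambda}, namely $\Lambda^{\widetilde\rho}=\Lambda$. The plan is to compute directly, using the explicit expression for $\Lambda$ in terms of the equitable generators that was recorded just above Lemma~\ref{lem:image}, namely $\Lambda = qx + q^{-1}y + qz - qxyz$, together with the explicit description of $\widetilde\rho$ from Lemma~\ref{lem:tilderho} as the automorphism of $\F[a^{\pm1},b^{\pm1},c^{\pm1}]\otimes_\F U'$ sending $(a,b,c,x,y,z)\mapsto(b,c,a,y,z,x)$. Applying $\widetilde\rho$ to the formula for $\Lambda$ term by term gives $\Lambda^{\widetilde\rho} = qy + q^{-1}z + qx - qyzx$, so the claim reduces to the identity $qx+q^{-1}y+qz-qxyz = qx+qy+q^{-1}z-qyzx$ in $U'$.

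This is clearly \emph{not} an equality of the obvious symbolic form, so the real content is an identity in the algebra $U'$ relating $x,y,z$ and their cyclic permutations. First I would recall (or re-derive from the relations $ke=q^2ek$, $kf=q^{-2}fk$, $ef-fe=(k-k^{-1})/(q-q^{-1})$, via the substitution $x=k^{-1}-q^{-1}(q-q^{-1})ek^{-1}$, $y^{\pm1}=k^{\pm1}$, $z=k^{-1}+(q-q^{-1})f$) the defining relations among the equitable generators; these are the well-known relations of \cite{equit2005}, of the shape
\begin{gather*}
qxy - q^{-1}yx = q^2-q^{-2} \ \ (\text{up to the precise normalization}),
\end{gather*}
and its cyclic images, plus $y$ invertible. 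Using these two commutation relations I would rewrite $xyz$ and $yzx$ in a common normal form — e.g. move everything to the ordering $x,y,z$ — and check that $qxyz - qyzx$ collapses to $q^{-1}y - qy + qz - q^{-1}z + \text{(something)} \cdot x$ exactly matching the difference of the two linear parts $q^{-1}y+qz$ versus $qy+q^{-1}z$. Concretely, $yzx$ differs from $xyz$ by a sequence of elementary swaps, and each swap introduces an affine correction term by the equitable relations; collecting these corrections should produce precisely $q^{-1}(q-q^{-1})^{-1}$-type scalars times lower-degree monomials that reassemble into $\Lambda - \Lambda^{\widetilde\rho}$ being zero. In other words, the centrality-flavored fact is that $\Lambda$, being (a normalization of) the Casimir, is fixed by the cyclic shift on the equitable presentation, and the computation above is the bare-hands verification of that.

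An alternative, cleaner route — which I would mention as a fallback if the direct monomial bookkeeping gets unwieldy — is to avoid picking a normal form entirely: observe that $\widetilde\rho$ restricted to $U'$ is an $\F$-algebra automorphism (Lemma~\ref{lem:tilderho}), and that $\Lambda$ is central in $U'$ (it is the image of the Casimir, hence central in $U$, hence in $U'$). Under the cyclic automorphism the element $\Lambda^{\widetilde\rho}$ is again central in $U'$, and one can identify $Z(U')$ and check that $\Lambda$ and $\Lambda^{\widetilde\rho}$ have the same value by evaluating on a single faithful-enough representation, or by comparing the degree-$0$ (in the $\Z$-gradation of Lemma~\ref{lem:PBWU}) leading terms. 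The main obstacle in the direct approach is purely organizational — keeping track of the affine correction terms produced when commuting $x$, $y$, $z$ past one another in the right order — rather than conceptual; there is no subtlety at roots of unity here, since the identity is an equality of fixed elements of $U'$ that holds over any field with $q^2\neq1$, and $\widetilde\rho$ is manifestly well defined by Lemma~\ref{lem:tilderho}. So I expect the proof to be short: one display exhibiting $\Lambda^{\widetilde\rho}$ after applying the substitution, followed by the remark that the resulting expression equals $\Lambda$ by the equitable relations (equivalently, by \cite{equit2005} or by \cite[Lemma~10.13]{uaw&equit2011}).
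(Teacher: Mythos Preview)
Your approach is correct, but note that the paper does not actually prove this lemma: it simply cites \cite[Lemma~10.13]{uaw&equit2011} and states the result. Your direct computation is the content behind that citation. To confirm your sketch: with the equitable relations $qxy-q^{-1}yx=qyz-q^{-1}zy=qzx-q^{-1}xz=q-q^{-1}$, one has $yzx=q^{-2}yxz+q^{-1}(q-q^{-1})y$ and $yxz=q^{2}xyz-q(q-q^{-1})z$, whence $xyz-yzx=q^{-1}(q-q^{-1})(z-y)$; multiplying by $q$ gives exactly $(q-q^{-1})(z-y)$, which is the difference $(q^{-1}y+qz)-(qy+q^{-1}z)$ of the linear parts of $\Lambda$ and $\Lambda^{\widetilde\rho}$. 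So the bookkeeping you worried about is a two-line calculation, and your ``fallback'' centrality argument is unnecessary.
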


\subsection{Gaussian binomial coefficients and Chebyshev polynomials}\label{s:qbin}

The main purpose of this subsection is to display two identities to facilitate the evaluation of $T_{\dbar}(A)^\natural$, $T_{\dbar}(B)^\natural$, $T_{\dbar}(C)^\natural$.
Let $Q$ denote an indeterminate over the rational number field. Recall the notation
\begin{gather*}
[n]=\frac{Q^n-Q^{-n}}{Q-Q^{-1}} \qquad \quad
\hbox{for all $n\in \Z$}
\end{gather*}
and the Gaussian binomial coefficients
\begin{gather*}
{n\brack i}=\prod_{j=1}^{i}\frac{[n-j+1]}{[j]}
\qquad \quad
\hbox{for all $i\in \N$ and $n\in \Z$}.
\end{gather*}
The Gaussian binomial coefficients are shown to be contained in $\Z[Q^{\pm 1}]$. Since $\lim\limits_{Q\to 1}[n]=n$ for $n\in \Z$ the binomial coefficients are a limit case of the Gaussian binomial coefficients. The binomial formula can be generalized as follows.

\begin{prop}\label{prop:qbino}
If two elements $R$, $S$ in a $\Z[Q^{\pm 1}]$-algebra satisfy
$SR=Q^2 RS$, then
\begin{gather*}
(R+S)^n=\sum_{i=0}^n {n \brack i} Q^{i(n-i)}
R^{n-i} S^i
\qquad \quad \hbox{for all $n\in \N$}.
\end{gather*}
In particular, if $Q$ is a primitive $d^{\, th}$ root of unity then
$(R+S)^{\dbar}=R^{\dbar}+S^{\dbar}$.
\end{prop}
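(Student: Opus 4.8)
The plan is to prove the $q$-binomial formula by induction on $n$, and then extract the root-of-unity statement as an immediate corollary. For the base cases $n=0,1$ the identity is trivial. For the inductive step, assume $(R+S)^n=\sum_{i=0}^n {n\brack i}Q^{i(n-i)}R^{n-i}S^i$ and multiply on the left by $R+S$. The term $R\cdot {n\brack i}Q^{i(n-i)}R^{n-i}S^i$ contributes $Q^{i(n-i)}{n\brack i}R^{n+1-i}S^i$ directly. For the term $S\cdot {n\brack i}Q^{i(n-i)}R^{n-i}S^i$, I would first push the $S$ past $R^{n-i}$: the relation $SR=Q^2RS$ iterates to $SR^{n-i}=Q^{2(n-i)}R^{n-i}S$, so this term becomes $Q^{i(n-i)+2(n-i)}{n\brack i}R^{n-i}S^{i+1}=Q^{(i+2)(n-i)}{n\brack i}R^{n-i}S^{i+1}$. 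After reindexing the second sum (replace $i$ by $i-1$), I collect the coefficient of $R^{n+1-i}S^i$ in $(R+S)^{n+1}$ as $Q^{i(n-i)}{n\brack i}+Q^{(i+1)(n-i+1)}{n\brack i-1}$.

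The remaining task is the purely combinatorial identity that this coefficient equals ${n+1\brack i}Q^{i(n+1-i)}$, i.e.
\begin{gather*}
Q^{i(n-i)}{n\brack i}+Q^{(i+1)(n+1-i)}{n\brack i-1}
=
Q^{i(n+1-i)}{n+1\brack i}.
\end{gather*}
Dividing through by $Q^{i(n-i)}$, this is the standard Pascal-type recurrence ${n+1\brack i}={n\brack i}+Q^{n+1-i}{n\brack i-1}$ for Gaussian binomial coefficients (one of the two symmetric forms), which follows directly from the product definition ${n\brack i}=\prod_{j=1}^i\frac{[n-j+1]}{[j]}$ by writing $[n+1-i+i]=[n+1-i]+Q^{n+1-i}[i]$ wait—more precisely from $[n+1]=[i]+Q^{i}[n+1-i]$ type manipulations; in any case this recurrence is classical and I would simply cite it or verify it in one line. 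Since all the ${n\brack i}$ lie in $\Z[Q^{\pm1}]$, every manipulation stays inside the given $\Z[Q^{\pm1}]$-algebra and no division is needed.

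For the final assertion, specialize to $Q$ a primitive $d^{\,\mathrm{th}}$ root of unity and take $n=\dbar$. The claim is that ${\dbar\brack i}=0$ for $1\le i\le \dbar-1$, which forces $(R+S)^{\dbar}=R^{\dbar}+S^{\dbar}$. For $1\le i\le \dbar-1$ the numerator $\prod_{j=1}^i[\dbar-j+1]=[\dbar][\dbar-1]\cdots[\dbar-i+1]$ contains the factor $[\dbar]$. When $d$ is odd, $\dbar=d$ and $[\,d\,]=\frac{Q^d-Q^{-d}}{Q-Q^{-1}}=0$ since $Q^d=1$; when $d$ is even, $\dbar=d/2$ and $Q^{\dbar}=Q^{d/2}=-1$ (as $Q$ has exact order $d$), so $[\dbar]=\frac{(-1)-(-1)^{-1}}{Q-Q^{-1}}=0$ as well. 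Meanwhile the denominator $\prod_{j=1}^i[j]$ is a product of $[j]$ with $1\le j\le i\le\dbar-1$, none of which vanish since $Q^j\ne Q^{-j}$ for $1\le j\le\dbar-1$ (this uses that $Q$ has order $d$ and $2j<2\dbar\le d$, so $Q^{2j}\ne1$). Hence ${\dbar\brack i}$, computed as an element of $\Z[Q^{\pm1}]$ and then specialized, equals $0/(\text{nonzero})=0$. The only mild subtlety—and the step I would be most careful about—is the bookkeeping that ${\dbar\brack i}\in\Z[Q^{\pm1}]$ is a genuine polynomial whose value at the root of unity is what appears in the formula, rather than a formal fraction; this is exactly the content of the already-quoted fact that Gaussian binomials lie in $\Z[Q^{\pm1}]$, so once the numerator acquires a factor that vanishes at $Q$ and the denominator does not, the polynomial itself (after cancellation in $\Z[Q^{\pm1}]$) still vanishes there.
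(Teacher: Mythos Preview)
Your approach is correct and entirely standard; in fact the paper states this proposition without proof, treating it as a well-known identity, so your argument supplies strictly more than the paper does.

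One small bookkeeping slip: after dividing your displayed identity by $Q^{i(n-i)}$ you actually obtain
\[
{n\brack i}+Q^{\,n+1}{n\brack i-1}=Q^{\,i}{n+1\brack i},
\]
equivalently ${n+1\brack i}=Q^{-i}{n\brack i}+Q^{\,n+1-i}{n\brack i-1}$. The form you wrote, ${n+1\brack i}={n\brack i}+Q^{\,n+1-i}{n\brack i-1}$, is the Pascal recurrence for the \emph{unsymmetric} $q$-integers $(Q^n-1)/(Q-1)$, not for the symmetric $[n]=(Q^n-Q^{-n})/(Q-Q^{-1})$ used here. This is a one-line fix and does not affect the rest of the argument. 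Your treatment of the root-of-unity specialization (showing $[\dbar]=0$ while $[1],\ldots,[\dbar-1]$ are nonzero, and then using that ${\dbar\brack i}\in\Z[Q^{\pm1}]$ to conclude the specialized value vanishes) is correct and carefully handles the only genuine subtlety.
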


Set $\N^*=\N\setminus\{0\}$. By \cite[\S9.8.2]{Koe2010} the normalized Chebyshev polynomials of the first kind $\{T_n(X)\}_{n\in \N}$ given in (\ref{e:Tn}) satisfy
\begin{gather*}
X T_n(X)=T_{n+1}(X)+T_{n-1}(X) \qquad \quad \hbox{for $n\in \N^*$}
\end{gather*}
with $T_0(X)=2$ and $T_1(X)=X$.

\begin{lem}\label{lem:Tchara}
\begin{enumerate}
\item $
T_n(X+X^{-1})=X^n+X^{-n}
$
for all $n\in \N$.

\item $T_m(T_n(X))=T_{mn}(X)$ for all $m$, $n\in \N$.
\end{enumerate}
\end{lem}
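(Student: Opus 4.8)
\textbf{Proof proposal for Lemma~\ref{lem:Tchara}.}
The plan is to prove both parts by induction on $n$ (and on $m$ for part (ii)), using only the three-term recurrence $XT_n(X)=T_{n+1}(X)+T_{n-1}(X)$ for $n\in\N^*$ together with the base cases $T_0(X)=2$ and $T_1(X)=X$. For part (i), set $X\mapsto X+X^{-1}$ in the recurrence. The base cases read $T_0(X+X^{-1})=2=X^0+X^0$ and $T_1(X+X^{-1})=X+X^{-1}$, both of the desired form. For the inductive step, assume $T_{n-1}(X+X^{-1})=X^{n-1}+X^{-(n-1)}$ and $T_n(X+X^{-1})=X^n+X^{-n}$ for some $n\in\N^*$; then
\begin{align*}
T_{n+1}(X+X^{-1})
&=(X+X^{-1})T_n(X+X^{-1})-T_{n-1}(X+X^{-1})\\
&=(X+X^{-1})(X^n+X^{-n})-(X^{n-1}+X^{-(n-1)})\\
&=X^{n+1}+X^{-(n+1)},
\end{align*}
which closes the induction and proves (i).

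For part (ii), I would fix $n\in\N$ and induct on $m$. The case $m=0$ is $T_0(T_n(X))=2=T_0(X)$, and the case $m=1$ is $T_1(T_n(X))=T_n(X)=T_n(X)$; these use only $T_0=2$, $T_1=\mathrm{id}$ in the outer variable. For the inductive step, suppose $T_{m-1}(T_n(X))=T_{(m-1)n}(X)$ and $T_m(T_n(X))=T_{mn}(X)$ for some $m\in\N^*$. Applying the recurrence in the outer variable (valid since $m\in\N^*$) and then the recurrence for $\{T_j(X)\}_{j\in\N}$ repeatedly (that is, $T_n(X)\cdot T_{mn}(X)=\sum$ telescoping back to $T_{(m+1)n}(X)+T_{(m-1)n}(X)$, which itself follows from part (i) by writing $T_n(X)=\xi^n+\xi^{-n}$ formally or, more carefully, by a short secondary induction showing $T_a(X)T_b(X)=T_{a+b}(X)+T_{|a-b|}(X)$), one obtains
\[
T_{m+1}(T_n(X))=T_n(X)\,T_m(T_n(X))-T_{m-1}(T_n(X))
=T_n(X)\,T_{mn}(X)-T_{(m-1)n}(X)=T_{(m+1)n}(X).
\]
An alternative, cleaner route for (ii): once (i) is established, embed everything in $\Z[X,X^{-1}]$ by the substitution $X\mapsto X+X^{-1}$, under which $T_n$ acts as the $n$-th power map on the subring generated by $X+X^{-1}$; since polynomial identities over $\Z$ that hold after this injective substitution hold identically, (ii) reduces to $(\xi^n)^m=\xi^{mn}$.

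The only mild subtlety, and thus the "hard part," is bookkeeping the edge case $n=1$ in the recurrence (which is stated for $n\in\N^*$, not including a relation at $n=0$), and, in the power-substitution argument for (ii), justifying that $X\mapsto X+X^{-1}$ induces an injection $\Z[X]\hookrightarrow\Z[X,X^{-1}]$ so that an identity verified on Laurent polynomials descends to a polynomial identity — this is standard (the image is exactly the ring of symmetric Laurent polynomials, and degree considerations give injectivity). Everything else is the routine telescoping displayed above. I would present the substitution argument for (i) and then deduce (ii) from it, since that is the shortest self-contained path and dovetails with how $T_{\dbar}(A)^\natural$ etc.\ will be computed via the equitable presentation.
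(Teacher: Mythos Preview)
Your proposal is correct and essentially matches the paper's proof: part (i) is exactly the routine induction the paper indicates, and your ``alternative, cleaner route'' for (ii)---substituting $X\mapsto X+X^{-1}$ and applying (i) twice (once for $T_n$ and once for $T_m$ with $X$ replaced by $X^n$), then using injectivity of the substitution to descend to a polynomial identity---is precisely the paper's one-line argument. Your first approach to (ii) via the product identity $T_a(X)T_b(X)=T_{a+b}(X)+T_{|a-b|}(X)$ also works but is unnecessary given the substitution argument.
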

\begin{proof}
To see (i)  proceed by a routine induction on $n$ and apply the above recurrence relation.
To see (ii) replace $X$ by $X+X^{-1}$ in $T_m(T_n(X))$ and apply (i).
\end{proof}

\noindent
Recall the central elements $\Gamma_0$, $\Gamma_1$, $\Gamma_2$ of $\V$ and the embedding $\flat$ of $\V$ from Introduction. To evaluate the images of $\Gamma_0$, $\Gamma_1$, $\Gamma_2$ under $\flat$ Iorgov established the identity \cite[Lemma~1]{iog02}:

\begin{prop}\label{prop:iorg}
If three elements $R^{\pm 1}$, $S$ in a $\Z[Q^{\pm 1}]$-algebra satisfy the relations
$R R^{-1}=R^{-1} R=1$ and $SR=Q^2 RS$, then $T_n(R+S+R^{-1})$ is equal to 
\begin{gather*}
R^n+R^{-n}+\sum_{i=1}^n \sum_{j=0}^{n-i}
 \frac{[n]}{[i]} {i+j-1\brack i-1}{n-j-1\brack i-1} Q^{i(n-i-2j)} R^{n-i-2j} S^i
\end{gather*}
for each $n\in \N$. In particular, if $Q$ is a primitive $d^{\, th}$ root of unity then
$T_{\dbar}(R+S+R^{-1})=R^{\dbar}+S^{\dbar}+R^{-\dbar}$.
\end{prop}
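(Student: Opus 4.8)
The plan is to reduce the closed form for $T_n(R+S+R^{-1})$ to a combinatorial identity about Gaussian binomial coefficients, by comparing coefficients of the normal-ordered monomials $R^{\,n-i-2j}S^i$ on both sides. First I would note that, because of the relation $SR=Q^2RS$, every element of the $\Z[Q^{\pm1}]$-subalgebra generated by $R^{\pm1}$ and $S$ can be written uniquely in the form $\sum_{i\ge 0}\sum_{m}c_{i,m}R^mS^i$ with $c_{i,m}\in\Z[Q^{\pm1}]$ (the monomials $R^mS^i$, $m\in\Z$, $i\in\N$, are a basis — this is the rank-$1$ quantum-torus/Weyl-algebra PBW fact, and it is what lets us match coefficients). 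So it suffices to prove the identity at the level of these coefficients, i.e. to show
\begin{gather*}
T_n(R+S+R^{-1})
= R^n+R^{-n}
+\sum_{i=1}^n\sum_{j=0}^{n-i}\frac{[n]}{[i]}{i+j-1\brack i-1}{n-j-1\brack i-1}Q^{i(n-i-2j)}R^{\,n-i-2j}S^i.
\end{gather*}

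Next I would prove this by induction on $n$ using the Chebyshev three-term recurrence $X T_n(X)=T_{n+1}(X)+T_{n-1}(X)$ with $T_0=2$, $T_1=X$, specialized at $X=R+S+R^{-1}$. The base cases $n=0,1$ are immediate (for $n=1$ the double sum has only the term $i=1$, $j=0$, giving $S$, and $R+R^{-1}+S$ as required; note ${0\brack 0}=1$ and $[1]/[1]=1$). For the inductive step I would compute $(R+S+R^{-1})\cdot T_n$ using the already-known expansion of $T_n$, normal-order each of the three pieces $R\cdot(\cdot)$, $S\cdot(\cdot)$, $R^{-1}\cdot(\cdot)$ — here $S\cdot R^mS^i=Q^{-2m}R^mS^{i+1}$ produces the needed powers of $Q$ — then subtract the expansion of $T_{n-1}$, and verify that what remains is exactly the claimed expansion of $T_{n+1}$. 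The identity that has to be checked coefficient-by-coefficient is a finite Gaussian-binomial identity of the Pascal/Chu–Vandermonde type; I would verify it using the two standard $q$-Pascal recurrences ${n\brack i}=Q^{-i}{n-1\brack i-1}+Q^{n-i}{n-1\brack i}$ and its partner, together with the relation $[n]=Q^{-1}[n-1]+Q^{n-1}$ expressing how the prefactor $[n]/[i]$ changes. The main obstacle, and the only genuinely delicate part, is this bookkeeping: the three shift operators move the index $j$ differently ($R\cdot$ fixes $j$, $R^{-1}\cdot$ sends $j\mapsto j+1$ within an $S^i$-block but relabels, $S\cdot$ raises $i$ and shifts the summation range), so one must carefully re-index all four sums to a common $(i,j)$ lattice, handle the boundary terms $j=0$, $j=n-i$, $i=n$ separately (this is where the conventions ${m\brack -1}=0$ and the $R^{\pm n}$ "diagonal" terms interact), and only then will the summand identity collapse.

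Finally, for the ``in particular'' clause: set $Q$ equal to a primitive $d^{\text{th}}$ root of unity, so $\bar d=d$ ($d$ odd) or $\bar d=d/2$ ($d$ even). I would take $n=\bar d$ in the general formula and argue that every term of the double sum vanishes. The prefactor is $\frac{[\bar d]}{[i]}{i+j-1\brack i-1}{\bar d-j-1\brack i-1}$; the key point is that $[\bar d]=0$ at such $Q$ while $[i]\ne 0$ for $1\le i\le \bar d-1$, so all terms with $i\le \bar d-1$ die immediately. The only surviving index is $i=\bar d$, which forces $j=0$ (since $0\le j\le \bar d-i=0$), and then $\frac{[\bar d]}{[\bar d]}{\bar d-1\brack \bar d-1}{\bar d-1\brack \bar d-1}=1$ with $Q^{i(n-i-2j)}=Q^{0}=1$ and $R^{\,n-i-2j}=R^{0}=1$, contributing exactly $S^{\bar d}$. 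Hence $T_{\bar d}(R+S+R^{-1})=R^{\bar d}+R^{-\bar d}+S^{\bar d}$, as claimed. (Alternatively this clause follows directly from Proposition~\ref{prop:qbino} applied to $R':=R+R^{-1}$ and $S$ once one checks $S(R+R^{-1})$ is \emph{not} simply $Q^2(R+R^{-1})S$ — so in fact the computational route through the full formula, as above, is the clean one; I would present it that way.)
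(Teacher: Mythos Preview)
The paper does not give its own proof of this proposition: it attributes the identity to Iorgov \cite[Lemma~1]{iog02} and merely states the result. So there is no in-paper argument to compare against.

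Your outline is sound. Induction on $n$ via the Chebyshev three-term recurrence, after normal-ordering into monomials $R^{m}S^{i}$, is exactly the natural approach and is essentially what Iorgov does. One small imprecision: the claim that the monomials $R^{m}S^{i}$ form a basis is not true in an \emph{arbitrary} $\Z[Q^{\pm1}]$-algebra containing such elements (take $R=1$, $S=0$); you should instead prove the identity in the universal algebra $\Z[Q^{\pm1}]\langle R^{\pm1},S\rangle/(SR-Q^{2}RS)$, where the PBW basis genuinely holds, and then specialize. You correctly flag that the inductive step reduces to a Gaussian-binomial bookkeeping identity and that this is the only real work; as written this is an outline rather than a complete proof, since that identity is stated but not verified.

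Your argument for the ``in particular'' clause is correct and clean: the coefficients $\frac{[n]}{[i]}{i+j-1\brack i-1}{n-j-1\brack i-1}$ lie in $\Z[Q^{\pm1}]$, and evaluating at a primitive $d^{\rm th}$ root of unity with $n=\dbar$ gives $[\dbar]=0$ while $[i]\ne 0$ for $1\le i\le \dbar-1$, so those terms vanish; the sole survivor $(i,j)=(\dbar,0)$ contributes $S^{\dbar}$.
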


Henceforth each element in $\Z[Q^{\pm 1}]$ is also treated as an element in $\F$ via the ring homomorphism $\Z[Q^{\pm 1}]\to \F$ given by $1\mapsto 1$ and $Q\mapsto q$.

\subsection{A presentation for $Z(U)$}\label{s:preZU}
In \cite[Theorem~4.2]{ck90} Concini and Kac showed that $Z(U)$ is the commutative $\F$-algebra generated by $e^{\dbar}$, $k^{\pm \dbar}$, $f^{\dbar}$, $\Lambda$ subject to the relations $k^{\dbar} k^{-\dbar}=1$ and
\begin{gather}\label{e:TLambda}
T_{\dbar}(\Lambda)=
(q-q^{-1})^{2\dbar} e^{\dbar} f^{\dbar}
+q^{\dbar} (k^{\dbar}+k^{-\dbar}).
\end{gather}
Note that $\dbar$ could be any positive integer except $1$ because $q^2\not=1$ is necessary in the definition of $U$.

\begin{lem}\label{lem:basisZU}
For the tower
$
\F\subseteq \F[\Lambda]\subseteq Z(U)
$
the following hold:
\begin{enumerate}
\item  The elements
\begin{gather*}
\Lambda^n
\qquad \quad \hbox{for all $n\in \N$}
\end{gather*}
form an $\F$-basis of $\F[\Lambda]$.

\item
The elements
\begin{gather*}
k^{\pm \dbar i} f^{\dbar j}, \qquad
k^{\pm \dbar i}, \qquad
k^{\pm \dbar i} e^{\dbar j}
\qquad \quad
\hbox{for all $i\in \N$ and $j\in \N^*$}
\end{gather*}
form an $\F[\Lambda]$-basis of $Z(U)$.
\end{enumerate}
\end{lem}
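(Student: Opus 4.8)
The plan is to prove Lemma~\ref{lem:basisZU} by combining the Concini--Kac presentation~(\ref{e:TLambda}) for $Z(U)$ with the PBW basis of $U$ from Lemma~\ref{lem:PBWU}. Part~(i) is the easy half: since $\Lambda = qx+q^{-1}y+qz-qxyz$ and, in Chevalley form, $\Lambda = (q-q^{-1})^2 ef + q^{-1}k + qk^{-1}$, expanding a polynomial in $\Lambda$ against the PBW basis $f^s k^{\pm i} e^r$ shows that the top-degree term of $\Lambda^n$ is $(q-q^{-1})^{2n} f^n e^n$ plus lower terms; so distinct powers of $\Lambda$ have distinct leading PBW terms and are therefore $\F$-linearly independent. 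That gives (i).

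For part~(ii), first I would check that the listed elements $k^{\pm\dbar i}f^{\dbar j}$, $k^{\pm\dbar i}$, $k^{\pm\dbar i}e^{\dbar j}$ (for $i\in\N$, $j\in\N^*$) genuinely lie in $Z(U)$ and are $\F[\Lambda]$-linearly independent. Centrality is immediate from the relations $ke=q^2ek$, $kf=q^{-2}fk$ once one knows $e^{\dbar}$, $f^{\dbar}$, $k^{\pm\dbar}$ are central — and $e^{\dbar}$, $f^{\dbar}$, $k^{\pm\dbar}$ are central by the $q$-binomial argument (Proposition~\ref{prop:qbino}) together with the fact that $q$ is a primitive $d^{\rm\,th}$ root of unity, so that $q^{2\dbar}=1$. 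For $\F[\Lambda]$-independence I would again pass to the PBW basis: an $\F[\Lambda]$-linear combination of the proposed elements, expanded via $\Lambda=(q-q^{-1})^2 ef + q^{-1}k+qk^{-1}$, produces PBW monomials $f^s k^{\pm i} e^r$ whose leading terms in $r-s$ and in the $k$-exponent are controlled, and one reads off that a nontrivial relation is impossible.

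The substantive step is spanning: every central element of $U$ must be an $\F[\Lambda]$-combination of the listed monomials. Here I would invoke the Concini--Kac theorem quoted just above the lemma, which asserts precisely that $Z(U)$ is generated as an $\F$-algebra by $e^{\dbar}$, $k^{\pm\dbar}$, $f^{\dbar}$, $\Lambda$ subject to $k^{\dbar}k^{-\dbar}=1$ and the single relation~(\ref{e:TLambda}). Consequently $Z(U)$ is spanned over $\F[\Lambda]$ by the monomials $k^{\dbar m} e^{\dbar r} f^{\dbar s}$ with $m\in\Z$, $r,s\in\N$, and I would use relation~(\ref{e:TLambda}) to rewrite $e^{\dbar}f^{\dbar}$ as a polynomial in $\Lambda$ and $k^{\pm\dbar}$, thereby reducing any monomial in which both $e^{\dbar}$ and $f^{\dbar}$ appear: iterating, one eliminates all mixed $e$--$f$ monomials, leaving only the pure powers $k^{\pm\dbar i}f^{\dbar j}$, $k^{\pm\dbar i}$, $k^{\pm\dbar i}e^{\dbar j}$ displayed in the lemma. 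Combined with the linear independence already established, this yields the claimed $\F[\Lambda]$-basis.

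The main obstacle, I expect, is the bookkeeping in the spanning/reduction step: one must be careful that the substitution $e^{\dbar}f^{\dbar}\mapsto (q-q^{-1})^{-2\dbar}(T_{\dbar}(\Lambda)-q^{\dbar}(k^{\dbar}+k^{-\dbar}))$ terminates and does not reintroduce mixed monomials, and that the remaining families $k^{\pm\dbar i}e^{\dbar j}$ and $k^{\pm\dbar i}f^{\dbar j}$ are kept disjoint (indexing $j$ over $\N^*$ rather than $\N$ so that $k^{\pm\dbar i}$ is listed only once). Everything else — centrality of $e^{\dbar},f^{\dbar},k^{\pm\dbar}$, and the PBW-leading-term independence arguments — is routine given Proposition~\ref{prop:qbino} and Lemma~\ref{lem:PBWU}.
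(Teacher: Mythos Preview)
Your proposal is correct and follows essentially the same route as the paper: the leading PBW term $(q-q^{-1})^{2n}f^ne^n$ of $\Lambda^n$ gives (i) and the $\F[\Lambda]$-independence in (ii), while the Concini--Kac generators together with the reduction via~(\ref{e:TLambda}) give the spanning. The only cosmetic difference is that the paper takes centrality of $e^{\dbar},f^{\dbar},k^{\pm\dbar}$ for granted from the cited Concini--Kac result rather than rederiving it.
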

\begin{proof}
Clearly $\F[\Lambda]$ is spanned by $\{\Lambda^n\}_{n\in \N}$ over $\F$. An induction on $n\in \N$ yields that
\begin{gather}\label{e:Lambdan}
\Lambda^n=(q-q^{-1})^{2n}f^n e^n+\sum_{i=0}^{n-1}f^i k_i e^i
\end{gather}
where $k_i\in \F[k^{\pm 1}]$. By Lemma~\ref{lem:PBWU} the elements $\{\Lambda^n\}_{n\in \N}$ are linearly independent over $\F$. This shows (i).

Relation (\ref{e:Lambdan}) along with Lemma~\ref{lem:PBWU} also implies that the elements
\begin{gather*}
\Lambda^n k^{\pm \dbar i}f^{\dbar j}, \qquad
\Lambda^n k^{\pm \dbar i}, \qquad
\Lambda^n k^{\pm \dbar i} e^{\dbar j}
\qquad \quad \hbox{for all $i,n\in \N$ and $j\in \N^*$}
\end{gather*}
are linearly independent over $\F$. It follows that
$k^{\pm \dbar i} f^{\dbar j}$,
$k^{\pm \dbar i}$,
$k^{\pm \dbar i} e^{\dbar j}$
for all $i\in \N$ and $j\in \N^*$ are linearly independent over $\F[\Lambda]$. Since $Z(U)$ is generated by  $e^{\dbar}$, $k^{\pm \dbar}$, $f^{\dbar}$, $\Lambda$ the monomials
\begin{gather*}
f^{\dbar r} k^{\pm \dbar i} e^{\dbar s}
\qquad \quad
\hbox{for all $i,r,s\in \N$}
\end{gather*}
span $Z(U)$ over $\F[\Lambda]$. Applying (\ref{e:TLambda}) each of these monomials is an $\F[\Lambda]$-linear combination of $k^{\pm \dbar i} f^{\dbar j}$,
$k^{\pm \dbar i}$,
$k^{\pm \dbar i} e^{\dbar j}$
for all $i\in \N$ and $j\in \N^*$. This shows (ii).
\end{proof}

\section{Three central elements of $\triangle$ at roots of unity}\label{s:3centralABC}

\subsection{The elements $T_{\dbar}(A)$, $T_{\dbar}(B)$, $T_{\dbar}(C)$ central in $\triangle$}\label{s:3central}

Observe that the number $\dbar$ is chosen to be the order of the root of unity $q^2$. Let $Y$ denote an indeterminate over $\F$ commuting with $X$. Consider the Laurent polynomials
\begin{gather*}
\Theta_i(Y)=q^{2i}Y^{-1} + q^{-2i}Y
\qquad \quad \hbox{for all $i\in \Z$}.
\end{gather*}

\begin{lem}\label{lem:theta}
For all $i$, $j\in \Z$
\begin{enumerate}
\item $\Theta_i(Y)=\Theta_j(Y)$ if and only if  $\dbar$ divides $i-j$;

\item $T_{\dbar}(\Theta_i(Y))=Y^{\dbar}+Y^{-\dbar}$.
\end{enumerate}
\end{lem}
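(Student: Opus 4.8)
The plan is to prove both parts directly from the definitions, using the fact that $\dbar$ is precisely the multiplicative order of $q^2$.

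For part (i), I would compute the difference
\[
\Theta_i(Y)-\Theta_j(Y)=(q^{2i}-q^{2j})Y^{-1}+(q^{-2i}-q^{-2j})Y.
\]
Since $Y^{-1}$ and $Y$ are linearly independent over $\F$ in the Laurent polynomial ring $\F[Y^{\pm1}]$, this vanishes if and only if $q^{2i}=q^{2j}$ (the second coefficient is then automatically zero, being $q^{-2i}-q^{-2j}=q^{-2i-2j}(q^{2j}-q^{2i})$). Because $q$ is a primitive $d^{\mathrm{th}}$ root of unity, the order of $q^2$ is exactly $\dbar$ (this is why $\dbar$ was defined as $d$ for $d$ odd and $d/2$ for $d$ even), so $q^{2i}=q^{2j}$ if and only if $\dbar\mid i-j$. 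This gives the equivalence.

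For part (ii), the key observation is that $\Theta_i(Y)=R+R^{-1}$ where $R=q^{-2i}Y$, since $R^{-1}=q^{2i}Y^{-1}$. Then Lemma~\ref{lem:Tchara}(i) gives
\[
T_{\dbar}(\Theta_i(Y))=T_{\dbar}(R+R^{-1})=R^{\dbar}+R^{-\dbar}=q^{-2i\dbar}Y^{\dbar}+q^{2i\dbar}Y^{-\dbar}.
\]
Finally, since $q^2$ has order $\dbar$, we have $q^{2\dbar}=1$, hence $q^{\pm 2i\dbar}=1$, and the right-hand side collapses to $Y^{\dbar}+Y^{-\dbar}$.

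Neither step presents a genuine obstacle; the proof is essentially a two-line calculation once one recognizes $\Theta_i(Y)$ as $R+R^{-1}$ for a suitable unit $R$. The only point requiring minor care is the linear independence of $1$, $Y$, $Y^{-1}$ in part (i) to conclude that the vanishing of $\Theta_i(Y)-\Theta_j(Y)$ forces $q^{2i}=q^{2j}$, and the routine bookkeeping that the order of $q^2$ is $\dbar$ in both parities of $d$. I would state the latter once at the outset since it is used in both parts.
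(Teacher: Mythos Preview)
Your proof is correct and essentially the same as the paper's. The only cosmetic difference is in part~(i): where you argue by linear independence of $Y$ and $Y^{-1}$, the paper records the factorization $\Theta_i(Y)-\Theta_j(Y)=(q^{i-j}-q^{j-i})(q^{i+j}Y^{-1}-q^{-i-j}Y)$ and reads off the same conclusion from the vanishing of the scalar factor; for part~(ii) the paper simply cites Lemma~\ref{lem:Tchara}(i), exactly as you do.
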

\begin{proof}
(i) follows by the factorization $\Theta_i(Y)-\Theta_j(Y)=(q^{i-j}-q^{j-i}) (q^{i+j} Y^{-1}-q^{-i-j} Y)$.
To see (ii) apply Lemma~\ref{lem:Tchara}(i).
\end{proof}

By virtue of Lemma~\ref{lem:theta} one may prove that

\begin{thm}\label{thm:3central}
The elements $T_{\dbar}(A)$, $T_{\dbar}(B)$, $T_{\dbar}(C)$ are  central in $\triangle$.
\end{thm}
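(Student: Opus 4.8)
The plan is to transport the problem through the injective homomorphism $\natural$ of Lemma~\ref{lem:naturalABC}. Because $\natural$ is injective and $\alpha,\beta,\gamma$ are already central in $\triangle$, the element $T_{\dbar}(A)$ lies in $Z(\triangle)$ as soon as $T_{\dbar}(A)^\natural=T_{\dbar}(A^\natural)$ commutes with $B^\natural$ and $C^\natural$ inside $\F[a^{\pm1},b^{\pm1},c^{\pm1}]\otimes_\F U$. Moreover, by the $\F$-algebra automorphism $\rho$ of $\triangle$ introduced below Lemma~\ref{lem:PBW2}, which cyclically permutes $A,B,C$, it is enough to handle $T_{\dbar}(A)$: once $T_{\dbar}(A)\in Z(\triangle)$, then $T_{\dbar}(B)=T_{\dbar}(A)^\rho$ and $T_{\dbar}(C)=T_{\dbar}(A)^{\rho^2}$ also lie in $Z(\triangle)$.

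The first substantive step is to cast $A^\natural$ into the shape required by Proposition~\ref{prop:iorg}. Using $1-xy=q^{-1}(q-q^{-1})e$ together with $ek^{-1}=q^2k^{-1}e$, a short rearrangement of the formula in Lemma~\ref{lem:naturalABC} gives
\[
A^\natural=R+S+R^{-1},\qquad R=ak^{-1},\qquad S=(q-q^{-1})\bigl(bc^{-1}e-aq^{-1}ek^{-1}\bigr),
\]
where $R$ is invertible (with $R^{-1}=a^{-1}k$) and a direct check yields $SR=q^2RS$. Proposition~\ref{prop:iorg} (the root of unity case, with $Q$ specialized to $q$) then gives
\[
T_{\dbar}(A^\natural)=R^{\dbar}+S^{\dbar}+R^{-\dbar},
\]
so it remains only to see that $R^{\dbar}$, $R^{-\dbar}$ and $S^{\dbar}$ are central in $\F[a^{\pm1},b^{\pm1},c^{\pm1}]\otimes_\F U$.

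For $R^{\pm\dbar}=a^{\pm\dbar}k^{\mp\dbar}$ this is clear, since $a^{\pm\dbar}$ are central in the Laurent ring and $k^{\pm\dbar}\in Z(U)$ by the Concini--Kac presentation recalled in \S\ref{s:preZU}. For $S^{\dbar}$ I would first move $e$ to the right, writing $S=(q-q^{-1})(bc^{-1}-aqk^{-1})e$, and use the shift identity $e\,(bc^{-1}-aq^{1+2j}k^{-1})=(bc^{-1}-aq^{3+2j}k^{-1})\,e$; iterating it collapses $S^{\dbar}$ to
\[
S^{\dbar}=(q-q^{-1})^{\dbar}\Bigl(\textstyle\prod_{j=0}^{\dbar-1}(bc^{-1}-aq^{1+2j}k^{-1})\Bigr)e^{\dbar}.
\]
Since $q^2$ has order $\dbar$, the scalars $q^{2j}$ for $0\le j\le\dbar-1$ are pairwise distinct --- this is exactly Lemma~\ref{lem:theta}(i) --- hence exhaust the $\dbar$-th roots of unity, so the commuting product telescopes to $(bc^{-1})^{\dbar}-(aqk^{-1})^{\dbar}=b^{\dbar}c^{-\dbar}-q^{\dbar}a^{\dbar}k^{-\dbar}$. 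Thus $S^{\dbar}=(q-q^{-1})^{\dbar}\bigl(b^{\dbar}c^{-\dbar}-q^{\dbar}a^{\dbar}k^{-\dbar}\bigr)e^{\dbar}$, which is central because $e^{\dbar},k^{-\dbar}\in Z(U)$ and $a,b,c$ are central. Therefore $T_{\dbar}(A^\natural)$ is central, $T_{\dbar}(A)\in Z(\triangle)$, and applying $\rho$ and $\rho^2$ completes the proof.

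The point at which I expect to have to be careful is the rewriting $A^\natural=R+S+R^{-1}$ with $SR=q^2RS$: one must keep track of the noncommutativity of $e$ with $k^{\pm1}$, and the orientation $R=ak^{-1}$ (rather than $R=a^{-1}k$) is dictated by the sign of the exponent in the braiding relation, so that Proposition~\ref{prop:iorg} applies with the stated $\dbar$. After that the centrality checks are routine, the only genuine input being the telescoping of the degree-$\dbar$ product, which is precisely where the root of unity hypothesis --- through Lemma~\ref{lem:theta} --- enters.
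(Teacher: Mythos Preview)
Your argument is correct for $q^2\neq 1$, but you should note that the embedding $\natural$ of Lemma~\ref{lem:naturalABC} is only defined in that range (since $U$ itself requires $q^2\neq 1$); the remaining case $\dbar=1$ is trivial because the defining relations (\ref{e:uaw2})--(\ref{e:uaw1}) then force $\triangle$ to be commutative. Also, your appeal to Lemma~\ref{lem:theta}(i) for the distinctness of the $q^{2j}$ is a bit loose---that lemma concerns the Laurent polynomials $\Theta_i(Y)$---but the underlying fact (that $q^2$ has order $\dbar$) is of course correct.

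Your route is genuinely different from the paper's. The paper works \emph{intrinsically} in $\triangle$: it expresses $BA^n$ in the PBW basis via four recursively defined polynomial families $P_n,Q_n,R_n,S_n$ (Lemma~\ref{lem:PQRS}), evaluates them at the Laurent points $\Theta_i(Y)$ (Lemma~\ref{lem:PQRStheta}) to obtain closed forms, and then interpolates to conclude that $T_{\dbar}(A)$ commutes with $B$ (and similarly with $C$). By contrast, you transport the problem through $\natural$ and reduce everything to the known centrality of $e^{\dbar},f^{\dbar},k^{\pm\dbar}$ in $U$---essentially the content of Theorem~\ref{thm:embedd}, which the paper in fact remarks afterward ``provides an alternative proof of Theorem~\ref{thm:3central}.'' Your approach is shorter and leans on existing structure theory of $U$; the paper's approach is self-contained, works uniformly for all $q$, and yields the auxiliary machinery (the $P_n,Q_n,R_n,S_n$ and their evaluations) that has independent interest.
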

\begin{proof}
Without loss we show that $T_{\dbar}(A)$ is central in $\triangle$ since the $\F$-algebra automorphism $\rho:\triangle\to \triangle$ cyclically permutes $A$, $B$, $C$. The $\F$-algebra $\triangle$ is generated by $A$, $B$, $C$ and three central elements  $\alpha$, $\beta$, $\gamma$. Evidently $T_{\dbar}(A)$ commutes with $A$. Thus it suffices to show that $T_{\dbar}(A)$ commutes with $B$ and $C$. To evaluate $B\,T_{\dbar}(A)$ we begin to express $BA^n$ ($n\in \N$) as a linear combination of the $\F$-basis of $\triangle$ given in Lemma~\ref{lem:PBW}:

\begin{lem}\label{lem:PQRS}
For each $n\in \N$
\begin{gather*}
BA^n = P_n(A)B+Q_n(A)C+R_n(A)\beta+S_n(A)\gamma
\end{gather*}
where $P_n(X)$, $Q_n(X)$, $R_n(X)$, $S_n(X)\in \F[X]$ are recurrently defined by $P_0(X)=1$,
$Q_0(X)=0$,
$R_0(X)=0$,
$S_0(X)=0$ and for $n\in \N^*$
\begin{eqnarray}
P_n(X)&=&
q^2XP_{n-1}(X)
-q^{-1}(q^2-q^{-2})Q_{n-1}(X),
\label{P->Q}\\
Q_n(X)&=&
q^{-2}XQ_{n-1}(X)
+q(q^2-q^{-2})P_{n-1}(X),
\label{Q->P}\\
R_n(X)&=&
X R_{n-1}(X)
+q^{-1}(q-q^{-1})Q_{n-1}(X),
\label{R->Q}\\
S_n(X)&=&
X S_{n-1}(X)
-q(q-q^{-1}) P_{n-1}(X).
\label{S->P}
\end{eqnarray}
\end{lem}
\begin{proof}
Proceed by induction on $n$. There is nothing to prove for $n=0$. By induction hypothesis
\begin{gather}\label{e:BAn}
BA^n=P_{n-1}(A)BA+Q_{n-1}(A)CA+R_{n-1}(A)A\beta+S_{n-1}(A)A\gamma
\qquad \quad
\hbox{for $n\geq 1$}.
\end{gather}
This lemma follows by substituting (\ref{e:uaw3}), (\ref{e:uaw1}) into (\ref{e:BAn}).
\end{proof}
\noindent Applying Lemma~\ref{lem:PQRS} a direct calculation yields that
\begin{align*}
&P_1(X)=q^2 X,
\qquad
Q_1(X)=q(q^2-q^{-2}),
\qquad
R_1(X)=0,
\qquad
S_1(X)=-q(q-q^{-1}),\\
&P_2(X)=q^4 X^2-(q^2-q^{-2})^2,
\qquad \quad \,
Q_2(X)=q(q^4-q^{-4}) X,\\
&R_2(X)=(q-q^{-1})(q^2-q^{-2}),
\qquad \quad
S_2(X)=-q^2(q^2-q^{-2})X.
\end{align*}
In particular, if $q^2=1$ then $P_1(X)=X$, $Q_1(X)=0$, $R_1(X)=0$, $S_1(X)=0$ and if $q^4=1$ then
$P_2(X)=X^2$, $Q_2(X)=0$, $R_2(X)=0$, $S_2(X)=0$.
Thus this theorem holds for $\dbar=1$, $2$.

For the rest assume that $\dbar>2$.
It seems difficult to find the closed forms of $P_n(X)$, $Q_n(X)$, $R_n(X)$, $S_n(X)$ for any $n\in \N$. Instead we evaluate these polynomials at $\{\Theta_i(Y)\}_{i\in \Z}$.

\begin{lem}\label{lem:PQRStheta}
For all $i\in \Z$ and $n\in \N$
\begin{eqnarray*}
P_n(\Theta_i(Y))&=&
\frac{\Theta_{i+1}(Y)-q^2\Theta_i(Y)}
{\Theta_{i+1}(Y)-\Theta_{i-1}(Y)}
\,\Theta_{i-1}(Y)^n
+\frac{q^2\Theta_i(Y)-\Theta_{i-1}(Y)}
{\Theta_{i+1}(Y)-\Theta_{i-1}(Y)}
\,\Theta_{i+1}(Y)^n,\\[2pt]
Q_n(\Theta_i(Y))&=&
q(q^2-q^{-2})\,
\frac{\Theta_{i+1}(Y)^n-\Theta_{i-1}(Y)^n}
{\Theta_{i+1}(Y)-\Theta_{i-1}(Y)},\\[2pt]
R_n(\Theta_i(Y)) &=&
\frac{(q-q^{-1}) (q^2-q^{-2})}{\Theta_{i+1}(Y)-\Theta_{i-1}(Y)}
\left(
\frac{\Theta_{i+1}(Y)^n-\Theta_i(Y)^n}{\Theta_{i+1}(Y)-\Theta_i(Y)}
-
\frac{\Theta_i(Y)^n-\Theta_{i-1}(Y)^n}{\Theta_i(Y)-\Theta_{i-1}(Y)}
\right), \\[2pt]
S_n(\Theta_i(Y))&=&
\frac{q(q-q^{-1})}{\Theta_{i+1}(Y)-\Theta_{i-1}(Y)}
\bigg(
\frac{\Theta_{i+1}(Y)^n-\Theta_i(Y)^n}
{\Theta_{i+1}(Y)-\Theta_i(Y)}\,
(
\Theta_{i-1}(Y)-q^2\Theta_i(Y)
)
\\
&&\quad-\;\;
\frac{\Theta_i(Y)^n-\Theta_{i-1}(Y)^n}
{\Theta_i(Y)-\Theta_{i-1}(Y)}\,
(
\Theta_{i+1}(Y)-q^2\Theta_i(Y)
)\bigg).
\end{eqnarray*}
\end{lem}
\begin{proof}
Multiply (\ref{P->Q}), (\ref{Q->P}) by $q^{-2} X$, $-q^{-1}(q^2-q^{-2})$ respectively. The difference of the resulting equations gives
\begin{gather*}
q^{-2} XP_n(X)+q^{-1}(q^2-q^{-2}) Q_n(X)=
(X^2+(q^2-q^{-2})^2) P_{n-1}(X)
\qquad \quad \hbox{for $n\in\N^*$}.
\end{gather*}
Subtracting the above from the equation obtained by replacing the index $n$ in (\ref{P->Q}) by $n+1$, it follows that the polynomials $\{P_n(X)\}_{n\in \N}$ satisfy
\begin{gather*}
P_{n+1}(X)=(q^2+q^{-2})X P_n(X)-(X^2+(q^2-q^{-2})^2) P_{n-1}(X) \qquad \quad \hbox{for $n\in \N^*$.}
\end{gather*}
Fix $i\in \Z$. Substituting $X=\Theta_i(Y)$ into the above relation, the characteristic polynomial of the resulting recurrence relation is
\begin{gather*}
K(X)=X^2-(q^2+q^{-2})\Theta_i(Y)X+\Theta_i(Y)^2+(q^2-q^{-2})^2.
\end{gather*}
It is straightforward to verify that $\Theta_{i-1}(Y)$ and $\Theta_{i+1}(Y)$ are two roots of $K(X)$. Under the assumption $\dbar>2$ the Laurent polynomials $\Theta_{i-1}(Y)$, $\Theta_{i+1}(Y)$ are distinct by Lemma~\ref{lem:theta}(i). Therefore $P_n(\Theta_i(Y))$ is of the form
\begin{gather*}
\Xi(Y)\Theta_{i-1}(Y)^n+\Sigma(Y)\Theta_{i+1}(Y)^n
\qquad \quad
\hbox{for all $n\in \N$},
\end{gather*}
where $\Xi(Y)$, $\Sigma(Y)\in \F(Y)$.
The functions $\Xi(Y)$, $\Sigma(Y)$ can be solved from
the matrix equation
\begin{gather*}
\left(
\begin{array}{cc}
1 &1\\
\Theta_{i-1}(Y) &\Theta_{i+1}(Y)
\end{array}
\right)
\left(
\begin{array}{c}
\Xi(Y)\\
\Sigma(Y)
\end{array}
\right)
=
\left(
\begin{array}{c}
1\\
q^2\Theta_i(Y)
\end{array}
\right),
\end{gather*}
which obtains from $P_0(X)=1$ and $P_1(X)=q^2 X$.
The identity for $Q_n(\Theta_i(Y))$ follows by a similar argument.

Proceed by induction on $n$ to verify the identities for $R_n(\Theta_i(Y))$ and $S_n(\Theta_i(Y))$. It holds for $n=0$ since $R_0(X)=0$ and $S_0(X)=0$. For $n\geq 1$ apply induction hypotheses to (\ref{R->Q}), (\ref{S->P}) with $X=\Theta_i(Y)$ and simplify them by using the identities for $Q_{n-1}(\Theta_i(Y))$, $P_{n-1}(\Theta_i(Y))$ respectively.
\end{proof}

As a consequence of Lemma~\ref{lem:PQRS} the element $B T_n(A)$ is equal to
\begin{gather*}
\mathcal P_n(A) B+
\mathcal Q_n(A) C+
\mathcal R_n(A) \beta+
\mathcal S_n(A) \gamma
\qquad
\quad
\hbox{for each $n\in \N$},
\end{gather*}
where $\mathcal P_n(X)$, $\mathcal Q_n(X)$, $\mathcal R_n(X)$, $\mathcal S_n(X)\in \F[X]$ are obtained from (\ref{e:Tn}) by replacing $X^{n-2i}$ by $P_{n-2i}(X)$, $Q_{n-2i}(X)$, $R_{n-2i}(X)$, $S_{n-2i}(X)$ for all $0\leq i \leq \lfloor n/2\rfloor$, respectively. To see that $T_{\dbar}(A)$ commutes with $B$ it needs to show that
\begin{gather*}
\mathcal P_{\dbar}(X)=T_{\dbar}(X), \qquad \quad
\mathcal Q_{\dbar}(X)=\mathcal R_{\dbar}(X)=\mathcal S_{\dbar}(X)=0.
\end{gather*}
By Lemma~\ref{lem:PQRS} a routine induction yields that  $P_n(X)$ is of degree $n$ with leading coefficient $q^{2n}$ and hence so is $\mathcal P_n(X)$. In particular $\mathcal P_{\dbar}(X)$ is a monic polynomial of degree $\dbar$ as well as $T_{\dbar}(X)$. Applying the formula of $P_n(\Theta_i(Y))$ in Lemma~\ref{lem:PQRStheta} a direct calculation yields that
\begin{gather*}
\mathcal P_n(\Theta_i(Y))=\frac{\Theta_{i+1}(Y)-q^2\Theta_i(Y)}
{\Theta_{i+1}(Y)-\Theta_{i-1}(Y)}
\;T_n(\Theta_{i-1}(Y))
+\frac{q^2\Theta_i(Y)-\Theta_{i-1}(Y)}
{\Theta_{i+1}(Y)-\Theta_{i-1}(Y)}
\;T_n(\Theta_{i+1}(Y))
\end{gather*}
for all $i\in \Z$. Simplifying the above equality by using Lemma~\ref{lem:theta}(ii) it follows that
\begin{gather*}
\mathcal P_{\dbar}(\Theta_i(Y))=Y^{\dbar}+Y^{-\dbar}
\qquad \quad
\hbox{for all $i\in \Z$}.
\end{gather*}
So far we have seen that $\mathcal P_{\dbar}(X)$ and $T_{\dbar}(X)$ are monic polynomials of degree $\dbar$ and agree on $\{\Theta_i(Y)\}^{\dbar-1}_{i=0}$ which are pairwise distinct by Lemma~\ref{lem:theta}(i). Therefore $\mathcal P_{\dbar}(X)=T_{\dbar}(X)$ by division algorithm. A similar argument shows that each of $\mathcal Q_{\dbar}(X)$, $\mathcal R_{\dbar}(X)$, $\mathcal S_{\dbar}(X)$ is identically zero. By a similar argument $T_{\dbar}(A)$ commutes with $C$. The result follows.
\end{proof}

As consequences of Theorem~\ref{thm:3central} the following elements are contained in $Z(\triangle)$.

\begin{prop}
\begin{enumerate}
\item $T_n(A)$, $T_n(B)$, $T_n(C)$ are cental in $\triangle$ for all nonnegative multiples $n$ of $\dbar$.
\item
$\prod\limits_{i=0}^{\dbar-1} (A-\Theta_i(\lambda))$,
$\prod\limits_{i=0}^{\dbar-1} (B-\Theta_i(\lambda))$,
$\prod\limits_{i=0}^{\dbar-1} (C-\Theta_i(\lambda))$
are central in $\triangle$ for all nonzero $\lambda\in \F$.
\end{enumerate}
\end{prop}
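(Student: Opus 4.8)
The plan is to derive both parts from Theorem~\ref{thm:3central}, which already places $T_{\dbar}(A)$, $T_{\dbar}(B)$, $T_{\dbar}(C)$ in $Z(\triangle)$, together with the composition law $T_m(T_n(X))=T_{mn}(X)$ from Lemma~\ref{lem:Tchara}(ii) and the factorization content of Lemma~\ref{lem:theta}. In both parts the underlying point is simply that every polynomial in a central element is again central.

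For (i), the case of a zero multiple is immediate since $T_0(A)=2$. For $n=m\dbar$ with $m\in\N^*$, Lemma~\ref{lem:Tchara}(ii) gives the polynomial identity $T_{m\dbar}(X)=T_m\big(T_{\dbar}(X)\big)$, so evaluating at $X=A$ yields $T_n(A)=T_m\big(T_{\dbar}(A)\big)$. Since $T_{\dbar}(A)$ is central by Theorem~\ref{thm:3central}, this exhibits $T_n(A)$ as a polynomial in a central element, hence central. The same computation handles $T_n(B)$ and $T_n(C)$; alternatively one applies the $\F$-algebra automorphism $\rho$ of $\triangle$, which cyclically permutes $A,B,C$ and preserves $Z(\triangle)$.

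For (ii), the crux is the polynomial identity
\begin{gather*}
\prod_{i=0}^{\dbar-1}\big(X-\Theta_i(Y)\big)=T_{\dbar}(X)-\big(Y^{\dbar}+Y^{-\dbar}\big)
\end{gather*}
in $\F[Y^{\pm 1}][X]$. I would prove it by a degree-and-roots argument: the right-hand side is monic of degree $\dbar$ in $X$ since $T_{\dbar}$ is; by Lemma~\ref{lem:theta}(ii) each $\Theta_i(Y)$ is a root of it; and by Lemma~\ref{lem:theta}(i) the elements $\Theta_0(Y),\dots,\Theta_{\dbar-1}(Y)$ are pairwise distinct in $\F(Y)$, so they exhaust the roots and the two monic degree-$\dbar$ polynomials must coincide. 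Specializing $Y\mapsto\lambda$ via the ring homomorphism $\F[Y^{\pm 1}]\to\F$ (valid because $\lambda\neq 0$) and then evaluating at $X=A$ gives
\begin{gather*}
\prod_{i=0}^{\dbar-1}\big(A-\Theta_i(\lambda)\big)=T_{\dbar}(A)-\big(\lambda^{\dbar}+\lambda^{-\dbar}\big),
\end{gather*}
and the right-hand side is central by Theorem~\ref{thm:3central} since $\lambda^{\dbar}+\lambda^{-\dbar}\in\F$. The analogues for $B$ and $C$ follow in the same way.

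I do not anticipate a real obstacle. The only step requiring care is the factorization identity above: its distinct-roots input must be used at the level of the fraction field $\F(Y)$ — before $Y$ is specialized to the scalar $\lambda$ — since after specialization the $\Theta_i(\lambda)$ may collide; working with the transcendental $Y$ is exactly what makes Lemma~\ref{lem:theta}(i) yield the $\dbar$ distinct roots needed for the degree count.
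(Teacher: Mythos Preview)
Your proof is correct and matches the paper's approach: part (i) via the composition law $T_m\!\circ T_{\dbar}=T_{m\dbar}$ from Lemma~\ref{lem:Tchara}(ii), and part (ii) via the factorization identity $T_{\dbar}(X)-(Y^{\dbar}+Y^{-\dbar})=\prod_{i=0}^{\dbar-1}(X-\Theta_i(Y))$ established over $\F[Y^{\pm 1}]$ and then specialized. Your explicit remark that the distinct-roots step must be carried out before specializing $Y\mapsto\lambda$ is a welcome clarification of a point the paper leaves implicit.
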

\begin{proof}
(i) is immediate from Lemma~\ref{lem:Tchara}(ii). By the division algorithm  Lemma~\ref{lem:theta}(i) implies that
\begin{gather}\label{e:zeroT}
T_{\dbar}(X)=\prod\limits_{i=0}^{\dbar-1} (X-\Theta_i(Y))+Y^{\dbar}+Y^{-\dbar}.
\end{gather}
(ii) is immediate from (\ref{e:zeroT}).
\end{proof}

\noindent Equation (\ref{e:zeroT}) also implies that the zeros of $T_{\dbar}(X)$ are
$\{\Theta_i(\lambda)\}_{i=0}^{\dbar-1}$ where $\lambda$ is a root of $Y^{\dbar}+Y^{-\dbar}$. More generally, please refer to \cite{AWpolyzero:97} for zeros of the Askey-Wilson polynomials at roots of unity.

\subsection{The images of $T_{\dbar}(A)$, $T_{\dbar}(B)$, $T_{\dbar}(C)$ under $\natural$}\label{s:3embedd}
Throughout this subsection assume that $q^2\not=1$.
 Recall the embedding $\natural:\triangle\to \F[a^{\pm 1},b^{\pm1},c^{\pm 1}]\otimes_\F U$ from Lemma~\ref{lem:naturalABC}. This subsection is devoted to computing $T_{\dbar}(A)^\natural$, $T_{\dbar}(B)^\natural$, $T_{\dbar}(C)^\natural$. To do this we need the following conversion formulae between $e^{\dbar}$, $k^{\pm \dbar}$, $f^{\dbar}$ and
$x^{\dbar}$, $y^{\pm \dbar}$, $z^{\dbar}$.

\begin{lem}\label{lem:ZU}
Assume that $q^2\not=1$. Then the following hold:
\begin{enumerate}
\item The elements
$x^{\dbar}$, $y^{\pm \dbar}$, $z^{\dbar}$ are equal to
\begin{gather*}
k^{-\dbar}-q^{\dbar} (q-q^{-1})^{\dbar} e^{\dbar} k^{-\dbar},
\qquad \quad
k^{\pm \dbar},
\qquad \quad
k^{-\dbar}+(q-q^{-1})^{\dbar} f^{\dbar}
\end{gather*}
respectively. In particular $x^{\dbar}$, $y^{\pm \dbar}$, $z^{\dbar}$ are central in $U$.

\item The elements $e^{\dbar}$, $k^{\pm \dbar}$, $f^{\dbar}$ are equal to
\begin{gather*}\label{e:genZ(U)2}
q^{\dbar}(q-q^{-1})^{-\dbar}(1-x^{\dbar} y^{\dbar}),
\qquad \quad
y^{\pm \dbar},
\qquad \quad
(q-q^{-1})^{-\dbar}(z^{\dbar}-y^{-\dbar})
\end{gather*}
respectively.
\end{enumerate}
\end{lem}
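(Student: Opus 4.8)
The plan is to prove the two conversion formulae in Lemma~\ref{lem:ZU} by exploiting the relations among the equitable generators together with Proposition~\ref{prop:qbino}. Recall that $x=k^{-1}-q^{-1}(q-q^{-1})ek^{-1}$, $y^{\pm 1}=k^{\pm 1}$ and $z=k^{-1}+(q-q^{-1})f$. For part (i), consider $x^{\dbar}$ first. Write $x=R+S$ with $R=k^{-1}$ and $S=-q^{-1}(q-q^{-1})ek^{-1}$. A short computation using $ke=q^2ek$ shows $SR=q^2RS$, so Proposition~\ref{prop:qbino} gives $x^{\dbar}=R^{\dbar}+S^{\dbar}$, since $q$ (and hence $q^2$, whose order is exactly $\dbar$) is a primitive root of unity. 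Here $R^{\dbar}=k^{-\dbar}$ and $S^{\dbar}=(-q^{-1}(q-q^{-1}))^{\dbar}(ek^{-1})^{\dbar}$; one then reorders $(ek^{-1})^{\dbar}$ into $e^{\dbar}k^{-\dbar}$ by repeatedly applying $k^{-1}e=q^{-2}ek^{-1}$, picking up a power of $q$ that combines with the scalar $(-q^{-1}(q-q^{-1}))^{\dbar}$ to yield exactly $-q^{\dbar}(q-q^{-1})^{\dbar}$. The case of $z^{\dbar}$ is easier: $z=k^{-1}+(q-q^{-1})f$ and $((q-q^{-1})f)k^{-1}=q^2k^{-1}((q-q^{-1})f)$ from $kf=q^{-2}fk$, so again Proposition~\ref{prop:qbino} applies and gives $z^{\dbar}=k^{-\dbar}+(q-q^{-1})^{\dbar}f^{\dbar}$. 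The formula $y^{\pm\dbar}=k^{\pm\dbar}$ is immediate. Centrality of $x^{\dbar},y^{\pm\dbar},z^{\dbar}$ in $U$ then follows because each is expressed in terms of $e^{\dbar},k^{\pm\dbar},f^{\dbar}$, and these lie in $Z(U)$ by the Concini--Kac presentation recalled in \S\ref{s:preZU} (alternatively one checks directly that $k^{\dbar}$ commutes with $e,f$ and that $e^{\dbar},f^{\dbar}$ commute with $k$, then with each other via relation~(\ref{e:TLambda})).

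For part (ii), I would simply invert the linear relations obtained in part (i). From $y^{\dbar}=k^{\dbar}$ we read off $k^{\pm\dbar}=y^{\pm\dbar}$. From $x^{\dbar}=k^{-\dbar}-q^{\dbar}(q-q^{-1})^{\dbar}e^{\dbar}k^{-\dbar}=y^{-\dbar}(1-q^{\dbar}(q-q^{-1})^{\dbar}e^{\dbar})$ — using that $e^{\dbar}$ and $k^{-\dbar}$ commute — we solve $q^{\dbar}(q-q^{-1})^{\dbar}e^{\dbar}=1-x^{\dbar}y^{\dbar}$, i.e. $e^{\dbar}=q^{-\dbar}(q-q^{-1})^{-\dbar}(1-x^{\dbar}y^{\dbar})$; note this requires $q-q^{-1}\neq 0$, which holds since $q^2\neq 1$. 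Wait — I should double-check the scalar: $x^{\dbar}y^{\dbar}=(k^{-\dbar}-q^{\dbar}(q-q^{-1})^{\dbar}e^{\dbar}k^{-\dbar})k^{\dbar}=1-q^{\dbar}(q-q^{-1})^{\dbar}e^{\dbar}$, so indeed $e^{\dbar}=q^{\dbar}(q-q^{-1})^{-\dbar}(1-x^{\dbar}y^{\dbar})$ once one is careful that $q^{-\dbar}$ versus $q^{\dbar}$ agree — and they need not, so I will track this sign/power carefully: from $1-x^{\dbar}y^{\dbar}=q^{\dbar}(q-q^{-1})^{\dbar}e^{\dbar}$ one gets $e^{\dbar}=q^{-\dbar}(q-q^{-1})^{-\dbar}(1-x^{\dbar}y^{\dbar})$; reconciling with the stated $q^{\dbar}(q-q^{-1})^{-\dbar}(1-x^{\dbar}y^{\dbar})$ amounts to checking whether $q^{2\dbar}=1$, which is true because $q^2$ has order $\dbar$. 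Finally, from $z^{\dbar}=k^{-\dbar}+(q-q^{-1})^{\dbar}f^{\dbar}=y^{-\dbar}+(q-q^{-1})^{\dbar}f^{\dbar}$ we solve $f^{\dbar}=(q-q^{-1})^{-\dbar}(z^{\dbar}-y^{-\dbar})$.

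The main obstacle, and the only place where genuine care is needed, is the bookkeeping of the $q$-power scalars in part (i): correctly reordering $(ek^{-1})^{\dbar}$ to the normal form $e^{\dbar}k^{-\dbar}$ and checking that the accumulated factor, together with $(-q^{-1})^{\dbar}$, collapses to $-q^{\dbar}$ using $q^{2\dbar}=1$. Everything else is a direct application of Proposition~\ref{prop:qbino} and elementary linear algebra. I would present part (i) with the $x^{\dbar}$ computation written out, then remark that $z^{\dbar}$ is identical in form and that $y^{\pm\dbar}$ is trivial, and present part (ii) as a one-line inversion, flagging the use of $q^2\neq 1$ to invert $(q-q^{-1})^{\dbar}$ and of $q^{2\dbar}=1$ to normalize the power of $q$ in front of $e^{\dbar}$.
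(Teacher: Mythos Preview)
Your approach is essentially identical to the paper's: split $x$ (resp.\ $z$) as $R+S$, verify the $q$-commutation, invoke Proposition~\ref{prop:qbino}, then normalize the scalar in front of $e^{\dbar}k^{-\dbar}$ using $(ek^{-1})^{\dbar}=q^{\dbar(\dbar-1)}e^{\dbar}k^{-\dbar}$ together with $q^{\dbar(\dbar-1)}=(-1)^{\dbar-1}$ (these are exactly the paper's Lemmas~\ref{lem:ekn} and~\ref{lem:qh}), and then invert for part~(ii). One small slip: from $kf=q^{-2}fk$ you get $fk^{-1}=q^{-2}k^{-1}f$, not $q^{2}$; this is harmless since Proposition~\ref{prop:qbino} applies equally well with $Q=q^{-1}$, which is still a primitive $d^{\rm th}$ root of unity, and $S^{\dbar}=(q-q^{-1})^{\dbar}f^{\dbar}$ regardless.
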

\begin{proof}
To get (ii) solve (i) for $e^{\dbar}$, $k^{\pm \dbar}$, $f^{\dbar}$. Thus it only needs to prove (i). Since $y^{\pm 1}=k^{\pm 1}$ the equality $y^{\pm \dbar}=k^{\pm \dbar}$ holds. Recall that
\begin{gather*}
x=k^{-1}-q^{-1}(q-q^{-1})ek^{-1}.
\end{gather*}
The first two defining relations of $U$ imply that
\begin{gather}\label{e:ek-1}
ek^{-1}=q^2 k^{-1} e.
\end{gather}
Applying Proposition~\ref{prop:qbino} with $(R,S)=(k^{-1},-q^{-1}(q-q^{-1})ek^{-1})$ it follows that
\begin{gather*}\label{e:xhbar}
x^{\dbar}=
k^{-\dbar}
+(-1)^{\dbar}q^{-\dbar}(q-q^{-1})^{\dbar}(ek^{-1})^{\dbar}.
\end{gather*}
To further simplify the addend in the above identity, one may use the following lemmas:
\begin{lem}\label{lem:ekn}
$(ek^{-1})^n=q^{n(n-1)} e^n k^{-n}$ for all $n\in \N$.
\end{lem}
\begin{proof}
This lemma follows by inductively applying (\ref{e:ek-1}).
\end{proof}

\begin{lem}\label{lem:qh}
$q^{\dbar(\dbar-1)}=(-1)^{\dbar-1}$.
\end{lem}
\begin{proof}
By the setting of $\dbar$ the left-hand side is equal to $1$ (resp. $-1$) if $\dbar$ is odd (resp. even).
\end{proof}
\noindent
By a similar argument the identity for $z^{\dbar}$ follows.
\end{proof}

We are ready to evaluate the images of $T_{\dbar}(A)$, $T_{\dbar}(B)$, $T_{\dbar}(C)$ under $\natural$.

\begin{thm}\label{thm:embedd}
Assume that $q^2\not=1$.
Then $T_{\dbar}(A)^\natural$, $T_{\dbar}(B)^\natural$, $T_{\dbar}(C)^\natural$ are central in $\F[a^{\pm 1},b^{\pm 1},c^{\pm 1}]\otimes_\F U$. Moreover $T_{\dbar}(A)^\natural$, $T_{\dbar}(B)^\natural$, $T_{\dbar}(C)^\natural$ are equal to
\begin{gather}
a^{\dbar} x^{\dbar} + a^{-\dbar}  y^{\dbar}
+q^{\dbar} b^{\dbar} c^{-\dbar} (1-x^{\dbar} y^{\dbar}),
\label{e:TA}\\
b^{\dbar}  y^{\dbar} + b^{-\dbar} z^{\dbar}
+q^{\dbar} c^{\dbar} a^{-\dbar}(1-y^{\dbar} z^{\dbar}), \label{e:TB}\\
c^{\dbar}  z^{\dbar} + c^{-\dbar}  x^{\dbar}
+q^{\dbar} a^{\dbar} b^{-\dbar}(1-z^{\dbar} x^{\dbar}), \label{e:TC}
\end{gather}
respectively. Their nonzero homogeneous components are as follows:
\begin{table}[H]
\begin{minipage}[H]{0.5\textwidth}
\centering
\extrarowheight=3.3pt
\begin{tabular}{c|c}
degree &$T_{\dbar}(A)^\natural$\\

\midrule[2pt]

$0$
&$a^{\dbar}k^{-\dbar}+a^{-\dbar}k^{\dbar}$\\

\midrule[1pt]

$\dbar$
&$(q-q^{-1})^{\dbar}(b^{\dbar} c^{-\dbar}-q^{\dbar} a^{\dbar} k^{-\dbar}) e^{\dbar}$
\end{tabular}
\end{minipage}%
\begin{minipage}[H]{0.5\textwidth}
\centering
\extrarowheight=3.3pt
\begin{tabular}{c|c}
degree &$T_{\dbar}(B)^\natural$\\

\midrule[2pt]

$-\dbar$
&$(q-q^{-1})^{\dbar}(b^{-\dbar}-q^{\dbar} a^{-\dbar}c^{\dbar}k^{\dbar})f^{\dbar}$\\

\midrule[1pt]

$0$
&$b^{-\dbar}k^{-\dbar}+b^{\dbar}k^{\dbar}$
\end{tabular}
\end{minipage}
\end{table}

\begin{table}[H]
\centering
\extrarowheight=3.3pt
\begin{tabular}{c|c}
degree &$T_{\dbar}(C)^\natural$\\

\midrule[2pt]

$-\dbar$
&$(q-q^{-1})^{\dbar}(c^{\dbar}-q^{\dbar} a^{\dbar}b^{-\dbar}k^{-\dbar}
)f^{\dbar}$\\

\midrule[1pt]

$0$
&$-q^{\dbar}a^{\dbar}b^{-\dbar}k^{-2\dbar}
+
(c^{\dbar}+c^{-\dbar})k^{-\dbar}
+a^{\dbar}b^{-\dbar}(q-q^{-1})^{2\dbar} f^{\dbar} k^{-\dbar} e^{\dbar}+q^{\dbar}a^{\dbar}b^{-\dbar}$\\

\midrule[1pt]

$\dbar$
&$(q-q^{-1})^{\dbar}(a^{\dbar} b^{-\dbar}k^{-\dbar}-q^{\dbar} c^{-\dbar} )k^{-\dbar}
 e^{\dbar}$
\end{tabular}
\end{table}
\end{thm}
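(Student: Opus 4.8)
The plan is to compute $T_{\dbar}(A)^\natural$ directly from the formula for $A^\natural$ in Lemma~\ref{lem:naturalABC}, then specialize to $T_{\dbar}(B)^\natural$ and $T_{\dbar}(C)^\natural$ either by the same method or by transporting through $\widetilde\rho$ via Lemma~\ref{lem:tilderho}. First I would record that, in $\F[a^{\pm1},b^{\pm1},c^{\pm1}]\otimes_\F U$, the element $A^\natural = ax + a^{-1}y + qbc^{-1}(1-xy)$ is a sum of three terms built from the equitable generators $x$, $y^{\pm1}$. The key commutation facts are $yx = q^2 xy$ (equivalently $xy = q^{-2}yx$), so the three summands satisfy pairwise $q^2$-commutation relations of exactly the shape required by Proposition~\ref{prop:iorg}: setting $R = a^{-1}y$, $R^{-1} = ay^{-1}$, and $S = ax + qbc^{-1}(1-xy)$ one checks $SR = q^2 RS$ after verifying that $ax$ and $qbc^{-1}(1-xy)$ together behave like a single ``$S$''-type term against $R$. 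Then Proposition~\ref{prop:iorg}, in the root-of-unity case, collapses $T_{\dbar}(R+S+R^{-1})$ to $R^{\dbar}+S^{\dbar}+R^{-\dbar} = a^{-\dbar}y^{\dbar} + S^{\dbar} + a^{\dbar}y^{-\dbar}$.

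Next I would expand $S^{\dbar} = (ax + qbc^{-1}(1-xy))^{\dbar}$ using Proposition~\ref{prop:qbino}: the two terms $ax$ and $qbc^{-1}(1-xy)$ commute up to a power of $q^2$ (since $x(1-xy) = (1-xy)\cdot$ something with the right scaling, using $yx=q^2xy$), so $S^{\dbar} = a^{\dbar}x^{\dbar} + q^{\dbar}b^{\dbar}c^{-\dbar}(1-xy)^{\dbar}$. Here one applies Proposition~\ref{prop:qbino} a second time to $(1-xy)^{\dbar}$: since $x$ and $y$ satisfy $yx = q^2 xy$, the product $xy$ satisfies no obstruction and $(1-xy)^{\dbar} = 1 - (xy)^{\dbar} = 1 - x^{\dbar}y^{\dbar}$ — wait, this needs the finer statement that $1$ and $-xy$ commute, which is trivial, so in fact $(1-xy)^{\dbar} = \sum \binom{\dbar}{i}(-xy)^i$ and one uses instead that $(xy)^{\dbar} = x^{\dbar}y^{\dbar}$ up to a $q$-power which is $1$ by Lemma~\ref{lem:qh}, together with the fact that for $0 < i < \dbar$ the ordinary binomial coefficient $\binom{\dbar}{i}$ need not vanish — so the correct route is to first rewrite $1 - xy$ as $k^{\pm1}$-type equitable data where Lemma~\ref{lem:ZU}(i) already gives the clean $\dbar$-th power. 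Indeed the cleanest path: $ax = a k^{-1} - a q^{-1}(q-q^{-1})ek^{-1}$ and $qbc^{-1}(1-xy) = bc^{-1}(q-q^{-1})e \cdot(\text{const})$; summing, $A^\natural$ is visibly of the form $R + S + R^{-1}$ with $R = a^{-1}k$ and $S$ a multiple of $ek^{-1}$ plus a multiple of $k^{-1}$, and then Proposition~\ref{prop:iorg} plus Lemma~\ref{lem:ekn} and Lemma~\ref{lem:qh} finish it, yielding formula~\eqref{e:TA}. Centrality then follows from Lemma~\ref{lem:ZU}(i), which says $x^{\dbar},y^{\pm\dbar},z^{\dbar}$ are central in $U$, and the $a,b,c$ are central by construction; alternatively it is already known from Theorem~\ref{thm:3central} since $\natural$ is an algebra homomorphism.

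For $T_{\dbar}(B)^\natural$ and $T_{\dbar}(C)^\natural$ I would apply $\widetilde\rho$ from Lemma~\ref{lem:tilderho}: since $\natural\circ\rho = \widetilde\rho\circ\natural$ and $\rho$ sends $A\mapsto B\mapsto C$, we get $T_{\dbar}(B)^\natural = (T_{\dbar}(A)^\natural)^{\widetilde\rho}$ and $T_{\dbar}(C)^\natural = (T_{\dbar}(A)^\natural)^{\widetilde\rho^2}$. Applying the substitution $(a,b,c,x,y,z)\mapsto(b,c,a,y,z,x)$ to~\eqref{e:TA} gives~\eqref{e:TB} immediately, and once more gives~\eqref{e:TC}. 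For the homogeneous-component tables, I would substitute the expressions for $x^{\dbar}$, $y^{\pm\dbar}$, $z^{\dbar}$ from Lemma~\ref{lem:ZU}(i) into~\eqref{e:TA}--\eqref{e:TC}, re-expand in the PBW basis $f^s k^{\pm i} e^r$ of $U$, and collect terms by the $\Z$-grading $U = \bigoplus_n U_n$ where $f^s k^{\pm i} e^r \in U_{r-s}$: the $x^{\dbar}$ and $(1-x^{\dbar}y^{\dbar})$ pieces contribute degree $0$ and degree $\dbar$ components, the $z^{\dbar}$ piece contributes a degree $-\dbar$ component, and for $T_{\dbar}(C)^\natural$ the cross term $c^{-\dbar}x^{\dbar}$ combined with $c^{\dbar}z^{\dbar}$ and the $(1-z^{\dbar}x^{\dbar})$ term produces all three of degrees $-\dbar, 0, \dbar$, with the degree-$0$ part requiring the substitution $z^{\dbar}x^{\dbar} = (k^{-\dbar} + (q-q^{-1})^{\dbar}f^{\dbar})(k^{-\dbar} - q^{\dbar}(q-q^{-1})^{\dbar}e^{\dbar}k^{-\dbar})$ and then commuting $f^{\dbar}$ past $e^{\dbar}k^{-\dbar}$ within $U_0$. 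The main obstacle is bookkeeping rather than conceptual: getting the $q$-power prefactors exactly right through the two applications of Proposition~\ref{prop:iorg}/Proposition~\ref{prop:qbino} and Lemma~\ref{lem:qh}, and then carefully sorting the $T_{\dbar}(C)^\natural$ expansion into its three graded pieces without dropping the mixed $f k^{-1} e$ term.
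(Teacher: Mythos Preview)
After your own self-corrections (dropping the $R=a^{-1}y$ decomposition and the ordinary-binomial attack on $(1-xy)^{\dbar}$), this is exactly the paper's proof: write $A^\natural$ in Chevalley generators as $ak^{-1}+a^{-1}k+(q-q^{-1})(bc^{-1}e-aq^{-1}ek^{-1})$, apply Proposition~\ref{prop:iorg} with $(R,S)=(ak^{-1},(q-q^{-1})(bc^{-1}e-aq^{-1}ek^{-1}))$, then Proposition~\ref{prop:qbino} on $S^{\dbar}$ with the pair $(bc^{-1}e,-aq^{-1}ek^{-1})$, simplify via Lemmas~\ref{lem:ekn} and~\ref{lem:qh}, convert to equitable form via Lemma~\ref{lem:ZU}(ii), and transport to $B,C$ via $\widetilde\rho$. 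Your initial decomposition is not of the shape $R+S+R^{-1}$ because $ay^{-1}$ does not occur as a separate summand of $A^\natural$; go straight to the Chevalley expression.
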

\begin{proof}
Express $A^\natural$ given in Lemma~\ref{lem:naturalABC} in terms of Chevalley generators:
\begin{gather*}
A^\natural=
ak^{-1}+a^{-1}k+(q-q^{-1})(bc^{-1}e-a q^{-1} ek^{-1}).
\end{gather*}
By relation (\ref{e:ek-1}),
Proposition~\ref{prop:iorg} is applicable to $(R,S)=(ak^{-1},(q-q^{-1})(bc^{-1}e-a q^{-1}ek^{-1}))$. It follows that
\begin{gather*}
T_{\dbar}(A)^\natural=a^{\dbar}k^{-\dbar}+
a^{-\dbar} k^{\dbar}+
(q-q^{-1})^{\dbar}(bc^{-1}e-a q^{-1} ek^{-1})^{\dbar}.
\end{gather*}
By (G2) the $0$- and $\dbar$-homogeneous components of $T_{\dbar}(A)^\natural$ are equal to
\begin{gather*}
a^{\dbar}k^{-\dbar}+
a^{-\dbar} k^{\dbar},
\qquad \quad
(q-q^{-1})^{\dbar}(bc^{-1}e-a q^{-1} ek^{-1})^{\dbar}
\end{gather*}
respectively and the other homogeneous components of $T_{\dbar}(A)^\natural$ are zero.
To show the $\dbar$-homogeneous component of $T_{\dbar}(A)$ as stated, one may simplify
$
(bc^{-1}e-a q^{-1} ek^{-1})^{\dbar}
$
by applying Proposition~\ref{prop:qbino} with $(R,S)=(bc^{-1}e,-a q^{-1} ek^{-1})$ followed by using Lemma~\ref{lem:ekn} with $n=\dbar$ and Lemma~\ref{lem:qh}.

To see that $T_{\dbar}(A)^\natural$ is equal to (\ref{e:TA}), make use of Lemma~\ref{lem:ZU}(ii) to express the homogeneous components of $T_{\dbar}(A)^\natural$ in terms of $x^{\dbar}$, $y^{\pm \dbar}$, $z^{\dbar}$.
The identities (\ref{e:TB}), (\ref{e:TC}) now follow by applying the automorphism $\widetilde \rho$ of $\F[a^{\pm 1},b^{\pm 1},c^{\pm 1}]\otimes_\F U$ given in Lemma~\ref{lem:tilderho} to (\ref{e:TA}) and (\ref{e:TB}),  respectively. Apply Lemma~\ref{lem:ZU}(i) to get the homogeneous components of $T_{\dbar}(B)^\natural$ and $T_{\dbar}(C)^\natural$.
\end{proof}

We remark that Theorem~\ref{thm:embedd} provides an alternative proof of Theorem~\ref{thm:3central} by the injectivity of $\natural$. Nevertheless this proof is lack of intuition.

\section{The center of $\triangle$ at roots of unity}\label{s:ZDelta}

\subsection{A basis of $Z(\triangle)$}\label{s:FbasisZ}

As seen in Theorem~\ref{thm:3central} the elements $T_{\dbar}(A)$, $T_{\dbar}(B)$, $T_{\dbar}(C)$ are central in $\triangle$. In this subsection we shall show that

\begin{thm}\label{thm:basisZDelta}
For each $n\in \N$
the elements
\begin{gather*}
T_{\dbar}(A)^{i_0} T_{\dbar}(B)^{i_1} T_{\dbar}(C)^{i_2}
\alpha^{j_0} \beta^{j_1} \gamma^{j_2}
\Omega^\ell,
\qquad \quad
\begin{split}
&i_0,i_1,i_2,j_0,j_1,j_2,\ell\in \N, \qquad i_0i_1i_2=0,\\
&\dbar(i_0+i_1+i_2)+j_0+j_1+j_2+3\ell\leq n
\end{split}
\end{gather*}
form an $\F$-basis of $Z(\triangle)\cap \triangle_n$.
In particular
the elements
\begin{gather*}
T_{\dbar}(A)^{i_0} T_{\dbar}(B)^{i_1} T_{\dbar}(C)^{i_2}
\alpha^{j_0} \beta^{j_1} \gamma^{j_2}
\Omega^\ell
 \qquad \quad
\hbox{for all $i_0,i_1,i_2,j_0,j_1,j_2,\ell\in \N$ \; with \; $i_0 i_1 i_2=0$}
\end{gather*}
form an $\F$-basis of $Z(\triangle)$.
\end{thm}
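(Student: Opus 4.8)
The plan is to establish the spanning and linear independence of the proposed basis of $Z(\triangle)\cap\triangle_n$ separately, and then take the union over $n$ to get the second statement. Linear independence is essentially free: the elements listed are a subset of the PBW-type basis of $\triangle_n$ exhibited in Lemma~\ref{lem:PBW2}, once one checks that expanding each $T_{\dbar}(A)^{i_0}T_{\dbar}(B)^{i_1}T_{\dbar}(C)^{i_2}$ via \eqref{e:Tn} produces, as its highest-degree term, the monomial $A^{\dbar i_0}B^{\dbar i_1}C^{\dbar i_2}$ (recall $T_{\dbar}$ is monic of degree $\dbar$, so $T_{\dbar}(A)=A^{\dbar}+\text{lower}$, etc.), and that the degree in the $\triangle$-filtration is exactly $\dbar(i_0+i_1+i_2)+j_0+j_1+j_2+3\ell$. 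A triangularity argument against the Lemma~\ref{lem:PBW2} basis then gives independence, and also shows that each listed element genuinely lies in $\triangle_n$.

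For spanning, first note that every listed element is central: $\alpha,\beta,\gamma,\Omega$ are central by construction, and $T_{\dbar}(A),T_{\dbar}(B),T_{\dbar}(C)$ are central by Theorem~\ref{thm:3central}. So the real content is that these elements span all of $Z(\triangle)\cap\triangle_n$. I would argue by induction on $n$. Take $0\neq w\in Z(\triangle)\cap\triangle_n$ and write it in the Lemma~\ref{lem:PBW2} basis of $\triangle_n$. The key structural input is that for $q$ a root of unity the ``naive'' center argument fails only because of the extra central elements $T_{\dbar}(A),T_{\dbar}(B),T_{\dbar}(C)$; concretely, I expect to show that the top-degree part of $w$ (the degree-$n$ homogeneous part in the associated graded algebra $\mathrm{gr}\,\triangle$) must be a polynomial in the images of $A^{\dbar}$, $B^{\dbar}$, $C^{\dbar}$, $\alpha$, $\beta$, $\gamma$, $\Omega$ subject to $A^{\dbar}B^{\dbar}C^{\dbar}=0$ in the relevant reduced form. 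This is where the embedding $\natural$ of Lemma~\ref{lem:naturalABC} and the computations of Theorem~\ref{thm:embedd} enter: applying $\natural$ sends $w$ into $\F[a^{\pm1},b^{\pm1},c^{\pm1}]\otimes_\F U$, and centrality of $w$ forces $w^\natural$ to commute with everything, in particular to land in $\F[a^{\pm1},b^{\pm1},c^{\pm1}]\otimes_\F Z(U)$. The Concini--Kac presentation of $Z(U)$ (\S\ref{s:preZU}, Lemma~\ref{lem:basisZU}) together with the explicit formulas \eqref{e:TA}--\eqref{e:TC} for $T_{\dbar}(A)^\natural,T_{\dbar}(B)^\natural,T_{\dbar}(C)^\natural$ and Lemma~\ref{lem:image} for $\alpha^\natural,\beta^\natural,\gamma^\natural,\Omega^\natural$ should pin down exactly which functions of $a,b,c,k^{\pm\dbar},e^{\dbar},f^{\dbar},\Lambda$ arise, and hence show $w^\natural$ lies in the subalgebra generated by the images of the claimed generators.

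Once the top-degree part of $w$ is matched, by the triangularity noted above I can subtract off an appropriate $\F$-linear combination of the listed basis elements to kill the degree-$n$ part, producing an element of $Z(\triangle)\cap\triangle_{n-1}$, to which the induction hypothesis applies. The base case $n=0$ (or $n<\dbar$) is trivial since $\triangle_0=\F$ and low filtration degrees contain no nonconstant central elements beyond $\alpha,\beta,\gamma$. Finally, the last sentence of the theorem follows from (F1): $Z(\triangle)=\bigcup_n\bigl(Z(\triangle)\cap\triangle_n\bigr)$, and the union of the bases of the $Z(\triangle)\cap\triangle_n$ is precisely the set of all $T_{\dbar}(A)^{i_0}T_{\dbar}(B)^{i_1}T_{\dbar}(C)^{i_2}\alpha^{j_0}\beta^{j_1}\gamma^{j_2}\Omega^\ell$ with $i_0i_1i_2=0$.

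The main obstacle I anticipate is the spanning step, specifically showing that the top-degree part of an arbitrary central $w$ is forced into the span of images of the claimed generators and does not secretly involve a ``mixed'' central element that is not a polynomial in $T_{\dbar}(A),T_{\dbar}(B),T_{\dbar}(C),\alpha,\beta,\gamma,\Omega$. Handling this cleanly requires a careful bookkeeping of the $\Z$-grading on $U$ (properties (G1), (G2)) transported through $\natural$: the homogeneous-component tables in Theorem~\ref{thm:embedd} must be leveraged to show that any element of $\F[a^{\pm1},b^{\pm1},c^{\pm1}]\otimes_\F Z(U)$ in the image of $\triangle$ whose Laurent-degree pattern in $a,b,c$ is constrained by membership in $\triangle$ is automatically a polynomial in $T_{\dbar}(A)^\natural,T_{\dbar}(B)^\natural,T_{\dbar}(C)^\natural,\alpha^\natural,\beta^\natural,\gamma^\natural,\Omega^\natural$ — the relation $i_0i_1i_2=0$ presumably emerging from the one syzygy among $e^{\dbar},f^{\dbar},k^{\pm\dbar},\Lambda$ given by \eqref{e:TLambda}. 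Everything else is routine induction and triangularity.
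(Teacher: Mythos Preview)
Your proposal contains a genuine gap in the spanning argument. You write that ``centrality of $w$ forces $w^\natural$ to commute with everything, in particular to land in $\F[a^{\pm1},b^{\pm1},c^{\pm1}]\otimes_\F Z(U)$.'' This inference is unjustified: $\natural$ is injective but not surjective, so $w\in Z(\triangle)$ only tells you that $w^\natural$ commutes with the image $\triangle^\natural$, not with all of $\F[a^{\pm1},b^{\pm1},c^{\pm1}]\otimes_\F U$. Making your approach work would require proving that the commutant of $\triangle^\natural$ inside the codomain is exactly $\F[a^{\pm1},b^{\pm1},c^{\pm1}]\otimes_\F Z(U)$, and then that every element of that commutant lying in $\triangle^\natural$ is a polynomial in the images of your seven generators. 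You correctly identify this as the main obstacle but do not resolve it, and it is not clear that it is easier than the theorem itself. A secondary issue: $\natural$ is only defined when $q^2\neq1$, so the case $\dbar=1$ would need separate treatment.

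The paper's proof is entirely different and does not touch $\natural$. It is a short internal filtration argument. First one refines Lemma~\ref{lem:PBW2} to a basis of $\triangle_n$ of the form $A^{r_0}B^{r_1}C^{r_2}T_{\dbar}(A)^{i_0}T_{\dbar}(B)^{i_1}T_{\dbar}(C)^{i_2}\alpha^{j_0}\beta^{j_1}\gamma^{j_2}\Omega^\ell$ with $0\le r_0,r_1,r_2<\dbar$ (this is just reading $\F[X]$ in base $T_{\dbar}(X)$). Given $R\in Z(\triangle)\cap\triangle_n$, write $S=R\bmod\triangle_{n-1}$ in this basis. If a term with, say, $r_0>0$ and $r_1=i_1=0$ appears, compute $[S,C]$ modulo $\triangle_n$ directly from the defining relations (\ref{e:uaw2})--(\ref{e:uaw1}); since $[R,C]=0$ one has $[S,C]\in\triangle_n$, but the coefficient of $A^{r_0+\dbar i_0}C^{r_2+\dbar i_2+1}\alpha^{j_0}\beta^{j_1}\gamma^{j_2}\Omega^\ell$ in $[S,C]$ (read in the Lemma~\ref{lem:PBW2} basis of $\triangle_{n+1}$) is $(1-q^{-2r_0})$ times the original coefficient. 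As $0<r_0<\dbar$ and $q^2$ has order $\dbar$, this factor is nonzero, forcing the coefficient to vanish. Hence only terms with $r_0=r_1=r_2=0$ survive in $S$, and induction on $n$ finishes. The embedding $\natural$ is used later, in \S\ref{s:relZDelta}, to discover the single relation among the generators, but not for this theorem.
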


To prove Theorem~\ref{thm:basisZDelta} we resort to the $\F$-bases of $\{\triangle_n\}_{n\in \N}$ as below. For each $n\in \N$ define
$\I_n$ to be the set of all $10$-tuples $(r_0,r_1,r_2,i_0,i_1,i_2,j_0,j_1,j_2,\ell)$ of nonnegative integers with
\begin{gather*}
r_0+r_1+r_2+\dbar(i_0+i_1+i_2)+j_0+j_1+j_2+3\ell\leq n,
\\
(r_0+i_0)(r_1+i_1)(r_2+i_2)=0,
\qquad \quad
r_0, r_1, r_2<\dbar.
\end{gather*}

\begin{lem}\label{lem:basisDeltan_n-1}
For each $n\in \N$
the elements
\begin{gather*}
A^{r_0} B^{r_1} C^{r_2}
T_{\dbar}(A)^{i_0} T_{\dbar}(B)^{i_1} T_{\dbar}(C)^{i_2}
\alpha^{j_0} \beta^{j_1} \gamma^{j_2} \Omega^\ell
\qquad \quad \hbox{for all $(r_0,r_1,r_2,i_0,i_1,i_2,j_0,j_1,j_2,\ell)\in \I_n$}
\end{gather*}
form an $\F$-basis of $\triangle_n$.
\end{lem}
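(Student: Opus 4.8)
The plan is to show that the proposed set of monomials is both a spanning set for $\triangle_n$ over $\F$ and linearly independent, and since both $\I_n$ and the PBW-type basis of Lemma~\ref{lem:PBW2} are finite, it suffices to establish one of the two properties together with the correct cardinality, or simply to exhibit an invertible change of basis. I would proceed by comparing the proposed set directly with the $\F$-basis of $\triangle_n$ given in Lemma~\ref{lem:PBW2}, namely the monomials $A^{i_0}B^{i_1}C^{i_2}\alpha^{j_0}\beta^{j_1}\gamma^{j_2}\Omega^\ell$ with $i_0i_1i_2=0$ and $i_0+i_1+i_2+j_0+j_1+j_2+3\ell\leq n$.

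First I would recall from the proof of Theorem~\ref{thm:3central} (see the induction using Lemma~\ref{lem:PQRS}) that $T_{\dbar}(X)$ is a monic polynomial in $X$ of degree $\dbar$; hence for each $i\in\N$ the power $T_{\dbar}(A)^{i}$ is a monic polynomial in $A$ of degree $\dbar i$, and likewise for $B$ and $C$. Therefore, for a tuple in $\I_n$, the element
\begin{gather*}
A^{r_0}B^{r_1}C^{r_2}T_{\dbar}(A)^{i_0}T_{\dbar}(B)^{i_1}T_{\dbar}(C)^{i_2}\alpha^{j_0}\beta^{j_1}\gamma^{j_2}\Omega^\ell
\end{gather*}
equals $A^{r_0+\dbar i_0}B^{r_1+\dbar i_1}C^{r_2+\dbar i_2}\alpha^{j_0}\beta^{j_1}\gamma^{j_2}\Omega^\ell$ plus an $\F$-linear combination of monomials $A^{a_0}B^{a_1}C^{a_2}\alpha^{j_0}\beta^{j_1}\gamma^{j_2}\Omega^\ell$ with $a_0\leq r_0+\dbar i_0$, $a_1\leq r_1+\dbar i_1$, $a_2\leq r_2+\dbar i_2$ and strictly smaller total degree in $A,B,C$; here I must be careful that since $T_{\dbar}(A)$, $T_{\dbar}(B)$, $T_{\dbar}(C)$ are central, all cross terms stay expressed purely in powers of $A$, $B$, $C$ without introducing the other generators, so the condition $(r_0+i_0)(r_1+i_1)(r_2+i_2)=0$ guarantees $a_0a_1a_2=0$ for every monomial that appears. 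Moreover the total degree in all six generators plus $3\ell$ is bounded by $n$ because $r_0+r_1+r_2+\dbar(i_0+i_1+i_2)+j_0+j_1+j_2+3\ell\leq n$. Thus each element of the proposed set lies in $\triangle_n$ and its ``leading monomial'' $A^{r_0+\dbar i_0}B^{r_1+\dbar i_1}C^{r_2+\dbar i_2}\alpha^{j_0}\beta^{j_1}\gamma^{j_2}\Omega^\ell$ is a member of the Lemma~\ref{lem:PBW2} basis of $\triangle_n$.

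The main point is then that the map $(r_0,r_1,r_2,i_0,i_1,i_2,j_0,j_1,j_2,\ell)\mapsto(r_0+\dbar i_0,\,r_1+\dbar i_1,\,r_2+\dbar i_2,\,j_0,j_1,j_2,\ell)$ is a bijection from $\I_n$ onto the index set of the Lemma~\ref{lem:PBW2} basis of $\triangle_n$: writing each of the first three coordinates of the target uniquely via division by $\dbar$ (quotient $i_s$, remainder $r_s<\dbar$) recovers the preimage, the vanishing-product condition transfers correctly because $r_s+i_s=0$ iff $r_s+\dbar i_s=0$, and the degree inequalities match since $r_s+\dbar i_s$ is exactly the corresponding exponent. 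Order both $\I_n$ and the target index set compatibly with this bijection and with the partial order by total $A,B,C$-degree. Then the transition matrix from the proposed set to the Lemma~\ref{lem:PBW2} basis is triangular with $1$'s on the diagonal (each proposed element has leading term the corresponding basis monomial with coefficient $1$, and lower-order corrections only involve strictly-lower monomials which, under the refined ordering, come earlier). Hence the transition matrix is invertible over $\F$, and the proposed set is an $\F$-basis of $\triangle_n$. The step I expect to require the most care is verifying that the lower-order correction terms genuinely involve \emph{only} powers of $A$, $B$, $C$ (no spurious appearance of $\alpha$, $\beta$, $\gamma$, $\Omega$ beyond the fixed exponents $j_0,j_1,j_2,\ell$) and that they respect the $a_0a_1a_2=0$ constraint, so that the triangularity is with respect to the honest Lemma~\ref{lem:PBW2} basis and not some larger set; this follows from centrality of $T_{\dbar}(A),T_{\dbar}(B),T_{\dbar}(C),\alpha,\beta,\gamma,\Omega$ and the fact that $T_{\dbar}(X)^i\in\F[X]$, but it should be spelled out explicitly.
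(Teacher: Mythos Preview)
Your proposal is correct and is essentially the same argument as the paper's, just unpacked in more detail. The paper compresses the whole thing into one sentence: regard $\F[X]$ as a positional system with base $T_{\dbar}(X)$, i.e.\ write each $P(X)\in\F[X]$ uniquely as $\sum_i P_i(X)T_{\dbar}(X)^i$ with $\deg P_i<\dbar$, and apply this to the Lemma~\ref{lem:PBW2} basis in each of the variables $A$, $B$, $C$ separately. Your division-with-remainder bijection and unitriangular transition matrix are exactly this positional-system change of basis viewed from the other side, and your careful check that centrality of $T_{\dbar}(A),T_{\dbar}(B),T_{\dbar}(C)$ keeps the expansion inside the $a_0a_1a_2=0$ locus is precisely the point that makes the one-line proof in the paper legitimate.
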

\begin{proof}
Regard $\F[X]$ as a positional system with base $T_{\dbar}(X)$. Namely, view each $P(X)\in \F[X]$ as
$\sum_{i\in \N} P_i(X)T_{\dbar}(X)^i$ with $P_i(X)\in \F[X]$ of degree less than $\dbar$ and almost all zero. This lemma follows by applying the positional system to the $\F$-basis of $\triangle_n$ given in Lemma~\ref{lem:PBW2}.
\end{proof}

\begin{lem}\label{lem:uawmod}
The following equations hold:
\begin{eqnarray*}\label{e:uawmod}
CB&=&q^{2} BC
\qquad \pmod{\triangle_1},
\\
CA&=&q^{-2} AC
\qquad \pmod{\triangle_1},
\\
BA&=&q^{2} AB
\qquad \pmod{\triangle_1}.
\end{eqnarray*}
\end{lem}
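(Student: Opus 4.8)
The plan is to establish the three congruences directly from the defining relations \eqref{e:uaw2}--\eqref{e:uaw1} of $\triangle$, using the $\N$-filtration properties (F1), (F2) together with Lemma~\ref{lem:PBW2} to identify the filtration degrees. Recall that by Lemma~\ref{lem:PBW} the monomials $A^{i_0}B^{i_1}C^{i_2}\alpha^{j_0}\beta^{j_1}\gamma^{j_2}$ form an $\F$-basis of $\triangle$, and by definition $\triangle_1$ is the span of those monomials with $i_0+i_1+i_2+j_0+j_1+j_2\le 1$; in particular $A$, $B$, $C$, $\alpha$, $\beta$, $\gamma$ and the unit $1$ all lie in $\triangle_1$.

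First I would look at relation \eqref{e:uaw2}, namely $CB = q^2 BC + q(q^2-q^{-2})A - q(q-q^{-1})\alpha$. The right-hand side correction term $q(q^2-q^{-2})A - q(q-q^{-1})\alpha$ is an $\F$-linear combination of $A$ and $\alpha$, hence lies in $\triangle_1$. Therefore $CB \equiv q^2 BC \pmod{\triangle_1}$. The same reasoning applied to \eqref{e:uaw3} gives $CA = q^{-2}AC - q^{-1}(q^2-q^{-2})B + q^{-1}(q-q^{-1})\beta$, whose correction term is a combination of $B$ and $\beta$, again in $\triangle_1$, so $CA \equiv q^{-2}AC \pmod{\triangle_1}$. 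Finally \eqref{e:uaw1} reads $BA = q^2 AB + q(q^2-q^{-2})C - q(q-q^{-1})\gamma$, with correction term a combination of $C$ and $\gamma$ lying in $\triangle_1$, yielding $BA \equiv q^2 AB \pmod{\triangle_1}$.

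There is essentially no obstacle here: the lemma is an immediate "leading-order" reading of the defining relations, recording that modulo lower filtration degree the generators $A$, $B$, $C$ $q$-commute in pairs. The only point worth a word is that the statement is symmetric under the cyclic automorphism $\rho$ of Lemma~\ref{lem:rho}, which permutes $(A,B,C)$ cyclically and preserves each $\triangle_n$, so in fact the three congruences are equivalent under $\rho$ and one could derive the second and third from the first; but since all three relations are given explicitly it is cleanest just to read each one off directly. This lemma will presumably be used in the next subsection to show that the associated graded algebra $\mathrm{gr}\,\triangle$ is a quantum-torus-like algebra, which is what ultimately pins down $Z(\triangle)$ via Lemma~\ref{lem:basisDeltan_n-1} and the argument of Theorem~\ref{thm:basisZDelta}.
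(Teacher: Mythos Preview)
Your proof is correct and takes essentially the same approach as the paper, which simply says ``Immediate from (\ref{e:uaw2})--(\ref{e:uaw1}).'' You have just spelled out what that means: the correction terms in each defining relation are $\F$-linear combinations of generators and hence lie in $\triangle_1$.
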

\begin{proof}
Immediate from (\ref{e:uaw2})--(\ref{e:uaw1}).
\end{proof}

\noindent As usual, for any $r$, $s$ in a ring the notation $[r,s]$ is used to denote the commutator $rs-sr$.

\begin{lem}\label{lem:Omega-ABC}
\begin{enumerate}
\item For all $i_0$, $i_1$, $i_2\in \N$
\begin{align*}
&[A^{i_0} B^{i_1} C^{i_2},A]
=(q^{2(i_1-i_2)}-1)
A^{i_0+1} B^{i_1} C^{i_2}
\qquad \pmod{\triangle_{i_0+i_1+i_2}},
\\
&[A^{i_0} B^{i_1} C^{i_2},B]
=(q^{2i_2}-q^{2i_0}) A^{i_0} B^{i_1+1} C^{i_2}
\qquad \pmod{\triangle_{i_0+i_1+i_2}},
\\
&[A^{i_0} B^{i_1} C^{i_2},C]
=(1-q^{2(i_1-i_0)})A^{i_0}B^{i_1} C^{i_2+1}
\qquad \pmod{\triangle_{i_0+i_1+i_2}}.
\end{align*}

\item For all $i_0$, $i_1$, $i_2\in \N^*$
\begin{gather*}
A^{i_0}B^{i_1}C^{i_2}
= (-1)^\ell q^{\ell(\ell-2i_1)}
A^{i_0-\ell}
B^{i_1-\ell}
C^{i_2-\ell}
\Omega^\ell
\qquad
\pmod{\triangle_{i_0+i_1+i_2-1}}
\end{gather*}
where $\ell$ is any nonnegative integer less than or equal to each of $i_0$, $i_1$, $i_2$.
\end{enumerate}
\end{lem}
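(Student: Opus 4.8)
The plan is to carry out both parts inside the associated graded algebra attached to the filtration $\{\triangle_n\}$, where by Lemma~\ref{lem:uawmod} the generators $A$, $B$, $C$ $q$-commute and $\alpha$, $\beta$, $\gamma$ remain central; concretely, every assertion will be phrased as a congruence modulo the appropriate $\triangle_n$ and proved by iterated use of Lemma~\ref{lem:uawmod} together with (F2).

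For (i), I would first record the elementary congruences obtained by pushing a single generator through a power of another: from $BA\equiv q^2AB\pmod{\triangle_1}$ one gets $BA^{i_0}\equiv q^{2i_0}A^{i_0}B$ and $B^{i_1}A\equiv q^{2i_1}AB^{i_1}$ (each modulo the $\triangle$-degree one below that of the monomial in question), and similarly for the pairs $(C,A)$ and $(C,B)$ coming from (\ref{e:uaw3}) and (\ref{e:uaw2}). Chaining these together and applying (F2) gives, for example, $A^{i_0}B^{i_1}C^{i_2}\cdot A\equiv q^{2(i_1-i_2)}A^{i_0+1}B^{i_1}C^{i_2}\pmod{\triangle_{i_0+i_1+i_2}}$, while $A\cdot A^{i_0}B^{i_1}C^{i_2}=A^{i_0+1}B^{i_1}C^{i_2}$ exactly; subtracting yields the first formula, and the formulas for $[A^{i_0}B^{i_1}C^{i_2},B]$ and $[A^{i_0}B^{i_1}C^{i_2},C]$ follow by the same bookkeeping, moving the extra generator to whichever end of $A^{i_0}B^{i_1}C^{i_2}$ is cheaper.

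For (ii), the starting point is that solving the defining identity (\ref{e:Omega}) for $ABC$ expresses it as $-q^{-1}\Omega$ plus terms each of $\triangle$-degree at most $2$, so $ABC\equiv -q^{-1}\Omega\pmod{\triangle_2}$. I would then prove the case $\ell=1$: the $q$-commutation congruences from part (i) give $A^{i_0}B^{i_1}C^{i_2}\equiv q^{-2(i_1-1)}A^{i_0-1}B^{i_1-1}(ABC)C^{i_2-1}\pmod{\triangle_{i_0+i_1+i_2-1}}$, and substituting $ABC\equiv -q^{-1}\Omega$ is legitimate because $\Omega$ is central and the error term lies in $\triangle_2$, so after multiplication by the degree-$(i_0+i_1+i_2-3)$ monomial it lies in $\triangle_{i_0+i_1+i_2-1}$; this produces the asserted formula with $\ell=1$ and exponent $1-2i_1$. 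Finally I would induct on $\ell$: applying the case $\ell=1$ to the indices $(i_0-\ell,i_1-\ell,i_2-\ell)$, multiplying by the central element $\Omega^\ell\in\triangle_{3\ell}$ and invoking (F2) to promote the resulting congruence to one modulo $\triangle_{i_0+i_1+i_2-1}$, and a one-line computation shows $\ell(\ell-2i_1)+\bigl(1-2(i_1-\ell)\bigr)=(\ell+1)\bigl((\ell+1)-2i_1\bigr)$ and $(-1)^\ell\cdot(-1)=(-1)^{\ell+1}$, closing the induction.

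The computational content is entirely routine $q$-power bookkeeping; the only point demanding attention is tracking, at each step, the exact filtration degree modulo which a congruence holds, so that after multiplying a congruence by $\Omega^\ell$ the discarded terms land in $\triangle_{i_0+i_1+i_2-1}$ and no larger space. That degree accounting, together with the immediate but indispensable reduction $ABC\equiv -q^{-1}\Omega\pmod{\triangle_2}$ drawn from (\ref{e:Omega}), is the crux of the argument.
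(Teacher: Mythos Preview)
Your proposal is correct and follows essentially the same approach as the paper: part~(i) by iterating Lemma~\ref{lem:uawmod} together with (F2), and part~(ii) by first extracting the congruence $ABC\equiv -q^{-1}\Omega\pmod{\triangle_2}$ from (\ref{e:Omega}), then rewriting $A^{i_0}B^{i_1}C^{i_2}\equiv q^{-2(i_1-1)}A^{i_0-1}B^{i_1-1}C^{i_2-1}\Omega$ modulo $\triangle_{i_0+i_1+i_2-1}$ and inducting. The only cosmetic differences are that the paper places $ABC$ at the right end of the monomial rather than between $B^{i_1-1}$ and $C^{i_2-1}$ (immaterial once $\Omega$ is substituted, being central) and phrases the induction on $i_0+i_1+i_2$ rather than on $\ell$.
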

\begin{proof}
Property (F2) will be used frequently in the proof without reference. (i) follows by applying Lemma~\ref{lem:uawmod}. Suppose $i_0$, $i_1$, $i_2\in \N^*$. By Lemma~\ref{lem:uawmod} it yields that
\begin{gather*}\label{e:Omega-ABC}
A^{i_0}B^{i_1}C^{i_2}
=q^{-2(i_1-1)}
A^{i_0-1}
B^{i_1-1}
C^{i_2-1}
 A B C
\qquad
\pmod{\triangle_{i_0+i_1+i_2-1}}.
\end{gather*}
Observe from (\ref{e:Omega}) that $ABC=-q^{-1}\Omega\,\bmod{\triangle_2}$.
Therefore
\begin{gather*}
A^{i_0}B^{i_1}C^{i_2}
=-q^{1-2i_1}
A^{i_0-1}
B^{i_1-1}
C^{i_2-1}
\Omega
\qquad
\pmod{\triangle_{i_0+i_1+i_2-1}}.
\end{gather*}
Now (ii) follows by a routine induction on $i_0+i_1+i_2$.
\end{proof}

It is ready to prove Theorem~\ref{thm:basisZDelta}.

\medskip

\noindent {\it Proof of Theorem~\ref{thm:basisZDelta}.}
The second statement is immediate from the first statement by (F1).
Proceed by induction on $n\in \N$ to prove the first statement.
It is clear for $n=0$. Now assume $n\geq 1$. Pick any $R\in Z(\triangle)\cap\triangle_n$.
By Lemma~\ref{lem:basisDeltan_n-1} there exist unique $c(r_0,r_1,r_2,i_0,i_1,i_2,j_0,j_1,j_2,\ell)\in \F$ for all $(r_0,r_1,r_2,i_0,i_1,i_2,j_0,j_1,j_2,\ell)\in \I_n\setminus \I_{n-1}$ such that $R\bmod\triangle_{n-1}$ is equal to
\begin{gather*}
S=
\sum_{(r_0,r_1,\ldots,\ell)}
c(r_0,r_1,r_2,i_0,i_1,i_2,j_0,j_1,j_2,\ell)
A^{r_0} B^{r_1} C^{r_2}
T_{\dbar}(A)^{i_0} T_{\dbar}(B)^{i_1} T_{\dbar}(C)^{i_2}
\alpha^{j_0} \beta^{j_1} \gamma^{j_2}
\Omega^\ell
\end{gather*}
where the sum is over all $(r_0,r_1,r_2,i_0,i_1,i_2,j_0,j_1,j_2,\ell)\in \I_n\setminus \I_{n-1}$.
We shall see that if at least one of $r_0$, $r_1$, $r_2$ is positive then
$c(r_0,r_1,r_2,i_0,i_1,i_2,j_0,j_1,j_2,\ell)=0$.
If this is true then $S\in Z(\triangle)$ by Theorem~\ref{thm:3central} and this theorem follows by applying induction hypothesis to $R-S\in Z(\triangle)\cap \triangle_{n-1}$. It remains to prove the claim.

Pick any $c(r_0,r_1,r_2,i_0,i_1,i_2,j_0,j_1,j_2,\ell)$ with at least one of $r_0$, $r_1$, $r_2$ in $\N^*$. Without loss assume that $r_0\in \N^*$. By the definition of $\I_n$, at least one of $(r_1,i_1)$, $(r_2,i_2)$ is equal to $(0,0)$, say $(r_1,i_1)$. In this case, consider the commutator $[S,C]$. Since $R$ is central in $\triangle$ the commutator $[R,C]=0$. It follows from (F2) that
$$
[S,C]\in \triangle_n.
$$
On the other hand, applying Lemma~\ref{lem:Omega-ABC} the coefficient of
$
A^{r_0+\dbar i_0} C^{r_2+\dbar i_2+1}
\alpha^{j_0} \beta^{j_1} \gamma^{j_2}
\Omega^\ell
$
in $[S,C]$ with respect to the $\F$-basis of $\triangle_{n+1}$
given in Lemma~\ref{lem:PBW2} is equal to
\begin{gather*}\label{e:coeff}
(1-q^{-2r_0})\cdot c(r_0,r_1,r_2,i_0,i_1,i_2,j_0,j_1,j_2,\ell).
\end{gather*}
Since $[S,C]\in \triangle_n$ the above is equal to $0$. Recall that
the order of the root of unity $q^2$ is equal to $\dbar$. Therefore  $q^{2r_0}\not=1$ and this forces that the claim is true. The result follows.
\hfill $\square$

\subsection{A relation in $Z(\triangle)$}\label{s:relZDelta}

The goal of this subsection is to prove Theorems~\ref{thm:poly} and \ref{thm:relZDelta} stated below.
For each $n\in \N$ let
\begin{eqnarray*}
\phi_n(X_0,X_1,X_2;X) &=& T_n(X) T_n(X_0)+T_n(X_1) T_n(X_2),\\
\psi_n(X_0,X_1,X_2;X) &=& T_{2n}(X)+T_n(X_0)^2+T_n(X_1)^2+T_n(X_2)^2+
T_n(X)T_n(X_0)T_n(X_1)T_n(X_2).
\end{eqnarray*}
To each $i\in \Z/3\Z$ we associate an $\F[X]$-algebra automorphism $\ubar i$ of $\F[X_0,X_1,X_2,X]$ with
\begin{gather*}
X^{\ubar i}_j= X_{i+j}
\qquad \quad
\hbox{for all $j\in \Z/3\Z$}.
\end{gather*}

\begin{thm}\label{thm:poly}
For each $n\in \N$ there exist unique polynomials $\Phi_n(X_0,X_1,X_2;X)$, $\Psi_n(X_0,X_1,X_2;X)$ over $\Z$ satisfying
\begin{gather*}
\Phi_n(
\phi_m^{\ubar 0},
\phi_m^{\ubar 1},
\phi_m^{\ubar 2};
\psi_m)
=
\phi_{mn},
\qquad \quad
\Psi_n(
\phi_m^{\ubar 0},
\phi_m^{\ubar 1},
\phi_m^{\ubar 2};
\psi_m)
=
\psi_{mn}
\qquad \quad
\hbox{for all $m\in \N$}.
\end{gather*}
\end{thm}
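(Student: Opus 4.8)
The plan is to fix $n$, reduce the statement to the single case $m=1$, and then show that $\phi_n$ and $\psi_n$ lie in the subring generated by $\alpha:=\phi_1^{\ubar 0}=XX_0+X_1X_2$, $\beta:=\phi_1^{\ubar 1}=XX_1+X_2X_0$, $\gamma:=\phi_1^{\ubar 2}=XX_2+X_0X_1$ and $\Omega:=\psi_1=X^2+X_0^2+X_1^2+X_2^2+XX_0X_1X_2-2$ of $\Z[X_0,X_1,X_2,X]$.

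\emph{Reduction to $m=1$.} Let $\tau_m$ be the $\Z$-algebra endomorphism of $\Z[X_0,X_1,X_2,X]$ with $\tau_m(X_j)=T_m(X_j)$ and $\tau_m(X)=T_m(X)$. Using $T_k\circ T_m=T_{km}$ (Lemma~\ref{lem:Tchara}(ii)) one gets $\tau_m(\phi_k^{\ubar i})=\phi_{km}^{\ubar i}$ and $\tau_m(\psi_k)=\psi_{km}$ for all $i,k$. Hence, once we exhibit $\Phi_n,\Psi_n\in\Z[X_0,X_1,X_2,X]$ with $\Phi_n(\alpha,\beta,\gamma;\Omega)=\phi_n$ and $\Psi_n(\alpha,\beta,\gamma;\Omega)=\psi_n$, applying $\tau_m$ (which fixes the integer coefficients of $\Phi_n,\Psi_n$ and sends $\alpha,\beta,\gamma,\Omega$ to $\phi_m^{\ubar 0},\phi_m^{\ubar 1},\phi_m^{\ubar 2},\psi_m$) yields $\Phi_n(\phi_m^{\ubar 0},\phi_m^{\ubar 1},\phi_m^{\ubar 2};\psi_m)=\phi_{mn}$ and the analogous identity for $\Psi_n$, for arbitrary $m$. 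So it suffices to treat $m=1$, i.e. to show $\phi_n,\psi_n\in\Z[\alpha,\beta,\gamma,\Omega]$, together with uniqueness.

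\emph{Existence.} Pass to Laurent variables by writing $X_j=a_j+a_j^{-1}$ and $X=t+t^{-1}$, so that $T_k(X_j)=a_j^k+a_j^{-k}$ and $T_k(X)=t^k+t^{-k}$ by Lemma~\ref{lem:Tchara}(i). For $n\ge1$ the quantity $\phi_n$ is then the $n$-th power sum of the $8$-element multiset $\mathcal{M}=\{t^{\pm1}a_0^{\pm1}\}\cup\{a_1^{\pm1}a_2^{\pm1}\}$, whose characteristic polynomial $\prod_{w\in\mathcal{M}}(W-w)$ is the product of the reciprocal quartics $(W^4-XX_0W^3+(X^2+X_0^2-2)W^2-XX_0W+1)$ and $(W^4-X_1X_2W^3+(X_1^2+X_2^2-2)W^2-X_1X_2W+1)$. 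Expanding this and using $\Omega=X^2+X_0^2+X_1^2+X_2^2+XX_0X_1X_2-2$, $\beta\gamma=XX_0(X_1^2+X_2^2)+X_1X_2(X^2+X_0^2)$ and $\beta^2+\gamma^2=(X^2+X_0^2)(X_1^2+X_2^2)+4XX_0X_1X_2$ gives
\begin{gather*}
\prod_{w\in\mathcal{M}}(W-w)=W^8-\alpha W^7+(\Omega-2)W^6-(\beta\gamma-\alpha)W^5+(\beta^2+\gamma^2-2\Omega+2)W^4\\
-(\beta\gamma-\alpha)W^3+(\Omega-2)W^2-\alpha W+1,
\end{gather*}
a monic polynomial over $\Z[\alpha,\beta,\gamma,\Omega]$; since $\phi_0=8$, Newton's identities force $\phi_n\in\Z[\alpha,\beta,\gamma,\Omega]$ for all $n$. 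For $\psi_n$ I would argue the same way: for $n\ge1$, $\psi_n-6$ is the $n$-th power sum of the $24$-element multiset $\mathcal{N}=\{t^{\pm2},a_0^{\pm2},a_1^{\pm2},a_2^{\pm2}\}\cup\{t^{\epsilon_0}a_0^{\epsilon_1}a_1^{\epsilon_2}a_2^{\epsilon_3}:\epsilon\in\{\pm1\}^4\}$ (this, and $\psi_0=30$, comes from expanding $T_{2n}(X)+\sum_jT_n(X_j)^2+T_n(X)\prod_jT_n(X_j)$). Its characteristic polynomial factors as the octic $\prod_{c}(W^2-cW+1)$ with $c$ running over $T_2(X),T_2(X_0),T_2(X_1),T_2(X_2)$, times the degree-$16$ factor $\prod_{u,v}(W-uv)$ with $u,v$ running over the roots of the two reciprocal quartics above (the latter a resultant, hence with coefficients polynomial in those of the quartics); carrying out the symmetric-function bookkeeping one checks that all of its coefficients again lie in $\Z[\alpha,\beta,\gamma,\Omega]$ (for instance the $W^{23}$-coefficient is $6-\Omega$, matching $\psi_1=\Omega$), and Newton's identities then give $\psi_n-6\in\Z[\alpha,\beta,\gamma,\Omega]$.

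\emph{Uniqueness and the main obstacle.} For uniqueness I would check that $\alpha,\beta,\gamma,\Omega$ are algebraically independent over $\mathbb{Q}$, e.g. by evaluating $\det\,\partial(\alpha,\beta,\gamma,\Omega)/\partial(X,X_0,X_1,X_2)$ at $(X,X_0,X_1,X_2)=(1,0,0,0)$, where it equals $-2\neq0$. Thus $\Z[\alpha,\beta,\gamma,\Omega]$ is a polynomial ring on these generators, the $\Phi_n,\Psi_n$ above are uniquely determined, and by the reduction they are the unique polynomials satisfying the identities for all $m$. The substantive difficulty is the $\psi_n$ step: verifying that every coefficient of the degree-$24$ characteristic polynomial of $\mathcal{N}$ genuinely lies in the polynomial subring $\Z[\alpha,\beta,\gamma,\Omega]$, even though the natural building blocks — the elementary symmetric functions of $X,X_0,X_1,X_2$ and monomials such as $XX_0$ — individually do not. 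This is a longer (though routine) computation than the octic case used for $\phi_n$, and it is the one place where genuine bookkeeping is needed.
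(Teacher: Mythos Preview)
Your route is genuinely different from the paper's, and where it is complete it is more elementary. The paper does not argue directly in $\Z[X_0,X_1,X_2,X]$ at all: it sets $\F=\C$, invokes Proposition~\ref{prop:TABC} (whose proof runs through the embedding $\natural:\triangle\to\F[a^{\pm1},b^{\pm1},c^{\pm1}]\otimes_\F U_q(\mathfrak{sl}_2)$ and the appendix tables) to produce $\Phi,\Psi$ over $\C$, and only then upgrades the coefficients to $\Z$ via the leading-monomial Lemma~\ref{lem:leading}. Your reduction to $m=1$ and your uniqueness argument are essentially the content of Lemmas~\ref{lem:Tijnew} and~\ref{lem:indp4}. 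For $\phi_n$ your argument is correct and self-contained: the octic you display really does have all its coefficients in $\Z[\alpha,\beta,\gamma,\Omega]$, and Newton's identities finish the job. This is a genuine gain over the paper's indirect treatment.

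The gap is in the $\psi_n$ step, and it is larger than ``routine bookkeeping''. Neither of the two factors you write down for the degree-$24$ polynomial has coefficients in $\Z[\alpha,\beta,\gamma,\Omega]$. For instance the $W^{7}$-coefficient of your octic factor is $8-\sum_i X_i^2$, and $\sum_i X_i^2=\Omega+2-XX_0X_1X_2$; but $XX_0X_1X_2\notin\Z[\alpha,\beta,\gamma,\Omega]$, since its $X$-degree is~$1$ with leading coefficient $X_0X_1X_2$, and by the same leading-term reasoning as Lemma~\ref{lem:leading} no $\Z$-linear combination of monomials $\alpha^{i_0}\beta^{i_1}\gamma^{i_2}\Omega^j$ can produce that. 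Likewise the $W^{15}$-coefficient of the degree-$16$ factor is $-XX_0X_1X_2$. So the membership of the product in $\Z[\alpha,\beta,\gamma,\Omega]$ relies on genuine cancellations between the factors (your $W^{23}$ check exhibits the first such cancellation), and must be verified for all $23$ nontrivial coefficients. Until that is carried out, or replaced by a structural argument identifying a subring of $\Z[p,q,r,s]$ (with $p=XX_0$, $q=X^2+X_0^2-2$, $r=X_1X_2$, $s=X_1^2+X_2^2-2$) that both contains the $e_k(\mathcal N)$ and sits inside $\Z[\alpha,\beta,\gamma,\Omega]$, the existence of $\Psi_n$ is not established by your method.
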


\begin{thm}\label{thm:relZDelta}
The relation
\begin{align*}
&q^{\dbar}
\Phi_{\dbar}^{\ubar 0}(\alpha,\beta,\gamma;\Omega) T_{\dbar}(A)
+q^{\dbar}
\Phi_{\dbar}^{\ubar 1}(\alpha,\beta,\gamma;\Omega) T_{\dbar}(B)
+q^{\dbar}
\Phi_{\dbar}^{\ubar 2}(\alpha,\beta,\gamma;\Omega) T_{\dbar}(C)
\\
&\quad=\;\;
q^{\dbar} T_{\dbar}(A) T_{\dbar}(B) T_{\dbar}(C)
+T_{\dbar}(A)^2
+ T_{\dbar}(B)^2
+ T_{\dbar}(C)^2
+ \Psi_{\dbar}(\alpha,\beta,\gamma;\Omega)
-2
\end{align*}
holds in $Z(\triangle)$.
\end{thm}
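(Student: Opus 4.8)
The plan is to apply Terwilliger's injective embedding $\natural$ of Lemma~\ref{lem:naturalABC}; since $\natural$ is injective it suffices to verify the asserted identity inside $\F[a^{\pm 1},b^{\pm 1},c^{\pm 1}]\otimes_\F U$. First I must dispose of the case $\dbar=1$ (equivalently $q^2=1$), where $\natural$ is not available: there $T_1(A)=A$, the three elements $\Phi_1^{\ubar i}(\alpha,\beta,\gamma;\Omega)$ are $\alpha,\beta,\gamma$, one has $\Psi_1(\alpha,\beta,\gamma;\Omega)=\Omega$, and $q=q^{-1}=\pm 1$, so the assertion is merely a rearrangement of the defining relation (\ref{e:Omega}) of $\Omega$ together with the centrality of $\alpha,\beta,\gamma$. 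Hence I may assume $\dbar\geq 2$, so that $q^2\neq 1$.

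Next I would compute the $\natural$-images of every generator occurring in the relation. For $T_{\dbar}(A),T_{\dbar}(B),T_{\dbar}(C)$ these are supplied by Theorem~\ref{thm:embedd}; I abbreviate the expressions (\ref{e:TA})--(\ref{e:TC}) by $\tilde A,\tilde B,\tilde C$. For the other terms, the key observation is that evaluating the polynomials $\phi_1^{\ubar 0},\phi_1^{\ubar 1},\phi_1^{\ubar 2},\psi_1$ of Theorem~\ref{thm:poly} at $X_0=a+a^{-1}$, $X_1=b+b^{-1}$, $X_2=c+c^{-1}$, $X=\Lambda$ yields exactly $\alpha^\natural,\beta^\natural,\gamma^\natural,\Omega^\natural$ of Lemma~\ref{lem:naturalABC} (using $T_2(\Lambda)=\Lambda^2-2$). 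Combining this with Theorem~\ref{thm:poly} in the case $m=1$, with Lemma~\ref{lem:Tchara}(i), with $T_{2\dbar}(\Lambda)=T_2(T_{\dbar}(\Lambda))=T_{\dbar}(\Lambda)^2-2$, and with the compatibility of the cyclic automorphisms $\ubar i$ with this evaluation, I obtain, writing $P=a^{\dbar}+a^{-\dbar}$, $Q=b^{\dbar}+b^{-\dbar}$, $R=c^{\dbar}+c^{-\dbar}$, $L=T_{\dbar}(\Lambda)$,
\[
\Phi_{\dbar}^{\ubar 0}(\alpha,\beta,\gamma;\Omega)^\natural=LP+QR,\quad
\Phi_{\dbar}^{\ubar 1}(\alpha,\beta,\gamma;\Omega)^\natural=LQ+RP,\quad
\Phi_{\dbar}^{\ubar 2}(\alpha,\beta,\gamma;\Omega)^\natural=LR+PQ,
\]
\[
\Psi_{\dbar}(\alpha,\beta,\gamma;\Omega)^\natural=L^2+P^2+Q^2+R^2+LPQR-2.
\]

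Substituting these images, the relation reduces to the single identity
\[
q^{\dbar}(LP+QR)\tilde A+q^{\dbar}(LQ+RP)\tilde B+q^{\dbar}(LR+PQ)\tilde C=q^{\dbar}\tilde A\tilde B\tilde C+\tilde A^2+\tilde B^2+\tilde C^2+L^2+P^2+Q^2+R^2+LPQR-4
\]
in the commutative subring of $\F[a^{\pm 1},b^{\pm 1},c^{\pm 1}]\otimes_\F U$ generated by $a^{\pm\dbar},b^{\pm\dbar},c^{\pm\dbar}$ and $x^{\dbar},y^{\pm\dbar},z^{\dbar}$, and checking it is the heart of the argument. The direct route is to rewrite $\tilde A,\tilde B,\tilde C$ and $L$ in terms of $e^{\dbar},k^{\pm\dbar},f^{\dbar}$ via Lemma~\ref{lem:ZU} and the Concini--Kac relation (\ref{e:TLambda}) (so that $L=(q-q^{-1})^{2\dbar}e^{\dbar}f^{\dbar}+q^{\dbar}(k^{\dbar}+k^{-\dbar})$), and then expand both sides and match coefficients, using $q^{2\dbar}=1$ throughout---long but entirely mechanical. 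A more conceptual route, useful for organizing and cross-checking the computation, is to note from (\ref{e:TA})--(\ref{e:TC}) and Lemma~\ref{lem:naturalABC} that $\tilde A,\tilde B,\tilde C$, $L$, $LP+QR$ and $L^2+P^2+Q^2+R^2+LPQR-2$ are precisely the images of $A^\natural,B^\natural,C^\natural,\Lambda,\alpha^\natural,\Omega^\natural$ under the ``Frobenius'' substitution $q\mapsto q^{\dbar}$, $a\mapsto a^{\dbar}$, $b\mapsto b^{\dbar}$, $c\mapsto c^{\dbar}$, $x\mapsto x^{\dbar}$, $y\mapsto y^{\dbar}$, $z\mapsto z^{\dbar}$ (for $\Lambda\mapsto L$ one uses $q^{2\dbar}=1$ and Lemma~\ref{lem:ZU}); since $q^{\dbar}=\pm 1$ and $x^{\dbar},y^{\pm\dbar},z^{\dbar}$ are central, this substitution respects the equitable relations of $U$ at the parameter $q^{\dbar}$ (which collapse to commutativity), so applying it to the identity $\Omega^\natural=q^2+q^{-2}-qA^\natural B^\natural C^\natural-q^2(A^\natural)^2-q^{-2}(B^\natural)^2-q^2(C^\natural)^2+qA^\natural\alpha^\natural+q^{-1}B^\natural\beta^\natural+qC^\natural\gamma^\natural$ obtained from (\ref{e:Omega}) delivers the displayed identity at once.

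The main obstacle is therefore the sheer size of this final identity (products such as $\tilde A\tilde B\tilde C$ are bulky), rather than any conceptual difficulty; and if one follows the conceptual route a little care is needed to justify that the identity for $\Omega^\natural$, being a formal consequence of the equitable relations, remains valid after specializing the parameter to $q^{\dbar}$. Once the identity is verified, the injectivity of $\natural$ returns it to $Z(\triangle)$, completing the proof.
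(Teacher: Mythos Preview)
Your argument is correct and, in its direct route, lands on essentially the same computation the paper carries out (the appendix tables). The organization, however, is genuinely different. The paper first invokes Theorem~\ref{thm:basisZDelta} to know \emph{a priori} that $T_{\dbar}(A)T_{\dbar}(B)T_{\dbar}(C)$ expands in the given form with undetermined coefficients $\Phi_A,\Phi_B,\Phi_C,\Psi\in\F[\alpha,\beta,\gamma,\Omega]$ (Proposition~\ref{prop:TABC}), then reads off only a few coefficients under~$\natural$ to pin these down, and finally identifies them with $\Phi_{\dbar},\Psi_{\dbar}$ via Theorem~\ref{thm:poly}. You instead take Theorem~\ref{thm:poly} as input, compute $\Phi_{\dbar}^{\ubar i}(\alpha,\beta,\gamma;\Omega)^\natural$ and $\Psi_{\dbar}(\alpha,\beta,\gamma;\Omega)^\natural$ directly, and reduce to a single polynomial identity in the commutative ring generated by $a^{\pm\dbar},b^{\pm\dbar},c^{\pm\dbar},x^{\dbar},y^{\pm\dbar},z^{\dbar}$. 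Your route avoids any appeal to Theorem~\ref{thm:basisZDelta} for this step, at the cost of having to verify the whole identity rather than solve for a few unknowns.

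On the ``Frobenius substitution'' shortcut: the heuristic is sound and the conclusion is right, but the justification you flag does need to be spelled out. The subtlety is that $q\in\F$ is a scalar, so ``$q\mapsto q^{\dbar}$'' is not an $\F$-algebra map; one must lift to the integral form. Concretely, the equitable relations $QXY-Q^{-1}YX=Q-Q^{-1}$ etc.\ and the formulas of Lemma~\ref{lem:naturalABC} (including $\Lambda=QX+Q^{-1}Y+QZ-QXYZ$) make sense over $\Z[Q^{\pm1}]$ with no division, and the identity coming from~(\ref{e:Omega}) under~$\natural$ holds there; then specializing along the ring map $Q\mapsto q^{\dbar}$, $X\mapsto x^{\dbar}$, $Y\mapsto y^{\dbar}$, $Z\mapsto z^{\dbar}$ (well-defined since the relations become $0=0$) yields your displayed identity. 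If you present the conceptual route, this lift-and-specialize step should be made explicit.
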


Before launching into the proof of Theorems~\ref{thm:poly} and \ref{thm:relZDelta} we make some preparations.

\begin{lem}\label{lem:leading}
For any $r_0$, $r_1$, $r_2$, $s \in \N$, if there are $i_0$, $i_1$, $i_2$, $j\in\N$
such that the degree of
\begin{gather*}
(\phi_1^{\ubar 0})^{i_0}
(\phi_1^{\ubar 1})^{i_1}
(\phi_1^{\ubar 2})^{i_2}
(\psi_1)^{j}
\end{gather*}
as a polynomial of $X$ is equal to $r_0+r_1+r_2+2s$ with leading coefficient
$
X_0^{r_0}X_1^{r_1}X_2^{r_2}
$, then $(i_0,i_1,i_2,j)=(r_0,r_1,r_2,s)$.
\end{lem}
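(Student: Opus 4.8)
The plan is to reduce everything to a comparison of leading terms in $\F[X_0,X_1,X_2][X]$, regarded as a polynomial ring in the single variable $X$ over the integral domain $R=\F[X_0,X_1,X_2]$.

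First I would write the four factors explicitly. From $T_1(X)=X$ and $T_2(X)=X^2-2$ one gets $\phi_1=XX_0+X_1X_2$ and $\psi_1=X^2+X_0X_1X_2\,X+(X_0^2+X_1^2+X_2^2-2)$; note that $\psi_1$ is symmetric in $X_0,X_1,X_2$ and hence fixed by every $\ubar i$, while $\phi_1^{\ubar 0}=XX_0+X_1X_2$, $\phi_1^{\ubar 1}=XX_1+X_0X_2$, $\phi_1^{\ubar 2}=XX_2+X_0X_1$. So, as elements of $R[X]$, the factors $\phi_1^{\ubar 0}$, $\phi_1^{\ubar 1}$, $\phi_1^{\ubar 2}$ have degree $1$ in $X$ with leading coefficients $X_0$, $X_1$, $X_2$ respectively, and $\psi_1$ has degree $2$ in $X$ with leading coefficient $1$.

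Next, since $R$ is a domain, the leading coefficient (in $X$) of a product of elements of $R[X]$ is the product of the leading coefficients; in particular it is nonzero and no cancellation of top-degree terms occurs. Hence $(\phi_1^{\ubar 0})^{i_0}(\phi_1^{\ubar 1})^{i_1}(\phi_1^{\ubar 2})^{i_2}(\psi_1)^{j}$ has degree exactly $i_0+i_1+i_2+2j$ in $X$ and leading coefficient $X_0^{i_0}X_1^{i_1}X_2^{i_2}$. Now I would invoke the hypotheses: the leading coefficient being $X_0^{r_0}X_1^{r_1}X_2^{r_2}$ forces the monomial identity $X_0^{i_0}X_1^{i_1}X_2^{i_2}=X_0^{r_0}X_1^{r_1}X_2^{r_2}$ in $R$, whence $i_0=r_0$, $i_1=r_1$, $i_2=r_2$ by matching exponents of the three independent variables; the degree being $r_0+r_1+r_2+2s$ then collapses to $2j=2s$, i.e. $j=s$. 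This yields $(i_0,i_1,i_2,j)=(r_0,r_1,r_2,s)$.

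I do not anticipate a real obstacle: the argument is pure bookkeeping of $X$-degrees and $X$-leading coefficients. The only places demanding a little care are the correct computation of the leading coefficients of the four building blocks — in particular that $\psi_1$ is \emph{monic} of degree $2$ in $X$, contributing leading coefficient $1$ rather than a polynomial in the $X_i$ — and the (routine) observation that $\F[X_0,X_1,X_2]$ being an integral domain is exactly what lets leading coefficients multiply without cancellation. This lemma is precisely the tool needed for the uniqueness half of Theorem~\ref{thm:poly}, where $\phi_{mn}$ and $\psi_{mn}$ get re-expressed through $\phi_m^{\ubar 0}$, $\phi_m^{\ubar 1}$, $\phi_m^{\ubar 2}$, $\psi_m$.
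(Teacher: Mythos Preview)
Your proof is correct and is exactly what the paper's one-line proof (``Expand $(\phi_1^{\ubar 0})^{i_0}(\phi_1^{\ubar 1})^{i_1}(\phi_1^{\ubar 2})^{i_2}(\psi_1)^j$ directly'') means when unpacked: compute the four building blocks, read off their $X$-degrees and leading coefficients, multiply them using that $\F[X_0,X_1,X_2]$ is a domain, and match against the hypothesis. Your write-up simply makes explicit the bookkeeping the paper leaves to the reader.
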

\begin{proof}
Expand $(\phi_1^{\ubar 0})^{i_0}
(\phi_1^{\ubar 1})^{i_1}
(\phi_1^{\ubar 2})^{i_2}
(\psi_1)^j$ directly.
\end{proof}

\begin{lem}\label{lem:Tijnew}
For each $n\in \N$ there exists a unique $\F$-algebra isomorphism $\F[X_0,X_1,X_2,X]\to \F[T_n(X_0),T_n(X_1),T_n(X_2),T_n(X)]$ that sends
\begin{gather*}
X_0
\;\;\mapsto\;\;
T_n(X_0),
\qquad
X_1
\;\;\mapsto\;\;
T_n(X_1),
\qquad
X_2
\;\;\mapsto\;\;
T_n(X_2),
\qquad
X
\;\;\mapsto\;\;
T_n(X).
\end{gather*}
Moreover this isomorphism maps $\phi_m$ to $\phi_{mn}$ and $\psi_m$ to $\psi_{mn}$ for all $m\in \N$.
\end{lem}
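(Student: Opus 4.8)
The plan is to realize the map as the substitution homomorphism determined by $X_j\mapsto T_n(X_j)$ and $X\mapsto T_n(X)$, and then read off every assertion from the universal property of polynomial rings together with the composition law Lemma~\ref{lem:Tchara}(ii) for the normalized Chebyshev polynomials. We may assume $n\ge 1$, the case $n=0$ being degenerate (each $T_0$ is the constant $2$). Since $\F[X_0,X_1,X_2,X]$ is the free commutative $\F$-algebra on $X_0,X_1,X_2,X$, there is a unique $\F$-algebra homomorphism $\varphi_n\colon\F[X_0,X_1,X_2,X]\to\F[X_0,X_1,X_2,X]$ with $X_j\mapsto T_n(X_j)$ for $j\in\{0,1,2\}$ and $X\mapsto T_n(X)$; by construction its image is exactly the subalgebra $\F[T_n(X_0),T_n(X_1),T_n(X_2),T_n(X)]$, and any $\F$-algebra homomorphism taking the prescribed values on the generators equals $\varphi_n$. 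Thus existence and uniqueness are immediate, and it remains only to see that $\varphi_n$ is injective, i.e. that $T_n(X_0),T_n(X_1),T_n(X_2),T_n(X)$ are algebraically independent over $\F$.

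This algebraic-independence assertion is the one step that is not purely formal, and it is standard. By (\ref{e:Tn}), for $n\ge 1$ each $T_n(X_j)$ is a non-constant polynomial in the single variable $X_j$ alone, and $T_n(X)$ in $X$ alone, the four variables being distinct; hence the substitution homomorphism factors as the tensor product over $\F$ of the four injections $\F[Y_j]\hookrightarrow\F[X_j]$, $Y_j\mapsto T_n(X_j)$ (each injective because a non-constant polynomial generates a polynomial subring of a domain), and a tensor product over a field of injective maps is injective. Alternatively, since each $T_n(X_j)$ is monic of degree $n$, a monomial $X_0^{a_0}X_1^{a_1}X_2^{a_2}X^{a}$ is sent by $\varphi_n$ to a polynomial whose homogeneous component of top total degree is the single monomial $X_0^{na_0}X_1^{na_1}X_2^{na_2}X^{na}$; distinct source monomials therefore have images with distinct top monomials, so no nontrivial $\F$-linear relation among the images can hold, and $\varphi_n$ is injective. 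Either way $\varphi_n$ is the desired isomorphism onto $\F[T_n(X_0),T_n(X_1),T_n(X_2),T_n(X)]$.

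For the final clause I would apply $\varphi_n$ term by term to the defining expressions for $\phi_m$ and $\psi_m$. Because $\varphi_n$ is an $\F$-algebra homomorphism it commutes with evaluation of univariate polynomials, so $\varphi_n(T_m(X_j))=T_m(T_n(X_j))$ and $\varphi_n(T_m(X))=T_m(T_n(X))$, and Lemma~\ref{lem:Tchara}(ii) collapses these to $T_{mn}(X_j)$ and $T_{mn}(X)$ respectively; likewise $\varphi_n(T_{2m}(X))=T_{2m}(T_n(X))=T_{2mn}(X)$. Substituting into $\phi_m(X_0,X_1,X_2;X)=T_m(X)T_m(X_0)+T_m(X_1)T_m(X_2)$ gives $T_{mn}(X)T_{mn}(X_0)+T_{mn}(X_1)T_{mn}(X_2)=\phi_{mn}(X_0,X_1,X_2;X)$, and the identical substitution in the formula for $\psi_m$ yields $\psi_{mn}$. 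The only real obstacle, as noted, is the algebraic-independence claim, and even that is routine; the rest is bookkeeping with the identity $T_a\circ T_b=T_{ab}$.
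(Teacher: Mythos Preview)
Your proof is correct and follows essentially the same route as the paper: construct the substitution homomorphism via the universal property, verify it is an isomorphism by checking algebraic independence of the $T_n(X_j)$, and then use Lemma~\ref{lem:Tchara}(ii) to compute the images of $\phi_m$ and $\psi_m$. The only noteworthy difference is in the justification of algebraic independence: the paper simply cites Lemma~\ref{lem:Tchara}(i), whereas you supply the explicit leading-monomial (equivalently, tensor-product) argument; your version is more self-contained and arguably cleaner. Your remark that the case $n=0$ is degenerate is apt---$T_0\equiv 2$ so no isomorphism can exist---and the paper is equally silent on this boundary case.
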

\begin{proof}
Lemma~\ref{lem:Tchara}(i) implies the algebraic independence of $T_n(X_0)$, $T_n(X_1)$, $T_n(X_2)$, $T_n(X)$ over $\F$. Therefore the desired isomorphism exists.
Applying Lemma~\ref{lem:Tchara}(ii) it yields that
\begin{eqnarray*}
\phi_m(T_n(X_0),T_n(X_1),T_n(X_2);T_n(X))
&=&\phi_{mn}(X_0,X_1,X_2;X),\\
\psi_m(T_n(X_0),T_n(X_1),T_n(X_2);T_n(X))
&=&\psi_{mn}(X_0,X_1,X_2;X).
\end{eqnarray*}
This lemma follows.
\end{proof}

\begin{lem}\label{lem:indp4}
For each $n\in \N^*$ the polynomials $\phi_n^{\ubar 0}$, $\phi_n^{\ubar 1}$, $\phi_n^{\ubar 2}$, $\psi_n$ are algebraically independent over $\F$.
\end{lem}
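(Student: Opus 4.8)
The plan is to reduce to the case $n=1$, where the two preceding lemmas are tailored, and then argue by extracting the top-degree part in the variable $X$.

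\emph{Reduction to $n=1$.} By Lemma~\ref{lem:Tijnew} with $m=1$ there is an injective $\F$-algebra homomorphism $\sigma\colon\F[X_0,X_1,X_2,X]\to\F[X_0,X_1,X_2,X]$ sending $X_j\mapsto T_n(X_j)$ and $X\mapsto T_n(X)$, with $\sigma(\phi_1)=\phi_n$ and $\sigma(\psi_1)=\psi_n$. Since $\sigma$ commutes with each cyclic shift $\ubar i$ (both act by substituting into the variables, and they act on ``different parts''), one gets $\sigma(\phi_1^{\ubar i})=\phi_n^{\ubar i}$ for $i\in\Z/3\Z$ as well. An injective ring homomorphism carries an algebraically independent family to an algebraically independent family, so it is enough to prove that $\phi_1^{\ubar 0}$, $\phi_1^{\ubar 1}$, $\phi_1^{\ubar 2}$, $\psi_1$ are algebraically independent over $\F$.

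\emph{The case $n=1$.} View $\F[X_0,X_1,X_2,X]=\F[X_0,X_1,X_2][X]$ and write $\deg_XP$ and $\mathrm{lc}_XP\in\F[X_0,X_1,X_2]$ for the $X$-degree and the corresponding leading coefficient of a nonzero $P$. From $\phi_1^{\ubar 0}=XX_0+X_1X_2$, $\phi_1^{\ubar 1}=XX_1+X_2X_0$, $\phi_1^{\ubar 2}=XX_2+X_0X_1$ and $\psi_1=X^2+XX_0X_1X_2+X_0^2+X_1^2+X_2^2-2$ we read off that the first three have $X$-degree $1$ with $\mathrm{lc}_X$ equal to $X_0,X_1,X_2$, while $\psi_1$ has $X$-degree $2$ with $\mathrm{lc}_X=1$; since $\F[X_0,X_1,X_2]$ is a domain, $\deg_X$ and $\mathrm{lc}_X$ are multiplicative, so $(\phi_1^{\ubar 0})^{i_0}(\phi_1^{\ubar 1})^{i_1}(\phi_1^{\ubar 2})^{i_2}\psi_1^{j}$ has $X$-degree $i_0+i_1+i_2+2j$ and $\mathrm{lc}_X=X_0^{i_0}X_1^{i_1}X_2^{i_2}$. (This is exactly Lemma~\ref{lem:leading}, which also records that $(i_0,i_1,i_2,j)$ is recovered from this data.) Now suppose $F\in\F[Y_0,Y_1,Y_2,Y]$ is nonzero with $F(\phi_1^{\ubar 0},\phi_1^{\ubar 1},\phi_1^{\ubar 2},\psi_1)=0$; write $F=\sum c_{i_0i_1i_2j}Y_0^{i_0}Y_1^{i_1}Y_2^{i_2}Y^j$ and let $N$ be the maximum of $i_0+i_1+i_2+2j$ over tuples with $c_{i_0i_1i_2j}\neq0$. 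Extracting the coefficient of $X^N$ from $\sum c_{i_0i_1i_2j}(\phi_1^{\ubar 0})^{i_0}(\phi_1^{\ubar 1})^{i_1}(\phi_1^{\ubar 2})^{i_2}\psi_1^{j}=0$ yields
\[
\sum_{i_0+i_1+i_2+2j=N}c_{i_0i_1i_2j}\,X_0^{i_0}X_1^{i_1}X_2^{i_2}=0
\]
in $\F[X_0,X_1,X_2]$. For a fixed $N$ the index $j$ is determined by $(i_0,i_1,i_2)$, so the monomials here are pairwise distinct; hence all these $c_{i_0i_1i_2j}$ vanish, contradicting the maximality of $N$. Thus $F=0$, as desired.

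There is no serious computational obstacle. The two points needing care are the commutation of $\sigma$ with the shifts $\ubar i$ in the reduction step, and the bookkeeping in Lemma~\ref{lem:leading} that passing to the leading behaviour in $X$ loses no information (the exponent $j$ of $\psi_1$ is recovered from the exponents of $\phi_1^{\ubar 0},\phi_1^{\ubar 1},\phi_1^{\ubar 2}$, so ``top $X$-degree'' terms cannot cancel). An alternative is to skip the reduction and run the same top-$X$-degree extraction for general $n$ directly, using that $T_k(X)$ is monic of degree $k$, so $\phi_n^{\ubar i}$ has $\mathrm{lc}_X=T_n(X_i)$ and $\psi_n$ has $\mathrm{lc}_X=1$, and then invoking the algebraic independence of $T_n(X_0),T_n(X_1),T_n(X_2)$ over $\F$ that comes from Lemma~\ref{lem:Tchara}(i).
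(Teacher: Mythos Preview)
Your proof is correct and follows essentially the same route as the paper: prove the case $n=1$ by extracting the leading behaviour in $X$ via Lemma~\ref{lem:leading}, then transport to general $n$ via the isomorphism of Lemma~\ref{lem:Tijnew}. Your write-up is slightly more explicit than the paper's in two places---you check that the isomorphism commutes with the cyclic shifts $\ubar i$ (the paper leaves this implicit), and you phrase the $n=1$ step as extracting the full $X^N$-coefficient rather than a single monomial---but these are cosmetic differences.
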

\begin{proof}
Consider the equation
\begin{gather}\label{e:indp1}
\sum_{i_0,i_1,i_2,j\in \N}
c_{i_0,i_1,i_2,j}
(\phi_1^{\ubar 0})^{i_0}
(\phi_1^{\ubar 1})^{i_1}
(\phi_1^{\ubar 2})^{i_2}
(\psi_1)^j=0
\end{gather}
with $c_{i_0,i_1,i_2,j}\in \F$ and almost all zero. Suppose there exist  $r_0,r_1,r_2,s\in \N$ with $c_{r_0,r_1,r_2,s}\not=0$ and other coefficients $c_{i_0,i_1,i_2,j}\not=0$ only if $r_0+r_1+r_2+2s\geq i_0+i_1+i_2+2j$.
By Lemma~\ref{lem:leading} the coefficient of $X_0^{r_0}X_1^{r_1}X_2^{r_2}X^{r_0+r_1+r_2+2s}$ in the left-hand side of (\ref{e:indp1}) is equal to $c_{r_0,r_1,r_2,s}$, a contradiction to (\ref{e:indp1}). Therefore $c_{i_0,i_1,i_2,j}=0$ for all $i_0,i_1,i_2,j\in \N$. This shows that $\phi_1^{\ubar 0}$, $\phi_1^{\ubar 1}$, $\phi_1^{\ubar 2}$, $\psi_1$ are algebraically independent over $\F$. For any $n\in \N^*$ the algebraic independence of  $\phi_n^{\ubar 0}$, $\phi_n^{\ubar 1}$, $\phi_n^{\ubar 2}$, $\psi_n$ now follows from Lemma~\ref{lem:Tijnew}.
\end{proof}

\begin{prop}\label{prop:TABC}
There exist two unique polynomials $\Phi(X_0,X_1,X_2;X)$, $\Psi(X_0,X_1,X_2;X)$ over $\F$ satisfying
\begin{gather*}
\Phi(\phi_1^{\ubar 0},
\phi_1^{\ubar 1},
\phi_1^{\ubar 2};
\psi_1)=
\phi_{\dbar},
\qquad \quad
\Psi(\phi_1^{\ubar 0},
\phi_1^{\ubar 1},
\phi_1^{\ubar 2};
\psi_1)=
\psi_{\dbar}.
\end{gather*}
Moreover
\begin{align*}
&
q^{\dbar} \Phi^{\ubar 0}(\alpha,\beta,\gamma;\Omega) T_{\dbar}(A)
+q^{\dbar} \Phi^{\ubar 1}(\alpha,\beta,\gamma;\Omega) T_{\dbar}(B)
+q^{\dbar} \Phi^{\ubar 2}(\alpha,\beta,\gamma;\Omega) T_{\dbar}(C)
\\
&\quad=\;\;q^{\dbar}
T_{\dbar}(A) T_{\dbar}(B) T_{\dbar}(C)
+T_{\dbar}(A)^2
+ T_{\dbar}(B)^2
+ T_{\dbar}(C)^2
+\Psi(\alpha,\beta,\gamma;\Omega)
-2.
\end{align*}
\end{prop}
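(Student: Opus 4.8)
The plan is to prove the two assertions in turn: first the existence and uniqueness of $\Phi$ and $\Psi$, then the relation, the latter by descending along the injection $\natural$ of Lemma~\ref{lem:naturalABC}.

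\emph{Existence and uniqueness of $\Phi$, $\Psi$.} Uniqueness is immediate from Lemma~\ref{lem:indp4}: since $\phi_1^{\ubar 0},\phi_1^{\ubar 1},\phi_1^{\ubar 2},\psi_1$ are algebraically independent over $\F$, a polynomial in four variables is determined by the element of $\F[X_0,X_1,X_2,X]$ it produces under the substitution $(X_0,X_1,X_2,X)\mapsto(\phi_1^{\ubar 0},\phi_1^{\ubar 1},\phi_1^{\ubar 2},\psi_1)$. For existence I must check $\phi_{\dbar},\psi_{\dbar}\in\F[\phi_1^{\ubar 0},\phi_1^{\ubar 1},\phi_1^{\ubar 2},\psi_1]$. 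From the recurrence $XT_n(X)=T_{n+1}(X)+T_{n-1}(X)$ and the product-to-sum identity $T_m(X)T_n(X)=T_{m+n}(X)+T_{|m-n|}(X)$ (both consequences of Lemma~\ref{lem:Tchara}) a short calculation gives $(\phi_1^{\ubar i})^2=\phi_2^{\ubar i}+2\psi_1-4$ for each $i\in\Z/3\Z$, hence $\Phi_2$, and similar bookkeeping together with the isomorphism of Lemma~\ref{lem:Tijnew} produces $\Phi_3,\Psi_2,\Psi_3$ as displayed in the introduction. For general $n$ one sets up a recursion — multiplying $\phi_n^{\ubar 0}$ by $\phi_1^{\ubar 0}$ and by $\psi_1$ and regrouping the resulting Chebyshev products — expressing $\phi_{n+1}$ and $\psi_{n+1}$ as polynomials over $\F[\phi_1^{\ubar 0},\phi_1^{\ubar 1},\phi_1^{\ubar 2},\psi_1]$ in the $\phi_m^{\ubar j}$, $\psi_m$ with $m\le n$; an induction on $n$ then gives $\phi_n,\psi_n\in\F[\phi_1^{\ubar 0},\phi_1^{\ubar 1},\phi_1^{\ubar 2},\psi_1]$ for all $n$, in particular $n=\dbar$. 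Put $\Phi=\Phi_{\dbar}$, $\Psi=\Psi_{\dbar}$.

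\emph{Reduction of the relation via $\natural$.} Both sides of the asserted identity lie in $Z(\triangle)$, so by injectivity of $\natural$ it suffices to prove it after applying $\natural$. For $T_{\dbar}(A),T_{\dbar}(B),T_{\dbar}(C)$, Theorem~\ref{thm:embedd} supplies $T_{\dbar}(A)^\natural,T_{\dbar}(B)^\natural,T_{\dbar}(C)^\natural$ as in (\ref{e:TA})--(\ref{e:TC}), Laurent expressions in $a^{\pm\dbar},b^{\pm\dbar},c^{\pm\dbar}$ and $x^{\dbar},y^{\pm\dbar},z^{\dbar}$; by Lemma~\ref{lem:ZU}(i) the latter are central in $\F[a^{\pm1},b^{\pm1},c^{\pm1}]\otimes_\F U$, so all six of $a^{\pm\dbar},b^{\pm\dbar},c^{\pm\dbar},x^{\dbar},y^{\pm\dbar},z^{\dbar}$ commute with one another. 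For the central coefficients, introduce the $\F$-algebra homomorphism $\sigma\colon\F[X_0,X_1,X_2,X]\to\F[a^{\pm1},b^{\pm1},c^{\pm1}]\otimes_\F\F[\Lambda]$ with $X_0\mapsto a+a^{-1}$, $X_1\mapsto b+b^{-1}$, $X_2\mapsto c+c^{-1}$, $X\mapsto\Lambda$; Lemma~\ref{lem:naturalABC} gives $\sigma(\phi_1^{\ubar 0})=\alpha^\natural$, $\sigma(\phi_1^{\ubar 1})=\beta^\natural$, $\sigma(\phi_1^{\ubar 2})=\gamma^\natural$, $\sigma(\psi_1)=\Omega^\natural$. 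Applying the automorphism $\ubar i$ to the defining identity of $\Phi$ (it permutes the $\phi_1^{\ubar j}$ and fixes $\psi_1$) yields $\Phi^{\ubar i}(\phi_1^{\ubar 0},\phi_1^{\ubar 1},\phi_1^{\ubar 2};\psi_1)=\phi_{\dbar}^{\ubar i}$, and likewise $\Psi(\phi_1^{\ubar 0},\phi_1^{\ubar 1},\phi_1^{\ubar 2};\psi_1)=\psi_{\dbar}$; applying $\sigma$ to these and using Lemma~\ref{lem:Tchara}(i) gives
\[
\Phi^{\ubar 0}(\alpha,\beta,\gamma;\Omega)^\natural=T_{\dbar}(\Lambda)(a^{\dbar}+a^{-\dbar})+(b^{\dbar}+b^{-\dbar})(c^{\dbar}+c^{-\dbar}),
\]
with cyclic permutations for $\Phi^{\ubar 1},\Phi^{\ubar 2}$, and
\[
\Psi(\alpha,\beta,\gamma;\Omega)^\natural=T_{\dbar}(\Lambda)^2-2+(a^{\dbar}+a^{-\dbar})^2+(b^{\dbar}+b^{-\dbar})^2+(c^{\dbar}+c^{-\dbar})^2+T_{\dbar}(\Lambda)(a^{\dbar}+a^{-\dbar})(b^{\dbar}+b^{-\dbar})(c^{\dbar}+c^{-\dbar}).
\]
Finally, combining the Concini--Kac relation (\ref{e:TLambda}) with Lemma~\ref{lem:ZU}(ii) gives $T_{\dbar}(\Lambda)=q^{\dbar}(x^{\dbar}+y^{\dbar}+z^{\dbar}-x^{\dbar}y^{\dbar}z^{\dbar})$.

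\emph{Completion and main obstacle.} After these substitutions the identity to be proved is a Laurent-polynomial identity among the pairwise commuting elements $a^{\pm\dbar},b^{\pm\dbar},c^{\pm\dbar},x^{\dbar},y^{\pm\dbar},z^{\dbar}$, and term by term it is the image of the Casimir relation (\ref{e:Omega}) evaluated at $A^\natural,B^\natural,C^\natural,\alpha^\natural,\beta^\natural,\gamma^\natural,\Omega^\natural$ under the replacements $a\mapsto a^{\dbar}$, $b\mapsto b^{\dbar}$, $c\mapsto c^{\dbar}$, $x\mapsto x^{\dbar}$, $y^{\pm1}\mapsto y^{\pm\dbar}$, $z\mapsto z^{\dbar}$, $\Lambda\mapsto T_{\dbar}(\Lambda)$ and $q\mapsto q^{\dbar}$, where one uses $q^{2\dbar}=1$ (so $q^{\dbar}=q^{-\dbar}$, and the coefficients $q^2+q^{-2},q^{\pm2}$ of (\ref{e:Omega}) become $2,1$). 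One verifies it by direct expansion; the degenerate case $\dbar=1$ (where $q^2=1$ and Theorem~\ref{thm:embedd} does not apply) is nothing but (\ref{e:Omega}) since $T_1(A)=A$, $T_1(B)=B$, $T_1(C)=C$. The main obstacle is twofold: first, exhibiting the recursion that lodges every $\phi_n$ and $\psi_n$ inside $\F[\phi_1^{\ubar 0},\phi_1^{\ubar 1},\phi_1^{\ubar 2},\psi_1]$ — the natural recursion couples $\phi$ with $\psi$ and its coefficients need not themselves lie in this subalgebra — and, above all, organizing the commutative Laurent-polynomial verification of the last step, which is exactly the $\dbar$-th power, $q\mapsto q^{\dbar}$ incarnation of (\ref{e:Omega}).
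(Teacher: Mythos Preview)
Your approach inverts the paper's logic, and the inversion creates a real gap. The paper does \emph{not} first prove that $\phi_{\dbar},\psi_{\dbar}$ lie in $\F[\phi_1^{\ubar 0},\phi_1^{\ubar 1},\phi_1^{\ubar 2},\psi_1]$ and then verify the relation. Instead it invokes Theorem~\ref{thm:basisZDelta}: since $T_{\dbar}(A)T_{\dbar}(B)T_{\dbar}(C)\in Z(\triangle)$ and $T_{\dbar}(A)T_{\dbar}(B)T_{\dbar}(C)\equiv(-1)^{\dbar}q^{-\dbar^2}\Omega^{\dbar}\bmod\triangle_{3\dbar-1}$, the basis of $Z(\triangle)\cap\triangle_{3\dbar}$ forces an expansion of the shape (\ref{e:TABC}) with ten unknown coefficients $\Sigma_A,\Sigma_B,\Sigma_C,\Xi_A,\Xi_B,\Xi_C,\Phi_A,\Phi_B,\Phi_C,\Psi\in\F[\alpha,\beta,\gamma,\Omega]$. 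These are then pinned down by applying $\natural$, comparing a handful of coefficients against the Appendix tables, and exploiting the $\rho$-symmetry. The existence of $\Phi,\Psi$ is a \emph{consequence}: once $\Phi_A^\natural$ and $\Psi^\natural$ are computed in terms of $a^{\pm\dbar},b^{\pm\dbar},c^{\pm\dbar},T_{\dbar}(\Lambda)$, the fact that $\Phi_A,\Psi$ already lie in $\F[\alpha,\beta,\gamma,\Omega]$ together with the algebraic independence of $a+a^{-1},b+b^{-1},c+c^{-1},\Lambda$ yields the polynomial identities defining $\Phi,\Psi$.

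Your first step --- a direct recursion putting every $\phi_n,\psi_n$ in $\F[\phi_1^{\ubar 0},\phi_1^{\ubar 1},\phi_1^{\ubar 2},\psi_1]$ --- is precisely the step the paper's argument is designed to avoid, and you yourself flag that the natural recursion does not obviously close up. Your second step is closer in spirit to the paper (both ultimately compute products of $T_{\dbar}(A)^\natural,T_{\dbar}(B)^\natural,T_{\dbar}(C)^\natural$), but the ``$q\mapsto q^{\dbar}$ incarnation of (\ref{e:Omega})'' heuristic is not a proof: the original Casimir identity after $\natural$ lives in the noncommutative algebra $\F[a^{\pm1},b^{\pm1},c^{\pm1}]\otimes_\F U'$ and its simplification to the formula for $\Omega^\natural$ in Lemma~\ref{lem:naturalABC} uses the $U'$-relations among $x,y,z$; there is no a~priori reason the same formal expression holds with commuting $x^{\dbar},y^{\dbar},z^{\dbar}$. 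A genuine direct expansion is of course possible, but then you are redoing the Appendix computations without the organizing principle (the basis theorem) that lets the paper match only a few coefficients rather than the whole identity.
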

\begin{proof}
By Lemma~\ref{lem:indp4} the polynomials $\Phi(X_0,X_1,X_2;X)$ and $\Psi(X_0,X_1,X_2;X)$ are unique if they exist. To see the existence consider the central element
\begin{gather*}
T_{\dbar}(A) T_{\dbar}(B) T_{\dbar}(C).
\end{gather*}
By Lemma~\ref{lem:Omega-ABC}(ii) with $i_0=i_1=i_2=\dbar$, it follows that
$T_{\dbar}(A) T_{\dbar}(B) T_{\dbar}(C)-(-1)^{\dbar}q^{-\dbar^2}\Omega^{\dbar}\in \triangle_{3\dbar-1}$.
By Theorem~\ref{thm:basisZDelta} there are unique $\Sigma_A$, $\Sigma_B$, $\Sigma_C$, $\Xi_A$, $\Xi_B$, $\Xi_C$, $\Phi_A$, $\Phi_B$, $\Phi_C$, $\Psi\in \F[\alpha,\beta,\gamma,\Omega]$ such that $T_{\dbar}(A) T_{\dbar}(B) T_{\dbar}(C)$ is equal to
\begin{align}\label{e:TABC}
\begin{split}
&\Sigma_A T_{\dbar}(B)T_{\dbar}(C)
+\Sigma_B T_{\dbar}(C)T_{\dbar}(A)
+\Sigma_C T_{\dbar}(A)T_{\dbar}(B)
+\Xi_A T_{\dbar}(A)^2+\Xi_B T_{\dbar}(B)^2+\Xi_C T_{\dbar}(C)^2
\\
&\quad+\;\;
\Phi_A T_{\dbar}(A)
+\Phi_B T_{\dbar}(B)
+\Phi_C T_{\dbar}(C)
+\Psi.
\end{split}
\end{align}
For $\dbar=1$ it follows from (\ref{e:Omega}) that
\begin{align*}
&\Sigma_A=\Sigma_B=\Sigma_C=0,
\qquad \quad
\Xi_A=\Xi_B=\Xi_C=-q,
\\
&\Phi_A=\alpha,
\qquad
\Phi_B=\beta,
\qquad
\Phi_C=\gamma,
\qquad
\Psi=q(2-\Omega).
\end{align*}
Thus $\Phi(X_0,X_1,X_2;X)=X_0$ and $\Psi(X_0,X_1,X_2;X)=X$ when $\dbar=1$.

Assume that $\dbar>1$. By Lemma~\ref{lem:basisZU}(ii), $\F[a^{\pm 1},b^{\pm 1},c^{\pm 1}]\otimes_\F Z(U)$ has the
$\F[a^{\pm 1},b^{\pm 1},c^{\pm 1}]\otimes_\F \F[\Lambda]$-basis
\begin{gather}\label{e:basis}
k^{\pm \dbar i} f^j, \qquad
k^{\pm \dbar i},\qquad
k^{\pm \dbar i} e^j
\qquad \quad \hbox{for all $i\in \N$ and $j\in \N^*$}.
\end{gather}
By Lemma~\ref{lem:image} the images of $\Sigma_A$, $\Sigma_B$, $\Sigma_C$, $\Xi_A$, $\Xi_B$, $\Xi_C$, $\Phi_A$, $\Phi_B$, $\Phi_C$, $\Psi$ under $\natural$ are contained in $\F[a^{\pm 1},b^{\pm 1},c^{\pm 1}]\otimes_\F \F[\Lambda]$. To find the values of them, express
\begin{gather*}
T_{\dbar}(A)^\natural, \qquad
T_{\dbar}(B)^\natural, \qquad
T_{\dbar}(C)^\natural, \qquad
 T_{\dbar}(A)^{2\natural}, \qquad
T_{\dbar}(B)^{2\natural}, \qquad
T_{\dbar}(C)^{2\natural},\\
T_{\dbar}(A)^\natural T_{\dbar}(B)^\natural, \qquad
T_{\dbar}(B)^\natural T_{\dbar}(C)^\natural, \qquad
T_{\dbar}(C)^\natural T_{\dbar}(A)^\natural, \qquad
T_{\dbar}(A)^\natural T_{\dbar}(B)^\natural T_{\dbar}(C)^\natural
\end{gather*}
as linear combinations of (\ref{e:basis}). The results of calculation are included in the appendix of this paper. The coefficient of $e^{2\dbar}$ in $T_{\dbar}(A)^\natural T_{\dbar}(B)^\natural T_{\dbar}(C)^\natural$ is equal to
\begin{gather*}
-(q-q^{-1})^{2\dbar} q^{\dbar}  b^{2\dbar} c^{-2\dbar}.
\end{gather*}
Denote by $(\ref{e:TABC})^\natural$ the equation obtained by applying $\natural$ to $(\ref{e:TABC})$. Therefore the coefficient of $e^{2\dbar}$ in $(\ref{e:TABC})^\natural$ is equal to
\begin{gather*}
(q-q^{-1})^{2\dbar}b^{2\dbar} c^{-2\dbar}\,\Xi_A^\natural.
\end{gather*}
This leads to $\Xi_A=-q^{\dbar}$. The comparison of the coefficients of $k^{-\dbar}e^{2\dbar}$
yields that $\Sigma_B=0$. Since $T_{\dbar}(A) T_{\dbar}(B) T_{\dbar}(C)$ is fixed by $\rho$ the automorphism $\rho$ sends
\begin{eqnarray*}
(\Sigma_A,\Sigma_B,\Sigma_C) &\mapsto& (\Sigma_B,\Sigma_C,\Sigma_A),
\\
(\Xi_A,\Xi_B,\Xi_C) &\mapsto& (\Xi_B,\Xi_C,\Xi_A),
\\
\qquad
(\Phi_A,\Phi_B,\Phi_C) &\mapsto& (\Phi_B,\Phi_C,\Phi_A).
\end{eqnarray*}
As a result
\begin{gather*}
\Sigma_A=\Sigma_B=\Sigma_C=0,
\qquad \qquad
\Xi_A=\Xi_B=\Xi_C=-q^{\dbar}.
\end{gather*}
Now, the comparison of the coefficients of $e^{\dbar}$ in $T_{\dbar}(A)^\natural T_{\dbar}(B)^\natural T_{\dbar}(C)^\natural$ and $(\ref{e:TABC})^\natural$ determines that
\begin{eqnarray*}
\Phi_A^\natural&=&T_{\dbar}(\Lambda)(a^{\dbar}+a^{-\dbar})
+(b^{\dbar}+b^{-\dbar})(c^{\dbar}+c^{-\dbar}).
\end{eqnarray*}
By Lemma~\ref{lem:tilderho} the automorphism $\widetilde \rho$ of $\F[a^{\pm 1},b^{\pm 1},c^{\pm 1}]\otimes_\F U'$ cyclically permutes $\Phi_A^\natural$, $\Phi_B^\natural$, $\Phi_C^\natural$. Combined with Lemma \ref{lem:tilderhoLambda} it follows that
\begin{eqnarray*}
\Phi_B^\natural&=&
T_{\dbar}(\Lambda)(b^{\dbar}+b^{-\dbar})
+(c^{\dbar}+c^{-\dbar})(a^{\dbar}+a^{-\dbar}),\\
\Phi_C^\natural&=&
T_{\dbar}(\Lambda)(c^{\dbar}+c^{-\dbar})
+(a^{\dbar}+a^{-\dbar})(b^{\dbar}+b^{-\dbar}).
\end{eqnarray*}
Comparing the coefficients of $1$ it follows that $\Psi^\natural$ is equal to $-q^{\dbar}$ times
\begin{gather*}
T_{2\dbar}(\Lambda)
+(a^{\dbar}+a^{-\dbar})^2
+(b^{\dbar}+b^{-\dbar})^2
+(c^{\dbar}+c^{-\dbar})^2
+T_{\dbar}(\Lambda)
(a^{\dbar}+a^{-\dbar})
(b^{\dbar}+b^{-\dbar})
(c^{\dbar}+c^{-\dbar})-2.
\end{gather*}

Normalize $\Psi$ by multiplying the factor $-q^{\dbar}$ followed by adding $2$. Abuse the notation $\Psi$ to denote the normalized one. By Lemma~\ref{lem:Tchara}(i) we may write
\begin{eqnarray*}
\Phi_A^\natural&=&
\phi_{\dbar}(a+a^{-1},b+b^{-1},c+c^{-1};\Lambda),\\
\Psi^\natural &=&
\psi_{\dbar}(a+a^{-1},b+b^{-1},c+c^{-1};\Lambda).
\end{eqnarray*}
Since $\Phi_A$, $\Psi\in \F[\alpha,\beta,\gamma;\Omega]$ there exist two polynomials $\Phi(X_0,X_1,X_2;X)$, $\Psi(X_0,X_1,X_2;X)$ over $\F$ with $\Phi(\alpha^\natural,\beta^\natural,\gamma^\natural;\Omega^\natural)
=\Phi_A^\natural$ and $\Psi(\alpha^\natural,\beta^\natural,\gamma^\natural;\Omega^\natural)=\Psi^\natural$. Observe from Lemma~\ref{lem:naturalABC} that
\begin{eqnarray*}
\alpha^\natural &=&
\phi_1^{\ubar 0}(a+a^{-1},b+b^{-1},c+c^{-1};\Lambda),
\\
\beta^\natural &=&
\phi_1^{\ubar 1}(a+a^{-1},b+b^{-1},c+c^{-1};\Lambda),
\\
\gamma^\natural &=&
\phi_1^{\ubar 2}(a+a^{-1},b+b^{-1},c+c^{-1};\Lambda),
\\
\Omega^\natural &=&
\psi_1(a+a^{-1},b+b^{-1},c+c^{-1};\Lambda).
\end{eqnarray*}
By Lemma~\ref{lem:basisZU}(i), $\F[a^{\pm 1},b^{\pm 1},c^{\pm 1}]\otimes_\F \F[\Lambda]$ has the $\F[a^{\pm 1},b^{\pm 1},c^{\pm 1}]$-basis
$\{\Lambda^n\}_{n\in \N}$.
In other words $\Lambda$ is algebraically independent over $\F[a^{\pm 1},b^{\pm 1},c^{\pm 1}]$. Together with the algebraic independence of $a+a^{-1}$, $b+b^{-1}$, $c+c^{-1}$ over $\F$, the elements
\begin{gather*}
a+a^{-1}, \qquad
b+b^{-1}, \qquad
c+c^{-1}, \qquad
\Lambda
\end{gather*}
are algebraically independent over $\F$. Concluding from the above results
the two polynomials $\Phi(X_0,X_1,X_2;X)$, $\Psi(X_0,X_1,X_2;X)$ satisfy the desired properties.
\end{proof}

It is in the position to prove Theorems~\ref{thm:poly} and \ref{thm:relZDelta}.

\medskip

\noindent {\it Proof of Theorem~\ref{thm:poly}.}
Set $\F=\C$ and consider the polynomials $\Phi(X_0,X_1,X_2;X)$, $\Psi(X_0,X_1,X_2;X)$ from Proposition~\ref{prop:TABC}.

Suppose some coefficients of $\Phi(X_0,X_1,X_2;X)$ are not integers.
Choose a monomial $X_0^{r_0}X_1^{r_1}X_2^{r_2}X^s$ in $\Phi(X_0,X_1,X_2;X)$ with non-integral coefficient that satisfies the property: For any $i_0$, $i_1$, $i_2$, $j\in \N$, if the coefficient of $X_0^{i_0}X_1^{i_1}X_2^{i_2}X^j$ in $\Phi(X_0,X_1,X_2;X)$ is not an integer, then $r_0+r_1+r_2+2s\geq i_0+i_1+i_2+2j$. Substitute $(X_0,X_1,X_2,X)=(\phi_1^{\ubar 0},\phi_1^{\ubar 1},\phi_1^{\ubar 2},\psi_1)$ into $\Phi(X_0,X_1,X_2;X)$.
By Lemma~\ref{lem:leading} the coefficient of $X_0^{r_0}X_1^{r_1}X_2^{r_2}X^{r_0+r_1+r_2+2s}$
in $\Phi(\phi_1^{\ubar 0},\phi_1^{\ubar 1},\phi_1^{\ubar 2};\psi_1)$ is not an integer. This contradicts that $\Phi(
\phi_1^{\ubar 0},\phi_1^{\ubar 1},\phi_1^{\ubar 2};\psi_1)=\phi_{\dbar}$ is a polynomial over $\Z$. Therefore the coefficients of $\Phi(X_0,X_1,X_2;X)$ are integers. Let $m\in \N$ be given. Substitute
$X_i=T_m(X_i)$ for all $i\in\Z/3\Z$ and $X=T_m(X)$ into $\Phi(
\phi_1^{\ubar 0},\phi_1^{\ubar 1},\phi_1^{\ubar 2};\psi_1)=\phi_{\dbar}$.
By Lemma~\ref{lem:Tijnew} it follows that
$
\Phi(
\phi_m^{\ubar 0},\phi_m^{\ubar 1},\phi_m^{\ubar 2};\psi_m)
=
\phi_{m\dbar}$. We have shown that $\Phi(X_0,X_1,X_2;X)=\Phi_{\dbar}(X_0,X_1,X_2;X)$. By a similar argument $\Psi(X_0,X_1,X_2;X)=\Psi_{\dbar}(X_0,X_1,X_2;X)$.

Given any positive integer $d$ there always exists an element $q\in \F$ to be a primitive $d^{\rm\, th}$ root of unity since $\F$ is now assumed to be $\C$. Therefore $\dbar$ could be any positive integer. The result follows.
\hfill $\square$

\medskip

\noindent{\it Proof of Theorem~\ref{thm:relZDelta}.}
By Theorem~\ref{thm:poly} the polynomials $\Phi_{\dbar}(X_0,X_1,X_2;X)$, $\Psi_{\dbar}(X_0,X_1,X_2;X)$ satisfy $
\Phi_{\dbar}
(\phi_1^{\ubar 0},
\phi_1^{\ubar 1},
\phi_1^{\ubar 2};
\psi_1)
=\phi_{\dbar}$, 	
$\Psi_{\dbar}
(\phi_1^{\ubar 0},
\phi_1^{\ubar 1},
\phi_1^{\ubar 2};
\psi_1)
=\psi_{\dbar}$.
Thus, for any ground field $\F$
the polynomials $\Phi(X_0,X_1,X_2;X)$, $\Psi(X_0,X_1,X_2;X)$ from Proposition~\ref{prop:TABC} are equal to $\Phi_{\dbar}(X_0,X_1,X_2;X)$, $\Psi_{\dbar}(X_0,X_1,X_2;X)$ respectively.
\hfill $\square$

\medskip

Some properties about $\{\Phi_n\}_{n\in \N}$ and $\{\Psi_n\}_{n\in \N}$ are worthy of mention.

\begin{lem}\label{lem:poly1}
There exists a unique $\F$-algebra isomorphism $\F[X_0,X_1,X_2,X]\to \F[\phi_1^{\ubar 0}, \phi_1^{\ubar 1},\phi_1^{\ubar 2},\psi_1]$ that sends
\begin{gather*}
\Phi_n^{\ubar i}
\;\;
\mapsto
\;\;
\phi_n^{\ubar i},
\qquad \quad
\Psi_n^{\ubar i}
\;\;
\mapsto
\;\;
\psi_n
\qquad \quad
\hbox{for all $n\in \N$ and all $i\in \Z/3\Z$}.
\end{gather*}
\end{lem}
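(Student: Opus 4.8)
The plan is to realize the claimed map as the substitution homomorphism $X_i\mapsto\phi_1^{\ubar i}$, $X\mapsto\psi_1$, show it is an isomorphism, and then verify by a direct computation that it sends $\Phi_n^{\ubar i}$ to $\phi_n^{\ubar i}$ and $\Psi_n^{\ubar i}$ to $\psi_n$.

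First I would invoke Lemma~\ref{lem:indp4} with $n=1$: the polynomials $\phi_1^{\ubar 0},\phi_1^{\ubar 1},\phi_1^{\ubar 2},\psi_1$ are algebraically independent over $\F$, so there is a unique $\F$-algebra homomorphism
\[
\sigma:\F[X_0,X_1,X_2,X]\longrightarrow\F[\phi_1^{\ubar 0},\phi_1^{\ubar 1},\phi_1^{\ubar 2},\psi_1]
\]
with $\sigma(X_i)=\phi_1^{\ubar i}$ for $i\in\Z/3\Z$ and $\sigma(X)=\psi_1$. Its image is the whole target ring, and since the images of the generators are algebraically independent, $\sigma$ is injective, hence an isomorphism. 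It remains to identify $\sigma$ on the elements $\Phi_n^{\ubar i}$ and $\Psi_n^{\ubar i}$.

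The key step is to feed in Theorem~\ref{thm:poly} at $m=1$, which reads $\Phi_n(\phi_1^{\ubar 0},\phi_1^{\ubar 1},\phi_1^{\ubar 2};\psi_1)=\phi_n$ and $\Psi_n(\phi_1^{\ubar 0},\phi_1^{\ubar 1},\phi_1^{\ubar 2};\psi_1)=\psi_n$, and to apply the automorphism $\ubar i$ of $\F[X_0,X_1,X_2,X]$ to both sides. Since $\Phi_n,\Psi_n$ have constant (integer) coefficients, $\ubar i$ commutes with the substitution; since $\ubar i$ fixes $X$, since $\ubar 0=\mathrm{id}$ and the operators compose as $(P^{\ubar j})^{\ubar i}=P^{\ubar{i+j}}$ on the cyclic index, and since $\psi_1$ is symmetric in $X_0,X_1,X_2$ so that $\psi_1^{\ubar i}=\psi_1$, one obtains
\[
\Phi_n(\phi_1^{\ubar i},\phi_1^{\ubar{i+1}},\phi_1^{\ubar{i+2}};\psi_1)=\phi_n^{\ubar i},
\qquad
\Psi_n(\phi_1^{\ubar i},\phi_1^{\ubar{i+1}},\phi_1^{\ubar{i+2}};\psi_1)=\psi_n^{\ubar i}=\psi_n .
\]
By definition $\Phi_n^{\ubar i}=\Phi_n(X_i,X_{i+1},X_{i+2};X)$ and $\Psi_n^{\ubar i}=\Psi_n(X_i,X_{i+1},X_{i+2};X)$, so applying $\sigma$ and using $\sigma(X_i)=\phi_1^{\ubar i}$, $\sigma(X)=\psi_1$ gives exactly the left-hand sides above; hence $\sigma(\Phi_n^{\ubar i})=\phi_n^{\ubar i}$ and $\sigma(\Psi_n^{\ubar i})=\psi_n$. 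Uniqueness of an isomorphism with the stated property is forced already by the cases $n=1$: from $\Phi_1(X_0,X_1,X_2;X)=X_0$ and $\Psi_1(X_0,X_1,X_2;X)=X$ we have $\Phi_1^{\ubar i}=X_i$ and $\Psi_1^{\ubar 0}=X$, so any admissible map must agree with $\sigma$ on $X_0,X_1,X_2,X$.

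I do not expect a genuine obstacle here; the only delicate point is the bookkeeping with the cyclic automorphisms $\ubar i$ in the third step — in particular confirming $\psi_1^{\ubar i}=\psi_1$ (equivalently, the cyclic invariance of $\Psi_n$ in its first three arguments, which follows from the algebraic independence in Lemma~\ref{lem:indp4}) — and nothing beyond Lemma~\ref{lem:indp4}, the definitions of $\phi_n,\psi_n,\Phi_n,\Psi_n$, and Theorem~\ref{thm:poly} is needed.
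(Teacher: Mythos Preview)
Your proposal is correct and follows essentially the same approach as the paper: construct the substitution homomorphism $X_i\mapsto\phi_1^{\ubar i}$, $X\mapsto\psi_1$, use Lemma~\ref{lem:indp4} to see it is an isomorphism, and invoke Theorem~\ref{thm:poly} (at $m=1$, together with the cyclic shift $\ubar i$) to verify the images of $\Phi_n^{\ubar i}$ and $\Psi_n^{\ubar i}$. The only remark is that $\psi_1^{\ubar i}=\psi_1$ is immediate from the defining formula for $\psi_n$ (it is visibly symmetric in $X_0,X_1,X_2$), so there is no need to route this through Lemma~\ref{lem:indp4}.
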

\begin{proof}
By construction $\psi_n=\psi_n^{\ubar i}$ for all $n\in \N$ and all $i\in \Z/3\Z$.
By Lemma~\ref{lem:indp4} there is a unique $\F$-algebra isomorphism $\F[X_0,X_1,X_2,X]\to \F[\phi_1^{\ubar 0}, \phi_1^{\ubar 1},\phi_1^{\ubar 2},\psi_1]$ that maps
$X_i$ to $\phi_1^{\ubar i}$
for all $i\in \Z/3\Z$
and
$X$ to $\psi_1$.
This isomorphism satisfies the desired property due to Theorem~\ref{thm:poly}.
\end{proof}

As consequences of Lemma~\ref{lem:poly1} we have

\begin{prop}\label{prop:poly2}
\begin{enumerate}
\item  $\Psi_n^{\ubar i}=\Psi_n$ for all $n\in \N$ and $i\in \Z/3\Z$.

\item
$
\Phi_n(\Phi_m^{\ubar 0},\Phi_m^{\ubar 1},\Phi_m^{\ubar 2};\Psi_m)=\Phi_{mn}
$
and
$
\Psi_n(\Phi_m^{\ubar 0},\Phi_m^{\ubar 1},\Phi_m^{\ubar 2};\Psi_m)=\Psi_{mn}
$
for all $m$, $n\in \N$.

\item $\Phi_n^{\ubar 0}$, $\Phi_n^{\ubar 1}$, $\Phi_n^{\ubar 2}$, $\Psi_n$ are algebraically independent over $\F$ for each $n\in \N^*$.
\end{enumerate}
\end{prop}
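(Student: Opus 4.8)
The plan is to read off all three assertions from the $\F$-algebra isomorphism furnished by Lemma~\ref{lem:poly1}. Write $\iota\colon\F[X_0,X_1,X_2,X]\to\F[\phi_1^{\ubar 0},\phi_1^{\ubar 1},\phi_1^{\ubar 2},\psi_1]$ for that isomorphism; by construction $\iota$ sends $\Phi_n^{\ubar i}\mapsto\phi_n^{\ubar i}$ and $\Psi_n^{\ubar i}\mapsto\psi_n$ for every $n\in\N$ and $i\in\Z/3\Z$. Since $\iota$ is an $\F$-algebra isomorphism it is in particular injective and a ring homomorphism, so each identity claimed in the proposition will be obtained by checking that both sides have the same image under $\iota$, and each algebraic-independence claim by transporting Lemma~\ref{lem:indp4} through $\iota$.

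For part (i): in the proof of Lemma~\ref{lem:poly1} it is already observed that $\psi_n=\psi_n^{\ubar i}$ for all $n$ and $i$, hence $\iota(\Psi_n^{\ubar i})=\psi_n=\iota(\Psi_n)$, and injectivity of $\iota$ gives $\Psi_n^{\ubar i}=\Psi_n$. For part (ii): applying the homomorphism $\iota$ to $\Phi_n(\Phi_m^{\ubar 0},\Phi_m^{\ubar 1},\Phi_m^{\ubar 2};\Psi_m)$ yields $\Phi_n(\phi_m^{\ubar 0},\phi_m^{\ubar 1},\phi_m^{\ubar 2};\psi_m)$, which equals $\phi_{mn}$ by Theorem~\ref{thm:poly}; on the other hand $\iota(\Phi_{mn})=\iota(\Phi_{mn}^{\ubar 0})=\phi_{mn}^{\ubar 0}=\phi_{mn}$. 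Injectivity of $\iota$ then forces $\Phi_n(\Phi_m^{\ubar 0},\Phi_m^{\ubar 1},\Phi_m^{\ubar 2};\Psi_m)=\Phi_{mn}$, and running the identical argument with $\psi$ in place of $\phi$ and $\Psi$ in place of $\Phi$ settles the second identity in (ii).

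For part (iii): if $\Phi_n^{\ubar 0},\Phi_n^{\ubar 1},\Phi_n^{\ubar 2},\Psi_n$ satisfied a nontrivial polynomial relation over $\F$, applying $\iota$ to that relation would produce the same nontrivial relation among $\phi_n^{\ubar 0},\phi_n^{\ubar 1},\phi_n^{\ubar 2},\psi_n$, contradicting Lemma~\ref{lem:indp4} (valid for $n\in\N^*$); hence $\Phi_n^{\ubar 0},\Phi_n^{\ubar 1},\Phi_n^{\ubar 2},\Psi_n$ are algebraically independent over $\F$. There is essentially no computational obstacle here; the only point requiring care is the bookkeeping of the cyclic action of the automorphisms $\ubar i$ — one must be sure that $\iota$ intertwines the permutation of the $X_i$ with that of the $\phi_1^{\ubar i}$, so that $\iota$ applied to $\Phi_n(\Phi_m^{\ubar 0},\Phi_m^{\ubar 1},\Phi_m^{\ubar 2};\Psi_m)$ produces $\Phi_n$ evaluated at $\phi_m^{\ubar 0},\phi_m^{\ubar 1},\phi_m^{\ubar 2};\psi_m$ and not a permuted variant. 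This is handled by invoking the uniqueness clause of Lemma~\ref{lem:poly1} together with Theorem~\ref{thm:poly}.
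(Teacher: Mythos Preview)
Your proposal is correct and follows essentially the same approach as the paper: all three parts are read off from the isomorphism of Lemma~\ref{lem:poly1}, with (ii) reduced to Theorem~\ref{thm:poly} and (iii) to Lemma~\ref{lem:indp4}. The only remark is that your final caveat about ``bookkeeping of the cyclic action'' is unnecessary---since $\iota$ is an $\F$-algebra homomorphism, it automatically commutes with substitution into any polynomial, so $\iota\bigl(\Phi_n(\Phi_m^{\ubar 0},\Phi_m^{\ubar 1},\Phi_m^{\ubar 2};\Psi_m)\bigr)=\Phi_n(\phi_m^{\ubar 0},\phi_m^{\ubar 1},\phi_m^{\ubar 2};\psi_m)$ holds without any appeal to uniqueness.
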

\begin{proof}
(i) is trivial. (ii) follows from Theorem~\ref{thm:poly}. (iii) follows from Lemma~\ref{lem:indp4}.
\end{proof}

\subsection{A presentation for $Z(\triangle)$}\label{s:preZDelta}

As a consequence of Theorem~\ref{thm:basisZDelta} the elements $T_{\dbar}(A)$, $T_{\dbar}(B)$, $T_{\dbar}(C)$, $\alpha$, $\beta$, $\gamma$, $\Omega$ form a set of generators of $Z(\triangle)$. This subsection is devoted to proving that

\begin{thm}\label{thm:presentation}
The center of $\triangle$ is the commutative $\F$-algebra generated by $T_{\dbar}(A)$, $T_{\dbar}(B)$, $T_{\dbar}(C)$, $\alpha$, $\beta$, $\gamma$, $\Omega$ subject to the relation
\begin{align*}
&
q^{\dbar} \Phi_{\dbar}^{\ubar 0}(\alpha,\beta,\gamma;\Omega) T_{\dbar}(A)
+q^{\dbar} \Phi_{\dbar}^{\ubar 1}(\alpha,\beta,\gamma;\Omega) T_{\dbar}(B)
+q^{\dbar} \Phi_{\dbar}^{\ubar 2}(\alpha,\beta,\gamma;\Omega) T_{\dbar}(C)
\\
&\quad=\;\;
q^{\dbar} T_{\dbar}(A) T_{\dbar}(B) T_{\dbar}(C)
+T_{\dbar}(A)^2
+ T_{\dbar}(B)^2
+ T_{\dbar}(C)^2
+\Psi_{\dbar}(\alpha,\beta,\gamma;\Omega)
-2.
\end{align*}
\end{thm}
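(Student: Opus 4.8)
\emph{Proof strategy.}
The plan is to confirm the obvious candidate presentation by a dimension count against Theorem~\ref{thm:basisZDelta}. Let $R=\F[x_A,x_B,x_C,x_\alpha,x_\beta,x_\gamma,x_\Omega]$ be a commutative polynomial ring, and let $\mathfrak r\in R$ be the polynomial obtained by transporting the displayed relation to $R$ under $T_{\dbar}(A)\leftrightarrow x_A$, $T_{\dbar}(B)\leftrightarrow x_B$, $T_{\dbar}(C)\leftrightarrow x_C$, $\alpha\leftrightarrow x_\alpha$, $\beta\leftrightarrow x_\beta$, $\gamma\leftrightarrow x_\gamma$, $\Omega\leftrightarrow x_\Omega$ and moving both sides to one. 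By Theorem~\ref{thm:relZDelta} the assignment $x_A\mapsto T_{\dbar}(A)$, \dots, $x_\Omega\mapsto\Omega$ descends to an $\F$-algebra homomorphism $\pi\colon R/(\mathfrak r)\to Z(\triangle)$, and $\pi$ is surjective because those seven elements generate $Z(\triangle)$ by Theorem~\ref{thm:basisZDelta}. So it remains to prove $\pi$ injective, and for that it is enough to show that $R/(\mathfrak r)$ is spanned over $\F$ by the monomials $x_A^{i_0}x_B^{i_1}x_C^{i_2}x_\alpha^{j_0}x_\beta^{j_1}x_\gamma^{j_2}x_\Omega^{\ell}$ with $i_0i_1i_2=0$: then $\pi$ carries a spanning set of $R/(\mathfrak r)$ bijectively onto the $\F$-basis of $Z(\triangle)$ of Theorem~\ref{thm:basisZDelta}, hence that spanning set is linearly independent and $\pi$ is an isomorphism.

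For the spanning statement I would induct on the weighting $D$ of $R$ that assigns weight $\dbar$ to each of $x_A,x_B,x_C$, weight $1$ to each of $x_\alpha,x_\beta,x_\gamma$, and weight $3$ to $x_\Omega$ (so that $\pi$ is compatible with the filtration of $\triangle$), and, for fixed $D$, by a secondary induction on $\min(i_0,i_1,i_2)$. Since $q^{\dbar}\ne 0$, the relation solves for $x_Ax_Bx_C$, exhibiting it modulo $(\mathfrak r)$ as an $\F$-combination $\Phi_{\dbar}^{\ubar 0}(x_\alpha,x_\beta,x_\gamma;x_\Omega)\,x_A+\Phi_{\dbar}^{\ubar 1}(\cdots)x_B+\Phi_{\dbar}^{\ubar 2}(\cdots)x_C-q^{-\dbar}(x_A^2+x_B^2+x_C^2)-q^{-\dbar}\Psi_{\dbar}(x_\alpha,x_\beta,x_\gamma;x_\Omega)+2q^{-\dbar}$, in every monomial of which at least one of $x_A,x_B,x_C$ is absent. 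Given a monomial $\mu$ with $\min(i_0,i_1,i_2)\ge 1$, factor out $x_Ax_Bx_C$ and substitute. A short degree count, using that each $\Phi_{\dbar}^{\ubar i}(x_\alpha,x_\beta,x_\gamma;x_\Omega)$ has $D$-degree at most $\lfloor 3\dbar/2\rfloor$ and that the only monomial of $\Psi_{\dbar}(x_\alpha,x_\beta,x_\gamma;x_\Omega)$ of $D$-degree $3\dbar$ is $x_\Omega^{\dbar}$ (these bounds coming from the defining property of $\Phi_{\dbar},\Psi_{\dbar}$ in Theorem~\ref{thm:poly} together with Lemma~\ref{lem:Tchara}), shows that every resulting term has $D$-degree strictly below $D(\mu)$ except the single term arising from $x_\Omega^{\dbar}$, which has the same $D$-degree but with $\min(i_0,i_1,i_2)$ dropped by one. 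The two inductive hypotheses then finish the reduction, and applying it to all monomials yields the required spanning set.

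\textbf{Main obstacle.} The delicate point is precisely this: no single numerical invariant makes the relation reduce monomials, because the $x_\Omega^{\dbar}$ term contributed by $\Psi_{\dbar}$ occupies exactly the same filtration degree as $x_Ax_Bx_C$ — as it must, since $T_{\dbar}(A)T_{\dbar}(B)T_{\dbar}(C)$ and $\Omega^{\dbar}$ have the same leading term in $\mathrm{gr}\,\triangle$ by Lemma~\ref{lem:Omega-ABC}(ii). Recognising that this degree coincidence can nonetheless be absorbed into a secondary induction on $\min(i_0,i_1,i_2)$, which that same term strictly lowers, is the heart of the argument; the rest is bookkeeping of weighted degrees through the substitution, together with the two degree estimates on $\Phi_{\dbar}$ and $\Psi_{\dbar}$ quoted above.
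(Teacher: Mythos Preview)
Your argument is correct and takes a genuinely different route from the paper's own proof.

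The paper views $Z(\triangle)$ as $K[\Omega]$, where $K$ is the polynomial subalgebra on the six generators $T_{\dbar}(A),T_{\dbar}(B),T_{\dbar}(C),\alpha,\beta,\gamma$, and then argues that the relation, regarded as a monic polynomial $P(X)\in K[X]$ with $X=\Omega$, is in fact the minimal polynomial of $\Omega$ over the fraction field of $K$. The tools are integral closure of $K$ (so that the minimal polynomial has coefficients in $K$), the $\rho$-symmetry of $P(X)$ and of the minimal polynomial, and a ``chief coefficient'' degree count on the constant term which shows that no $\rho$-invariant factorisation of the degree-$3$ constant term into degrees $1+2$ exists; hence the quotient $P/R$ has all coefficients of degree $0$ in $T_{\dbar}(A),T_{\dbar}(B),T_{\dbar}(C)$ and is a unit. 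Your approach instead straightens monomials in $R/(\mathfrak r)$ directly, using the relation to replace $x_Ax_Bx_C$ and tracking the weighted degree $D$; the key observation that the unique surviving top-degree term is $x_\Omega^{\dbar}$ (which is exactly the content of Lemma~\ref{lem:monic}) lets a lexicographic induction on $(D,\min(i_0,i_1,i_2))$ terminate. What the paper's argument buys is a cleaner structural statement --- it identifies $P(X)$ as the minimal polynomial, yielding Theorems~\ref{thm:basisZDelta2} and~\ref{thm:integra1} as immediate by-products --- whereas your argument is more elementary, avoids fraction fields and integral closure entirely, and is essentially a diamond-lemma/Gr\"obner reduction compatible with the basis of Theorem~\ref{thm:basisZDelta}. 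One small remark on citations: the degree bounds you invoke on $\Phi_{\dbar}$ and $\Psi_{\dbar}$ ultimately rest on the non-cancellation of leading $X$-terms in $(\phi_1^{\ubar 0})^{i_0}(\phi_1^{\ubar 1})^{i_1}(\phi_1^{\ubar 2})^{i_2}\psi_1^{j}$, which in the paper is Lemma~\ref{lem:leading}; you may wish to cite it (or Lemma~\ref{lem:monic} directly) rather than Lemma~\ref{lem:Tchara}.
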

\begin{proof}
Let $K$ denote the $\F$-subalgebra of $Z(\triangle)$ generated by $T_{\dbar}(A)$, $T_{\dbar}(B)$, $T_{\dbar}(C)$, $\alpha$, $\beta$, $\gamma$. Consider the $K$-algebra homomorphism $K[X]\to Z(\triangle)$ that maps $X\mapsto \Omega$. Since $Z(\triangle)=K[\Omega]$ by Theorem~\ref{thm:basisZDelta}, the homomorphism is surjective. By Theorem~\ref{thm:relZDelta} the kernel of this homomorphism contains the ideal of $K[X]$ generated by
\begin{align*}
P(X)&=
\Psi_{\dbar}(\alpha,\beta,\gamma;X)
-q^{\dbar}\Phi_{\dbar}^{\ubar 0}(\alpha,\beta,\gamma;X) T_{\dbar}(A)
-q^{\dbar}\Phi_{\dbar}^{\ubar 1}(\alpha,\beta,\gamma;X) T_{\dbar}(B)
-q^{\dbar}\Phi_{\dbar}^{\ubar 2}(\alpha,\beta,\gamma;X) T_{\dbar}(C)
\\
&\qquad+\;
q^{\dbar} T_{\dbar}(A) T_{\dbar}(B) T_{\dbar}(C)
+T_{\dbar}(A)^2
+ T_{\dbar}(B)^2
+ T_{\dbar}(C)^2
-2.
\end{align*}
It suffices to show the converse inclusion. Furthermore, we shall show that $P(X)$ is equal to the minimal polynomial $R(X)$ of $\Omega$ over the fraction field of $K$. We begin with a lemma which implies that $P(X)$ is a monic polynomial of degree $\dbar$.

\begin{lem}\label{lem:monic}
As polynomials of $X$,
\begin{enumerate}
\item $\Phi_n(X_0,X_1,X_2;X)$ is of degree less than or equal to $\lfloor n/2\rfloor$ for each $n\in \N$;

\item $\Psi_n(X_0,X_1,X_2;X)$ is monic of degree $n$ for each $n\in \N^*$.
\end{enumerate}
\end{lem}
\begin{proof}
By Theorem~\ref{thm:poly}, $
\Phi_n(\phi_1^{\ubar 0},\phi_1^{\ubar 1},\phi_1^{\ubar 2};\psi_1)=\phi_n$ is of degree $n$ as a polynomial of $X$.
Therefore, by Lemma~\ref{lem:leading} each monomial $X_0^{r_0} X_1^{r_1} X_2^{r_2} X^s$ in $\Phi_n(X_0,X_1,X_2;X)$ with nonzero coefficient satisfies $r_0+r_1+r_2+2s\leq n$ and in particular $2s\leq n$. This shows (i).

By Theorem~\ref{thm:poly},
$\Psi_n(\phi_1^{\ubar 0},
\phi_1^{\ubar 1},\phi_1^{\ubar 2};\psi_1)=\psi_n$
is monic of degree $2n$ as a polynomial of $X$. By Lemma~\ref{lem:leading} each monomial $X_0^{r_0} X_1^{r_1} X_2^{r_2} X^s$ in $\Psi_n(X_0,X_1,X_2;X)$ with nonzero coefficient satisfies $r_0+r_1+r_2+2s\leq 2n$ and the equality only holds for the monomial $X^n$ with coefficient $1$. This shows (ii).
\end{proof}

Recall that $\rho$ sends
\begin{eqnarray*}
(A,B,C,\alpha,\beta,\gamma) &\mapsto& (B,C,A,\beta,\gamma,\alpha).
\end{eqnarray*}
Hence $K$ is $\rho$-invariant. Here we abuse the notation $\rho$ to denote the $\F$-algebra automorphism $K[X]\to K[X]$ induced by $\rho$ that leaves $X$ fixed. By Proposition~\ref{prop:poly2}(i) a direct calculation yields that $P(X)$ is $\rho$-invariant.

\begin{lem}\label{lem:rhoPQR}
Let $Q(X)$ denote the quotient of $P(X)$ divided by $R(X)$. Then the coefficients of $Q(X)$, $R(X)$ lie in $K$ and each of $Q(X)$, $R(X)$ is $\rho$-invariant.
\end{lem}
\begin{proof}
By Lemma~\ref{lem:monic} the polynomial $P(X)$ is monic over $K$ and hence the roots of $P(X)$ are integral over $K$. Since $R(X)$ divides $P(X)$ the roots of $R(X)$ are integral over $K$ and so are the coefficients of $R(X)$.
By Lemma~\ref{lem:PBW}, $K$ is a six-variable polynomial ring over $\F$ generated by $T_{\dbar}(A)$, $T_{\dbar}(B)$, $T_{\dbar}(C)$, $\alpha$, $\beta$, $\gamma$.
In particular $K$ is integrally closed. Therefore $R(X)\in K[X]$ and so is $Q(X)$.

Since $\Omega^\rho=\Omega$ by Lemma~\ref{lem:rho}, the element $\Omega$ is a root of $R(X)^\rho$. Therefore $R(X)$ is $\rho$-invariant by the uniqueness of $R(X)$. Applying $\rho$ to $P(X)=Q(X)R(X)$ it follows that $Q(X)$ is $\rho$-invariant by the uniqueness of $Q(X)$.
\end{proof}

Let $L$ denote the $\F$-subalgebra of $K$ generated by $\alpha$, $\beta$, $\gamma$. For each $g\in K$ let $\deg g$ denote the degree of $g$ as a polynomial of $T_{\dbar}(A)$, $T_{\dbar}(B)$, $T_{\dbar}(C)$ over $L$.
Write a polynomial $G(X)\in K[X]$ as $\sum_{i\in \N} g_i X^i$ with all $g_i\in K$ and almost all zero. The coefficient $g_n$
is said to be {\it chief} if $n$ is the maximal nonnegative integer satisfying
\begin{gather*}
\deg g_i\leq \deg g_n \qquad \quad \hbox{for all $i\in \N$}.
\end{gather*}

\begin{lem}\label{lem:chief}
Given $F(X)$, $G(X)\in K[X]$, if the coefficients of $X^m$, $X^n$ in $F(X)$, $G(X)$ are chief with degrees $r$, $s$ respectively, then the coefficient of $X^{m+n}$ in $F(X)G(X)$ is chief with degree $r+s$.
\end{lem}
\begin{proof}
Write $F(x)=\sum\limits_{i\in \N} f_i X^i$ and $G(X)=\sum\limits_{i\in \N} g_i X^i$ with $f_i$, $g_i$ in $K$ and almost all zero. Then
\begin{gather*}
\deg f_i+\deg g_j\leq r+s
\qquad \quad \hbox{for all $i$, $j\in \N$}.
\end{gather*}
The equality holds if $i=m$ and $j=n$ and only if $i\leq m$ and $j\leq n$.
The result follows.
\end{proof}

From the definition of $P(X)$ we see that the constant term of $P(X)$ is chief with degree $3$. By Lemma~\ref{lem:chief} with $(F(X),G(X))=(Q(X),R(X))$ the constant term $q_0$ of $Q(X)$ is chief and so is the constant term $r_0$ of $R(X)$. Observe that the constant term of $P(X)$
is not equal to a product of two $\rho$-invariant elements in $K$ that have degrees $1$ and $2$. Therefore $\deg r_0$ is equal to $0$ or $3$ by Lemma~\ref{lem:rhoPQR}. Since $R(X)$ is of positive degree this forces that $\deg r_0=3$ and hence $\deg q_0=0$. This shows that $Q(X)$ is a constant. Since $P(X)$ and $R(X)$ are monic polynomials it follows that $P(X)=R(X)$.
\end{proof}

Replace the roles of $T_{\dbar}(A)$, $T_{\dbar}(B)$, $T_{\dbar}(C)$ in Theorem~\ref{thm:presentation} by $q^{\dbar} T_{\dbar}(A)$, $q^{\dbar} T_{\dbar}(B)$, $q^{\dbar} T_{\dbar}(C)$ respectively. Combining with Lemma~\ref{lem:Tijnew} and Lemma~\ref{lem:poly1} it follows that

\begin{thm}\label{thm:structureZDelta}
$Z(\triangle)$ is isomorphic to the polynomial ring $\F[X_0,X_1,X_2, X,Y_0,Y_1,Y_2]$ modulo the ideal generated by
\begin{gather}\label{e:mondrom}
\prod_{i\in \Z/3\Z} Y_i
+\sum_{i\in \Z/3\Z} Y_i^2
-\sum_{i\in \Z/3\Z}\phi_1^{\ubar i}(X_0,X_1,X_2;X) Y_i
+\psi_1(X_0,X_1,X_2;X)-2.
\end{gather}
\end{thm}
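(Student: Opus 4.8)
The plan is to deduce this from the presentation in Theorem~\ref{thm:presentation} by a change of generators, exactly as the preamble indicates.

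First, recall that $\dbar$ is the multiplicative order of $q^2$, so $q^{2\dbar}=1$, whence $q^{-\dbar}=q^{\dbar}$ and $q^{3\dbar}=q^{\dbar}$. Starting from Theorem~\ref{thm:presentation}, I would replace the generators $T_{\dbar}(A),T_{\dbar}(B),T_{\dbar}(C)$ of $Z(\triangle)$ by $q^{\dbar}T_{\dbar}(A),q^{\dbar}T_{\dbar}(B),q^{\dbar}T_{\dbar}(C)$. Using $q^{2\dbar}=1$, the single defining relation of Theorem~\ref{thm:presentation} becomes the ``monodromy relation''
\[
\prod_{i\in\Z/3\Z}Y_i+\sum_{i\in\Z/3\Z}Y_i^2-\sum_{i\in\Z/3\Z}\Phi_{\dbar}^{\ubar i}(\alpha,\beta,\gamma;\Omega)\,Y_i+\Psi_{\dbar}(\alpha,\beta,\gamma;\Omega)-2=0 ,
\]
where $Y_0,Y_1,Y_2\mapsto q^{\dbar}T_{\dbar}(A),q^{\dbar}T_{\dbar}(B),q^{\dbar}T_{\dbar}(C)$. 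In other words $Z(\triangle)$ is the quotient of $\F[X_0,X_1,X_2,X,Y_0,Y_1,Y_2]$ by the principal ideal generated by the same polynomial with $\Phi_{\dbar}^{\ubar i}(X_0,X_1,X_2;X)$, $\Psi_{\dbar}(X_0,X_1,X_2;X)$ in place of $\Phi_{\dbar}^{\ubar i}(\alpha,\beta,\gamma;\Omega)$, $\Psi_{\dbar}(\alpha,\beta,\gamma;\Omega)$, the generators going by $X_0,X_1,X_2\mapsto\alpha,\beta,\gamma$, $X\mapsto\Omega$, $Y_0,Y_1,Y_2\mapsto q^{\dbar}T_{\dbar}(A),q^{\dbar}T_{\dbar}(B),q^{\dbar}T_{\dbar}(C)$. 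This already has exactly the shape of $(\ref{e:mondrom})$, the only difference being that $\Phi_{\dbar}^{\ubar i}$, $\Psi_{\dbar}$ occupy the slots of $\phi_1^{\ubar i}$, $\psi_1$.

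It therefore remains to identify this principal ideal with the one generated by $(\ref{e:mondrom})$, and this is where Lemmas~\ref{lem:Tijnew} and~\ref{lem:poly1} enter. By Lemma~\ref{lem:poly1} the assignment $X_i\mapsto\phi_1^{\ubar i}$, $X\mapsto\psi_1$ is an $\F$-algebra isomorphism $\F[X_0,X_1,X_2,X]\to\F[\phi_1^{\ubar 0},\phi_1^{\ubar 1},\phi_1^{\ubar 2},\psi_1]$ carrying $\Phi_{\dbar}^{\ubar i}\mapsto\phi_{\dbar}^{\ubar i}$ and $\Psi_{\dbar}\mapsto\psi_{\dbar}$; by Lemma~\ref{lem:Tijnew} at $n=\dbar$, together with the Chebyshev identity $T_m(T_n(X))=T_{mn}(X)$ of Lemma~\ref{lem:Tchara}(ii), the assignment $X_i\mapsto T_{\dbar}(X_i)$, $X\mapsto T_{\dbar}(X)$ is an $\F$-algebra isomorphism $\F[X_0,X_1,X_2,X]\to\F[T_{\dbar}(X_0),T_{\dbar}(X_1),T_{\dbar}(X_2),T_{\dbar}(X)]$ carrying $\phi_1^{\ubar i}\mapsto\phi_{\dbar}^{\ubar i}$ and $\psi_1\mapsto\psi_{\dbar}$. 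Combining these two isomorphisms through their common values $\phi_{\dbar}^{\ubar 0},\phi_{\dbar}^{\ubar 1},\phi_{\dbar}^{\ubar 2},\psi_{\dbar}$ produces an $\F$-algebra isomorphism of $\F[X_0,X_1,X_2,X]$ that sends the quadruple $\bigl(\phi_1^{\ubar 0}(X_0,X_1,X_2;X),\phi_1^{\ubar 1},\phi_1^{\ubar 2},\psi_1\bigr)$ to $\bigl(\Phi_{\dbar}^{\ubar 0}(X_0,X_1,X_2;X),\Phi_{\dbar}^{\ubar 1},\Phi_{\dbar}^{\ubar 2},\Psi_{\dbar}\bigr)$; extending it by $Y_i\mapsto Y_i$, it carries the ideal generated by $(\ref{e:mondrom})$ onto the ideal of the previous paragraph. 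Passing to quotients, and recording where the seven generators land, gives the asserted isomorphism.

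The main obstacle is making this last combination precise. The isomorphisms supplied by Lemmas~\ref{lem:poly1} and~\ref{lem:Tijnew} are isomorphisms \emph{onto} the polynomial subrings $\F[\phi_1^{\ubar 0},\phi_1^{\ubar 1},\phi_1^{\ubar 2},\psi_1]$ and $\F[T_{\dbar}(X_0),T_{\dbar}(X_1),T_{\dbar}(X_2),T_{\dbar}(X)]$ of $\F[X_0,X_1,X_2,X]$, and these two subrings are not nested; so one must keep careful track of which subring plays the role of source and which of target at each step, and verify that the resulting composite is defined on all of $\F[X_0,X_1,X_2,X]$ and is a bijection. The algebraic independence of $\phi_1^{\ubar 0},\phi_1^{\ubar 1},\phi_1^{\ubar 2},\psi_1$ (Lemma~\ref{lem:indp4}) and of $\Phi_n^{\ubar 0},\Phi_n^{\ubar 1},\Phi_n^{\ubar 2},\Psi_n$ (Proposition~\ref{prop:poly2}(iii)), the recursion $\Phi_n(\phi_1^{\ubar 0},\phi_1^{\ubar 1},\phi_1^{\ubar 2};\psi_1)=\phi_n$ of Theorem~\ref{thm:poly}, and the Chebyshev composition identity are the inputs that reconcile the two descriptions. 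Once the two ideals are matched, everything else is purely formal.
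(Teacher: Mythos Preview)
Your rescaling step is correct and matches the paper exactly: after replacing $T_{\dbar}(A),T_{\dbar}(B),T_{\dbar}(C)$ by $q^{\dbar}T_{\dbar}(A),q^{\dbar}T_{\dbar}(B),q^{\dbar}T_{\dbar}(C)$ and using $q^{2\dbar}=1$, Theorem~\ref{thm:presentation} indeed presents $Z(\triangle)$ as $\F[X_0,X_1,X_2,X,Y_0,Y_1,Y_2]$ modulo the single relation $\prod Y_i+\sum Y_i^2-\sum\Phi_{\dbar}^{\ubar i}(X_0,X_1,X_2;X)\,Y_i+\Psi_{\dbar}(X_0,X_1,X_2;X)-2$.

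The gap is in the next step. You claim that Lemmas~\ref{lem:Tijnew} and~\ref{lem:poly1} combine into an $\F$-algebra \emph{automorphism} of $\F[X_0,X_1,X_2,X]$ carrying $(\phi_1^{\ubar 0},\phi_1^{\ubar 1},\phi_1^{\ubar 2},\psi_1)$ to $(\Phi_{\dbar}^{\ubar 0},\Phi_{\dbar}^{\ubar 1},\Phi_{\dbar}^{\ubar 2},\Psi_{\dbar})$, and you rightly flag this as ``the main obstacle.'' The obstacle is not a bookkeeping issue---it is genuine. Write $\sigma$ for the map $X_i\mapsto\phi_1^{\ubar i},\,X\mapsto\psi_1$ of Lemma~\ref{lem:poly1} and $\tau$ for the map $X_i\mapsto T_{\dbar}(X_i),\,X\mapsto T_{\dbar}(X)$ of Lemma~\ref{lem:Tijnew}. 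For $\dbar\geq 2$ these are isomorphisms onto \emph{proper} subrings of $\F[X_0,X_1,X_2,X]$, and neither image contains the other: for instance $\phi_1^{\ubar 0}=XX_0+X_1X_2\notin\F[T_{\dbar}(X_0),T_{\dbar}(X_1),T_{\dbar}(X_2),T_{\dbar}(X)]$, and $T_{\dbar}(X_0)\notin\F[\phi_1^{\ubar 0},\phi_1^{\ubar 1},\phi_1^{\ubar 2},\psi_1]$. Consequently none of the composites $\tau^{-1}\circ\sigma$, $\sigma^{-1}\circ\tau$, $\sigma\circ\tau^{-1}$, $\tau\circ\sigma^{-1}$ is defined on all of $\F[X_0,X_1,X_2,X]$, so the two lemmas do not splice into an automorphism of the full polynomial ring. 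What they do yield is only an isomorphism between the two \emph{sub}rings $\F[\Phi_{\dbar}^{\ubar i},\Psi_{\dbar}]$ and $\F[\phi_1^{\ubar i},\psi_1]$ matching $\Phi_{\dbar}^{\ubar i}\leftrightarrow\phi_1^{\ubar i}$ and $\Psi_{\dbar}\leftrightarrow\psi_1$; that alone does not identify the two principal ideals in the ambient seven-variable ring, because the ambient ring is strictly larger than either subring tensored with $\F[Y_0,Y_1,Y_2]$.

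So while your outline follows the paper's one-line hint faithfully, the particular mechanism you propose (an automorphism of $\F[X_0,X_1,X_2,X]$ extended by $Y_i\mapsto Y_i$) does not come out of Lemmas~\ref{lem:Tijnew} and~\ref{lem:poly1}, and your own caveat that ``one must verify that the resulting composite is defined on all of $\F[X_0,X_1,X_2,X]$ and is a bijection'' is exactly the point at which the argument breaks: that verification fails.
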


\noindent In the proof of Theorem~\ref{thm:presentation} we also see
that

\begin{thm}\label{thm:basisZDelta2}
The elements
\begin{gather*}
T_{\dbar}(A)^{i_0} T_{\dbar}(B)^{i_1} T_{\dbar}(C)^{i_2}
\alpha^{j_0} \beta^{j_1} \gamma^{j_2}
\Omega^\ell
\qquad \quad
\hbox{for all $i_0,i_1,i_2,j_0,j_1,j_2,\ell\in \N$
with $\ell<\dbar$}
\end{gather*}
form an $\F$-basis of $Z(\triangle)$.
\end{thm}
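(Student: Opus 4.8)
The plan is to obtain Theorem~\ref{thm:basisZDelta2} as a short corollary of the machinery already assembled in the proof of Theorem~\ref{thm:presentation}. Recall from that proof that $Z(\triangle)=K[\Omega]$, where $K$ is the $\F$-subalgebra of $Z(\triangle)$ generated by $T_{\dbar}(A)$, $T_{\dbar}(B)$, $T_{\dbar}(C)$, $\alpha$, $\beta$, $\gamma$, and that the minimal polynomial $R(X)$ of $\Omega$ over the fraction field of $K$ equals $P(X)$, which by Lemma~\ref{lem:monic} is monic in $X$ of degree $\dbar$ with coefficients in $K$. First I would isolate the polynomial-ring structure of $K$: as noted inside the proof of Lemma~\ref{lem:rhoPQR}, Lemma~\ref{lem:PBW} together with the positional-system device from the proof of Lemma~\ref{lem:basisDeltan_n-1} shows that $K$ is a six-variable polynomial ring over $\F$, so the monomials $T_{\dbar}(A)^{i_0}T_{\dbar}(B)^{i_1}T_{\dbar}(C)^{i_2}\alpha^{j_0}\beta^{j_1}\gamma^{j_2}$ with $i_0,i_1,i_2,j_0,j_1,j_2\in\N$ form an $\F$-basis of $K$.

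Next I would show that $1,\Omega,\dots,\Omega^{\dbar-1}$ form a $K$-basis of $Z(\triangle)$. For spanning, since $P(X)$ is monic of degree $\dbar$ and $P(\Omega)=0$, one may rewrite $\Omega^{\dbar}$ as a $K$-linear combination of $1,\Omega,\dots,\Omega^{\dbar-1}$, and iterating this reduction expresses every $\Omega^{\ell}$ with $\ell\ge\dbar$ as such a combination; hence $Z(\triangle)=K[\Omega]=\sum_{\ell=0}^{\dbar-1}K\,\Omega^{\ell}$. For linear independence over $K$, a nontrivial relation $\sum_{\ell=0}^{\dbar-1}a_\ell\Omega^{\ell}=0$ with $a_\ell\in K$ would exhibit $\Omega$ as a root of a nonzero polynomial over $\mathrm{Frac}(K)$ of degree strictly less than $\dbar=\deg R(X)$, contradicting the minimality of $R(X)$.

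Finally I would combine the two bases by the standard tower argument: if $\{u_s\}$ is an $\F$-basis of $K$ and $\{v_t\}$ a $K$-basis of $Z(\triangle)$, then $\{u_sv_t\}$ is an $\F$-basis of $Z(\triangle)$. Taking $\{u_s\}$ to be the monomials above and $\{v_t\}=\{\Omega^{\ell}:0\le\ell<\dbar\}$ produces exactly the asserted family.

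I do not expect a genuine obstacle: the substantive inputs — that $P(X)=R(X)$, that this polynomial is monic of degree $\dbar$ in $X$, and that $K$ is a polynomial ring — were all settled (at least implicitly) in the course of proving Theorem~\ref{thm:presentation}. The only care needed is in citing those facts precisely; if one wanted a fully self-contained argument, it would be cleanest to promote ``$K$ is a polynomial ring in six variables'' to a standalone lemma, after which the present theorem is a one-paragraph consequence of the structure $Z(\triangle)=K[\Omega]$ with $\Omega$ algebraic of degree $\dbar$ over $\mathrm{Frac}(K)$.
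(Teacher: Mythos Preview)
Your proposal is correct and is exactly the argument the paper has in mind: the paper does not give a separate proof of Theorem~\ref{thm:basisZDelta2} but merely writes ``In the proof of Theorem~\ref{thm:presentation} we also see that\ldots'', and your write-up simply makes explicit the two ingredients established there --- that $K$ is a six-variable polynomial ring and that $P(X)$, monic of degree $\dbar$, is the minimal polynomial of $\Omega$ over $\mathrm{Frac}(K)$ --- followed by the routine tower argument. There is nothing to add.
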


\begin{thm}\label{thm:integra1}
$Z(\triangle)$ is integral over the $\F$-subalgebra of $Z(\triangle)$ generated by $T_{\dbar}(A)$, $T_{\dbar}(B)$, $T_{\dbar}(C)$, $\alpha$, $\beta$, $\gamma$.
\end{thm}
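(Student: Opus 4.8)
The plan is to reduce the assertion to the integrality of the single element $\Omega$ over the indicated subalgebra. Write $K$ for the $\F$-subalgebra of $Z(\triangle)$ generated by $T_{\dbar}(A)$, $T_{\dbar}(B)$, $T_{\dbar}(C)$, $\alpha$, $\beta$, $\gamma$. By Theorem~\ref{thm:basisZDelta} the monomials $T_{\dbar}(A)^{i_0}T_{\dbar}(B)^{i_1}T_{\dbar}(C)^{i_2}\alpha^{j_0}\beta^{j_1}\gamma^{j_2}\Omega^{\ell}$ with $i_0i_1i_2=0$ span $Z(\triangle)$ over $\F$; in particular $Z(\triangle)=K[\Omega]$. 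Hence it suffices to prove that $\Omega$ is integral over $K$, for then the commutative $K$-algebra $K[\Omega]=Z(\triangle)$ is generated by one integral element and is therefore integral (indeed module-finite) over $K$.

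Next I would invoke the polynomial $P(X)\in K[X]$ exhibited in the proof of Theorem~\ref{thm:presentation}, whose defining feature here is that $P(\Omega)=0$ by Theorem~\ref{thm:relZDelta}. I claim $P(X)$ is monic of degree $\dbar$ as a polynomial in $X$ with coefficients in $K$. Indeed, inspecting the summands of $P(X)$: the term $\Psi_{\dbar}(\alpha,\beta,\gamma;X)$ is, by Lemma~\ref{lem:monic}(ii), monic of degree $\dbar$ in $X$; each of the three terms $q^{\dbar}\Phi_{\dbar}^{\ubar i}(\alpha,\beta,\gamma;X)\,T_{\dbar}(\,\cdot\,)$ has $X$-degree at most $\lfloor\dbar/2\rfloor<\dbar$ by Lemma~\ref{lem:monic}(i); and the remaining summands $q^{\dbar}T_{\dbar}(A)T_{\dbar}(B)T_{\dbar}(C)$, $T_{\dbar}(A)^2$, $T_{\dbar}(B)^2$, $T_{\dbar}(C)^2$, $-2$ do not involve $X$ at all. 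Thus $P(X)$ is a monic polynomial of degree $\dbar$ over $K$ annihilating $\Omega$, so $\Omega$ is integral over $K$.

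Combining the two steps proves the theorem: $Z(\triangle)=K[\Omega]$ is integral over $K$; one may further note, via Theorem~\ref{thm:basisZDelta2}, that $1,\Omega,\dots,\Omega^{\dbar-1}$ is actually a $K$-basis of $Z(\triangle)$, so $Z(\triangle)$ is free of rank $\dbar$ over $K$. I do not anticipate a genuine obstacle: the substantive work—that $Z(\triangle)=K[\Omega]$ and that the defining relation of $Z(\triangle)$ is monic of degree $\dbar$ in $\Omega$—has already been done in Theorems~\ref{thm:basisZDelta} and \ref{thm:presentation} together with Lemma~\ref{lem:monic}, and the present statement is a direct corollary; the only care needed is the routine degree bookkeeping recorded above.
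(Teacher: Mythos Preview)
Your argument is correct and matches the paper's own approach: the paper states Theorem~\ref{thm:integra1} as a direct consequence of the proof of Theorem~\ref{thm:presentation}, where it was established that $Z(\triangle)=K[\Omega]$ and that $\Omega$ satisfies the monic polynomial $P(X)\in K[X]$ of degree~$\dbar$ (via Lemma~\ref{lem:monic}). Your additional remark that $1,\Omega,\dots,\Omega^{\dbar-1}$ is a $K$-basis, read off from Theorem~\ref{thm:basisZDelta2}, is also in line with the paper.
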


We end this subsection with a feedback to $Z(\V)$.
By the construction of $\triangle$ there is an $\F$-algebra homomorphism $\iota:\triangle\to \V$ that sends
\begin{eqnarray*}
(A,B,C,\alpha,\beta,\gamma,\Omega) &\mapsto&
(
-(q^2-q^{-2})K_0,-(q^2-q^{-2})K_1,-(q^2-q^{-2})K_2,0,0,0,\Pi
).
\end{eqnarray*}
Recall from Introduction that $\Gamma_i$ denotes $T_{\dbar}(-(q^2-q^{-2})K_i)$ for each $i\in \Z/3\Z$. Hence $\iota$ maps $T_{\dbar}(A)$, $T_{\dbar}(B)$, $T_{\dbar}(C)$ to $\Gamma_0$, $\Gamma_1$, $\Gamma_2$ respectively.

From the definitions of $\phi_n(X_0,X_1,X_2;X)$, $\psi_n(X_0,X_1,X_2;X)$ we see that
\begin{eqnarray*}
\phi_n(0,0,0;X)&=&
2
\left(
\left\lfloor \frac{n}{2}\right\rfloor
-
\left\lceil \frac{n}{2}\right\rceil
+1
\right)
\left(
(-1)^{\lfloor n/2\rfloor}T_n(X)+2
\right),\\
\psi_n(0,0,0;X)&=&T_{2n}(X)+
4\left(
\left\lfloor \frac{n}{2}\right\rfloor
-
\left\lceil \frac{n}{2}\right\rceil
+1
\right)
\left(
(-1)^{\lfloor n/2\rfloor}2\,T_n(X)+3
\right)
\end{eqnarray*}
for all $n\in \N$.
Substitute $(X_0,X_1,X_2)=(0,0,0)$ into $\Phi_n(\phi_1^{\ubar 0},\phi_1^{\ubar 1},\phi_1^{\ubar 2};\psi_1)$. By Theorem~\ref{thm:poly} it follows that
\begin{gather*}
\Phi_n(0,0,0;T_2(X))=
2
\left(
\left\lfloor \frac{n}{2}\right\rfloor
-
\left\lceil \frac{n}{2}\right\rceil
+1
\right)
\left(
(-1)^{\lfloor n/2\rfloor}T_n(X)+2
\right)
\qquad \quad \hbox{for all $n\in \N$}.
\end{gather*}
Using Lemma~\ref{lem:Tchara}(ii) it follows that
\begin{gather*}
\Phi_n(0,0,0;X)=2
\left(
\left\lfloor \frac{n}{2}\right\rfloor
-
\left\lceil \frac{n}{2}\right\rceil
+1
\right)
\left(
(-1)^{\lfloor n/2\rfloor}T_{\lfloor n/2\rfloor}(X)+2
\right)
\qquad \quad \hbox{for all $n\in \N$}.
\end{gather*}
A similar way shows that
\begin{gather*}
\Psi_n(0,0,0;X)=T_n(X)
+
4\left(
\left\lfloor \frac{n}{2}\right\rfloor
-
\left\lceil \frac{n}{2}\right\rceil
+1
\right)
\left(
(-1)^{\lfloor n/2\rfloor}2\,T_{\lfloor n/2\rfloor}(X)+3
\right)
\qquad \quad \hbox{for all $n\in \N$}.
\end{gather*}

Applying the above results, the image of the relation in Theorem~\ref{thm:relZDelta} under $\iota$ implies the relation (\ref{e:presentationV}). By a similar proof to that of Theorem~\ref{thm:presentation}, the presentation for $Z(\V)$ conjectured by Iorgov \cite[Conjecture~1]{iog02} can be shown to be true.

\section{The center of DAHA of type $(C_1^\vee,C_1)$ at roots of unity}\label{s:ZH}

Fix four nonzero parameters $k_0$, $k_1$, $k_0^\vee$, $k_1^\vee$ taken from $\F$. The DAHA of type $(C_1^\vee,C_1)$ is an algebra $\H=\H_q(k_0,k_1,k_0^\vee,k_1^\vee)$ generated by $t_0$, $t_1$, $t_0^\vee$, $t_1^\vee$ subject to the relations
\begin{gather}
\begin{split}\label{e:h1}
(t_0-k_0)(t_0-k_0^{-1})=0,
\qquad \qquad
(t_1-k_1)(t_1-k_1^{-1})=0,\\
(t_0^\vee-k_0^\vee)(t_0^\vee-k_0^{\vee-1})=0,
\qquad \quad
(t_1^\vee-k_1^\vee)(t_1^\vee-k_1^{\vee-1})=0,
\end{split}
\\
t_0^\vee t_0 t_1^\vee t_1=q^{-1}.
\label{e:h2}
\end{gather}
The goal of \S\ref{s:ZH} is to give a presentation for $Z(\H)$.

\subsection{An $\N$-filtration structure of $\H$}
Let $c_0$, $c_1$, $c_0^\vee$, $c_1^\vee$ denote the scalars
\begin{gather*}
k_0+k_0^{-1},
\qquad
k_1+k_1^{-1},
\qquad
k_0^\vee+k_0^{\vee-1},
\qquad
k_1^\vee+k_1^{\vee-1}
\end{gather*}
respectively.
By (\ref{e:h1}) the generators $t_i$ and $t_i^\vee$ ($i=0,1$) are invertible with
$t_i^{-1}=c_i-t_i$ and
$t_i^{\vee-1}=c_i^\vee-t_i^\vee$.
Set
\begin{gather*}
u=t_1t_0^\vee,\qquad \quad
v=t_1^\vee t_1.
\end{gather*}
Clearly $t_0^\vee=t_1^{-1} u$ and $t_1^\vee=v t_1^{-1}$.
Relation (\ref{e:h2}) implies that $t_0=q^{-1} u^{-1} t_1 v^{-1}$.
Therefore $t_1^{\pm 1}$, $u^{\pm 1}$, $v^{\pm 1}$ form a set of generators of $\H$. Furthermore \cite[Proposition~5.2]{koo07} implies that

\begin{prop}\label{prop:presenH}
The $\F$-algebra $\H$ is generated by
$t_1^{\pm 1}$, $u^{\pm 1}$, $v^{\pm 1}$ subject to the relations
$t_1t_1^{-1}=t_1^{-1}t_1=1$, $uu^{-1}=u^{-1}u=1$, $vv^{-1}=v^{-1}v=1$ and
\begin{eqnarray*}
t_1^{-1}&=&
c_1-t_1,
\\
ut_1^{-1}&=&
-t_1u^{-1}+c_0^\vee,
\\
v t_1^{-1} &=&
-t_1 v^{-1}+c_1^\vee,
\\
v^{-1} u^{-1} &=&
q^2 u^{-1} v^{-1}
+q^2c_1t_1^{-1} u v^{-1}
-q^2c_1^\vee t_1^{-1} u
-q^2 c^\vee_0 t_1^{-1} v^{-1}
+qc_0t_1^{-1},
\\
vu &=&
q^2 uv
+q^{-2}c_1t_1^{-1} uv^{-1}
-c_1^\vee t_1^{-1} u
+c_0^\vee
(
t_1-q^{-2}c_1
) v^{-1}
-q c_0
(
t_1-q^{-2}c_1
),
\\
v^{-1}u &=&
-q^{-2}t_1^{-2}u v^{-1}
+c_1^\vee t_1^{-1} u
+q^{-2}c_0^\vee t_1^{-1} v^{-1}
-q^{-1}c_0t_1^{-1},
\\
v u^{-1}&=&
q^{-2}u^{-1}v
-q^{-2}c_1t_1^{-1} uv^{-1}
+q^{-2}c_1^\vee t_1^{-1} u
+c_0^\vee
(
q^{-2}c_1-t_1
)
v^{-1}
-q^{-1}c_0t_1^{-1}\\
&&\quad+\;\;(1-q^{-2})c_0^\vee c_1^\vee.
\end{eqnarray*}
\end{prop}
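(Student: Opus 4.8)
The plan is to deduce the presentation from the defining presentation of $\H$ by the change of generators already introduced, namely $u=t_1t_0^\vee$ and $v=t_1^\vee t_1$, with inverse substitutions $t_0^\vee=t_1^{-1}u$, $t_1^\vee=vt_1^{-1}$ and $t_0=q^{-1}u^{-1}t_1v^{-1}$. We have already recorded that $t_1^{\pm1}$, $u^{\pm1}$, $v^{\pm1}$ generate $\H$, so only the relations need to be matched. Concretely, let $\H'$ denote the $\F$-algebra abstractly presented by invertible generators $t_1^{\pm1}$, $u^{\pm1}$, $v^{\pm1}$ subject to the seven displayed relations. I would (i) verify that those seven relations hold in $\H$ under $u\mapsto t_1t_0^\vee$, $v\mapsto t_1^\vee t_1$, producing a homomorphism $\Phi\colon\H'\to\H$; (ii) verify that relations (\ref{e:h1})--(\ref{e:h2}) hold in $\H'$ under $t_0^\vee\mapsto t_1^{-1}u$, $t_1^\vee\mapsto vt_1^{-1}$, $t_0\mapsto q^{-1}u^{-1}t_1v^{-1}$, producing a homomorphism $\Psi\colon\H\to\H'$; and (iii) observe that $\Phi$ and $\Psi$ are mutually inverse on generators, whence $\H\cong\H'$. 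This is precisely the passage encoded by \cite[Proposition~5.2]{koo07}; what remains is the bookkeeping in (i) and (ii).

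For (i), the first three relations are the quadratic relations (\ref{e:h1}) transcribed: $(t_1-k_1)(t_1-k_1^{-1})=0$ becomes $t_1^{-1}=c_1-t_1$; substituting $t_0^\vee=t_1^{-1}u$ into $t_0^\vee+t_0^{\vee-1}=c_0^\vee$ and conjugating by $t_1$ gives $ut_1^{-1}+t_1u^{-1}=c_0^\vee$; substituting $t_1^\vee=vt_1^{-1}$ into $t_1^\vee+t_1^{\vee-1}=c_1^\vee$ gives $vt_1^{-1}+t_1v^{-1}=c_1^\vee$. The remaining four relations all originate from the single quadratic relation on $t_0$: writing $t_0+t_0^{-1}=c_0$ with $t_0=q^{-1}u^{-1}t_1v^{-1}$ yields $q^{-1}u^{-1}t_1v^{-1}+qvt_1^{-1}u=c_0$, and after replacing $t_1^{-1}$ by $c_1-t_1$ and repeatedly using the three relations just obtained to push every $v^{\pm1}$ to the right of every $u^{\pm1}$, this should collapse to the displayed relation for $vu$. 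The companion relations for $v^{-1}u^{-1}$, $v^{-1}u$ and $vu^{-1}$ then follow from the $vu$-relation by left and right multiplication by $u^{\pm1}$ and $v^{\pm1}$ together with renewed use of the first three relations.

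For (ii), one reverses this computation: inside $\H'$ the three quadratics on $t_1$, $t_0^\vee$, $t_1^\vee$ are the first three relations read backwards, relation (\ref{e:h2}) holds by the very definition of $t_0$, and the quadratic on $t_0$ must be recovered by reassembling $q^{-1}u^{-1}t_1v^{-1}+qvt_1^{-1}u=c_0$ out of the four normal-ordering relations. I expect this reassembly --- the equivalence, modulo the first three relations, between the scalar identity $t_0+t_0^{-1}=c_0$ and the package of four $v^{\pm1}u^{\pm1}$-relations --- to be the only step requiring genuine non-commutative computation rather than formal rewriting, and hence the main obstacle. Granting it, $\Phi\circ\Psi$ and $\Psi\circ\Phi$ act as the identity on generators, so $\H\cong\H'$ and the proposition follows.
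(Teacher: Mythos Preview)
Your approach is correct and is exactly the change-of-presentation argument the paper defers to via \cite[Proposition~5.2]{koo07}; the paper gives no independent proof beyond that citation. One small refinement: the direct rewriting of $t_0+t_0^{-1}=c_0$ via the first three relations most naturally yields the $v^{-1}u$ relation rather than the $vu$ relation---substituting $vt_1^{-1}=c_1^\vee-t_1v^{-1}$ and $u^{-1}t_1=c_0^\vee-t_1^{-1}u$ into $q^{-1}u^{-1}t_1v^{-1}+qvt_1^{-1}u=c_0$ collapses it immediately to the displayed $v^{-1}u$ formula, and conversely that single relation already recovers $t_0+t_0^{-1}=c_0$ in $\H'$, so the equivalence you flag as the ``main obstacle'' is in fact short.
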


\noindent Applying the Bergman's diamond lemma \cite[Theorem~1.2]{berg1978} to Proposition~\ref{prop:presenH}, it follows that

\begin{lem}\label{lem:basisH}
$\H$ has the $\F$-basis
\begin{gather*}
t_1^\ell u^i v^j
\qquad \quad
\hbox{for all $\ell\in \{0,1\}$ and
$i,j\in \Z$}.
\end{gather*}
\end{lem}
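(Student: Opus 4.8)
The plan is to apply Bergman's diamond lemma \cite[Theorem~1.2]{berg1978} to the presentation of $\H$ in Proposition~\ref{prop:presenH}, so that the asserted monomials are recognized as the irreducible words of a terminating, confluent rewriting system. First I would turn the defining relations into a reduction system on the free monoid on the letters $t_1,t_1^{-1},u,u^{-1},v,v^{-1}$, with reducible words and replacements: $t_1^{-1}\to c_1-t_1$ and $t_1t_1\to c_1t_1-1$ (encoding $(t_1-k_1)(t_1-k_1^{-1})=0$); the four cancellations $u^{\pm1}u^{\mp1}\to1$ and $v^{\pm1}v^{\mp1}\to1$; $u^{\pm1}t_1\to t_1u^{\mp1}+(\text{lower})$ and $v^{\pm1}t_1\to t_1v^{\mp1}+(\text{lower})$, where the two ``$+1$'' rules are read off the second and third displayed relations of Proposition~\ref{prop:presenH} and the two ``$-1$'' rules are the obvious consequences obtained by conjugating them; and $v^{\pm1}u^{\pm1}\to q^{\pm2}u^{\pm1}v^{\pm1}+(\text{lower})$, read off the last four displayed relations, after substituting $t_1^{-1}=c_1-t_1$ and $t_1^{-2}=c_1^2-c_1t_1-1$ on the right-hand sides so that $t_1^{-1}$ never occurs there. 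The words irreducible under this system are exactly $t_1^{\ell}u^{i}v^{j}$ with $\ell\in\{0,1\}$ and $i,j\in\Z$: no $t_1^{-1}$ survives, at most one $t_1$ survives and it sits to the left of everything, all $u^{\pm1}$ precede all $v^{\pm1}$, and no $u^{\pm1}u^{\mp1}$ or $v^{\pm1}v^{\mp1}$ remains uncancelled.

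Next I would record that the rewriting terminates. A naive weight or length order will not work, because the relation for $vu$ carries the term $c_1t_1^{-1}uv^{-1}$, which after substitution still contains $t_1uv^{-1}$, a word longer than $vu$. Instead I would use the lexicographic statistic
\[
\bigl(\operatorname{inv}_{vu}(w),\ \operatorname{inv}_{t_1}(w),\ |w|,\ \#_{t_1^{-1}}(w)\bigr),
\]
where $\operatorname{inv}_{vu}(w)$ is the number of index pairs $i<j$ with $w_i\in\{v^{\pm1}\}$ and $w_j\in\{u^{\pm1}\}$, where $\operatorname{inv}_{t_1}(w)$ is the number of pairs $i<j$ with $w_i\in\{u^{\pm1},v^{\pm1}\}$ and $w_j\in\{t_1^{\pm1}\}$, and where $\#_{t_1^{-1}}(w)$ counts occurrences of $t_1^{-1}$. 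One checks that every reduction step strictly lowers this tuple: the $v^{\pm1}u^{\pm1}$-rules drop $\operatorname{inv}_{vu}$; the $u^{\pm1}t_1$- and $v^{\pm1}t_1$-rules do not raise $\operatorname{inv}_{vu}$ and strictly drop $\operatorname{inv}_{t_1}$; the cancellation and $t_1t_1$-rules drop $|w|$ without raising the first two entries; and $t_1^{-1}\to c_1-t_1$ drops $\#_{t_1^{-1}}$ without raising the first three. (If one prefers an honest compatible Noetherian semigroup order rather than a mere termination measure, a refinement of this tuple supplies one.)

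The substantive part is to verify that every ambiguity resolves. Since each reducible word has length at most $2$, there are no inclusion ambiguities, and the overlap ambiguities are finite in number; I would group them as the self-overlap $t_1t_1t_1$ together with $u^{\pm1}t_1t_1$ and $v^{\pm1}t_1t_1$; the inverse overlaps $u^{\pm1}u^{\mp1}u^{\pm1}$ and $v^{\pm1}v^{\mp1}v^{\pm1}$; the overlaps of a cancellation rule with a $t_1$-rule, $u^{\pm1}u^{\mp1}t_1$ and $v^{\pm1}v^{\mp1}t_1$; and the overlaps involving a $v^{\pm1}u^{\pm1}$-rule, namely $v^{\pm1}u^{\pm1}t_1$, $v^{\pm1}u^{\pm1}u^{\mp1}$ and $v^{\pm1}v^{\mp1}u^{\pm1}$. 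For each, I would reduce the overlap word in both ways and check that the outputs agree as $\F$-linear combinations of the normal monomials $t_1^{\ell}u^{i}v^{j}$; the cases pairing a bulky $v^{\pm1}u^{\pm1}$-relation with a $t_1$-relation are the only ones requiring nontrivial computation. Once all ambiguities are resolved, Bergman's diamond lemma yields that the images in $\H$ of the irreducible words form an $\F$-basis, which is exactly the assertion of Lemma~\ref{lem:basisH}.

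I expect the main obstacle to be precisely this bookkeeping: producing (or convincingly arguing the existence of) a genuinely compatible Noetherian order in the presence of defining relations that are not length-homogeneous, and then pushing through the two dozen or so ambiguity verifications, especially those built from the long relations of Proposition~\ref{prop:presenH}. Everything else — identifying the irreducible words and invoking the diamond lemma — is immediate.
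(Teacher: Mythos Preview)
Your proposal is correct and follows exactly the route the paper takes: the paper's proof is the single sentence ``Applying the Bergman's diamond lemma \cite[Theorem~1.2]{berg1978} to Proposition~\ref{prop:presenH}, it follows that\ldots'', and your plan is a faithful, detailed unpacking of precisely that application. Your identification of the reduction rules, the irreducible words, the termination statistic, and the finite list of overlap ambiguities is accurate; the paper simply leaves all of this implicit.
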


\noindent Let $\T$ denote the $\F$-subalgebra of $\H$ generated by $t_1$.

\begin{lem}\label{lem:presenT}
$\T$ is the $\F$-algebra generated by $t_1^{\pm 1}$ subject to the relations
\begin{gather*}
t_1t_1^{-1}=t_1^{-1}t_1=1,
\qquad \quad
t_1+t_1^{-1}=c_1.
\end{gather*}
\end{lem}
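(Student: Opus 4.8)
The plan is to prove that the presented algebra is exactly $\T$ by a dimension count: both sides turn out to be $2$-dimensional over $\F$.

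First I would observe that the first relation in (\ref{e:h1}), namely $(t_1-k_1)(t_1-k_1^{-1})=0$, expands to $t_1^2-c_1t_1+1=0$; multiplying on the right by $t_1^{-1}$ (which exists, as recorded just before Proposition~\ref{prop:presenH}) gives $t_1+t_1^{-1}=c_1$ in $\H$. Consequently the assignment of the abstract generator to $t_1$ extends to a surjective $\F$-algebra homomorphism $\pi$ from the algebra $\mathcal B$ presented in the statement onto $\T$. The task then reduces to showing $\pi$ is injective.

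Next I would bound $\dim_\F\mathcal B$ from above by $2$. In $\mathcal B$ the relation $t_1+t_1^{-1}=c_1$ gives $t_1^{-1}=c_1\cdot 1-t_1$, and combined with $t_1t_1^{-1}=1$ it gives $t_1^2=c_1t_1-1$; a routine induction then places every $t_1^{\pm n}$ in $\F\cdot 1+\F\cdot t_1$, so $\{1,t_1\}$ spans $\mathcal B$ over $\F$. For the reverse inequality I would invoke Lemma~\ref{lem:basisH}: the elements $1=t_1^0u^0v^0$ and $t_1=t_1^1u^0v^0$ are two distinct members of the displayed $\F$-basis of $\H$, hence $\F$-linearly independent, so $\dim_\F\T\ge 2$. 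Since $\pi$ is onto, $2\le\dim_\F\T\le\dim_\F\mathcal B\le 2$, forcing all three to equal $2$ and $\pi$ to be a bijection; thus $\pi$ is the desired isomorphism.

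There is essentially no obstacle in this argument; the only point needing a moment's care is that the collapse of $\mathcal B$ to dimension at most $2$ is uniform in the parameter, so that the degenerate values $c_1=\pm 2$ (for which $X^2-c_1X+1$ has a repeated root) are not exceptional, since $\F[X]/(X^2-c_1X+1)$ still has $\F$-dimension $2$ — and of course that Lemma~\ref{lem:basisH} is precisely what certifies $\dim_\F\T\ge 2$ from below.
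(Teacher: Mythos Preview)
Your proof is correct and takes essentially the same approach as the paper. The paper's one-line proof (``Apply the first and fourth relations in Proposition~\ref{prop:presenH}'') points to the relations $t_1t_1^{-1}=t_1^{-1}t_1=1$ and $t_1^{-1}=c_1-t_1$, which are exactly those in the statement, with the injectivity implicitly resting on Lemma~\ref{lem:basisH}; you have simply made this dimension-count argument explicit.
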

\begin{proof}
Apply the first and fourth relations in Proposition~\ref{prop:presenH}.
\end{proof}

To make the arguments brief, Lemma~\ref{lem:presenT} will be used tacitly henceforward. For each $n\in \N$ denote by $\H_n$ the left $\T$-submodule of $\H$ spanned by
\begin{gather*}
u^i v^j
\qquad \quad
\hbox{for all $i$, $j\in \Z$ with $|i|+|j|\leq n$}.
\end{gather*}
As a left $\T$-module, $\H$ has the basis
\begin{gather*}
u^i v^j
\qquad \quad
\hbox{for all $i$, $j\in \Z$}
\end{gather*}
by Lemmas~\ref{lem:basisH}. Thus $\H=\bigcup_{n\in \N} \H_n$.  For any $m$, $n\in \N$, $\H_m\cdot\H_n\subseteq \H_{m+n}$ by Proposition~\ref{prop:presenH}.
Therefore the increasing sequence
\begin{gather*}
\H_0\subseteq \H_1\subseteq \cdots \subseteq \H_n\subseteq \cdots
\end{gather*}
gives an $\N$-filtration of the $\T$-algebra $\H$.
From the relations in Proposition~\ref{prop:presenH} we see that
\begin{eqnarray}
ut_1^{-1}&=&
-t_1u^{-1}
\qquad
\pmod{\H_0},
\label{e:ut-1}\\
v t_1^{-1} &=&
-t_1 v^{-1}
\qquad
\pmod{\H_0},
\label{e:vt-1}\\
v^{-1} u^{-1} &=&
q^2 u^{-1} v^{-1}
+q^2c_1t_1^{-1} u v^{-1}
\qquad
\pmod{\H_1},
\label{e:v-1u-1}
\\
v^{-1}u &=&
-q^{-2}t_1^{-2}u v^{-1}
\qquad
\pmod{\H_1},
\label{e:v-1u}\\
v u^{-1}&=&
q^{-2}u^{-1}v
-q^{-2}c_1t_1^{-1} uv^{-1}
\qquad
\pmod{\H_1}.
\label{e:vu-1}
\end{eqnarray}

\subsection{The centralizer of $t_1$ in $\H$}

By \cite[\S6]{koo07} or \cite[\S16]{aw&daha2013}
there exists a unique $\F$-algebra homomorphism $\sharp:\triangle\to \H$ given by
\begin{eqnarray*}
A^\sharp &=& t_1 t_0^\vee +(t_1t_0^\vee)^{-1}, \\
B^\sharp &=& t_1^\vee t_1+(t_1^\vee t_1)^{-1},\\
C^\sharp &=& t_0 t_1+(t_0 t_1)^{-1},\\
\alpha^\sharp &=&
\phi_1^{\ubar 0}
(c_0^\vee,c_1^\vee,c_0;q^{-1}t_1+qt_1^{-1}),
\\
\beta^\sharp &=&
\phi_1^{\ubar 1}
(c_0^\vee,c_1^\vee,c_0;q^{-1}t_1+qt_1^{-1}),
\\
\gamma^\sharp &=&
\phi_1^{\ubar 2}
(c_0^\vee,c_1^\vee,c_0;q^{-1}t_1+qt_1^{-1}),
\\
\Omega^\sharp &=&
\psi_1
(c_0^\vee,c_1^\vee,c_0;q^{-1}t_1+qt_1^{-1}).
\end{eqnarray*}
The image of $
\sharp$ is contained in the centralizer $C_\H(t_1)$ of $t_1$ in $\H$ \cite[Theorem 2.4(i)]{it10}.
For convenience we denote by $A$, $B$, $C$ the elements
\begin{gather*}
t_1 t_0^\vee+(t_1 t_0^\vee)^{-1},
\qquad \quad
t_1^\vee t_1+(t_1^\vee t_1)^{-1},
\qquad \quad
t_0 t_1+(t_0 t_1)^{-1}
\end{gather*}
respectively.  In this subsection we shall show that

\begin{thm}\label{thm:basisCT}
The elements
\begin{gather*}
A^i C^j B^k
\qquad \quad
\hbox{for all $i,j,k\in \N$ with $ijk=0$}
\end{gather*}
form a $\T$-basis of $C_\H(t_1)$.
\end{thm}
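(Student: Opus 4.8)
The plan is to exploit the homomorphism $\sharp\colon\triangle\to\H$ together with the $\N$-filtration $\{\H_n\}$. Writing $A,B,C$ for the displayed elements, a short computation in the generators $t_1^{\pm1},u^{\pm1},v^{\pm1}$ gives $A=u+u^{-1}$, $B=v+v^{-1}$, $C=-q^{-1}u^{-1}v-qv^{-1}u+q^{-1}c_1^\vee u^{-1}t_1+qc_1^\vee t_1^{-1}u$, and one checks $\alpha^\sharp,\beta^\sharp,\gamma^\sharp,\Omega^\sharp\in\T$. First I would show that $M:=\sum_{ijk=0}\T\,A^iC^jB^k$ is a $\T$-subalgebra of $\H$ contained in $C_\H(t_1)$: the relations that $\sharp$ transports from $\triangle$ — the Askey--Wilson relations (\ref{e:uaw2})--(\ref{e:uaw1}) and the Casimir relation (\ref{e:Omega}), all acquiring coefficients in $\T$ — rewrite every word in $A,B,C$ as a $\T$-combination of monomials $A^iC^jB^k$, and a monomial with $i,j,k\ge1$ is reduced further by moving $C$ past $B$ and then invoking the Casimir relation; moreover $A,B,C\in\mathrm{Im}\,\sharp\subseteq C_\H(t_1)$ and $\T$ is commutative, so $M\subseteq C_\H(t_1)$.

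Next I would prove that the monomials $A^iC^jB^k$ with $ijk=0$ are $\T$-linearly independent in $\H$, by a leading-term analysis with respect to $\{\H_n\}$ (in which $A,B$ have degree $1$ and $C$ has degree $2$). For each $n\ge1$ there are exactly $2n$ such monomials of degree $n$, and modulo $\H_{n-1}$ one finds $A^aB^{n-a}\equiv(u^a+u^{-a})(v^{n-a}+v^{-(n-a)})$, while $A^{n-2c}C^c$ and $C^cB^{n-2c}$ each reduce (using (\ref{e:v-1u}), (\ref{e:vu-1}) to collapse the mixed terms of $C^c$) to the product of $\mu_1u^{-c}v^c+\mu_2u^cv^{-c}$ (with $\mu_1,\mu_2\in\F^\times$) and $u^{n-2c}+u^{-(n-2c)}$, respectively $v^{n-2c}+v^{-(n-2c)}$. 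On the lattice $\{\,(i,j):|i|+|j|=n\,\}$ the leading term of $A^aB^{n-a}$ ($0<a<n$) occupies the four corners of the ``rectangle'' with $|$first coordinate$|=a$, these rectangles are pairwise disjoint, the leading terms of $A^n$ and $B^n$ sit on the two coordinate-axis pairs, and the leading term of each $C$-involving monomial sits on exactly one rectangle, supported only on its two anti-diagonal corners — so it cannot cancel the corner $\overline{u^av^{n-a}}$ carried by $\overline{A^aB^{n-a}}$. Hence these $2n$ images are $\T$-independent in $\H_n/\H_{n-1}$, and induction on $n$ gives the claim (and exhibits $\H=M\oplus N$, with $N$ the $\T$-span of the remaining basis monomials).

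The remaining inclusion $C_\H(t_1)\subseteq M$ is the heart of the argument, proved by induction on $n$ with $C_\H(t_1)\cap\H_0=\T\subseteq M$ as base case. Conjugation $\mathrm{Ad}(t_1)$ is a $\T$-linear automorphism of $\H$ that preserves each $\H_n$ (since $\mathrm{Ad}(t_1)(u),\mathrm{Ad}(t_1)(u^{-1}),\mathrm{Ad}(t_1)(v),\mathrm{Ad}(t_1)(v^{-1})$ lie in $\H_1$), hence induces a $\T$-linear map $\overline{\mathrm{Ad}(t_1)}$ on $\H_n/\H_{n-1}$; for $h\in C_\H(t_1)\cap\H_n$ the leading term $\bar h$ lies in $\ker(\overline{\mathrm{Ad}(t_1)}-1)$. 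From $\mathrm{Ad}(t_1)(u)=-t_1^2u^{-1}+c_0^\vee t_1$, $\mathrm{Ad}(t_1)(u^{-1})=u+c_1t_1u^{-1}-c_0^\vee t_1$ (and the analogues for $v$), the identity $t_1^2=c_1t_1-1$, and the reordering relations (\ref{e:v-1u-1})--(\ref{e:vu-1}), one computes that for each basis monomial $u^iv^j$ with $i>0$ the coefficient of $\overline{u^iv^j}$ in $(\overline{\mathrm{Ad}(t_1)}-1)(\overline{u^iv^j})$ is a unit of $\T$ ($-1$, or $t_1^2$ when $i=1$ and $j<0$), while the only other contribution with positive $u$-exponent is a correction within the pair $\{\overline{uv^{\,n-1}},\overline{uv^{-(n-1)}}\}$. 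It follows that $\overline{\mathrm{Ad}(t_1)}-1$ is injective on the $\T$-span of the $2n$ monomials $\{\overline{u^iv^j}:i>0\}\cup\{\overline{v^n}\}$, so it has rank at least $2n$ on $\H_n/\H_{n-1}$ and its kernel has $\T$-rank at most $2n$; but the kernel already contains the $2n$ $\T$-independent leading terms of the degree-$n$ monomials $A^iC^jB^k$ from the previous step, so it equals their $\T$-span. Therefore $\bar h$ is a $\T$-combination of such leading terms, $h$ agrees modulo $\H_{n-1}$ with an element of $M$, and the induction hypothesis applied to the difference finishes the proof. The main obstacle is precisely this last computation: the structure relations (\ref{e:v-1u-1})--(\ref{e:vu-1}) are not homogeneous for the filtration, so they couple basis vectors inside a single graded layer, and the bookkeeping must be handled carefully, particularly at the corner rectangles $a=1,n-1$ where the expansion of $u^{-1}t_1^2$ (equivalently $v^{-1}t_1^2$) contributes an extra term.
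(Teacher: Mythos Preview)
Your strategy is genuinely different from the paper's and, with some repair, works. The paper does \emph{not} count dimensions: it writes an arbitrary $R\in C_\H(t_1)\cap\H_n$ as $\sum_{|i|+|j|=n}r_{i,j}\,u^iv^j$ modulo $\H_{n-1}$, reads off from $[R,t_1]\equiv 0$ the constraints $r_{i,j}=r_{-i,-j}$ and $c_1r_{i,j}=t_1r_{i,-j}+t_1^{-1}r_{-i,j}$, and then \emph{explicitly solves} equation $R\equiv\sum s_{i,j}A^iB^j+\sum t_{i,j}A^iC^j+\sum u_{i,j}C^jB^i$ for the coefficients $s,t,u\in\T$, using the exact leading terms of $A^i$, $B^i$, $C^i$ computed in Proposition~\ref{prop:ABC}. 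Your approach replaces this explicit solution by an injectivity/rank argument on $\overline{\mathrm{Ad}(t_1)}-1$ acting on the graded piece $\H_n/\H_{n-1}$. That is a legitimate alternative and arguably more conceptual; it avoids the somewhat delicate Proposition~\ref{prop:ABC} computation of $C^i$ (you only need its support, not the precise coefficients).

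That said, several details need tightening. First, your displayed formula for $C$ is wrong in the lower-order terms (compare the expansion in the proof of Proposition~\ref{prop:ABC}); only the leading part $-q^{-1}u^{-1}v+q^{-1}t_1^{-2}uv^{-1}$ is used, but note that one coefficient lives in $\T^\times$, not $\F^\times$, so your claim ``$\mu_1,\mu_2\in\F^\times$'' is false. This does not break the independence argument---the support claim is what matters---but it does mean your triangularity picture in Part~3 must allow $\T$-coefficients throughout. Second, and more seriously, the inference ``kernel has $\T$-rank $\le 2n$ and contains $2n$ $\T$-independent vectors, hence equals their span'' is invalid over a ring: $\T=\F[t_1]/(t_1-k_1)(t_1-k_1^{-1})$ is not a field (and not even a domain when $k_1=\pm1$). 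The fix is easy: argue over $\F$. The graded piece is $\F$-dimension $8n$; injectivity of $(\overline{\mathrm{Ad}(t_1)}-1)$ on your rank-$2n$ free $\T$-submodule gives an $\F$-image of dimension $\ge 4n$, hence $\F$-kernel of dimension $\le 4n$; the $\T$-span of the $2n$ leading terms already has $\F$-dimension $4n$, so equality follows. Finally, your description of the off-diagonal structure is garbled: using Lemma~\ref{lem:uivjt} one finds that for \emph{every} $i,j>0$ the image $(\overline{\mathrm{Ad}(t_1)}-1)(\overline{u^iv^j})$ contributes $-c_1t_1\,\overline{u^iv^{-j}}$ back into the positive-$u$ region, not only for $i=1$; but projecting to that region still yields a block-triangular map with unit diagonal entries $-1$ and $t_1^2$, so injectivity holds for the reason you intend.
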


Before delving into the proof of Theorem~\ref{thm:basisCT} we lay some groundwork.  Applying (\ref{e:ut-1}), (\ref{e:vt-1}) the routine inductions yield that

\begin{lem}\label{lem:uit}
For each $i\in \N^*$
\begin{eqnarray*}
u^i t_1^{-1}
&=&
-t_1u^{-i}
\qquad \pmod{\H_{i-1}},
\label{e:uit}\\
v^i t_1^{-1}
&=&
-t_1 v^{-i}
\qquad \pmod{\H_{i-1}}.
\label{e:vit}
\end{eqnarray*}
\end{lem}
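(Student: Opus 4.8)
The plan is to proceed by induction on $i$, exactly as the sentence introducing the lemma suggests. The base case $i=1$ is nothing but the congruences (\ref{e:ut-1}) and (\ref{e:vt-1}). For the inductive step I would handle the two asserted congruences in parallel: I write out the argument for $u^i t_1^{-1}$, the one for $v^i t_1^{-1}$ being word-for-word the same after replacing $u$ by $v$ and invoking (\ref{e:vt-1}) in place of (\ref{e:ut-1}).

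Suppose $u^i t_1^{-1}\equiv -t_1 u^{-i}\pmod{\H_{i-1}}$ for some $i\in\N^*$. Multiplying on the left by $u$ and using the filtration property $\H_1\cdot\H_{i-1}\subseteq\H_i$ gives $u^{i+1}t_1^{-1}\equiv -u t_1 u^{-i}\pmod{\H_i}$, so the task reduces to rewriting $u t_1$. Since $t_1+t_1^{-1}=c_1$ by Lemma~\ref{lem:presenT}, one has $u t_1=c_1 u-u t_1^{-1}$, and (\ref{e:ut-1}) gives $u t_1^{-1}=-t_1 u^{-1}+s$ with $s\in\H_0=\T$; hence $u t_1=c_1 u+t_1 u^{-1}-s$. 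Substituting,
\[
u^{i+1}t_1^{-1}\;\equiv\;-c_1 u^{1-i}-t_1 u^{-i-1}+s u^{-i}\pmod{\H_i}.
\]
To finish I would check that the two unwanted terms fall into $\H_i$: the power $u^{1-i}$ lies in $\H_{i-1}\subseteq\H_i$ because $|1-i|=i-1$, and $s u^{-i}\in\H_0\cdot\H_i\subseteq\H_i$. This yields $u^{i+1}t_1^{-1}\equiv -t_1 u^{-i-1}\pmod{\H_i}$, completing the induction, and the same computation with $v$ gives the second congruence.

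The only point that requires care — and the step I expect to be the main obstacle, modest as it is — is the bookkeeping of filtration degrees. The available relation (\ref{e:ut-1}) controls $u t_1^{-1}$, not $u t_1$, so one must detour through the quadratic relation $t_1+t_1^{-1}=c_1$, and then verify that every term this produces, most notably $u^{1-i}$ whose exponent of $u$ has absolute value $i-1$, genuinely lands in $\H_{i-1}$ rather than merely in $\H_{i+1}$. Everything else is a routine application of the containments $\H_m\cdot\H_n\subseteq\H_{m+n}$ recorded just before the lemma.
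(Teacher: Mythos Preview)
Your proposal is correct and follows exactly the approach the paper intends: the paper's proof is literally the single line ``Applying (\ref{e:ut-1}), (\ref{e:vt-1}) the routine inductions yield that'', and you have simply written out that routine induction in full, including the one bookkeeping point (passing from $u t_1^{-1}$ to $u t_1$ via $t_1+t_1^{-1}=c_1$ and tracking the filtration degree of $u^{1-i}$) that makes it work.
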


\noindent As consequences of Lemma~\ref{lem:uit} we have
\begin{lem}\label{lem:uivjt}
For all $i$, $j\in \N^*$
\begin{eqnarray*}
u^i v^j t_1^{-1}
&=&
-t_1 u^{-i} v^{-j}
-c_1 u^i v^{-j}
\qquad \pmod{\H_{i+j-1}},
\label{e:uivjt}
\\
u^{-i} v^j t_1^{-1}
&=&t_1^{-1} u^i v^{-j}
\qquad \pmod{\H_{i+j-1}},
\label{e:u-ivjt}
\\
u^{-i} v^{-j} t_1
&=&
-t_1^{-1} u^i v^j
-c_1 u^{-i} v^j
\qquad \pmod{\H_{i+j-1}}.
\label{e:u-iv-jt}
\end{eqnarray*}
\end{lem}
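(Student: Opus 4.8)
\emph{Proof proposal.} The plan is to deduce all three congruences directly from Lemma~\ref{lem:uit} by pushing the factor $t_1^{\pm 1}$ to the left, first past $v^{\pm j}$ and then past $u^{\pm i}$, and absorbing the resulting error terms into $\H_{i+j-1}$. The only tools needed are Lemma~\ref{lem:uit}, the relation $t_1^{-1}=c_1-t_1$ of Lemma~\ref{lem:presenT}, and the filtration facts $\H_m\cdot\H_n\subseteq\H_{m+n}$, $u^{\pm i}\in\H_i$, $v^{\pm j}\in\H_j$, $t_1^{\pm 1}\in\T=\H_0$.

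First I would record the variants of Lemma~\ref{lem:uit} that the computation requires. Conjugating the congruence $u^i t_1^{-1}=-t_1 u^{-i}\pmod{\H_{i-1}}$ by $t_1$ (multiply on the left by $t_1^{-1}$ and on the right by $t_1$, and note that $t_1^{-1}\H_{i-1}t_1\subseteq\H_0\cdot\H_{i-1}\cdot\H_0\subseteq\H_{i-1}$) yields $u^{-i}t_1=-t_1^{-1}u^i\pmod{\H_{i-1}}$; the same argument applied to $v$ gives $v^{-j}t_1=-t_1^{-1}v^j\pmod{\H_{j-1}}$. Combining these with $t_1=c_1-t_1^{-1}$ produces the further identities $u^i t_1=c_1 u^i+t_1 u^{-i}\pmod{\H_{i-1}}$ and $u^{-i}t_1^{-1}=c_1 u^{-i}+t_1^{-1}u^i\pmod{\H_{i-1}}$.

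With these in hand each of the three congruences becomes a two-step substitution. For the first, $v^j t_1^{-1}\equiv -t_1 v^{-j}$ modulo $\H_{j-1}$ gives $u^i v^j t_1^{-1}\equiv -u^i t_1 v^{-j}$ modulo $u^i\cdot\H_{j-1}\subseteq\H_{i+j-1}$; now replace $u^i t_1$ by $c_1 u^i+t_1 u^{-i}$ (valid modulo $\H_{i-1}$) and multiply on the right by $v^{-j}$, which carries $\H_{i-1}$ into $\H_{i-1}\cdot\H_j\subseteq\H_{i+j-1}$, to obtain $u^i v^j t_1^{-1}\equiv -c_1 u^i v^{-j}-t_1 u^{-i}v^{-j}\pmod{\H_{i+j-1}}$. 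The second congruence is handled the same way, using $u^{-i}t_1\equiv -t_1^{-1}u^i$ in place of the identity for $u^i t_1$; and the third uses $v^{-j}t_1\equiv -t_1^{-1}v^j$ for the first move and $u^{-i}t_1^{-1}\equiv c_1 u^{-i}+t_1^{-1}u^i$ for the second. The main point to watch is the filtration bookkeeping: at every substitution one must confirm that the discarded terms, which lie in $u^{\pm i}\cdot\H_{j-1}$ or in $\H_{i-1}\cdot v^{\pm j}$, are contained in $\H_{i+j-1}$, and that the conjugation step does not enlarge the relevant ideal; both are immediate from $\H_m\cdot\H_n\subseteq\H_{m+n}$. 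Beyond this careful accounting I expect no real obstacle, and the computation is otherwise routine.
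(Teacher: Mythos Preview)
Your proof is correct and follows the same approach the paper indicates: the paper simply writes ``As consequences of Lemma~\ref{lem:uit} we have'' before stating the lemma, and your argument is precisely the natural way to unpack that remark. The two-step push of $t_1^{\pm 1}$ past $v^{\pm j}$ and then $u^{\pm i}$, together with the filtration bookkeeping via $\H_m\cdot\H_n\subseteq\H_{m+n}$, is exactly what the paper leaves implicit.
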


\begin{prop}\label{prop:ABC}
For each $i\in \N^*$
\begin{eqnarray*}
A^i&=&
u^i+u^{-i}
\qquad
\pmod{\H_{i-1}},
\\
B^i&=&
v^i+v^{-i}
\qquad
\pmod{\H_{i-1}},
\\
C^i&=&
(-1)^i q^{-i^2}
(u^{-i} v^i- t_1^{-2} u^i v^{-i})
\qquad
\pmod{\H_{2i-1}}.
\end{eqnarray*}
\end{prop}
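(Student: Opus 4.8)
Throughout I write $u=t_1t_0^\vee$, $v=t_1^\vee t_1$ as in the text, and I use freely that $t_0^\vee=t_1^{-1}u$, $t_1^\vee=vt_1^{-1}$ and $t_0=q^{-1}u^{-1}t_1v^{-1}$. The identities for $A^i$ and $B^i$ are the easy part: one has $A=u+u^{-1}$ and $B=v+v^{-1}$ on the nose, so $A^i=\sum_{k=0}^{i}\binom{i}{k}u^{i-2k}$, and every intermediate term $u^{i-2k}$ with $0<k<i$ satisfies $|i-2k|\le i-2$, hence lies in $\H_{i-2}\subseteq\H_{i-1}$; thus $A^i\equiv u^i+u^{-i}\pmod{\H_{i-1}}$, and symmetrically for $B^i$. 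For $C^i$ the first step is to rewrite $C$ in the generators $t_1,u^{\pm1},v^{\pm1}$. Using $t_0=q^{-1}u^{-1}t_1v^{-1}$ together with the exact consequence $t_1v^{-1}t_1=c_1^\vee t_1-v$ of the relation $vt_1^{-1}=-t_1v^{-1}+c_1^\vee$ of Proposition~\ref{prop:presenH} (and its mirror for $t_1^{-1}vt_1^{-1}$), one finds
\[
t_0t_1=-q^{-1}u^{-1}v+q^{-1}c_1^\vee u^{-1}t_1,
\qquad
(t_0t_1)^{-1}=-qv^{-1}u+qc_1^\vee t_1^{-1}u,
\]
so both lie in $\H_2$ with top-degree parts $-q^{-1}u^{-1}v$ and $-qv^{-1}u$.

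Since $C=t_0t_1+(t_0t_1)^{-1}$ and $t_0t_1$ commutes with its inverse, the binomial expansion of $C^i$ collapses: a mixed term equals $(t_0t_1)^{p-m}$ with $p+m=i$ and $p,m\ge1$, hence lies in $\H_{2|p-m|}\subseteq\H_{2i-4}$, and therefore $C^i\equiv(t_0t_1)^i+(t_0t_1)^{-i}\pmod{\H_{2i-1}}$. In the same spirit, expanding the $i$-fold products $(t_0t_1)^{\pm i}$ and discarding every occurrence of the degree-$\le1$ summands $q^{-1}c_1^\vee u^{-1}t_1$, $qc_1^\vee t_1^{-1}u$ costs only something in $\H_{2i-1}$, so that
\[
(t_0t_1)^i\equiv(-q^{-1})^i(u^{-1}v)^i,
\qquad
(t_0t_1)^{-i}\equiv(-q)^i(v^{-1}u)^i
\pmod{\H_{2i-1}}.
\]

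It then remains to normal-order $(u^{-1}v)^i$ and $(v^{-1}u)^i$ modulo $\H_{2i-1}$, which I would do by induction on $i$. For $(u^{-1}v)^i$ one moves each $v$ leftward past the $u^{-1}$'s using~(\ref{e:vu-1}); the leading term $q^{-2}u^{-1}v$ produces the main contribution, whereas the extra term $-q^{-2}c_1t_1^{-1}uv^{-1}$ — together with all further corrections it spawns via~(\ref{e:v-1u-1}) — only ever feeds into words whose normal forms have $u$- and $v$-exponents summing in absolute value to at most $2i-4$, hence are absorbed modulo $\H_{2i-1}$. This gives $(u^{-1}v)^i\equiv q^{-i(i-1)}u^{-i}v^i$, whence $(t_0t_1)^i\equiv(-1)^iq^{-i^2}u^{-i}v^i$. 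For $(v^{-1}u)^i$ one uses the cleaner relation $v^{-1}u\equiv-q^{-2}t_1^{-2}uv^{-1}\pmod{\H_1}$ of~(\ref{e:v-1u}): the first application deposits one factor $t_1^{-2}$ at the far left, where it remains, and everything to its right is normal-ordered using the auxiliary identity $v^{-1}u^m\equiv-q^{-2m}t_1^{-2}u^mv^{-1}\pmod{\H_m}$ (a parallel induction, exploiting $ut_1^{-2}=-u-c_1t_1u^{-1}+c_1c_0^\vee$, the relations~(\ref{e:ut-1})--(\ref{e:v-1u}), and Lemma~\ref{lem:uivjt} to push $t_1$-powers to the left). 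Tracking the $q$-exponents yields $(v^{-1}u)^i\equiv-q^{-i^2-i}t_1^{-2}u^iv^{-i}$, hence $(t_0t_1)^{-i}\equiv(-1)^{i+1}q^{-i^2}t_1^{-2}u^iv^{-i}$, and adding the two contributions gives precisely $(-1)^iq^{-i^2}\bigl(u^{-i}v^i-t_1^{-2}u^iv^{-i}\bigr)$.

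The main obstacle is exactly this last bookkeeping: at every stage of the two inductions one must check that the numerous correction terms spawned by relations~(\ref{e:ut-1})--(\ref{e:vu-1}) and by commuting $t_1$ past $u$ and $v$ really do land in $\H_{2i-1}$ and may be dropped, while keeping the exponent $-i^2$ and the single factor $t_1^{-2}$ exactly right; a helpful consistency check is the base case $i=1$, where $C\equiv q^{-1}\bigl(-u^{-1}v+t_1^{-2}uv^{-1}\bigr)\pmod{\H_1}$ by a single use of~(\ref{e:v-1u}), matching the claimed formula for $C^1$.
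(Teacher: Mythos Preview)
Your approach is correct and takes a genuinely different route from the paper at the crucial first step. The paper never splits $C$ as $(t_0t_1)+(t_0t_1)^{-1}$; instead it writes $C\equiv q^{-1}(r-s)\pmod{\H_1}$ with $r=t_1^{-2}uv^{-1}$ and $s=u^{-1}v$, and then has to prove the nonobvious facts $rs\in\H_3$ and $sr\in\H_3$ in order to collapse $(r-s)^i$ to $r^i+(-1)^is^i$ modulo $\H_{2i-1}$. Your observation that $t_0t_1$ commutes with its own inverse makes that collapse automatic via the ordinary binomial theorem, which is cleaner and more conceptual. After that point the two arguments converge: the paper's $s^i$ is exactly your $(u^{-1}v)^i$, and since $v^{-1}u\equiv -q^{-2}r\pmod{\H_1}$, your $(v^{-1}u)^i$ is $(-q^{-2})^ir^i$ modulo $\H_{2i-1}$, so the two inductive normal-orderings are equivalent in difficulty.

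Two small cautions on the write-up. First, the bound ``exponents summing in absolute value to at most $2i-4$'' for the correction terms in $(u^{-1}v)^i$ is too strong: already for $i=2$ the correction $-q^{-2}c_1\,u^{-1}t_1^{-1}u=-q^{-2}c_1(-u^{-2}t_1+c_0^\vee u^{-1})$ lives in $\H_2$, not $\H_0$. What is true, and all you need, is that the corrections land in $\H_{2i-1}$; the paper likewise just calls this a ``routine induction'' without the sharper bound. Second, your description of the $(v^{-1}u)^i$ computation (``one $t_1^{-2}$ deposited at the far left, where it remains'') is a bit loose: subsequent applications of $v^{-1}u\equiv -q^{-2}t_1^{-2}uv^{-1}$ do spawn further $t_1^{-2}$'s, and the mechanism that keeps only one in the leading term is that $u\,t_1^{-2}u\equiv -u^2\pmod{\H_1}$ (equivalently, the paper's relation $u\,t_1^{-2}u=-u^2-c_1t_1+c_1c_0^\vee u$), so internal $t_1^{-2}$'s are absorbed as a sign. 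If you make that step explicit, the induction closes exactly as you claim and your exponent $-i^2-i$ and single surviving $t_1^{-2}$ are correct.
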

\begin{proof}
By construction
\begin{gather*}
A=u+u^{-1},
\qquad \quad
B=v+v^{-1}.
\end{gather*}
The equalities for $A^i$, $B^i\bmod{\H_{i-1}}$ are immediate from the binomial theorem. A direct calculation yields that
\begin{gather*}
C=q^{-1}t_1^{-2} u v^{-1}
-q^{-1} u^{-1} v
-q^{-1} c_1^\vee t_1^{-1} u
-q^{-1} c_0^\vee t_1^{-1} v^{-1}
+c_0t_1^{-1}
+q^{-1} c_0^\vee c_1^\vee
\end{gather*}
as a linear combination of the basis of $\H$ given in Lemma~
\ref{lem:basisH}.
The expression implies that
\begin{gather*}
C^i=q^{-i}(r-s)^i
\qquad
\pmod{\H_{2i-1}}
\end{gather*}
where $r=t_1^{-2} u v^{-1}$ and $s= u^{-1} v$. Equation (\ref{e:v-1u-1}) implies that
\begin{gather*}
rs\in \H_3.
\end{gather*}
Decompose the product $sr$ into $(u^{-1} v t_1^{-1})(t_1^{-1} u v^{-1})$. The second equation in Lemma~\ref{lem:uivjt} with $(i,j)=(1,1)$ says that
\begin{gather}\label{e:u-1vt}
u^{-1}vt_1^{-1}=t_1^{-1}uv^{-1}
\qquad
\pmod{\H_1}.
\end{gather}
Therefore
\begin{gather*}
sr=t_1^{-1} u v^{-1} u^{-1} v t_1^{-1}
\qquad
\pmod{\H_3}.
\end{gather*}
By (\ref{e:v-1u-1}) the right-hand side is equal to $0\bmod{\H_3}$. Therefore
\begin{gather*}
sr\in \H_3.
\end{gather*}
Since $r$ and $s$ are in $\H_2$, the relations $rs$, $sr\in \H_3$ imply that $(r-s)^i=r^i+(-1)^i s^i\bmod{\H_{2i-1}}$. Applying (\ref{e:vu-1}) a routine induction shows that
\begin{gather*}
s^i=
q^{-i(i-1)} u^{-i} v^i
\qquad \pmod{\H_{2i-1}}.
\end{gather*}
To see the equality for $C^i\bmod{\H_{2i-1}}$ we have to show that
\begin{gather}
r^i =
(-1)^{i-1}q^{-i(i-1)}t_1^{-2} u^i v^{-i}
\qquad \pmod{\H_{2i-1}}.
\label{e:ri}
\end{gather}
Proceed by induction on $i$. Applying the induction hypothesis
\begin{gather*}
r^i=r\cdot r^{i-1}=(-1)^{i}q^{-(i-1)(i-2)}t_1^{-2} u v^{-1} t_1^{-2} u^{i-1} v^{1-i}
\qquad
\pmod{\H_{2i-1}}
\end{gather*}
for $i\geq 2$. Equation (\ref{e:u-1vt}) can be written as $uv^{-1}t_1^{-1}=c_1 u v^{-1}-t_1 u^{-1} v\bmod{\H_1}$. Therefore $r^i\bmod{\H_{2i-1}}$ is equal to
$(-1)^{i}q^{-(i-1)(i-2)}$ times
\begin{gather}\label{e:ri-1}
 c_1 t_1^{-2} u v^{-1} t_1^{-1} u^{i-1} v^{1-i}
 -t_1^{-1} u^{-1} v t_1^{-1} u^{i-1} v^{1-i}.
\end{gather}
Consider the element $w=u v^{-1} t_1^{-1} u$ in the minuend of (\ref{e:ri-1}). Equation (\ref{e:ut-1}) implies that $w=-u v^{-1} u^{-1} t_1\bmod{\H_2}$. Followed by using equation (\ref{e:v-1u}) it yields that
\begin{gather*}
w=q^2 t_1^2 v^{-1} t_1
\qquad
\pmod{\H_2}.
\end{gather*}
This shows $w\in \H_2$ and thus the minuend of (\ref{e:ri-1}) lies in $\H_{2i-1}$. Therefore $r^i \bmod{H_{2i-1}}$ is simplified to be
\begin{gather*}
(-1)^{i-1} q^{-(i-1)(i-2)} t_1^{-1} u^{-1} v t_1^{-1} u^{i-1} v^{1-i}.
\end{gather*}
By (\ref{e:u-1vt}) it follows that
\begin{gather*}
r^i=(-1)^{i-1} q^{-(i-1)(i-2)} t_1^{-2} u v^{-1} u^{i-1} v^{1-i}
\qquad \pmod{\H_{2i-1}}.
\end{gather*}
To see (\ref{e:ri}) it remains to prove that
\begin{gather*}
u v^{-1} u^{i-1}=q^{-2(i-1)} u^{i} v^{-1}
\qquad
\pmod{\H_{i}}.
\end{gather*}
The equation follows by applying the case $i=2$ to a routine induction.
By left multiplication by $u$ on (\ref{e:v-1u}) we have
\begin{gather*}
u v^{-1} u=
- q^{-2} u t_1^{-2} u v^{-1} u
\qquad
\pmod{\H_2}.
\end{gather*}
Applying (\ref{e:ut-1}) twice, one may find that
\begin{gather*}
u t_1^{-2}u=-u^2
\qquad
\pmod{\H_1}.
\end{gather*}
The case $i=2$ now follows by combing the above two equations. This proposition follows.
\end{proof}

It is ready to prove Theorem~\ref{thm:basisCT}.

\medskip

\noindent{\it Proof of Theorem~\ref{thm:basisCT}.}
Proceed by induction on $n\in \N$ to show that $C_\H(t_1)\cap \H_n$ has the $\T$-basis
\begin{gather*}
A^i C^j B^k
\qquad \quad
\hbox{for all $i,j,k\in \N$ with $i+2j+k\leq n$ and $ijk=0$}.
\end{gather*}
There is nothing to prove for $n=0$. Suppose that $n\geq 1$.
By induction hypothesis it suffices to show that for any $R\in C_\H(t_1)\cap \H_n$ the equation
\begin{gather}\label{e:R}
R=
\sum_{i, j\in \N
\atop
i+j=n}
s_{i,j} A^i B^j
+
\sum_{i\in \N, j\in \N^*
\atop
i+2j=n}
t_{i,j} A^i C^j
+
\sum_{i, j\in \N^*
\atop
i+2j=n}
u_{i,j} C^j B^i
\qquad
\pmod{\H_{n-1}}
\end{gather}
has a unique solution for $s_{i,j}$, $t_{i,j}$, $u_{i,j}\in \T$. By Lemma~\ref{lem:basisH} there are unique
$r_{i,j}\in \T$ for all $i,j\in \Z$ with $|i|+|j|=n$ such that
\begin{gather*}
R=\sum_{i,j\in \Z
\atop
|i|+|j|=n
}
r_{i,j}
u^i v^j
\qquad
\pmod{\H_{n-1}}.
\end{gather*}
Apply Lemmas~\ref{lem:uit} and \ref{lem:uivjt} to evaluate the coefficients of $u^i v^j$ in $[R,t_1]$ for all $i,j\in\Z$ with $|i|+|j|=n$. On the other hand, since $R\in C_\H(t_1)$ the commutator $[R,t_1]=0$ and it follows that
\begin{gather*}
r_{i,j}=r_{-i,-j},
\qquad \quad
c_1r_{i,j}=t_1r_{i,-j}+t_1^{-1}r_{-i,j}
\end{gather*}
for all $i$, $j\in \N$
with $i+j=n$.
As a result, $R \bmod{\H_{n-1}}$ is equal to
\begin{gather*}
r_{n,0}(u^n+u^{-n})
+
r_{0,n}(v^n+v^{-n})
+
\sum_{i,j\in \N^*
\atop i+j=n}
r_{i,j}(u^i+u^{-i})(v^j+v^{-j})
+
(r_{i,j}-r_{i,-j})
(t_1^2 u^{-i} v^j-u^i v^{-j}).
\end{gather*}
By Proposition~\ref{prop:ABC} we have
\begin{eqnarray*}
A^n &=& u^n+u^{-n}
\qquad \pmod{\H_{n-1}},\\
B^n &=& v^n+v^{-n}
\qquad \pmod{\H_{n-1}},
\end{eqnarray*}
and a direct calculation yields that
\begin{gather*}
A^i B^j= (u^i+u^{-i})(v^j+v^{-j})
\qquad \pmod{\H_{n-1}}
\end{gather*}
for all $i$, $j\in \N^*$ with $i+j=n$,
\begin{gather*}
A^i C^j =(-1)^j q^{-j^2}(u^{-i-j}v^j-t_1^{-2}u^{i+j} v^{-j})
\qquad \pmod{\H_{n-1}}
\end{gather*}
for all $i\in \N$, $j\in \N^*$ with $i+2j=n$ and
\begin{gather*}
C^j B^i =(-1)^j q^{-j^2}(u^{-j}v^{i+j}-t_1^{-2}u^j v^{-i-j})
\qquad \pmod{\H_{n-1}}
\end{gather*}
for all $i$, $j\in \N^*$ with $i+2j=n$. The comparison of coefficients implies that equation (\ref{e:R}) has the following unique solution
\begin{eqnarray*}
s_{i,j}&=&r_{i,j},
\\
t_{i,j}&=&
(-1)^j q^{j^2} t_1^2
(r_{i+j,j}-r_{i+j,-j}),
\\
u_{i,j} &=&
(-1)^j q^{j^2} t_1^2
(r_{j,i+j}-r_{j,-i-j}),
\end{eqnarray*}
as desired.
\hfill $\square$

\subsection{A presentation for $Z(\H)$}
Now the ideas from \S\ref{s:3centralABC} and \S\ref{s:ZDelta} can be used to give a presentation for $Z(\H)$.

\begin{thm}\label{thm:3centralH}
For any nonzero multiple $n$ of $\dbar$
the elements
\begin{gather*}
T_n(A) =
(t_1 t_0^\vee)^{n}+(t_1 t_0^\vee)^{-n},
\qquad
T_n(B) =
(t_1^\vee t_1)^{n}+(t_1^\vee t_1)^{-n},
\qquad
T_n(C) =
(t_0 t_1)^{n}+(t_0 t_1)^{-n}
\end{gather*}
are central in $\H$.
\end{thm}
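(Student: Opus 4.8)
The plan is to reduce everything to a single commutation in $\H$. First I would observe that $A=A^\sharp=t_1t_0^\vee+(t_1t_0^\vee)^{-1}$ is of the form $g+g^{-1}$ with $g=t_1t_0^\vee$ invertible, so Lemma~\ref{lem:Tchara}(i) gives $T_n(A)=g^n+g^{-n}$ and Lemma~\ref{lem:Tchara}(ii) gives $T_n(A)=T_{n/\dbar}\bigl(T_{\dbar}(A)\bigr)$; the same remarks apply to $B$ and $C$. Hence it suffices to prove that $T_{\dbar}(A)$, $T_{\dbar}(B)$, $T_{\dbar}(C)$ are central, since then $T_n(A)$ is a polynomial in the central element $T_{\dbar}(A)$. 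By Proposition~\ref{prop:presenH} the algebra $\H$ is generated by $t_1^{\pm1}$, $u^{\pm1}$, $v^{\pm1}$, and commuting with $g^{-1}$ is equivalent to commuting with $g$; so it is enough to show that each of $T_{\dbar}(A)=u^{\dbar}+u^{-\dbar}$, $T_{\dbar}(B)=v^{\dbar}+v^{-\dbar}$, $T_{\dbar}(C)=(t_0t_1)^{\dbar}+(t_0t_1)^{-\dbar}$ commutes with $t_1$, $u$ and $v$. I will describe the argument for $T_{\dbar}(A)$; the cases of $T_{\dbar}(B)$ and $T_{\dbar}(C)$ run along the same lines once one interchanges the roles of $u$, $v$ and $t_0t_1$ (for $t_0t_1=q^{-1}u^{-1}t_1v^{-1}t_1$ one invokes Proposition~\ref{prop:ABC}).

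For $T_{\dbar}(A)$ the commutation with $u$ is immediate, and the commutation with $t_1$ holds because $A=A^\sharp$ lies in the subalgebra $C_\H(t_1)$, hence so does $T_{\dbar}(A)$; note that this step uses neither that $q$ is a root of unity nor that $n=\dbar$. The only genuine point is that $T_{\dbar}(A)$ commutes with $v$. Two weaker commutations are free: applying the homomorphism $\sharp\colon\triangle\to\H$ to the identities $[T_{\dbar}(A),B]=0$ and $[T_{\dbar}(A),C]=0$, which hold in $\triangle$ by Theorem~\ref{thm:3central}, yields in $\H$
\[
\bigl[\,u^{\dbar}+u^{-\dbar},\ v+v^{-1}\,\bigr]=0
\qquad\text{and}\qquad
\bigl[\,u^{\dbar}+u^{-\dbar},\ w+w^{-1}\,\bigr]=0,
\]
where $w=t_0t_1$. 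Writing $Z=u^{\dbar}+u^{-\dbar}$ and $X=[Z,v]$, the first identity forces $v^{-1}Xv^{-1}=X$; feeding $w=q^{-1}u^{-1}t_1v^{-1}t_1$ and $[Z,u]=[Z,t_1]=0$ into the second then produces a relation of the shape $\Psi(X)=0$, where $\Psi\colon\H\to\H$ is an explicit $\F$-linear operator assembled from left and right multiplications by $t_1^{\pm1}$ and $u^{\pm1}$. Thus it would be enough to prove that $\Psi$ is injective (or, what comes to the same thing in practice, that $\H$ is generated by $t_1$, $u$, $v+v^{-1}$ and $w+w^{-1}$).

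The route I would actually follow — in direct analogy with the proof of Theorem~\ref{thm:3central} — is to establish $[u^{\dbar}+u^{-\dbar},v]=0$ by an explicit computation. Using Proposition~\ref{prop:presenH} one expands $v\,u^{n}$ against the $\T$-basis $t_1^{\ell}u^{i}v^{j}$ of Lemma~\ref{lem:basisH}; iterating the relations, in particular (\ref{e:ut-1})--(\ref{e:vu-1}), yields a recursion for the coefficient functions in $\F[t_1^{\pm1}]$ of the same flavour as Lemma~\ref{lem:PQRS}, governed by a characteristic polynomial whose roots are powers of $q^{2}$ times a fixed unit. One then evaluates these coefficient functions at suitable values — the analogue of the substitution $X=\Theta_i(Y)$ in Lemma~\ref{lem:PQRStheta} — and checks, using the $\N$-filtration $\{\H_n\}$ and Proposition~\ref{prop:ABC} to dispose of the corrections lying in $\H_n$, that when $n=\dbar$ (the order of $q^{2}$) the contributions obstructing commutation with $v$ collapse, exactly as $\mathcal Q_{\dbar}$, $\mathcal R_{\dbar}$, $\mathcal S_{\dbar}$ vanish in Theorem~\ref{thm:3central}. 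This gives $v(u^{\dbar}+u^{-\dbar})=(u^{\dbar}+u^{-\dbar})v$, so $T_{\dbar}(A)$ commutes with $t_1$, $u$, $v$ and is central; the analogous arguments with $(u,v,w)$ permuted settle $T_{\dbar}(B)$ and $T_{\dbar}(C)$.

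I expect this last step to be the main obstacle. Unlike in $\triangle$, the relation $vu=q^{2}uv+(\text{lower-order terms})$ carries a $v^{-1}$ alongside $v$, so the recursion is heavier and one must argue carefully that the lower-order corrections — which sit in $\H_n$ but not evidently in $C_\H(t_1)$ — really cancel at $n=\dbar$; equivalently, the crux is the injectivity of the operator $\Psi$. Everything else (the reduction to $n=\dbar$, the commutations with $t_1$ and $u$, and the transfer of the two weak commutations from $\triangle$ via $\sharp$) is routine.
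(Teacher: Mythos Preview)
Your reduction to $n=\dbar$ and the commutations of $T_{\dbar}(A)$ with $t_1$ and $u$ are fine, but there are two genuine gaps.

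First, the claim that the cases of $T_{\dbar}(B)$ and especially $T_{\dbar}(C)$ ``run along the same lines once one interchanges the roles of $u$, $v$ and $t_0t_1$'' is not justified: there is no automorphism of the single algebra $\H=\H_q(k_0,k_1,k_0^\vee,k_1^\vee)$ that cyclically permutes $A$, $B$, $C$. The paper supplies exactly what is missing here: it invokes Oblomkov's isomorphism $\H_q(k_0,k_1,k_0^\vee,k_1^\vee)\to\H_q(k_1^\vee,k_1,k_0,k_0^\vee)$, which sends $(A,B,C)$ to the corresponding $(B,C,A)$ of the target algebra. Since the parameters are arbitrary, this reduces all three statements to proving centrality of $T_n(B)$ alone. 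Without such a device your treatment of $T_{\dbar}(C)$ is a real problem: $t_0t_1=q^{-1}u^{-1}t_1v^{-1}t_1$ is not one of the generators $u$, $v$, so showing $T_{\dbar}(C)$ commutes with $u$ and with $v$ separately cannot be read off from the computation for $T_{\dbar}(A)$, and Proposition~\ref{prop:ABC} only controls $C^i$ modulo $\H_{2i-1}$, which is not nearly enough.

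Second, the paper's computation for the one remaining commutator $[T_n(B),u]$ is much sharper than the recursion you sketch. Rather than a Lemma~\ref{lem:PQRS}-style system in several unknown polynomials, the paper establishes by a direct induction the closed form
\[
B^n u \;=\; u\,(q^2v+q^{-2}v^{-1})^n \;+\; t_1\bigl(q^{-1}c_0^\vee v^{-1}-c_0\bigr)\,
\frac{(q^2v+q^{-2}v^{-1})^n-(v+v^{-1})^n}{qv-q^{-1}v^{-1}},
\]
and then applies Lemma~\ref{lem:Tchara}(i) to obtain $T_n(B)\,u=u\,(q^{2n}v^n+q^{-2n}v^{-n})+(q^n-q^{-n})(\cdots)$, which visibly equals $u\,T_n(B)$ once $\dbar\mid n$. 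Your $\Psi$-operator idea and the filtration argument via Proposition~\ref{prop:ABC} might be made to work for one of $A$ or $B$, but you have not carried either out, and as written neither route handles $C$.
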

\begin{proof}
By \cite[Proposition~2.4]{ob04} there exists an $\F$-algebra isomorphism from $\H=\H_q(k_0,k_1,k_0^\vee,k_1^\vee)$ into $\H_q(k_1^\vee,k_1,k_0,k_0^\vee)$ that sends
\begin{eqnarray*}
(t_0,t_1,t_0^\vee,t_1^\vee)
&\mapsto&
(t_0^\vee,t_1,t_1^{-1} t_1^\vee t_1,t_0),
\end{eqnarray*}
where $t_0$, $t_1$, $t_0^\vee$, $t_1^\vee$ on the right-hand side denote the defining generators of $\H_q(k_1^\vee,k_1,k_0,k_0^\vee)$ with the relations (\ref{e:h1}) in which $k_0$, $k_1$, $k_0^\vee$, $k_1^\vee$ are replaced by $k_1^\vee$, $k_1$, $k_0$, $k_0^\vee$ respectively.
A direct calculation shows that the isomorphism sends $A$, $B$, $C\in \H$ to the corresponding $B$, $C$, $A \in \H_q(k_1^\vee,k_1,k_0,k_0^\vee)$ respectively. Since $k_0$, $k_1$, $k_0^\vee$, $k_1^\vee$ are arbitrary nonzero scalars in $\F$, it suffices to show that $T_n(B)$ is central in $\H$ when $\dbar$ divides $n$.

As mentioned before, the element $B\in C_\H(t_1)$. Clearly $B=v+v^{-1}$ commutes with $v$. Since the $\F$-algebra $\H$ is generated by $t_1^{\pm 1}$, $u^{\pm 1}$, $v^{\pm 1}$ by Proposition~\ref{prop:presenH}, it is enough to verify that $T_n(B)=v^n+v^{-n}$ commutes with $u$ when $\dbar$ divides $n$.
To see this express $B^nu$ ($n\in \N$) as a linear combination of the $\T$-basis of $\H$  given below Lemma~\ref{lem:presenT}:
\begin{gather*}
B^nu=u(q^2 v+q^{-2} v^{-1})^n
+t_1(q^{-1}c_0^\vee v^{-1}-c_0)
\,
\frac
{(q^2 v+q^{-2} v^{-1})^n-(v+v^{-1})^n}
{q v-q^{-1}v^{-1}}.
\end{gather*}
Using this identity the element $T_n(B)u$ is equal to
\begin{gather*}
u
T_n(q^2 v+q^{-2} v^{-1})
+
t_1(q^{-1}c_0^\vee v^{-1}-c_0)
\,
\frac
{T_n(q^2 v+q^{-2} v^{-1})-T_n(v+v^{-1})}
{q v-q^{-1}v^{-1}}.
\end{gather*}
Simplifying the above by using Lemma~\ref{lem:Tchara}(i) it becomes
\begin{gather*}
u(q^{2n} v^n+q^{-2n} v^{-n})
+t_1(q^n-q^{-n})
(q^{-1}c_0^\vee v^{-1}-c_0)
\,
\frac
{q^nv^n-q^{-n}v^{-n}}
{q v-q^{-1}v^{-1}}.
\end{gather*}
Since $q^{2\dbar}=1$ the above is equal to $u T_n(B)$ when $\dbar$ divides $n$, as desired.
\end{proof}

\begin{thm}\label{thm:basisZH}
The elements
\begin{gather*}
T_{\dbar}(A)^i
T_{\dbar}(B)^j
T_{\dbar}(C)^k
\qquad \quad
\hbox{for all $i,j,k\in \N$ with $ijk=0$}
\end{gather*}
form an $\F$-basis of $Z(\H)$.
\end{thm}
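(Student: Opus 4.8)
The plan is to transplant the proof of Theorem~\ref{thm:basisZDelta} to $\H$, with the centralizer $C_\H(t_1)$ playing the role of $\triangle$ and the $\N$-filtration $\{\H_n\}$ playing the role of $\{\triangle_n\}$. Since $t_1\in\H$, every central element commutes with $t_1$, so $Z(\H)\subseteq C_\H(t_1)$; and because $\H$ is generated by $t_1,u,v$ (Proposition~\ref{prop:presenH}), an element of $C_\H(t_1)$ is central if and only if it commutes with $u$ and with $v$. By (the proof of) Theorem~\ref{thm:basisCT}, $C_\H(t_1)\cap\H_n$ is the free $\T$-module on the monomials $A^iC^jB^k$ with $ijk=0$ and $i+2j+k\le n$, while $\T$ has $\F$-basis $1,t_1$. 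Expanding each of $A^i$, $B^k$, $C^j$ in the positional system with base $T_{\dbar}$ and pushing the central elements $T_{\dbar}(A)$, $T_{\dbar}(B)$, $T_{\dbar}(C)$ (Theorem~\ref{thm:3centralH}) to the right, I would refine this into an $\F$-basis of $C_\H(t_1)\cap\H_n$ consisting of
\[
 t_1^{\ell}\,A^{r_0}C^{r_2}B^{r_1}\,T_{\dbar}(A)^{i_0}T_{\dbar}(C)^{i_2}T_{\dbar}(B)^{i_1},
\]
with $\ell\in\{0,1\}$, $0\le r_0,r_1,r_2<\dbar$, $(r_0+\dbar i_0)(r_1+\dbar i_1)(r_2+\dbar i_2)=0$, and $(r_0+\dbar i_0)+2(r_2+\dbar i_2)+(r_1+\dbar i_1)\le n$. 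Because $A,B\in\H_1$, $C\in\H_2$, $T_{\dbar}(A),T_{\dbar}(B)\in\H_{\dbar}$ and $T_{\dbar}(C)\in\H_{2\dbar}$, this refinement is compatible with the filtration: the displayed monomial lies in $\H_{(r_0+\dbar i_0)+2(r_2+\dbar i_2)+(r_1+\dbar i_1)}$ and equals the corresponding $A^iC^jB^k$ modulo lower filtration.

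I would then prove, by induction on $n$, that $Z(\H)\cap\H_n$ has $\F$-basis the elements $T_{\dbar}(A)^{i_0}T_{\dbar}(B)^{i_1}T_{\dbar}(C)^{i_2}$ with $i_0i_1i_2=0$ and $\dbar(i_0+i_1+2i_2)\le n$; the theorem follows since $\H=\bigcup_n\H_n$, and linear independence of the asserted basis is automatic because those monomials already occur in the refined $\T$-basis of $C_\H(t_1)$. For $n=0$ the statement reads $Z(\H)\cap\T=\F$, valid since $t_1\notin Z(\H)$ — e.g.\ $ut_1-t_1u$ has a nonzero coordinate on $t_1u$ in the basis of Lemma~\ref{lem:basisH}. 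For $n\ge1$, pick $R\in Z(\H)\cap\H_n$, expand $R\bmod\H_{n-1}$ in the refined basis above, and let $S$ be the part of filtration degree exactly $n$. The crux is to show that every coefficient in $S$ attached to a monomial with $\ell=1$ or $(r_0,r_1,r_2)\ne(0,0,0)$ is zero; granting this, $S$ is a combination of the central elements $T_{\dbar}(A)^{i_0}T_{\dbar}(B)^{i_1}T_{\dbar}(C)^{i_2}$, and applying the inductive hypothesis to $R-S\in Z(\H)\cap\H_{n-1}$ closes the induction.

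For the crux I would use $[R,u]=[R,v]=0$; since $R-S\in\H_{n-1}$ and $u,v\in\H_1$, this forces $[S,u],[S,v]\in\H_n$, so their images in $\H_{n+1}/\H_n$ vanish, and one reads off the leading symbols of these commutators from Propositions~\ref{prop:presenH} and~\ref{prop:ABC}. The useful inputs are: $A=u+u^{-1}$ commutes with $u$ and $B=v+v^{-1}$ commutes with $v$; the leading symbols of $[A,v]$ and $[B,u]$ are nonzero (the potential $t_1$-corrections cancel between the two summands of $A$, resp.\ of $B$, leaving $(1-q^2)uv+(1-q^{-2})u^{-1}v$, resp.\ $(q^2-1)uv+(q^{-2}-1)uv^{-1}$); and $[t_1,u]$, $[t_1,v]$ have the nonzero leading symbols $-(t_1^{-1}u+t_1u^{-1})$ and $-(t_1^{-1}v+t_1v^{-1})$. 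As in the proof of Theorem~\ref{thm:basisZDelta}, I would then propagate the vanishing of these graded commutators one index at a time: a factor $A^{r_0}$ with $0<r_0<\dbar$ appears in $[S,v]$ with a coefficient that is a nonzero scalar multiple of an expression in $q^{2r_0}$, which is nonzero because $q^2$ has order $\dbar$, forcing that coefficient of $S$ to vanish; symmetrically a factor $B^{r_1}$ is detected in $[S,u]$ and a factor $C^{r_2}$ in either commutator; and, once the $r$-factors have been eliminated, the surviving $t_1$-terms ($\ell=1$) are excluded because multiplication by $t_1^{-1}u+t_1u^{-1}$ (resp.\ $t_1^{-1}v+t_1v^{-1}$) is injective from $\H_n/\H_{n-1}$ into $\H_{n+1}/\H_n$, so the $t_1$-part of $S$ cannot lie in the kernel unless it is itself zero.

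The main obstacle will be exactly this commutator bookkeeping modulo the filtration. It is subtler than in $\triangle$ for two reasons: the non-central unit $t_1$ intervenes, so the leading symbols of $C$ — and hence of the central elements $T_{\dbar}(C)$ — already carry powers of $t_1$; and $u$ and $u^{-1}$ (likewise $v$ and $v^{-1}$) have the same filtration degree, so the associated graded of $\H$ behaves like a ``thickened'' quantum torus with $\bar u\,\bar u^{-1}=0$ rather than an ordinary one. Establishing the precise leading symbols and the injectivity statements in this setting is where the real work lies; once they are in hand, the induction is a faithful copy of the argument for Theorem~\ref{thm:basisZDelta}.
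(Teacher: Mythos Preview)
Your plan differs from the paper's in two structural ways, and both make the commutator bookkeeping you flag as ``where the real work lies'' unnecessarily hard.

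First, the paper does not run the induction in the filtration $\{\H_n\}$. It introduces a new $\N$-filtration on $C_\H(t_1)$ by setting $C_\H(t_1)_n$ to be the $\T$-span of the $A^iC^jB^k$ with $ijk=0$ and $i+j+k\le n$ (not $i+2j+k$). In this filtration $A,B,C$ all have degree~$1$, and the relations $BA\equiv q^2AB$, $CA\equiv q^{-2}AC$, $CB\equiv q^2BC$ modulo degree~$1$ follow from the $\triangle$-relations transported through $\sharp$ (the lower-order terms $\alpha^\sharp,\beta^\sharp,\gamma^\sharp,\Omega^\sharp$ all lie in $\T=C_\H(t_1)_0$). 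One then commutes with $A,B,C$ rather than with $u,v$; the analogue of Lemma~\ref{lem:Omega-ABC}(i) goes through verbatim over~$\T$, producing the familiar coefficients $q^{2r}-1$ with $0<r<\dbar$ and no $t_1$-corrections in the leading symbols. The asymmetry of $C$ and the ``thickened quantum torus'' issue simply do not arise.

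Second, the paper does not try to reach an $\F$-basis in one pass. The induction above yields only that the monomials $T_{\dbar}(A)^iT_{\dbar}(B)^jT_{\dbar}(C)^k$ with $ijk=0$ form a $\T$-basis of $Z(\H)$. The upgrade to an $\F$-basis is a separate, filtration-free step: writing $R=\sum r_{i,j,k}\,T_{\dbar}(A)^iT_{\dbar}(B)^jT_{\dbar}(C)^k$ with $r_{i,j,k}\in\T$, centrality of the monomials gives $0=[R,S]=\sum[r_{i,j,k},S]\,T_{\dbar}(A)^iT_{\dbar}(B)^jT_{\dbar}(C)^k$ for every $S\in\H$, and the $\T$-basis property forces each $r_{i,j,k}\in Z(\H)\cap\T=\F$ (Lemma~\ref{lem:TcapZH}).

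By contrast, your route keeps $C$ at filtration degree~$2$, commutes with $u,v$, and mixes the $r$-factor and $t_1$-factor eliminations. The obstacles you anticipate are genuine: eliminating an $r_2$-factor via $[S,u]$ or $[S,v]$ requires the leading symbol of $[A^{r_0}C^{r_2}B^{r_1},u]$ in $\H_{n+1}/\H_n$, where the $t_1^{-2}$ in the leading term of $C^{r_2}$ interacts with the $t_1$-twist in $[{-},u]$; and your injectivity claim for left multiplication by $t_1^{-1}u+t_1u^{-1}$ on $\H_n/\H_{n-1}$ is not literally true (already $\bar u\cdot\bar u^{-1}=0$ in the associated graded), so it would need to be replaced by a sharper statement restricted to the specific central elements at hand. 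These issues may be surmountable, but the paper's two devices sidestep them entirely.
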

\begin{proof}
Inspired by Theorem~\ref{thm:basisCT} we consider the $\N$-filtration
\begin{gather*}
C_\H(t_1)_0\subseteq
C_\H(t_1)_1\subseteq
\cdots
\subseteq
C_\H(t_1)_n
\subseteq
\cdots
\end{gather*}
of the $\T$-algebra $C_\H(t_1)$, where $C_\H(t_1)_n$ is defined to be the $\T$-submodule of $C_\H(t_1)$ spanned by
\begin{gather*}
A^i C^j B^k
\qquad \quad
\hbox{for all $i$, $j$, $k\in \N$ with $ijk=0$ and $i+j+k\leq n$}
\end{gather*}
rather than $C_\H(t_1)\cap \H_n$.  Applying the $\N$-filtration of $C_\H(t_1)$ and by Theorems~\ref{thm:basisCT} and \ref{thm:3centralH}, a similar argument to the proof of Theorem~\ref{thm:basisZDelta} show that $Z(\H)$ has the $\T$-basis
\begin{gather*}
T_{\dbar}(A)^i
T_{\dbar}(B)^j
T_{\dbar}(C)^k
\qquad \quad
\hbox{for all $i$, $j$, $k\in \N$ with $ijk=0$.}
\end{gather*}
Therefore any element $R\in Z(\H)$ can be uniquely expressed as a linear combination
\begin{gather*}
\sum
_{i,j,k\in \N
\atop
ijk=0}
r_{i,j,k}\,T_{\dbar}(A)^i T_{\dbar}(B)^j T_{\dbar}(C)^k
\end{gather*}
where the coefficients $r_{i,j,k}\in \T$. It remains to prove that the coefficients $r_{i,j,k}\in \F$. To do this we invoke the fact:

\begin{lem}\label{lem:TcapZH}
$\T\cap Z(\H)=\F$.
\end{lem}
\begin{proof}
By (\ref{e:ut-1}) the commutator $[t_1^{-1},u]=t_1^{-1}u+t_1 u^{-1}\bmod{\H_0}$. Therefore $[t_1^{-1},u]\not=0 \bmod{\H_0}$ by Lemma~\ref{lem:basisH}. This shows that $t_1^{-1}\not\in Z(\H)$.  Since $\{1,t_1^{-1}\}$ is an $\F$-basis of $\T$ this lemma follows.
\end{proof}

\noindent Since $R$ is central in $\H$ the commutator $[R,S]=0$ for any $S\in \H$. On the other hand, Theorem~\ref{thm:3centralH} implies that $[R,S]$ is equal to
\begin{gather*}
\sum
_{i,j,k\in \N
\atop
ijk=0}
[r_{i,j,k},S]\,T_{\dbar}(A)^i T_{\dbar}(B)^j T_{\dbar}(C)^k.
\end{gather*}
This forces that $[r_{i,j,k},S]=0$ and thus $r_{i,j,k}\in Z(\H)$. By Lemma~\ref{lem:TcapZH} the coefficients $r_{i,j,k}\in \F$ as claimed.
\end{proof}

The Oblomkov presentation for $Z(\H)$ at $q=1$ \cite[Theorem~3.1]{ob04} now can be generalized to any root of unity $q$.

\begin{thm}\label{thm:presenZH}
Let
\begin{gather*}
A= t_1 t_0^\vee+(t_1t_0^\vee)^{-1},
\qquad
B=t_1^\vee t_1+(t_1^\vee t_1)^{-1},
\qquad
C=t_0t_1+(t_0t_1)^{-1}.
\end{gather*}
Let $c_0$, $\tilde c_1$, $c_0^\vee$, $c_1^\vee$ denote the scalars
\begin{gather*}
k_0+k_0^{-1},
\qquad
q^{-1}k_1+q k_1^{-1},
\qquad
k_0^\vee+k_0^{\vee-1},
\qquad
k_1^\vee+k_1^{\vee-1}
\end{gather*}
respectively.
Then $Z(\H)$ is the commutative $\F$-algebra generated by $T_{\dbar}(A)$, $T_{\dbar}(B)$, $T_{\dbar}(C)$ subject to the relation
\begin{align}\label{e:relZH}
\begin{split}
&
q^{\dbar}  \phi_{\dbar}^{\ubar 0}(c_0^\vee,c_1^\vee,c_0; \tilde c_1) T_{\dbar}(A)
+q^{\dbar} \phi_{\dbar}^{\ubar 1}(c_0^\vee,c_1^\vee,c_0;\tilde c_1) T_{\dbar}(B)
+q^{\dbar} \phi_{\dbar}^{\ubar 2}(c_0^\vee,c_1^\vee,c_0;\tilde c_1) T_{\dbar}(C)
\\
&\quad=\;\;
q^{\dbar} T_{\dbar}(A) T_{\dbar}(B) T_{\dbar}(C)
+T_{\dbar}(A)^2
+ T_{\dbar}(B)^2
+ T_{\dbar}(C)^2
+\psi_{\dbar}(c_0^\vee,c_1^\vee,c_0;\tilde c_1)-2.
\end{split}
\end{align}
\end{thm}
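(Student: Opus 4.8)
The plan is to obtain Theorem~\ref{thm:presenZH} from the results already proved for $\triangle$ rather than redoing the $\H$-side computations. Concretely: the relation \eqref{e:relZH} will be produced by pushing the relation of Theorem~\ref{thm:relZDelta} through the homomorphism $\sharp:\triangle\to\H$, and the assertion that \eqref{e:relZH} generates the whole ideal of relations among $T_{\dbar}(A),T_{\dbar}(B),T_{\dbar}(C)$ will follow from a short minimal-polynomial argument which is much lighter than its counterpart in the proof of Theorem~\ref{thm:presentation}, because here the defining relation is quadratic instead of of degree $\dbar$.

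To verify \eqref{e:relZH} I would apply $\sharp$ to the identity of Theorem~\ref{thm:relZDelta}. By Lemma~\ref{lem:Tchara}(i), $\sharp$ carries $T_{\dbar}(A),T_{\dbar}(B),T_{\dbar}(C)$ to $(t_1t_0^\vee)^{\dbar}+(t_1t_0^\vee)^{-\dbar}$, $(t_1^\vee t_1)^{\dbar}+(t_1^\vee t_1)^{-\dbar}$, $(t_0t_1)^{\dbar}+(t_0t_1)^{-\dbar}$, which are central by Theorem~\ref{thm:3centralH}. From the definition of $\sharp$ together with the case $m=1$ of Theorem~\ref{thm:poly} (the identities $\Phi_{\dbar}(\phi_1^{\ubar 0},\phi_1^{\ubar 1},\phi_1^{\ubar 2};\psi_1)=\phi_{\dbar}$ and $\Psi_{\dbar}(\phi_1^{\ubar 0},\phi_1^{\ubar 1},\phi_1^{\ubar 2};\psi_1)=\psi_{\dbar}$), the images $\Phi_{\dbar}^{\ubar i}(\alpha^\sharp,\beta^\sharp,\gamma^\sharp;\Omega^\sharp)$ and $\Psi_{\dbar}(\alpha^\sharp,\beta^\sharp,\gamma^\sharp;\Omega^\sharp)$ equal $\phi_{\dbar}^{\ubar i}(c_0^\vee,c_1^\vee,c_0;\kappa)$ and $\psi_{\dbar}(c_0^\vee,c_1^\vee,c_0;\kappa)$, where $\kappa=q^{-1}t_1+qt_1^{-1}$. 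These a priori live in the noncommutative $\H$, but $\kappa=(q^{-1}t_1)+(q^{-1}t_1)^{-1}$, so Lemma~\ref{lem:Tchara}(i) gives $T_{\dbar}(\kappa)=q^{-\dbar}t_1^{\dbar}+q^{\dbar}t_1^{-\dbar}$ and $T_{2\dbar}(\kappa)=q^{-2\dbar}t_1^{2\dbar}+q^{2\dbar}t_1^{-2\dbar}$; since $q^{2\dbar}=1$, hence $q^{\dbar}=q^{-\dbar}$, the former becomes $q^{-\dbar}(t_1^{\dbar}+t_1^{-\dbar})=q^{-\dbar}T_{\dbar}(c_1)=q^{-\dbar}(k_1^{\dbar}+k_1^{-\dbar})=T_{\dbar}(\tilde c_1)$ and the latter becomes $t_1^{2\dbar}+t_1^{-2\dbar}=T_{2\dbar}(c_1)=T_{2\dbar}(\tilde c_1)$, using $t_1+t_1^{-1}=c_1$ and Lemma~\ref{lem:Tchara}(i). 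Substituting these back shows $\phi_{\dbar}^{\ubar i}(c_0^\vee,c_1^\vee,c_0;\kappa)=\phi_{\dbar}^{\ubar i}(c_0^\vee,c_1^\vee,c_0;\tilde c_1)$ and $\psi_{\dbar}(c_0^\vee,c_1^\vee,c_0;\kappa)=\psi_{\dbar}(c_0^\vee,c_1^\vee,c_0;\tilde c_1)$ are exactly the scalars in \eqref{e:relZH}, so the $\sharp$-image of the relation of Theorem~\ref{thm:relZDelta} is precisely \eqref{e:relZH}, an identity among central elements of $\H$.

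For the presentation, Theorem~\ref{thm:basisZH} gives that the monomials $T_{\dbar}(A)^iT_{\dbar}(B)^jT_{\dbar}(C)^k$ with $ijk=0$ form an $\F$-basis of $Z(\H)$. Hence $K_0:=\F[T_{\dbar}(A),T_{\dbar}(B)]$ is a two-variable polynomial ring (the monomials with $k=0$ are linearly independent), $Z(\H)=K_0[T_{\dbar}(C)]$ (every basis monomial is a $K_0$-multiple of a power of $T_{\dbar}(C)$), and $T_{\dbar}(C)\notin K_0$. I would then consider the surjective $K_0$-algebra homomorphism $K_0[X]\to Z(\H)$ with $X\mapsto T_{\dbar}(C)$ and let $P(X)\in K_0[X]$ be the polynomial obtained from \eqref{e:relZH} by transposing all terms to one side and replacing $T_{\dbar}(C)$ by $X$; then $P(X)$ is monic of degree $2$ in $X$ and lies in the kernel. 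To prove the kernel equals $(P(X))$, divide any $f$ in the kernel by the monic $P$ to get $f=gP+r$ with $\deg_X r\le 1$, so $r(T_{\dbar}(C))=0$; writing $r=aX+b$ with $a,b\in K_0$, if $a\neq0$ then $T_{\dbar}(C)=-b/a$ would lie in the fraction field of $K_0$ and be integral over $K_0$ (a root of the monic $P$), hence would lie in the integrally closed ring $K_0$, contradicting $T_{\dbar}(C)\notin K_0$; so $a=0$, and then $b=0$, giving $f\in(P(X))$. Therefore $Z(\H)\cong K_0[X]/(P(X))$, which is the asserted presentation once $T_{\dbar}(A),T_{\dbar}(B),T_{\dbar}(C)$ are named as generators.

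The main obstacle is the bookkeeping in the relation verification: one must confirm that the images of $\Phi_{\dbar}^{\ubar i}$ and $\Psi_{\dbar}$ under $\sharp$, which involve the non-central element $t_1\in\T$, genuinely collapse to the prescribed scalars; the mechanism is the root-of-unity identity $q^{\dbar}=q^{-\dbar}$ together with the palindromic structure of $T_{\dbar}$ recorded in Lemma~\ref{lem:Tchara}(i). Everything after that, including the degree-$2$ minimal-polynomial argument, is routine.
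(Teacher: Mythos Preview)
Your verification of the relation \eqref{e:relZH} is correct and is exactly what the paper does: push Theorem~\ref{thm:relZDelta} through $\sharp$, and use Lemma~\ref{lem:Tchara}(i) plus $q^{2\dbar}=1$ to see that $T_{\dbar}(q^{-1}t_1+qt_1^{-1})$ and $T_{2\dbar}(q^{-1}t_1+qt_1^{-1})$ collapse to the scalars $T_{\dbar}(\tilde c_1)$ and $T_{2\dbar}(\tilde c_1)$, so that the coefficients $\phi_{\dbar}^{\ubar i}$, $\psi_{\dbar}$ come out as scalars.

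The gap is in your minimal-polynomial step. From $aT_{\dbar}(C)+b=0$ in $Z(\H)$ with $a\neq 0$ you write ``$T_{\dbar}(C)=-b/a$ would lie in the fraction field of $K_0$''. This presupposes that $Z(\H)$ is a domain (or at least $K_0$-torsion-free), which is precisely what has not yet been established; a priori $a$ could be a zero-divisor in $Z(\H)$. The equation $aT_{\dbar}(C)+b=0$ lives in $Z(\H)$, not in any field, so you cannot invert $a$ and then invoke integral closure. What your argument actually proves (via the resultant $a^2P(-b/a)=0$ in $K_0$) is that $P$ has a root $-b/a$ in $\mathrm{Frac}(K_0)$, hence in $K_0$; so $P$ factors in $K_0[X]$. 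But that does not yet force $T_{\dbar}(C)$ itself to lie in $K_0$.

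The paper closes this gap by a direct irreducibility check: if $P(X)=(X-r)(X-s)$ over $K_0=\F[T_{\dbar}(A),T_{\dbar}(B)]$, comparing total degrees in $T_{\dbar}(A),T_{\dbar}(B)$ forces one factor to be a scalar, and then the $T_{\dbar}(A)^2$ coefficient of the constant term gives a contradiction. Once $P$ is irreducible, Gauss's lemma makes $(P)$ maximal in $\mathrm{Frac}(K_0)[X]$; since $K_0\hookrightarrow Z(\H)$ (from Theorem~\ref{thm:basisZH}) the localized map $\mathrm{Frac}(K_0)[X]\to Z(\H)\otimes_{K_0}\mathrm{Frac}(K_0)$ is nonzero, hence has kernel exactly $(P)$, and pulling back along the monic $P$ gives $\ker(K_0[X]\to Z(\H))=(P)$. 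Your integral closure maneuver becomes valid \emph{after} this irreducibility is known, but not before; add that two-line irreducibility check and your argument goes through.
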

\begin{proof}
As a consequence of Theorem~\ref{thm:basisZH} the center of $\H$ is generated by $T_{\dbar}(A)$, $T_{\dbar}(B)$, $T_{\dbar}(C)$.
By Lemma~\ref{lem:Tchara}(i), for each $n\in \N$
\begin{gather*}
T_n(q^{-1}t_1+q t_1^{-1})=
q^{-n}t_1^n+q^nt_1^{-n},
\qquad \quad
T_n(\tilde c_1)=
q^{-n}k_1^n+q^nk_1^{-n}.
\end{gather*}
Both are equal to $q^n T_n(c_1)$ when $\dbar$ divides $n$. Recall the homomorphism $\sharp:\triangle\to \H$ from Theorem~\ref{thm:basisCT} above. Applying Theorem~\ref{thm:poly} it follows that
\begin{gather*}
\Phi_{\dbar}(\alpha,\beta,\gamma;\Omega)^\sharp
=\phi_{\dbar}(c_0^\vee,c_1^\vee,c_0;\tilde c_1),
\qquad \quad
\Psi_{\dbar}(\alpha,\beta,\gamma;\Omega)^\sharp
=\psi_{\dbar}(c_0^\vee,c_1^\vee,c_0;\tilde c_1).
\end{gather*}
Therefore
(\ref{e:relZH}) holds by Theorem~\ref{thm:relZDelta}.

Let $K$ denote the $\F$-subalgebra of $Z(\H)$ generated by $T_{\dbar}(A)$, $T_{\dbar}(B)$. Relation (\ref{e:relZH}) implies that $T_{\dbar}(C)$ is a root of the quadratic polynomial
\begin{align*}
P(X)
&=
X^2
+q^{\dbar}
(
T_{\dbar}(A) T_{\dbar}(B)
-\phi_{\dbar}^{\ubar 2}(c_0^\vee,c_1^\vee,c_0;\tilde c_1)
)X
+T_{\dbar}(A)^2
+T_{\dbar}(B)^2
\\
&\qquad-\;
q^{\dbar}\phi_{\dbar}^{\ubar 0}(c_0^\vee,c_1^\vee,c_0; \tilde c_1) T_{\dbar}(A)
-q^{\dbar}\phi_{\dbar}^{\ubar 1}(c_0^\vee,c_1^\vee,c_0; \tilde c_1) T_{\dbar}(B)
+
\psi_{\dbar}(c_0^\vee,c_1^\vee,c_0;\tilde c_1)
-2.
\end{align*}
By Theorem~\ref{thm:basisZH}, $K$ is a two-variable polynomial ring over $\F$ generated by $T_{\dbar}(A)$, $T_{\dbar}(B)$. In particular $K$ is a unique factorization domain. Observe that $P(X)$ is irreducible over $K$. By Gauss's Lemma $P(X)$ is the minimal polynomial of $T_{\dbar}(C)$ over the fraction field of $K$. This shows the theorem.
\end{proof}

\appendix

\setcounter{secnumdepth}{0}
\section{Appendix. Auxiliary data
}\label{a:homog}

Assume that $q^2\not=1$.
As the linear combinations of the $\F[a^{\pm1},b^{\pm 1},c^{\pm 1}]\otimes_\F \F[\Lambda]$-basis
(\ref{e:basis}) of $ \F[a^{\pm1},b^{\pm 1},c^{\pm 1}]\otimes_\F Z(U)$, the nonzero coefficients of
\begin{gather*}
T_{\dbar}(A)^\natural, \qquad
T_{\dbar}(B)^\natural, \qquad
T_{\dbar}(C)^\natural, \qquad
 T_{\dbar}(A)^{2\natural}, \qquad
T_{\dbar}(B)^{2\natural}, \qquad
T_{\dbar}(C)^{2\natural},\\
T_{\dbar}(A)^\natural T_{\dbar}(B)^\natural, \qquad
T_{\dbar}(B)^\natural T_{\dbar}(C)^\natural, \qquad
T_{\dbar}(C)^\natural T_{\dbar}(A)^\natural, \qquad
T_{\dbar}(A)^\natural T_{\dbar}(B)^\natural T_{\dbar}(C)^\natural
\end{gather*}
can be evaluated by applying relation (\ref{e:TLambda}) to Theorem~\ref{thm:embedd}. The results are listed as follows:

\begin{table}[H]
\begin{minipage}[H]{0.5\textwidth}
\centering
\extrarowheight=3.3pt
\begin{tabular}{c|l|c}
\multicolumn{2}{c}{coefficients}
\vline
&$T_{\dbar}(A)^\natural$\\

\midrule[2pt]

\multirow{2}{*}{$k^{\dbar i}$}
&$i=-1$
&$
a^{\dbar}
$\\
\cline{2-3}

&$i=1$
&$
a^{-\dbar}
$\\

\midrule[1pt]

\multirow{2}{*}{$k^{\dbar i}e^{\dbar}$}
&$i=-1$
&$
-(q-q^{-1})^{\dbar}
q^{\dbar} a^{\dbar}
$\\
\cline{2-3}

&$i=0$
&$
(q-q^{-1})^{\dbar} b^{\dbar} c^{-\dbar}
$
\end{tabular}
\end{minipage}%
f\begin{minipage}[H]{0.5\textwidth}
\centering
\extrarowheight=3.3pt
\begin{tabular}{c|l|c}
\multicolumn{2}{c}{coefficients}
\vline
&$T_{\dbar}(B)^\natural$\\

\midrule[2pt]

\multirow{2}{*}{$k^{\dbar i}f^{\dbar}$}
&$i=0$
&$
(q-q^{-1})^{\dbar} b^{-\dbar}
$\\
\cline{2-3}

&$i=1$
&$
-(q-q^{-1})^{\dbar}  q^{\dbar} a^{-\dbar} c^{\dbar}
$\\

\midrule[1pt]

\multirow{2}{*}{$k^{\dbar i}$}
&$i=-1$
&$
b^{-\dbar}
$\\
\cline{2-3}

&$i=1$
&$
b^{\dbar}
$
\end{tabular}
\end{minipage}
\end{table}

\begin{table}[H]
\centering
\extrarowheight=3.3pt
\begin{tabular}{c|l|c}
\multicolumn{2}{c}{coefficients}
\vline
&$T_{\dbar}(C)^\natural$\\

\midrule[2pt]

\multirow{2}{*}{$k^{\dbar i}f^{\dbar}$}
&$i=-1$
&$
-(q-q^{-1})^{\dbar} q^{\dbar} a^{\dbar} b^{-\dbar}
$\\
\cline{2-3}

&$i=0$
&$
(q-q^{-1})^{\dbar} c^{\dbar}
$\\

\midrule[1pt]

\multirow{2}{*}{$k^{\dbar i}$}
&$i=-2$
&$
-2 q^{\dbar} a^{\dbar} b^{-\dbar}
$\\
\cline{2-3}

&$i=-1$
&$
a^{\dbar} b^{-\dbar} T_{\dbar}(\Lambda)+c^{\dbar}+c^{-\dbar}
$\\

\midrule[1pt]

\multirow{2}{*}{$k^{\dbar i}e^{\dbar}$}

&$i=-2$
&$
(q-q^{-1})^{\dbar} a^{\dbar} b^{-\dbar}
$\\
\cline{2-3}

&$i=-1$
&$
-(q-q^{-1})^{\dbar} q^{\dbar} c^{-\dbar}
$
\end{tabular}
\end{table}

\begin{table}[H]
\begin{minipage}[H]{0.5\textwidth}
\centering
\extrarowheight=3.3pt
\begin{tabular}{c|l|c}
\multicolumn{2}{c}{coefficients} \vline &$T_{\dbar}(A)^{2\natural}$\\

\midrule[2pt]

\multirow{3}{*}{$k^{\dbar i}$}
&$i=-2$
&$
a^{2\dbar}
$\\
\cline{2-3}

&$i=0$
&$
2
$\\
\cline{2-3}

&$i=2$
&$
a^{-2\dbar}
$\\

\midrule[1pt]

\multirow{4}{*}{$k^{\dbar i}e^{\dbar}$}

&$i=-2$
&$
-2(q-q^{-1})^{\dbar} q^{\dbar} a^{2\dbar}
$\\
\cline{2-3}

&$i=-1$
&$
2(q-q^{-1})^{\dbar} a^{\dbar} b^{\dbar} c^{-\dbar}
$\\
\cline{2-3}

&$i=0$
&$
-2 (q-q^{-1})^{\dbar} q^{\dbar}
$\\
\cline{2-3}

&$i=1$
&$
2 (q-q^{-1})^{\dbar} a^{-\dbar} b^{\dbar} c^{-\dbar}
$\\

\midrule[1pt]

\multirow{3}{*}{$k^{\dbar i} e^{2\dbar}$}

&$i=-2$
&$
(q-q^{-1})^{2\dbar} a^{2\dbar}
$\\
\cline{2-3}

&$i=-1$
&$
-2(q-q^{-1})^{2\dbar} q^{\dbar} a^{\dbar} b^{\dbar} c^{-\dbar}
$\\
\cline{2-3}

&$i=0$
&$
(q-q^{-1})^{2\dbar}b^{2\dbar}c^{-2\dbar}
$
\end{tabular}
\end{minipage}%
\begin{minipage}[H]{0.5\textwidth}
\centering
\extrarowheight=3.3pt
\begin{tabular}{c|l|c}
\multicolumn{2}{c}{coefficients} \vline
&$T_{\dbar}(B)^{2\natural}$\\

\midrule[2pt]

\multirow{3}{*}{$k^{\dbar i}f^{2\dbar}$}

&$i=0$
&$
(q-q^{-1})^{2\dbar}
b^{-2\dbar}
$\\
\cline{2-3}

&$i=1$
&$
-2(q-q^{-1})^{2\dbar}q^{\dbar} a^{-\dbar}b^{-\dbar} c^{\dbar}
$\\
\cline{2-3}

&$i=2$
&$
(q-q^{-1})^{2\dbar}
a^{-2\dbar} c^{2\dbar}
$\\

\midrule[1pt]

\multirow{4}{*}{$k^{\dbar i}f^{\dbar}$}
&$i=-1$
&$
2(q-q^{-1})^{\dbar} b^{-2\dbar}
$\\
\cline{2-3}

&$i=0$
&$
-2(q-q^{-1})^{\dbar}
q^{\dbar} a^{-\dbar}b^{-\dbar} c^{\dbar}
$\\
\cline{2-3}

&$i=1$
&$
2(q-q^{-1})^{\dbar}
$\\
\cline{2-3}

&$i=2$
&$
-2 (q-q^{-1})^{\dbar} q^{\dbar} a^{-\dbar}b^{\dbar} c^{\dbar}
$\\

\midrule[1pt]

\multirow{3}{*}{$k^{\dbar i}$}
&$i=-2$
&$
b^{-2\dbar}
$\\
\cline{2-3}

&$i=0$
&$
2
$\\
\cline{2-3}

&$i=2$
&$
b^{2\dbar}
$
\end{tabular}
\end{minipage}
\end{table}

\begin{table}[H]
\centering
\extrarowheight=3.3pt
\begin{tabular}{c|l|c}
\multicolumn{2}{c}{coefficients}
\vline
&$T_{\dbar}(C)^{2\natural}$\\

\midrule[2pt]

\multirow{3}{*}{$k^{\dbar i}f^{2\dbar}$}
&$i=-2$
&$
(q-q^{-1})^{2\dbar} a^{2\dbar} b^{-2\dbar}
$\\
\cline{2-3}

&$i=-1$
&$
-2(q-q^{-1})^{2\dbar}
q^{\dbar} a^{\dbar} b^{-\dbar} c^{\dbar}
$\\
\cline{2-3}

&$i=0$
&$
(q-q^{-1})^{2\dbar}c^{2\dbar}
$\\

\midrule[1pt]

\multirow{3}{*}{$k^{\dbar i}f^{\dbar}$}
&$i=-3$
&$
4(q-q^{-1})^{\dbar}
a^{2\dbar}b^{-2\dbar}
$\\
\cline{2-3}

&$i=-2$
&$
-2 (q-q^{-1})^{\dbar} q^{\dbar} a^{\dbar} b^{-\dbar}
\left(
a^{\dbar} b^{-\dbar}T_{\dbar}(\Lambda)
+
3c^{\dbar}+c^{-\dbar}
\right)
$\\
\cline{2-3}

&$i=-1$
&$
2(q-q^{-1})^{\dbar} c^{\dbar}
\left(
a^{\dbar} b^{-\dbar}  T_{\dbar}(\Lambda)
+c^{\dbar}+c^{-\dbar}
\right)
$\\

\midrule[1pt]

\multirow{5}{*}{$k^{\dbar i}$}
&$i=-4$
&$
6a^{2\dbar}b^{-2\dbar}
$\\
\cline{2-3}

&$i=-3$
&$
-6q^{\dbar} a^{\dbar} b^{-\dbar}
\left(
a^{\dbar} b^{-\dbar}T_{\dbar}(\Lambda)
+c^{\dbar}+c^{-\dbar}
\right)
$\\
\cline{2-3}

&$i=-2$
&$

a^{2\dbar} b^{-2\dbar}\left(
T_{\dbar}(\Lambda)^2
+2\right)
+(c^{\dbar}+c^{-\dbar})
\left(
4 a^{\dbar} b^{-\dbar}T_{\dbar}(\Lambda)
+c^{\dbar}+c^{-\dbar}
\right)
+2
$\\
\cline{2-3}

&$i=-1$
&$-2
q^{\dbar} a^{\dbar} b^{-\dbar}
\left(
a^{-\dbar} b^{\dbar} T_{\dbar}(\Lambda)
+c^{\dbar}+c^{-\dbar}
\right)
$\\
\cline{2-3}

&$i=0$
&$
2
$
\\

\midrule[1pt]

\multirow{3}{*}{$k^{\dbar i}e^{\dbar}$}
&$i=-4$
&$
-4(q-q^{-1})^{\dbar} q^{\dbar} a^{2\dbar}b^{-2\dbar}
$\\
\cline{2-3}

&$i=-3$
&$
2(q-q^{-1})^{\dbar} a^{\dbar} b^{-\dbar}
\left(
a^{\dbar} b^{-\dbar} T_{\dbar}(\Lambda)
+
c^{\dbar}+3c^{-\dbar}
\right)
$\\
\cline{2-3}

&$i=-2$
&$
-2(q-q^{-1})^{\dbar} q^{\dbar} c^{-\dbar}
\left(
a^{\dbar} b^{-\dbar} T_{\dbar}(\Lambda)
+c^{\dbar}+c^{-\dbar}
\right)
$\\

\midrule[1pt]

\multirow{3}{*}{$k^{\dbar i}e^{2\dbar}$}
&$i=-4$
&$
(q-q^{-1})^{2\dbar}a^{2\dbar} b^{-2\dbar}
$\\
\cline{2-3}

&$i=-3$
&$
-2(q-q^{-1})^{2\dbar}q^{\dbar} a^{\dbar} b^{-\dbar} c^{-\dbar}
$\\
\cline{2-3}

&$i=-2$
&$
(q-q^{-1})^{2\dbar} c^{-2\dbar}
$\\
\end{tabular}
\end{table}

\begin{table}[H]
\centering
\extrarowheight=3.3pt
\begin{tabular}{c|l|c}
\multicolumn{2}{c}{coefficients}
\vline
&$T_{\dbar}(A)^\natural T_{\dbar} (B)^\natural$\\

\midrule[2pt]

\multirow{4}{*}{$k^{\dbar i}f^{\dbar}$}
&$i=-1$
&$
(q-q^{-1})^{\dbar} a^{\dbar} b^{-\dbar}
$\\
\cline{2-3}

&$i=0$
&$
-(q-q^{-1})^{\dbar} q^{\dbar} c^{\dbar}
$\\
\cline{2-3}

&$i=1$
&$
(q-q^{-1})^{\dbar} a^{-\dbar} b^{-\dbar}
$\\
\cline{2-3}

&$i=2$
&$
-(q-q^{-1})^{\dbar} q^{\dbar}
a^{-2\dbar} c^{\dbar}
$
\\

\midrule[1pt]

\multirow{5}{*}{$k^{\dbar i}$}
&$i=-2$
&$
2 a^{\dbar} b^{-\dbar}
$\\
\cline{2-3}

&$i=-1$
&$
-q^{\dbar}
\left(
a^{\dbar} b^{-\dbar} T_{\dbar}(\Lambda)
+
c^{\dbar}+c^{-\dbar}
\right)
$\\
\cline{2-3}

&$i=0$
&$
(c^{\dbar}+c^{-\dbar})T_{\dbar}(\Lambda)
+(a^{\dbar}+a^{-\dbar})(b^{\dbar}+b^{-\dbar})
$\\
\cline{2-3}

&$i=1$
&$
-q^{\dbar}
\left(
a^{-\dbar} b^{\dbar} T_{\dbar}(\Lambda)
+
c^{\dbar}+c^{-\dbar}
\right)
$\\
\cline{2-3}

&$i=2$
&$
2a^{-\dbar}b^{\dbar}
$\\

\midrule[1pt]

\multirow{4}{*}{$k^{\dbar i}e^{\dbar}$}
&$i=-2$
&$
-(q-q^{-1})^{\dbar} q^{\dbar} a^{\dbar} b^{-\dbar}
$\\
\cline{2-3}

&$i=-1$
&$
(q-q^{-1})^{\dbar}  c^{-\dbar}
$\\
\cline{2-3}

&$i=0$
&$
-(q-q^{-1})^{\dbar}  q^{\dbar} a^{\dbar} b^{\dbar}
$\\
\cline{2-3}

&$i=1$
&$
(q-q^{-1})^{\dbar}
b^{2\dbar}c^{-\dbar}
$\\
\end{tabular}
\end{table}

\begin{table}[H]
\centering
\extrarowheight=3.3pt
\begin{tabular}{c|l|c}
\multicolumn{2}{c}{coefficients}
\vline
&$T_{\dbar}(B)^\natural T_{\dbar} (C)^\natural$\\

\midrule[2pt]

\multirow{3}{*}{$k^{\dbar i}f^{2\dbar}$}
&$i=-1$
&$
-(q-q^{-1})^{2\dbar} q^{\dbar} a^{\dbar} b^{-2\dbar}
$\\
\cline{2-3}

&$i=0$
&$
2(q-q^{-1})^{2\dbar} b^{-\dbar} c^{\dbar}
$\\
\cline{2-3}

&$i=1$
&$
-(q-q^{-1})^{2\dbar} q^{\dbar} a^{-\dbar} c^{2\dbar}
$\\

\midrule[1pt]

\multirow{4}{*}{$k^{\dbar i}f^{\dbar}$}
&$i=-2$
&$
-3(q-q^{-1})^{\dbar}
q^{\dbar} a^{\dbar} b^{-2\dbar}
$\\
\cline{2-3}

&$i=-1$
&$
(q-q^{-1})^{\dbar} b^{-\dbar}
\left(
a^{\dbar} b^{-\dbar}T_{\dbar}(\Lambda)
+4c^{\dbar}+c^{-\dbar}
\right)
$\\
\cline{2-3}

&$i=0$
&$
-(q-q^{-1})^{\dbar}
q^{\dbar}
a^{-\dbar}
\left(
a^{\dbar} b^{-\dbar}c^{\dbar}
T_{\dbar}(\Lambda)
+
a^{2\dbar}+1+c^{2\dbar}
\right)
$\\
\cline{2-3}

&$i=1$
&$
(q-q^{-1})^{\dbar} b^{\dbar} c^{\dbar}
$\\

\midrule[1pt]

\multirow{5}{*}{$k^{\dbar i}$}
&$i=-3$
&$
-3 q^{\dbar} a^{\dbar} b^{-2\dbar}
$\\
\cline{2-3}

&$i=-2$
&$
2 b^{-\dbar}
\left(
a^{\dbar} b^{-\dbar} T_{\dbar}(\Lambda)
+c^{\dbar}+c^{-\dbar}
\right)
$\\
\cline{2-3}

&$i=-1$
&$
-q^{\dbar}
a^{\dbar}
\left(
a^{-\dbar} b^{-\dbar}
(c^{\dbar}+c^{-\dbar}) T_{\dbar}(\Lambda)
+
a^{-2\dbar}+2
+b^{-2\dbar}
\right)
$\\
\cline{2-3}

&$i=0$
&$
(a^{\dbar}+a^{-\dbar})T_{\dbar}(\Lambda)
+
(b^{\dbar}+b^{-\dbar})(c^{\dbar}+c^{-\dbar})
$\\
\cline{2-3}

&$i=1$
&$
-q^{\dbar} a^{-\dbar}
$
\\

\midrule[1pt]

\multirow{4}{*}{$k^{\dbar i}e^{\dbar}$}
&$i=-3$
&$
(q-q^{-1})^{\dbar} a^{\dbar} b^{-2\dbar}
$\\
\cline{2-3}

&$i=-2$
&$
-(q-q^{-1})^{\dbar} q^{\dbar} b^{-\dbar}c^{-\dbar}
$\\
\cline{2-3}

&$i=-1$
&$
(q-q^{-1})^{\dbar} a^{\dbar}
$\\
\cline{2-3}

&$i=0$
&$
-(q-q^{-1})^{\dbar} q^{\dbar} b^{\dbar} c^{-\dbar}
$
\end{tabular}
\end{table}

\begin{table}[H]
\centering
\extrarowheight=3.3pt
\begin{tabular}{c|l|c}
\multicolumn{2}{c}{coefficients}
\vline
&$T_{\dbar}(C)^\natural T_{\dbar}(A)^\natural$\\

\midrule[2pt]

\multirow{4}{*}{$k^{\dbar i}f^{\dbar}$}
&$i=-2$
&$
-(q-q^{-1})^{\dbar} q^{\dbar} a^{2\dbar} b^{-\dbar}
$\\
\cline{2-3}

&$i=-1$
&$
(q-q^{-1})^{\dbar} a^{\dbar} c^{\dbar}
$\\
\cline{2-3}

&$i=0$
&$
-(q-q^{-1})^{\dbar} q^{\dbar} b^{-\dbar}
$\\
\cline{2-3}

&$i=1$
&$
(q-q^{-1})^{\dbar} a^{-\dbar} c^{\dbar}
$\\

\midrule[1pt]

\multirow{5}{*}{$k^{\dbar i}$}
&$i=-3$
&$
-3 q^{\dbar} a^{2\dbar} b^{-\dbar}
$\\
\cline{2-3}

&$i=-2$
&$
2 a^{\dbar}
\left(
a^{\dbar} b^{-\dbar}T_{\dbar}(\Lambda)
+c^{\dbar}+c^{-\dbar}
\right)
$\\
\cline{2-3}

&$i=-1$
&$
-q^{\dbar}
b^{-\dbar}
\left(
a^{\dbar} b^{\dbar}(c+c^{-\dbar})
T_{\dbar}(\Lambda)
+a^{2\dbar}+2+b^{2\dbar}
\right)
$\\
\cline{2-3}

&$i=0$
&$
(b+b^{-\dbar})T_{\dbar}(\Lambda)
+(a^{\dbar}+a^{-\dbar})(c^{\dbar}+c^{-\dbar})
$\\
\cline{2-3}

&$i=1$
&$
-q^{\dbar} b^{\dbar}
$\\

\midrule[1pt]

\multirow{4}{*}{$k^{\dbar i}e^{\dbar}$}
&$i=-3$
&$
3 (q-q^{-1})^{\dbar} a^{2\dbar}b^{-\dbar}
$\\
\cline{2-3}

&$i=-2$
&$
-(q-q^{-1})^{\dbar} q^{\dbar} a^{\dbar}
\left(
a^{\dbar} b^{-\dbar}T_{\dbar}(\Lambda)
+c^{\dbar}+4c^{-\dbar}
\right)
$\\
\cline{2-3}

&$i=-1$
&$
(q-q^{-1})^{\dbar}
b^{\dbar}
\left(
a^{\dbar} b^{-\dbar} c^{-\dbar}T_{\dbar}(\Lambda)
+
b^{-2\dbar}+1+c^{-2\dbar}
\right)
$\\
\cline{2-3}

&$i=0$
&$
-(q-q^{-1})^{\dbar}
q^{\dbar} a^{-\dbar}c^{-\dbar}
$\\

\midrule[1pt]

\multirow{3}{*}{$k^{\dbar i}e^{2\dbar}$}
&$i=-3$
&$
-(q-q^{-1})^{2\dbar}
q^{\dbar} a^{2\dbar} b^{-\dbar}
$\\
\cline{2-3}

&$i=-2$
&$
2(q-q^{-1})^{2\dbar} a^{\dbar} c^{-\dbar}
$\\
\cline{2-3}

&$i=-1$
&$-(q-q^{-1})^{2\dbar}
q^{\dbar} b^{\dbar} c^{-2\dbar}
$
\end{tabular}
\end{table}

\begin{table}[H]
\centering
\extrarowheight=3.3pt
\begin{tabular}{c|l|c}
\multicolumn{2}{c}{coefficients} \vline
&$T_{\dbar}(A)^\natural T_{\dbar}(B)^\natural T_{\dbar}(C)^\natural$\\

\midrule[2pt]

\multirow{5}{*}{$k^{\dbar i} f^{2\dbar}$}
& $i=-2$
&$
-(q-q^{-1})^{2\dbar}q^{\dbar} a^{2\dbar} b^{-2\dbar}
$\\
\cline{2-3}

& $i=-1$
&$
2(q-q^{-1})^{2\dbar} a^{\dbar} b^{-\dbar} c^{\dbar}
$\\
\cline{2-3}

&$i=0$
&$
-(q-q^{-1})^{2\dbar}q^{\dbar}(b^{-2\dbar}+c^{2\dbar})
$\\
\cline{2-3}

& $i=1$
&$
2(q-q^{-1})^{2\dbar} a^{-\dbar} b^{-\dbar} c^{\dbar}
$\\
\cline{2-3}

& $i=2$
&$
-(q-q^{-1})^{2\dbar}q^{\dbar} a^{-2\dbar} c^{2\dbar}
$\\

\midrule[1pt]

\multirow{7}{*}{$k^{\dbar i} f^{\dbar}$}
& $i=-3$
&$
-4(q-q^{-1})^{\dbar}q^{\dbar} a^{2\dbar} b^{-2\dbar}
$\\
\cline{2-3}

& $i=-2$
&$
2(q-q^{-1})^{\dbar}a^{\dbar} b^{-\dbar}
\left(
a^{\dbar} b^{-\dbar}T_{\dbar}(\Lambda)
+3c^{\dbar}+c^{-\dbar}
\right)
$\\
\cline{2-3}

& $i=-1$
&$
-(q-q^{-1})^{\dbar}q^{\dbar}
\left(
a^{\dbar} b^{-\dbar}(3 c^{\dbar}+c^{-\dbar}) T_{\dbar}(\Lambda)
+
(a^{2\dbar}+3)(b^{-2\dbar}+1)+2c^{2\dbar}
\right)
$\\
\cline{2-3}

&\multirow{2}{*}{$i=0$}
&$(q-q^{-1})^{\dbar}
\big(
(b^{-2\dbar}+2+c^{2\dbar})T_{\dbar}(\Lambda)
$\\

& &$
\;+\;b^{-\dbar}c^{\dbar}
(a^{\dbar}+a^{-\dbar})
(b^{2\dbar} +2+c^{-2\dbar})
+2a^{-\dbar}b^{-\dbar}c^{\dbar}
\big)
$\\
\cline{2-3}

& $i=1$
&$
-(q-q^{-1})^{\dbar} q^{\dbar}
\left(
a^{-\dbar} c^{\dbar}(b^{\dbar}+b^{-\dbar}) T_{\dbar}(\Lambda)
+(a^{-2\dbar}+1)(c^{2\dbar}+1)
+2
\right)
$\\
\cline{2-3}

& $i=2$
&$
2 (q-q^{-1})^{\dbar} a^{-\dbar} b^{\dbar} c^{\dbar}
$\\

\midrule[1pt]

\multirow{11}{*}{$k^{\dbar i}$}

&$i=-4$
&$
-6 q^{\dbar} a^{2\dbar} b^{-2\dbar}
$\\
\cline{2-3}

&$i=-3$
&$
6a^{\dbar} b^{-\dbar}
\left(
a^{\dbar} b^{-\dbar} T_{\dbar}(\Lambda)
+c^{\dbar}+c^{-\dbar}
\right)
$\\
\cline{2-3}

&\multirow{2}{*}{$i=-2$}
&$
-q^{\dbar}
\big(
a^{2\dbar} b^{-2\dbar}T_{\dbar}(\Lambda)^2
+6 a^{\dbar} b^{-\dbar} (c^{\dbar}+c^{-\dbar})T_{\dbar}(\Lambda)
$\\

& &$
\;+\;
(a^{2\dbar}+3)(b^{-2\dbar}+3)+3a^{2\dbar}b^{-2\dbar}
+c^{2\dbar}-3+c^{-2\dbar}
\big)
$
\\
\cline{2-3}

&\multirow{2}{*}{$i=-1$}
&$
a^{\dbar} b^{-\dbar}(c^{\dbar}+c^{-\dbar})
\left(
T_{\dbar}(\Lambda)^2
+(a^{-2\dbar}+2)(b^{2\dbar}+2)
+1
\right)
$\\

& &$
\;+\;\left(
(a^{2\dbar}+2) (b^{-2\dbar}+2)+c^{2\dbar}
+3+c^{-2\dbar}
\right)
T_{\dbar}(\Lambda)
$\\

\cline{2-3}

&\multirow{2}{*}{$i=0$}
&$
-q^{\dbar}
\big(
T_{\dbar}(\Lambda)^2+
(a^{\dbar}+a^{-\dbar})
(b^{\dbar}+b^{-\dbar})
(c^{\dbar}+c^{-\dbar})T_{\dbar}(\Lambda)
$\\

& &$
\;+\;a^{2\dbar}+b^{2\dbar}+c^{2\dbar}
+8
+a^{-2\dbar}+b^{-2\dbar}+c^{-2\dbar}
\big)
$\\
\cline{2-3}

&$i=1$
&$
(a^{-2\dbar}+2+b^{2\dbar}) T_{\dbar}(\Lambda)
+
a^{-\dbar}b^{\dbar}(
a^{2\dbar}
+2
+b^{-2\dbar}
)
(c^{\dbar}+c^{-\dbar})
$\\
\cline{2-3}

&$i=2$
&$
-q^{\dbar}(a^{-2\dbar}+b^{2\dbar})
$\\

\midrule[1pt]

\multirow{7}{*}{$k^{\dbar i}e^{\dbar}$}

&$i=-4$
&$
4(q-q^{-1})^{\dbar} a^{2\dbar} b^{-2\dbar}
$\\
\cline{2-3}

&$i=-3$
&$
-2 (q-q^{-1})^{\dbar} q^{\dbar} a^{\dbar} b^{-\dbar}
\left(
a^{\dbar} b^{-\dbar} T_{\dbar}(\Lambda)+c^{\dbar}+3c^{-\dbar}
\right)
$\\
\cline{2-3}

&$i=-2$
&$
(q-q^{-1})^{\dbar}
\left(
a^{\dbar} b^{-\dbar}(c^{\dbar}+3c^{-\dbar})T_{\dbar}(\Lambda)
+(a^{2\dbar}+1)(b^{-2\dbar}+3)+2c^{-2\dbar}
\right)
$\\
\cline{2-3}

&\multirow{2}{*}{$i=-1$}
&$
-(q-q^{-1})^{\dbar} q^{\dbar}\big(
(a^{2\dbar}+2+c^{-2\dbar})T_{\dbar}(\Lambda)
$\\

& &$
\;+\;
a^{\dbar} c^{-\dbar}
(b^{\dbar}+b^{-\dbar})(a^{-2\dbar}+2+c^{2\dbar})
+2 a^{\dbar} b^{\dbar} c^{-\dbar}
\big)
$\\
\cline{2-3}

&$i=0$
&$
(q-q^{-1})^{\dbar}
\left(
b^{\dbar} c^{-\dbar}(a^{\dbar}+a^{-\dbar})T_{\dbar}(\Lambda)
+(b^{2\dbar}+1)(c^{-2\dbar}+1)+2
\right)
$\\
\cline{2-3}

&$i=1$
&$
-2(q-q^{-1})^{\dbar}
q^{\dbar} a^{-\dbar}b^{\dbar} c^{-\dbar}
$\\

\midrule[1pt]

\multirow{5}{*}{$k^{\dbar i}e^{2\dbar}$}

&$i=-4$
&$
-(q-q^{-1})^{2\dbar}q^{\dbar} a^{2\dbar} b^{-2\dbar}
$\\
\cline{2-3}

&$i=-3$
&$
2(q-q^{-1})^{2\dbar} a^{\dbar} b^{-\dbar}c^{-\dbar}
$\\
\cline{2-3}

&$i=-2$
&$
-(q-q^{-1})^{2\dbar} q^{\dbar}(a^{2\dbar}+c^{-2\dbar})
$\\
\cline{2-3}

&$i=-1$
&$
2(q-q^{-1})^{2\dbar}a^{\dbar} b^{\dbar} c^{-\dbar}
$\\
\cline{2-3}

&$i=0$
&$
-(q-q^{-1})^{2\dbar}q^{\dbar} b^{2\dbar} c^{-2\dbar}
$
\end{tabular}
\end{table}

\end{document}